\newtheorem{thm}{Theorem}[section] \newtheorem{pro}[thm]{Proposition}
\newtheorem{lemma}[thm]{Lemma}
\newtheorem{cor}[thm]{Corollary}
\numberwithin{equation}{section}
\newtheorem{question}[thm]{Question}
\theoremstyle{definition} \newtheorem*{ex}{Example}
\newtheorem{rmk}[thm]{Remark} 
\newtheorem*{notation}{Notation}
\newtheorem{df}[thm]{Definition}
\newtheorem{hypothesis}[thm]{Hypothesis}
\DeclareMathAlphabet{\mathpzc}{OT1}{pzc}{m}{it}
\DeclareMathOperator*{\Spec}{Spec}\DeclareMathOperator*{\spf}{Spf}
\DeclareMathOperator*{\spec}{Spec} \DeclareMathOperator*{\Hom}{Hom}
\DeclareMathOperator*{\Aut}{Aut} \DeclareMathOperator*{\QF}{QF}
\DeclareMathOperator*{\Gal}{Gal}
\DeclareMathOperator*{\ord}{ord}
\DeclareMathOperator*{\supp}{Supp}
\DeclareMathOperator*{\BL}{BL}
\DeclareMathOperator*{\Sh}{Sh}
\DeclareMathOperator*{\GL}{GL}
\DeclareMathOperator*{\Cov}{Cov}
\DeclareMathOperator*{\into}{\hookrightarrow}
 \newcommand{\NN}{\mathbb{N}}
 \newcommand{\QQ}{\mathbb{Q}}
\newcommand{\ZZ}{\mathbb{Z}} 
\newcommand{\PP}{\mathbb{P}}
\newcommand{\cO}{\mathcal{O}} 
\newcommand{\scrZ}{\mathcal{Z}}
\newcommand{\scrX}{\mathcal{X}}\newcommand{\scrY}{\mathcal{Y}}
\newcommand{\scrW}{\mathcal{W}}
\newcommand{\scrF}{\mathcal{F}}\newcommand{\scrG}{\mathcal{G}}
\newcommand{\scrV}{\mathcal{V}}\newcommand{\scrU}{\mathcal{U}}
\newcommand{\scrA}{\mathcal{A}}
\newcommand{\cU}{\mathscr{U}}
\newcommand{\cV}{\mathscr{V}} 
\newcommand{\scrp}{\mathfrak{p}} 
\newcommand{\scrq}{\mathfrak{q}} 
\newcommand{\scrm}{\mathfrak{m}}
\newcommand{\cK}{\mathcal{K}}
 \newcommand{\Q}{\mathbb{Q}}
\newcommand{\Z}{\mathbb{Z}}
\begin{document}
\title{Ramification theory and formal orbifolds in arbitrary dimension} 
 \author{
  Manish Kumar
 }
 \address{
Statistics and Mathematics Unit\\
Indian Statistical Institute, \\
Bangalore, India-560059
  }
\email{manish@isibang.ac.in}
 \begin{abstract}
   Formal orbifolds are defined in higher dimension. Their \'etale fundamental groups are also defined. It is shown that the fundamental groups of formal orbifolds have certain finiteness property and it is also shown that they can be used to approximate the \'etale fundamental groups of normal varieties. Etale site on formal orbifolds are also defined. This framework allows one to study wild ramification in an organised way. Brylinski-Kato filtration and $l$-adic sheaves in these contexts are also studied.   
  \end{abstract}
\maketitle

\section{Introduction}

Let $X$ be a normal variety. Tamely ramified covers of $X$ are easy to understand. This is because locally these covers are determined by the ramification index. But this is not at all true for wildly ramified covers of $X$. This makes the study of wild ramification very tedious. In this paper we define a structure $P$ on $X$ called the branch data (see Section \ref{sec:branch-data}). When $X$ is a curve over an algebraically closed field then these objects were defined and studied in \cite{orbifold.bundles}. For a closed point $x\in X$, $P(x)$ is just a finite Galois extension of the fraction field of $\widehat \cO_{X,x}$ like in \cite{orbifold.bundles} but for higher dimensional $X$ the definition of branch data is slightly more intricate (see Definition \ref{def:branch-data}). The pair $(X,P)$ is called a formal orbifold. These objects provide a framework to study wild ramification in a relatively organised way.

Section \ref{sec:local-theory} develops ramification theory for excellent normal rings $R$ of arbitrary dimension. Some variants of which can be found in \cite{abhyankar}. Lemma \ref{compositum-wild-ramification} is the main result of this section which generalizes \cite[Lemma 3.1]{killwildram} to higher dimension. In section \ref{sec:branch-data} branch data on $X$ is defined and some of its basic properties are studied. Every finite dominant separable morphism from $Y\to X$ give rise to a Branch data on $X$ (Proposition \ref{B_f}). Formal orbifolds are defined in section \ref{sec:formal-orbifolds} to be normal varieties $X$ together with a branch data $P$. Coverings, \'etale and flat covers of formal orbifolds are also studied in this section. Fiber products of coverings and their properties are also discussed. It is shown that fiber product of \'etale coverings of formal orbifolds is an \'etale covering (Corollary \ref{fiber-product-etale}).

The \'etale fundamental group of formal orbifolds are defined in section \ref{sec:etale-pi_1}. Their basic properties which are analogous to fundamental groups of schemes are also proved. The following nice property about them are proved.
\begin{thm}(see Corollary \ref{pi_1-finite-rank})
 Let $(X,P)$ be a connected formal orbifold with $X$ projective over $\spec(k)$ where the field $k$ is such that the absolute Galois group $G_k$ has finite rank then $\pi_1(X,P)$ has finite rank.
\end{thm}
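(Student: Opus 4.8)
The plan is to deduce the statement from the corresponding finiteness of the \emph{geometric} fundamental group together with a homotopy exact sequence relating $\pi_1(X,P)$, its geometric counterpart, and $G_k$. Recall that a profinite group $G$ has finite rank precisely when $d(G)<\infty$, where $d(G)$ denotes the minimal cardinality of a topological generating set; the one fact about this invariant that I will use repeatedly is the \emph{extension estimate}: if $1\to N\to G\to Q\to 1$ is a short exact sequence of profinite groups, then $d(G)\le d(N)+d(Q)$, obtained by lifting a topological generating set of $Q$ and adjoining one of $N$. In particular an extension of two finite-rank groups has finite rank, and every open subgroup of a finite-rank group again has finite rank.

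First I would reduce to the geometrically connected case. Since $X$ is a connected normal variety that is projective over $k$, the field $k'$ of constants (the algebraic closure of $k$ inside $H^0(X,\cO_X)$, equivalently inside the function field) is a finite extension of $k$, and $X$ is geometrically connected over $k'$; the subgroup $G_{k'}\le G_k$ is open, hence of finite rank by the remark above. The heart of the argument is then the orbifold homotopy exact sequence
\[
1\longrightarrow \pi_1(X_{\bar k},P_{\bar k})\longrightarrow \pi_1(X,P)\longrightarrow G_{k'}\longrightarrow 1,
\]
where $P_{\bar k}$ denotes the branch data obtained by pulling $P$ back along $X_{\bar k}\to X$. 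To establish it I would mimic the scheme-theoretic proof: base change of $(X,P)$ along the separable algebraic extensions $k\subseteq k'\subseteq k^{\mathrm{sep}}$ sends orbifold \'etale covers to orbifold \'etale covers and is compatible with fibre products (using Corollary \ref{fiber-product-etale}), while conversely every connected orbifold \'etale cover of $X_{k^{\mathrm{sep}}}$ descends to a finite subextension; surjectivity onto $G_{k'}$ encodes that constant field extensions are orbifold \'etale covers, and exactness in the middle is the statement that a cover becomes geometrically split exactly when it is pulled back from the base field.

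Granting the sequence, it remains to see that the geometric group $\pi_1(X_{\bar k},P_{\bar k})$ has finite rank. Here $X_{\bar k}$ is projective over the algebraically closed field $\bar k$ (projectivity being preserved by base change) and $P_{\bar k}$ is a branch data on it, so this is exactly the finiteness property established for projective formal orbifolds over algebraically closed fields. If $k$ is imperfect one first passes to $k^{\mathrm{sep}}$ and uses that the purely inseparable (radicial) morphism $X_{\bar k}\to X_{k^{\mathrm{sep}}}$ is a universal homeomorphism, hence induces an isomorphism on orbifold \'etale fundamental groups, so the normality issues caused by a non-perfect ground field do not affect the rank. Combining the three inputs, the extension estimate gives
\[
d\big(\pi_1(X,P)\big)\ \le\ d\big(\pi_1(X_{\bar k},P_{\bar k})\big)+d(G_{k'})\ <\ \infty,
\]
which is the assertion.

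I expect the genuine obstacle to be the homotopy exact sequence, not the final rank count. One must check that pullback and descent of branch data along extensions of the base field are well behaved, namely that $P_{\bar k}$ is again a branch data and that orbifold \'etaleness is preserved in both directions, and that exactness in the middle holds, i.e.\ that the only covers killed by geometric base change are the constant-field ones. The finite rank of the geometric fundamental group is the substantive external input, and the passage from $\bar k$ to $k^{\mathrm{sep}}$ is the only point where imperfectness of $k$ needs care.
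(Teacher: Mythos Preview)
Your approach is correct in outline but takes a different, more circuitous route than the paper. The paper never sets up a homotopy exact sequence for formal orbifolds. Instead it first invokes the preceding corollary (resting on Proposition~\ref{approximate-orbifold}): one replaces $P$ by a maximal geometric branch data $Q\le P$ with $\pi_1(X,P)\cong\pi_1(X,Q)$, and then a finite Galois \'etale cover $(Y,O)\to(X,Q)$ with trivial branch data yields
\[
1\longrightarrow \pi_1(Y)\longrightarrow \pi_1(X,P)\longrightarrow G\longrightarrow 1
\]
with $G$ finite and $Y$ an honest connected normal projective variety over the \emph{same} field $k$. Finite rank of $\pi_1(Y)$ is then the classical fact for projective varieties (Grothendieck lifting for curves, Lefschetz for higher dimension, combined with the \emph{scheme-theoretic} homotopy exact sequence and the hypothesis on $G_k$). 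So the only exact sequence invoked is the well-known one for ordinary varieties, not an orbifold version.

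Two remarks on your version. First, the orbifold homotopy exact sequence you propose is not available in the paper and, as you yourself flag, would require nontrivial base-change and descent verifications for branch data; the paper's argument avoids this entirely. Second, your appeal to ``the finiteness property established for projective formal orbifolds over algebraically closed fields'' is the very statement being proved (specialised to $\bar k$); you still owe an argument there, and the natural one is precisely the reduction-to-variety step above, which could have been applied directly over $k$ without base-changing at all. In effect the paper trades your orbifold exact sequence for the structural fact that $\pi_1(X,P)$ is always a finite extension of the fundamental group of an ordinary projective variety. (Incidentally, formal orbifolds in this paper are by Definition~\ref{df:main} over a perfect field or $\spec(\ZZ)$, so your imperfect-field digression is moot in this setting.)
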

It is also shown that they approximate the fundamental group of a normal variety.
\begin{thm}(see Theorem \ref{approximation})
 Let $X$ be a normal connected variety over a perfect field or $\spec(\ZZ)$ and $U$ an open subset of $X$. Then $\pi_1(U)\cong \varprojlim\pi_1(X,P)$ where $P$ runs over all the branch data on $X$ such that $\BL(P)\subset X\setminus U$. 
\end{thm}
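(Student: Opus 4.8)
The plan is to exhibit an equivalence of Galois categories which, by the standard formalism of SGA1, yields the stated isomorphism of fundamental groups. Write $\mathrm{FEt}(U)$ for the category of finite \'etale covers of $U$ and, for a branch data $P$ on $X$, write $\mathrm{FEt}(X,P)$ for the category of \'etale covers of the formal orbifold $(X,P)$ from Section \ref{sec:formal-orbifolds}. Since an open subset of a normal variety is normal and $X$ is connected, $U$ is a normal connected variety with the same function field as $X$. I restrict attention to the poset of branch data $P$ with $\BL(P)\subset X\setminus U$; using the compositum operation on branch data from Section \ref{sec:branch-data} together with the local compositum statement Lemma \ref{compositum-wild-ramification}, this poset is directed, because the compositum $P_1\vee P_2$ satisfies $\BL(P_1\vee P_2)\subseteq \BL(P_1)\cup\BL(P_2)\subset X\setminus U$.

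First I would construct, for each such $P$, the restriction functor $F_P\colon \mathrm{FEt}(X,P)\to \mathrm{FEt}(U)$ sending an \'etale cover $f\colon Y\to (X,P)$ to $Y\times_X U\to U$. This is well defined: since $\BL(P)\subset X\setminus U$, the morphism $f$ is genuinely \'etale over $U$, so its base change to $U$ is finite \'etale. These functors are compatible with the transition functors $\mathrm{FEt}(X,P)\to\mathrm{FEt}(X,P')$ for $P\le P'$ (an \'etale cover of $(X,P)$ is an \'etale cover of $(X,P')$), so they assemble into a single functor $\varinjlim_P \mathrm{FEt}(X,P)\to\mathrm{FEt}(U)$ on the filtered colimit, and the whole proof reduces to showing this functor is an equivalence.

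For full faithfulness of each $F_P$ I would use normality. Faithfulness is immediate: the covers $Y,Y'$ are reduced and each component of $Y$ dominates $X$, so $Y\times_X U$ is dense in $Y$ and two $X$-morphisms agreeing over $U$ coincide. For fullness, a $U$-morphism $Y\times_X U\to Y'\times_X U$ is, on each connected component, induced by a $K(X)$-embedding of function fields; since $Y$ is normal and $Y'\to X$ is finite, this field-theoretic datum extends uniquely to an $X$-morphism $Y\to Y'$ by functoriality of normalization, and one checks that the extension automatically respects the branch data $P$, hence is a morphism in $\mathrm{FEt}(X,P)$.

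The crux is joint essential surjectivity. Given a finite \'etale cover $V\to U$, decompose $V$ into connected components with function fields $L_i$ and let $Y$ be the normalization of $X$ in $\prod_i L_i$, with structure map $f\colon Y\to X$; this is a finite dominant separable morphism. By Proposition \ref{B_f} it determines a branch data $P:=B_f$ on $X$ for which $f$ is an \'etale cover of $(X,P)$, and since $f$ is \'etale over $U$ we have $\BL(P)\subset X\setminus U$. Finally $Y\times_X U$ is the normalization of $U$ in $\prod_i L_i$, and as each component $V_i\to U$ is finite \'etale with $U$ normal, $V_i$ already equals this normalization, so $F_P(Y)\cong V$. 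The main difficulty I anticipate is exactly this step: verifying that the branch data $B_f$ produced by Proposition \ref{B_f} is matched to $f$ so precisely that $f$ is \'etale as a cover of $(X,B_f)$. This is where the arbitrary-dimensional local ramification theory of Section \ref{sec:local-theory} and the hypothesis that the base is perfect (or $\spec(\ZZ)$), guaranteeing the needed behaviour of residue fields and Zariski--Nagata purity, are used. With full faithfulness and joint essential surjectivity in hand, the essential images of the $F_P$ form a filtered system of Galois subcategories of $\mathrm{FEt}(U)$ whose union is all of $\mathrm{FEt}(U)$, compatibly with the fibre functor at a geometric point of $U$, and the SGA1 dictionary then gives $\pi_1(U)\cong\varprojlim_P \pi_1(X,P)$.
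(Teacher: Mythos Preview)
Your approach is essentially the paper's, dressed in the more explicit language of Galois categories: both arguments hinge on the single observation that for a finite cover $f\colon Y\to X$ \'etale over $U$, the branch data $B_f$ has $\BL(B_f)\subset X\setminus U$ and $(Y,O)\to(X,B_f)$ is \'etale (Corollary~\ref{B_f-geometric}). The paper simply matches the two inverse systems directly; your full faithfulness and essential surjectivity argument is a legitimate and slightly more careful packaging of the same content.

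There is one genuine gap. In the essential surjectivity step you invoke Proposition~\ref{B_f} for the normalization of $X$ in each $L_i$, but that proposition is stated and proved only for \emph{Galois} extensions $L/K$: the Galois hypothesis is what makes $B_f(x,U)$ independent of the choice of point over $x$ and forces it to be a Galois extension of $\cK_X^x(U)$, as required by Definition~\ref{def:branch-data}. For a non-Galois component $V_i\to U$ there is no branch data $B_f$ in the paper's sense. The fix is immediate: for a Galois category equivalence it suffices to hit every connected Galois object, so replace $V$ by its Galois closure $V'\to U$, take $P=B_{f'}$ for the corresponding $f'\colon Y'\to X$, and then recover the original $V$ inside $\mathrm{FEt}(X,P)$ via full faithfulness. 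This is also what the paper is implicitly doing when it writes $\pi_1(U)=\varprojlim \Aut(L/k(U))$.

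Two smaller remarks. First, your parenthetical ``an \'etale cover of $(X,P)$ is an \'etale cover of $(X,P')$'' is not literally true: one must replace the branch data $Q$ on $Y$ by $Q\cdot f^*P'$, after which \'etaleness follows by base change of unramified extensions. Second, the appeal to Zariski--Nagata purity and perfectness at the end is misplaced: purity plays no role here (and $X$ is only assumed normal), and in this paper the perfect/$\spec(\ZZ)$ hypothesis enters only through Lemma~\ref{compositum-wild-ramification} in the fiber-product constructions, not in the present argument. The fact you actually need, that $(Y,O)\to(X,B_f)$ is \'etale, is exactly the content of Corollary~\ref{B_f-geometric}.
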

The above two results applied together implies that $\pi_1(U)$ for $U$ quasiprojective variety over $k$ is the inverse limit of finite rank profinite groups which have a geometric interpretation.

%An approximation theorem is proved which shows that the fundamental group of a (quasi projective) normal variety can be approximated by fundamental group of projective formal orbifolds (Theorem \ref{approximation}). And the latter groups are shown to be finite rank if $X$ is defined over a field $k$ whose absolute Galois group is of finite rank (Corollary \ref{pi_1-finite-rank}). 

In section \ref{sec:site-sheaves} \'etale site, presheaves and sheaves on formal orbifolds are defined. Structure sheaf of formal orbifolds are defined and their stalks have been computed. Note that a formal orbifold $(X,O)$ where $O$ is the trivial branch data is essentially the same as the normal variety $X$. In particular the \'etale site on $(X,O)$ is the same as the small \'etale site on $X$ (and hence $\pi_1(X,O)=\pi_1(X)$). A geometric formal orbifold is a formal orbifold $\scrX=(X,P)$ which admits a finite Galois \'etale morphism $f:(Y,O)\to (X,P)$ (in the sense of formal orbifold) for some $Y$ with trivial branch data $O$. In such a situation cohomology groups of $\scrX$ can be computed using the cohomology groups of $Y$ by a spectral sequence (Proposition \ref{spectral-sequence}).

In section \ref{sec:BK-filtration} using swan conductors of local fields defined by Kato and Matsuda a divisor $Sw(X,P)$ is associated to branch data $P$ on $X$ whose branch locus has irreducible components of pure codimension one in $X$. The abelianized fundamental group parameterizing covers with certain bounded ramification $\pi_1^{ab}(X,D)$ defined by Kerz and Saito in \cite{Kerz-Saito.CLT} and \cite{Kerz-Saito.Lefschetz} for an effective Cartier divisor $D$ on $X$ is also shown to be the inverse limit of $\pi_1^{ab}(X,P)$ where $P$ varies over branch data with $Sw(X,P)\le D$. This leads to questions such as whether some version of  class field theory holds for formal orbifolds (\ref{Q:CFT}) and whether a version of Lefschetz's theorem for fundamental groups is true for formal orbifolds (\ref{Q:Lefschetz}). 

In section \ref{sec:Lefschetz} we make some progress towards the later. Note that Lefschetz theorem for fundamental group is false for affine varieties in positive characteristic. Though Lefschetz theorem holds if one restricts to prime-to-p \'etale fundamental group or more generally tame fundamental group (\cite{Esnault-Kindler}). To allow wild ramification as well, one has to bound the ramification in some sense. In \cite{Kerz-Saito.Lefschetz} Kerz and Saito prove Lefschetz theorem for $\pi_1^{ab}(X,D)$. The definition of $\pi_1^{ab}(X,D)$ is the dual of the first cohomology group and their proof is cohomological in nature. Here we follow the method adopted in \cite[Chapter XII]{SGA2}, \cite[4.3]{Grothendieck-Murre} and \cite{Esnault-Kindler}. More precisely, we show the following results.

\begin{pro}(Proposition \ref{formal-fullyfaithful})
 Let $(X,P)$ be a projective formal orbifold over a perfect field $k$ of dimension at least two, $Y$ be a normal hypersurface of $X$ which intersects $\BL(P)$ transversally and the divisor $[Y]$ is a very ample divisor on $X$. Then the functor $\Cov(X,P)\to \Cov(\widehat X_Y,\hat P)$ is fully faithful. Here $\widehat X_Y$ is the completion $X$ along $Y$ and $\hat P$ is the branch data on $\widehat X_Y$ induced from $P$.
\end{pro}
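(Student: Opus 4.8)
The plan is to reduce the statement to a connectedness assertion about formal completions of covers and then to invoke a Lefschetz-type connectedness theorem. Since $\Cov(X,P)$ and $\Cov(\widehat X_Y,\hat P)$ are categories in which a morphism is determined by its graph, I would first use that the fiber product of two \'etale covers of $(X,P)$ is again an \'etale cover (Corollary \ref{fiber-product-etale}) to replace a pair of covers by a single one. Concretely, for \'etale covers $(Z_1,Q_1),(Z_2,Q_2)$ of $(X,P)$, a morphism $Z_1\to Z_2$ over $(X,P)$ is the same datum as an open-and-closed subscheme $S$ of $W:=Z_1\times_{(X,P)}Z_2$ whose first projection $S\to Z_1$ is an isomorphism; such $S$ is a union of connected components of $W$, and the analogous description holds after restriction to $\widehat X_Y$. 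Hence full faithfulness follows once I establish that, for every \'etale cover $(W,Q_W)\to (X,P)$ with $W$ normal, the restriction map $\pi_0(W)\to\pi_0(\widehat W_{W_Y})$ on connected components is a bijection, where $W_Y:=W\times_X Y$ and $\widehat W_{W_Y}$ is the completion of $W$ along $W_Y$; the bijectivity of the corresponding $\Hom$-sets then follows by matching graphs, using that a finite morphism of normal projective varieties which becomes an isomorphism after completion along $W_Y$ is already an isomorphism (degree one together with Zariski's main theorem).

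It therefore suffices to treat a connected, hence integral, cover $W$ and to show that $\widehat W_{W_Y}$ is connected. The first step here is geometric: because $[Y]$ is very ample on $X$ and $W\to X$ is finite, the divisor $W_Y$ is ample on $W$; the transversality of $Y$ with $\BL(P)$ together with the normality of $Y$ guarantees that $W_Y$ is a genuine reduced ample divisor on the normal variety $W$, since over a neighbourhood of the generic points of $Y$ the cover is \'etale. As $\dim W=\dim X\ge 2$ and $W$ is connected and projective, the ample divisor $W_Y$ is connected by the standard Lefschetz connectedness of ample divisors in dimension at least two.

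The main step is then to pass from connectedness of $W_Y$ to connectedness of the formal completion $\widehat W_{W_Y}$. For this I would apply Grothendieck's connectedness theorem in the form of \cite[Chapter XII]{SGA2}: for $W$ normal and proper and $W_Y$ a connected ample divisor with $\dim W\ge 2$, the theorem on formal functions shows that $H^0(\widehat W_{W_Y},\mathcal O)$ carries no nontrivial idempotents, so $\widehat W_{W_Y}$ is connected. Applying this to each connected component of a general normal cover $W$ yields the required bijection $\pi_0(W)\xrightarrow{\sim}\pi_0(\widehat W_{W_Y})$, and with it the full faithfulness.

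The principal obstacle is not the classical connectedness input but its transfer to the orbifold setting. I must check that \'etale covers of $(X,P)$ restrict compatibly to covers of $(\widehat X_Y,\hat P)$, i.e. that forming the induced branch data $\hat P$ commutes with completion along $Y$, and that $\pi_0$ computed in the orbifold category $\Cov(\widehat X_Y,\hat P)$ agrees with $\pi_0$ of the underlying formal scheme $\widehat W_{W_Y}$. The transversality of $Y$ with $\BL(P)$ and the normality of $Y$ are exactly the hypotheses that keep $\hat P$ well behaved along $Y$ and that validate the underlying-scheme connectedness argument; verifying these compatibilities is the genuinely new ingredient beyond the arguments of \cite{SGA2}, \cite{Grothendieck-Murre} and \cite{Esnault-Kindler}.
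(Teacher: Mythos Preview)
Your approach is correct but takes a genuinely different route from the paper. The paper's argument is shorter and more direct: for \'etale covers $u:\scrZ\to(X,P)$ and $v:\scrW\to(X,P)$, one identifies morphisms with $\cO_X$-algebra maps $v_*\cO_W\to u_*\cO_Z$, observes that the sheaf $\underline{\Hom}_{\cO_X}(v_*\cO_W,u_*\cO_Z)$ has depth $\ge 2$ because $u_*\cO_Z$ does (this is the elementary Lemma~\ref{depth} on regular sequences for Hom modules), and then applies \cite[XII, Cor.~2.2]{SGA2} directly to this Hom sheaf to get $H^0(X,\underline{\Hom})\cong H^0(\widehat X_Y,\widehat{\underline{\Hom}})$; flatness of $\widehat X_Y\to X$ ensures algebra homomorphisms go to algebra homomorphisms. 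No fiber products, no graph trick, no connectedness statement.

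Your argument instead passes through $\pi_0$ of fiber products and ultimately applies the same SGA2 comparison, but to the structure sheaf $\cO_W$ of the normalized fiber product $W$ rather than to a Hom sheaf on $X$. This buys you a more categorical picture (and is the standard device for Galois categories), but at the cost of extra verifications: that the diagonal is open-closed in the normalized fiber product, that the ``projection is an isomorphism'' condition descends from the formal completion via Zariski's main theorem, and that the graph description is compatible with the branch data on both sides. The paper's route sidesteps all of this by never leaving the level of coherent sheaves on $X$, so your closing remark that ``verifying these compatibilities is the genuinely new ingredient'' overstates matters: in fact the branch data play no role once one reduces to $\Hom_X(Z,W)$, and the entire statement follows from the classical depth--ampleness input with essentially no orbifold-specific work.
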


\begin{thm}(Theorem \ref{GM-wild})
 Under the Hypothesis \ref{nonsplit}, the functor $\Cov(\widehat{X}_Y,\hat P) \to \Cov(Y,P_{|Y})$ is an equivalence.
\end{thm}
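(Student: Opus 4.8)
The plan is to prove the equivalence by factoring the restriction functor through the infinitesimal thickenings of $Y$ inside $X$ and reducing to a deformation-theoretic statement for orbifold-\'etale covers. Write $I$ for the ideal sheaf of $Y$ in $X$, let $Y_n=(Y,\cO_X/I^{n+1})$ be the $n$-th infinitesimal neighbourhood, and let $\hat P_n$ be the branch data induced by $\hat P$ on $Y_n$, so that $Y_0=Y$ and $\hat P_0=P_{|Y}$. A cover of the formal orbifold $(\widehat{X}_Y,\hat P)$ amounts, as for coherent data on a Noetherian formal scheme, to a compatible system of covers of the $(Y_n,\hat P_n)$; this gives a natural equivalence $\Cov(\widehat{X}_Y,\hat P)\simeq \varprojlim_n \Cov(Y_n,\hat P_n)$. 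The theorem is therefore equivalent to the assertion that this inverse limit is computed by its $n=0$ term, i.e. that every transition functor $\Cov(Y_{n+1},\hat P_{n+1})\to \Cov(Y_n,\hat P_n)$ is an equivalence of categories.

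To handle the transition functors I would invoke the rigidity of orbifold-\'etale covers along nilpotent thickenings. Over $X\setminus\BL(P)$ a cover of the formal orbifold is an honest finite \'etale cover, so on that locus the statement is exactly the topological invariance of the small \'etale site under the nilpotent closed immersion $Y_n\hookrightarrow Y_{n+1}$: covers lift uniquely and morphisms of covers lift uniquely. The genuinely new input is the behaviour of the lift along the branch locus, and this is where the transversality hypothesis enters.

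Near a point of $\BL(P)\cap Y$ I would use transversality to choose, in $\widehat{\cO}_{X,x}$, a regular system of parameters in which $Y$ is cut out by a single parameter $t$ and a codimension-one component of $\BL(P)$ by a parameter $s$. Since the thickening direction $t$ is transverse to the ramification direction $s$, the prescribed cover is unramified in the $t$-direction, and the local lifting of an orbifold-\'etale cover across $Y_n\hookrightarrow Y_{n+1}$ reduces to the ordinary unique-lifting property of \'etale morphisms in the parameter $t$; the (possibly wild) ramification along $s$ is carried unchanged under $\hat P_{n+1}\mapsto\hat P_n$ because the branch data $P_{|Y}$ is induced precisely by restricting transversally. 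Gluing these local lifts, and checking that they respect the branch data on overlaps, produces the unique lift of each cover and of each morphism, giving full faithfulness and essential surjectivity of each transition functor, and hence the collapse of the inverse limit onto $\Cov(Y,P_{|Y})$.

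The step I expect to be the main obstacle is exactly this local deformation theory in the presence of wild ramification: one must verify that an orbifold-\'etale cover of $(Y_n,\hat P_n)$ admits a unique orbifold-\'etale lift to $(Y_{n+1},\hat P_{n+1})$ compatible with the induced branch data, with no new ramification forced by the thickening. The transversality hypothesis is what reduces the ramified direction away from the thickening direction and lets one appeal to the classical \'etale case; Hypothesis \ref{nonsplit} is what I would use to guarantee that the lifted formal covers remain connected and do not split, so that the functor is an equivalence on connected covers rather than merely fully faithful, the latter being already supplied on the algebraic side by Proposition \ref{formal-fullyfaithful}.
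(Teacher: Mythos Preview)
Your approach diverges from the paper's and, as written, has a genuine gap. The paper does not argue by infinitesimal lifting along the $Y_n$. Instead it passes to the global Galois cover $c:(\widehat Z,O)\to(\widehat X_Y,\hat P)$ built in Proposition~\ref{P-cover} (the normalization of $\widehat X_Y$ in $P$) and its restriction $c_0:(Z_0,O)\to(Y,P_{|Y})$; Hypothesis~\ref{nonsplit} is invoked there to guarantee that these two covers share the \emph{same} Galois group $G$. An \'etale cover of $(Y,P_{|Y})$ pulled back to $Z_0$ becomes an honest \'etale cover of the variety $Z_0$, which by the classical (trivial branch data) case lifts uniquely to an \'etale cover of $\widehat Z$; one then descends along the $G$-action, exactly as in \cite[4.3.7]{Grothendieck-Murre} with the Kummer cover replaced by $c$. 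Full faithfulness is supplied by the two propositions immediately preceding the theorem (not by Proposition~\ref{formal-fullyfaithful}, which concerns the other functor $\Cov(X,P)\to\Cov(\widehat X_Y,\hat P)$).

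The gap in your plan is precisely the step you flag as the main obstacle. First, the categories $\Cov(Y_n,\hat P_n)$ are not defined in this framework: formal orbifolds require normal, in particular reduced, underlying schemes, so ``orbifold-\'etale cover of $Y_n$'' has no meaning here. Second, and more seriously, your local lifting argument appeals to unique lifting of \'etale morphisms across $Y_n\hookrightarrow Y_{n+1}$, but the cover in question is \emph{not} \'etale near $\BL(P)$: it is (possibly wildly) ramified along $s$, so topological invariance of the \'etale site does not apply. Saying the cover is ``unramified in the $t$-direction'' does not reduce one to the \'etale case; there is no off-the-shelf infinitesimal deformation theory for wildly ramified covers that would supply unique lifts, and this is exactly what the paper sidesteps by first trivialising the branch data via $\widehat Z$. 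Finally, your reading of Hypothesis~\ref{nonsplit} as a connectedness guarantee for arbitrary lifted covers misses its actual role: the prime-ideal condition on $I_Y R(U,y)$ forces $\Gal(P(\eta_Y)/\cK_{X,\eta_Y})\cong\Gal(F/k(Y))$, which is what makes the $G$-descent from $\widehat Z$ back to $(\widehat X_Y,\hat P)$ possible.
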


The Hypothesis \ref{nonsplit} essentially says that the formal orbifold $(X,P)$ is geometric, irreducible, smooth and projective over $k$; $Y$ is a smooth irreducible hypersurface of $X$ not contained in $\BL(P)$ such that for any point $y\in \BL(P)\cap Y$ there is only one point lying above it in the cover obtained from taking the normalization of a formal local neighbourhood $\widehat U_y$ of $y$ in $P(y,U)$. As a consequence of the above two results we get the following:

\begin{cor} (Corollary \ref{Lefschetz-thm})
  Let $(X,P)$ be a smooth irreducible projective formal orbifold over a perfect field $k$ of dimension at least two. The functor from $\Cov(X,P)\to \Cov(Y,P_{|Y})$ is fully faithful if $Y$ satisfies Hypothesis \ref{nonsplit}. In particular the map natural map $\pi_1(Y,P_{|Y})\to \pi_1(X,P)$ is an epimorphism. 
\end{cor}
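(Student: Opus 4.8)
The plan is to combine the two preceding results in the standard Lefschetz/Grothendieck-Murre fashion. The corollary asserts that $\Cov(X,P)\to \Cov(Y,P_{|Y})$ is fully faithful, and the key observation is that this functor factors as the composite
\[
\Cov(X,P)\xrightarrow{\ \alpha\ }\Cov(\widehat X_Y,\hat P)\xrightarrow{\ \beta\ }\Cov(Y,P_{|Y}),
\]
where $\alpha$ is the restriction-to-the-formal-completion functor and $\beta$ is restriction from the formal neighbourhood to $Y$ itself. So the first thing I would do is check that this factorization is literally the functor in the statement, i.e. that restricting a cover of $(X,P)$ all the way down to $(Y,P_{|Y})$ agrees with first completing along $Y$ and then restricting.

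Once the factorization is in place, the proof is essentially formal. By Proposition \ref{formal-fullyfaithful} the functor $\alpha$ is fully faithful, provided its hypotheses hold; and by Theorem \ref{GM-wild} the functor $\beta$ is an equivalence under Hypothesis \ref{nonsplit}. A composite of a fully faithful functor followed by an equivalence (or more simply, followed by any fully faithful functor) is again fully faithful, which gives the first assertion of the corollary. Here I would need to verify that the ambient hypotheses of the corollary, namely that $(X,P)$ is smooth irreducible projective of dimension at least two and that $Y$ satisfies Hypothesis \ref{nonsplit}, do indeed supply all the hypotheses required by Proposition \ref{formal-fullyfaithful}; in particular that $Y$ being a smooth irreducible hypersurface satisfying Hypothesis \ref{nonsplit} entails that $Y$ is a normal hypersurface meeting $\BL(P)$ transversally and that $[Y]$ is very ample (the very ampleness may need to be arranged by replacing $Y$ by a member of a suitable very ample linear system, or may be part of the standing assumptions).

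For the final assertion, that the natural map $\pi_1(Y,P_{|Y})\to \pi_1(X,P)$ is an epimorphism, I would appeal to the standard dictionary between fully faithful functors on categories of finite covers and surjectivity of the induced map on fundamental groups. The point is that full faithfulness of $\Cov(X,P)\to \Cov(Y,P_{|Y})$ means every connected cover of $(X,P)$ stays connected after restriction to $(Y,P_{|Y})$; translated through the Galois/fibre-functor formalism for $\pi_1$ developed in Section \ref{sec:etale-pi_1}, this says precisely that the image of $\pi_1(Y,P_{|Y})$ generates (is dense in) $\pi_1(X,P)$, i.e. the homomorphism is surjective. I would make sure the fibre functor on $\Cov(Y,P_{|Y})$ is chosen compatibly with that on $\Cov(X,P)$ via a geometric point of $Y$, so that the map of fundamental groups is the one induced by the inclusion.

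The main obstacle I anticipate is not the formal composition but checking that the hypotheses match up cleanly: specifically, reconciling the transversality and very-ampleness conditions of Proposition \ref{formal-fullyfaithful} with the conditions packaged into Hypothesis \ref{nonsplit}, and confirming that the two restriction functors compose to the stated functor. If Hypothesis \ref{nonsplit} does not literally imply very ampleness of $[Y]$, one would either need that as an additional standing hypothesis of the corollary or a Bertini-type argument ensuring a generic hyperplane section simultaneously satisfies all requirements; I would flag this as the point most likely to require care.
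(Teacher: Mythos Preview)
Your proposal is correct and is exactly the approach the paper takes: the corollary is stated as an immediate consequence of Theorem \ref{GM-wild} and Proposition \ref{formal-fullyfaithful}, i.e.\ precisely the factorization $\Cov(X,P)\to\Cov(\widehat X_Y,\hat P)\to\Cov(Y,P_{|Y})$ you describe, with no further argument given.

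Your caveat about hypothesis matching is well taken and reflects a genuine looseness in the paper rather than a defect in your strategy: Hypothesis \ref{nonsplit} as written does not include the very-ampleness of $[Y]$ or the transversality with $\BL(P)$ required by Proposition \ref{formal-fullyfaithful}, so one must read the corollary as tacitly assuming these as well (or as part of the standing setup for the Lefschetz-type statement).
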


The above is a wild analogue of \cite[4.3.7]{Grothendieck-Murre}.

In section \ref{sec:l-adic} $l$-adic sheaves on $X^o$ with normal compactification $X$ is analyzed in the language of formal orbifolds. It is shown that a locally constant sheaf $\scrF^o$ on $X^o$ extends to a locally constant sheaf $\scrF$ on a geometric formal orbifold $(X,P)$ for some branch data $P$ with branch locus contained in the boundary. It is also shown that if the ramification of $\scrF^o$ is bounded by a Galois \'etale cover $f:Y^o\to X^o$ (in the sense of \cite{Drinfeld}, \cite{Esnault-Kerz.Deligne}, \cite{Esnault}, etc.) then one could choose $P$ such that the branch data $f^*P$ on the normalization $Y$ of $X$ in $k(Y^o)$ is curve-tame. This allows us to observe that a lisse $l$-adic sheaf on $X^o$ has ramification bounded by a morphism $f$ then it has ramification bounded by any other morphism whose branch data is same as the branch data associated to $f$ up to tame part (Corollary \ref{lisse-bounded-ramification}).  

\section*{Acknowledgements}
The author thanks Suresh Nayak for some useful discussions and an anonymous referee for useful suggestions.

\section{Local Theory} \label{sec:local-theory}

We begin with some results on ramification theory. Proposition \ref{basic-ramification} and \ref{unramified-closed-points} below are variants of results in Section 7 of \cite{abhyankar} where the treatment is for arbitrary normal local domains and hence the comparison with completion of the rings are missing.

For a ring $R$ and a prime ideal $P$, let $\widehat R^P$ denote the $P$-adic completion of $R$. For a local ring $R$, $\widehat R$ denotes the completion of $R$ at the maximal ideal. For an integral domain $R$, $\QF(R)$ denotes the field of fractions (=quotient field) of $R$.

We recall the definition of decomposition group and inertia group for finite extension of normal domains. \cite[page 35-37]{abhyankar}
\begin{df}
Let $R$ be an excellent normal domain with fraction field $K$, $L$ a finite $G$-Galois extension of $K$ and $S$ the integral closure of $R$ in $L$. Let $Q\subset S$ be a prime ideal of $S$ and $P=Q\cap R$. 
\begin{enumerate}
 \item Define the \emph{decomposition group} $D(Q)=\{\sigma\in G: \sigma(Q) \subset Q\}$
 \item Define the \emph{inertia group} $I(Q)=\{\sigma\in G: \sigma(\alpha)-\alpha\in Q \text{ for all } \alpha\in S \}$. Note that $I(Q)\subset D(Q)$. 
\end{enumerate}
\end{df}

The following result is well known but a precise reference is difficult to find in the literature.

\begin{pro}\label{basic-ramification}
Let $R$ be an excellent normal domain with fraction field $K$, $L$ a finite separable extension of $K$ and $S$ the integral closure of $R$ in $L$. Let $Q\subset S$ be a prime ideal of $S$ and $P=Q\cap R$. Let $\widehat K_P=\QF(\widehat R_P)$, $\widehat L_Q=\QF(\widehat S_Q)$, $\widehat K^P=\QF(\widehat R^P)$ and $\widehat L^Q=\QF(\widehat L^Q)$. Then the following holds:
\begin{enumerate}
 \item The field extensions $\widehat L_Q/\widehat K_P$ and $\widehat L^Q/\widehat K^P$ are finite separable.
 \item If $L/K$ is Galois then so are $\widehat L_Q/\widehat K_P$ and $\widehat L^Q/\widehat K^P$.
 \item If $G=\Gal(L/K)$ then $\Gal(\widehat L_Q/\widehat K_P)$ is isomorphic to the subgroup $D(Q)$ and $\Gal(\widehat L_Q/\widehat K_P)\le \Gal(\widehat L^Q/K^P)$.
 \item Moreover, if $Q'$ is another prime ideal of $S$ lying above $P$ then $D(Q)$ and $D(Q')$ are conjugates in $G$. 
\end{enumerate}
\end{pro}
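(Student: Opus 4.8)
The plan is to reduce everything to the behaviour of the finite $R$-algebra $S$ under completion, using that $R$ is excellent so that $S$ is module-finite over $R$ and itself normal. The core input is a base-change identity. For the local completions, since $S$ is finite over $R$ the ring $S\otimes_R\widehat R_P$ is the completion of the semilocal ring $S_P=S\otimes_R R_P$ along $PS_P$, and the primes of $S_P$ lying over $PR_P$ are exactly the $Q_iS_P$, where $Q_1=Q,\dots,Q_g$ are the primes of $S$ over $P$. Hence the $PS_P$-adic and Jacobson topologies agree and the completion splits as a finite product
\[
S\otimes_R\widehat R_P\;\cong\;\prod_{i=1}^g\widehat S_{Q_i},
\]
each factor $\widehat S_{Q_i}$ being a complete local normal domain (normality of the completion follows from excellence). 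I would treat the $P$-adic completions in parallel: $S\otimes_R\widehat R^P$ is the $PS$-adic completion $\varprojlim S/P^nS$, and lifting the idempotents of $S/PS$ along the Henselian pair $(\widehat R^P,P\widehat R^P)$ splits it into a product indexed by the connected components of $\spec(S/PS)$, each factor being normal with fraction field the corresponding $\widehat L^Q$.

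Granting the decomposition, part (1) is essentially formal. Applying $-\otimes_{\widehat R_P}\widehat K_P$, and using that each $\widehat S_{Q_i}$ is a finite torsion-free $\widehat R_P$-module so that $\widehat S_{Q_i}\otimes_{\widehat R_P}\widehat K_P=\widehat L_{Q_i}$ is a finite field extension of $\widehat K_P$, turns the displayed isomorphism into
\[
L\otimes_K\widehat K_P\;\cong\;\prod_{i=1}^g\widehat L_{Q_i},
\]
and likewise $L\otimes_K\widehat K^P\cong\prod_j\widehat L^{Q_j}$. Now separability does the work: since $L/K$ is finite separable it is simple, $L=K[x]/(f)$ with $f$ separable, so for any field extension $M/K$ the algebra $L\otimes_KM=M[x]/(f)$ is étale over $M$, i.e. a finite product of finite separable extensions of $M$. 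Applying this with $M=\widehat K_P$ and $M=\widehat K^P$ and matching factors shows every $\widehat L_Q/\widehat K_P$ and every $\widehat L^Q/\widehat K^P$ is finite separable, which is (1).

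For (2) and (3) I would run the standard Galois-descent argument on the product. When $L/K$ is $G$-Galois, $L\otimes_K\widehat K_P$ is a $G$-Galois étale $\widehat K_P$-algebra and $G$ permutes its factors by the usual action on the set $\{Q_1,\dots,Q_g\}$ of primes over $P$, which is transitive (the classical transitivity of a Galois group on the primes of $S$ lying over a fixed prime of $R$). The stabiliser of the factor indexed by $Q$ is precisely $D(Q)$, which acts faithfully on $\widehat L_Q$ because it already acts faithfully on $L\subset\widehat L_Q$. Orbit–stabiliser gives $g=[G:D(Q)]$, while the degree identity $\sum_i[\widehat L_{Q_i}:\widehat K_P]=[L:K]=|G|$ together with the $G$-conjugacy (hence equal degree) of the factors forces $[\widehat L_Q:\widehat K_P]=|D(Q)|$. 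Comparing this with the faithful $D(Q)$-action gives $|D(Q)|\le|\Aut(\widehat L_Q/\widehat K_P)|\le[\widehat L_Q:\widehat K_P]=|D(Q)|$, so $\widehat L_Q/\widehat K_P$ is Galois with group canonically $D(Q)$, proving (2) and the first assertion of (3). The inclusion $\Gal(\widehat L_Q/\widehat K_P)\le\Gal(\widehat L^Q/\widehat K^P)$ then follows from the comparison map $\widehat R^P\to\widehat R_P$ induced by $R/P^n\to R_P/P^nR_P$: passing from the $P$-adic to the local completion can only refine the decomposition (a single $\widehat R^P$-component of $\spec(S/PS)$ may split into several $Q_i$), so the stabiliser $D(Q)$ of the finer factor sits inside the stabiliser $\Gal(\widehat L^Q/\widehat K^P)$ of the coarser component. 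Finally (4) is immediate from transitivity: if $\sigma\in G$ satisfies $\sigma(Q)=Q'$ then conjugation by $\sigma$ carries $D(Q)$ onto $D(Q')$.

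The genuinely delicate points — and where I expect the real work to lie — are on the $P$-adic side rather than the local side: verifying that $\widehat R^P$ and the factors of $S\otimes_R\widehat R^P$ are normal domains (this is exactly where excellence, via geometric normality of the formal fibres, is needed, together with idempotent lifting to control connectedness), and making the comparison between the two completions precise enough to read off the subgroup inclusion in (3). The local-completion statements, by contrast, are the clean heart of the argument and follow almost mechanically from the product decomposition once separability is invoked.
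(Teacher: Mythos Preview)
Your argument is correct and complete, but it follows a genuinely different route from the paper's. The paper does not use the product decomposition $S\otimes_R\widehat R_P\cong\prod_i\widehat S_{Q_i}$ at all. Instead it argues directly that $\widehat L_Q=L\widehat K_P$ (and similarly $\widehat L^Q=L\widehat K^P$), from which separability and normality are immediate by base change. For the identification $\Gal(\widehat L_Q/\widehat K_P)\cong D(Q)$ the paper gives an explicit two-sided argument: $D(Q)$ acts on $Q$-adic Cauchy sequences, hence embeds into $\Gal(\widehat L_Q/\widehat K_P)$; conversely any $\sigma\in\Gal(\widehat L_Q/\widehat K_P)$ preserves $L$ (since $L/K$ is Galois), hence preserves $S_Q=\widehat S_Q\cap L$, hence preserves $Q$, so $\sigma|_L\in D(Q)$. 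For (4) the paper constructs an explicit conjugating element by choosing a $\widehat K_P$-isomorphism between algebraic closures of $\widehat L_Q$ and $\widehat L_{Q'}$ and restricting it to $L$, rather than invoking the classical transitivity result you cite. Your approach via the \'etale algebra $L\otimes_K\widehat K_P$ and orbit--stabiliser is the more structural and arguably the more standard modern treatment; it packages the degree identity $\sum_i[\widehat L_{Q_i}:\widehat K_P]=[L:K]$ and the transitivity in one stroke, and makes the comparison between $\widehat R^P$ and $\widehat R_P$ transparent as a refinement of idempotent decompositions. The paper's approach is more hands-on and avoids any appeal to the semilocal completion theorem or idempotent lifting, at the cost of the slightly roundabout algebraic-closure argument for conjugacy.
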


\begin{proof}
  Since $R_P$ and $S_Q$ are excellent normal local domains, $\widehat R_P$ and $\widehat S_Q$ are also complete local normal domains containing $R_P$ and $S_Q$ respectively (\cite[7.8.3(v)]{ega4.2}). Also note that $\widehat R^P \subset \widehat R_P$ and $\widehat S^Q \subset \widehat S_Q$. 
 Hence $L$, $\widehat K^P$ and $\widehat K_P$ are subfields of $\widehat L_Q$, $\widehat L^Q=\widehat K^PL$ and $\widehat L_Q=\widehat K_P L$. So $\widehat L_Q/\widehat K_P$ is separable because $L/K$ is separable and $K\subset \widehat K_P$. The same proof says if $L/K$ is a normal extension then so is $\widehat L_Q/\widehat K_P$. Hence if $L/K$ is Galois then so is $\widehat L_Q/\widehat K_P$.
 
 By definition of $D(Q)$, if $a\in Q^n$ then $ga\in Q^n$ for every $g\in D(Q)$. Hence given a Cauchy sequence $(\alpha_i)_{i\in \NN}\in L$ and $g\in D(Q)$ then sequence $(g\alpha_i)_{i\in \NN}$ is also a Cauchy sequence. In particular $D(Q)$ acts on $\widehat L_Q$. Also it fixes $\widehat K_P$. So $D(Q)\into \Gal(\widehat L_Q/\widehat K_P)$ is a monomorphism.
 
 Let $\sigma \in \Gal(\widehat L_Q/\widehat K_P)$. Note that $L/K$ is Galois and $\sigma_{|K}=id_K$. Hence $\sigma(L)=L$. Since $\widehat S_Q$ is the integral closure of $\widehat R_P$ in $\widehat L_Q$, $\sigma(\widehat S_{Q})=\widehat S_{Q}$. But $S_Q=\widehat S_Q\cap L$, so $\sigma(S_Q)=S_{Q}$. Since $\sigma$ is a ring automorphism of a local ring, $\sigma(Q)=Q$. In particular $\sigma_{|L}\in D(Q)$. Hence $D(Q) \cong \Gal(\widehat L_Q/\widehat K_P)$.
 
 To see the last statement, let $Q'$ be another prime ideal of $S$ lying above $P$. We have that $S_{Q'}\subset \widehat S_{Q'}$ and hence $L\subset \widehat L_{Q'}$. Let $M$ and $M'$ be the algebraic closure of $\widehat L_Q$ and $\widehat L_{Q'}$ then both $M$ and $M'$ are algebraic closure of $\widehat K_P$. So there exists a $\widehat K_P$-isomorphism $\phi:M\to M'$. Since $L/K$ is a Galois extension and $K\subset \widehat K_P$, $\phi(L)=L$. Note that $\widehat L_Q$ and $\widehat L_{Q'}$ are splitting fields of the polynomial $f(Z)$ over $\widehat K_P$, hence $\phi(\widehat L_Q)=\widehat L_{Q'}$. Since $\widehat S_Q$ and $\widehat S_{Q'}$ are integral closures of $\widehat R_P$ in $\widehat L_Q$ and $\widehat L_{Q'}$ respectively, $\phi(\widehat S_{Q})=\widehat S_{Q'}$. But $S_Q=\widehat S_Q\cap L$, so $\phi(S_Q)=S_{Q'}$. Hence $\phi\in \Gal(L/K)$ and $\phi(Q)=Q'$. In particular $\phi^{-1}D(Q')\phi=D(Q)$.
\end{proof}

\begin{pro}\label{decomposition-variation}
  Let $R$ be an excellent normal domain with fraction field $K$, $L$ a finite separable extension of $K$ and $S$ the integral closure of $R$ in $L$. Let $Q_2\subsetneq Q_1\subset S$ be prime ideals of $S$ and $P_i=Q_i\cap R$. Then $\widehat K^{P_2} \subset \widehat K^{P_1}$ and $\widehat L^{Q_1}=L\widehat K^{P_1}=\widehat L^{Q_2}\widehat K^{P_1}$. 
\end{pro}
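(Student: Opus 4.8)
The plan is to first establish the inclusion $\widehat{K}^{P_2}\subseteq\widehat{K}^{P_1}$ by producing a natural injection of completed rings, and then to deduce the two equalities from the structural description $\widehat{L}^Q=\widehat{K}^P L$ recorded in the proof of Proposition \ref{basic-ramification}.

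For the first inclusion I would start from the observation that $Q_2\subseteq Q_1$ forces $P_2=Q_2\cap R\subseteq Q_1\cap R=P_1$, and hence $P_2^n\subseteq P_1^n$ for all $n$. This gives compatible surjections $R/P_2^n\twoheadrightarrow R/P_1^n$ and, in the inverse limit, a ring homomorphism $\widehat{R}^{P_2}\to\widehat{R}^{P_1}$. The key point is that this map is injective. To see it cleanly I would use the standard iterated-completion identity: since $P_2^n\subseteq P_1^n$, the ring $R/P_1^n$ is $P_2$-adically discrete, so $\widehat{R}^{P_2}/P_1^n\widehat{R}^{P_2}\cong R/P_1^n$, and passing to the inverse limit identifies $\widehat{R}^{P_1}$ with the $J$-adic completion of $\widehat{R}^{P_2}$ for $J:=P_1\widehat{R}^{P_2}$. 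Under this identification the map above is the completion map of $\widehat{R}^{P_2}$ along $J$, whose kernel is $\bigcap_n J^n$. Since $\widehat{R}^{P_2}$ is a Noetherian domain and $J$ is proper (its quotient is $R/P_1\neq 0$), Krull's intersection theorem gives $\bigcap_n J^n=0$. Thus $\widehat{R}^{P_2}\hookrightarrow\widehat{R}^{P_1}$ is injective and extends to an embedding of fraction fields $\widehat{K}^{P_2}\hookrightarrow\widehat{K}^{P_1}$, which is the first assertion.

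Running the same argument with $Q_2\subseteq Q_1$ in place of $P_2\subseteq P_1$ produces a compatible injection $\widehat{S}^{Q_2}\hookrightarrow\widehat{S}^{Q_1}$, hence $\widehat{L}^{Q_2}\hookrightarrow\widehat{L}^{Q_1}$; together with $L\subseteq\widehat{L}^{Q_i}$ and $\widehat{K}^{P_i}\subseteq\widehat{L}^{Q_i}$ this realises all the fields in the statement as subfields of the single field $\widehat{L}^{Q_1}$, so every compositum below is formed there. Now I would invoke $\widehat{L}^{Q_i}=\widehat{K}^{P_i}L$ from Proposition \ref{basic-ramification}. The first equality $\widehat{L}^{Q_1}=L\widehat{K}^{P_1}$ is then immediate. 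For the second, substitute $\widehat{L}^{Q_2}=\widehat{K}^{P_2}L$ to obtain $\widehat{L}^{Q_2}\widehat{K}^{P_1}=\widehat{K}^{P_2}\widehat{K}^{P_1}L$, and then use $\widehat{K}^{P_2}\subseteq\widehat{K}^{P_1}$ from the first part to collapse $\widehat{K}^{P_2}\widehat{K}^{P_1}=\widehat{K}^{P_1}$, giving $\widehat{L}^{Q_2}\widehat{K}^{P_1}=\widehat{K}^{P_1}L=\widehat{L}^{Q_1}$.

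The only genuinely delicate points are the injectivity of the completion map and the compatibility of the various embeddings into $\widehat{L}^{Q_1}$. The first I expect to be the main obstacle: it rests on $\widehat{R}^{P_2}$ (and $\widehat{S}^{Q_2}$) being a Noetherian domain, so that Krull's intersection theorem applies. The Noetherian property is automatic for adic completions of Noetherian rings, and the integral-domain property is exactly what makes $\widehat{K}^{P}=\QF(\widehat{R}^P)$ meaningful, so it is already built into the setting. The compatibility of embeddings is a routine functoriality check for adic completion with respect to the map $R\to S$ and the inclusions of ideal powers, and I would only note it rather than verify it in detail.
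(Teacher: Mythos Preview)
Your argument is correct and follows essentially the same route as the paper: both identify $\widehat{R}^{P_1}$ as the $P_1\widehat{R}^{P_2}$-adic completion of $\widehat{R}^{P_2}$ and use separatedness (you make Krull's intersection theorem explicit where the paper leaves it implicit) to obtain the inclusion, then derive the field equalities from $\widehat{L}^{Q}=L\widehat{K}^{P}$. The only cosmetic difference is that the paper re-derives $\widehat{L}^{Q_1}=L\widehat{K}^{P_1}$ on the spot from finiteness of $S$ over $R$, whereas you cite it from Proposition~\ref{basic-ramification}; the content is identical.
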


\begin{proof}
 Note that $\widehat R^{P_2}$, the $P_2$-adic completion of $R$, contains $R$ since $R$ is noetherian and hence $P_2$-adically separated. Note that $P_2$-adic completion of $P_1$ is $P_1\widehat R^{P_2}$ is a prime ideal of $\widehat R^{P_2}$ and $\widehat R^{P_1}$ is also the $P_1\widehat R^{P_2}$-adic completion of $\widehat R^{P_2}$.
 %R^{P_2}/P_1^n\cong R/P_1^n. Inverse limit commutes with tensor.
 Hence $\widehat R^{P_2}\subset \widehat R^{P_1}$. Hence $\widehat K^{P_2} \subset \widehat K^{P_1}$. Similarly $\widehat L^{Q_2} \subset \widehat L^{Q_1}$. Since $S$ is a finite $R$-module, $\widehat S^{Q_1}$ is a finite $\widehat R^{P_1}$-module with the same generators. Hence $\widehat L^{Q_1}=L\widehat K^{P_1}=\widehat L^{Q_2}\widehat K^{P_1}$. 
 \end{proof}

\begin{rmk}\label{inertia-variation}
  Let $Q_2\subsetneq Q_1\subset S$ be prime ideals of $S$ and $P_i=Q_i\cap R$. If $\sigma \in I(Q_2)$ then $\sigma{\alpha}-\alpha\in Q_2\subset Q_1$ for all $\alpha\in R$. Hence $I(Q_2)\subset I(Q_1)$. (Also see \cite[Proposition 1.50]{abhyankar})
\end{rmk}

\begin{rmk}\label{inertia-kernel}
  Let $Q\subset S$ be a prime ideal of $S$ and $P=Q\cap R$. Then $I(Q)$ is the kernel of the natural epimorphism from $\phi:D(Q) \to \Aut(k_Q/k_P)$ where $k_Q=\QF(S/Q)$, $k_P=\QF(R/P)$ and for $g\in D(Q)$, $a\in S_Q$ and its image $\overline a\in k_Q$, $\phi(g)(\overline a)=\overline{ga}$.
\end{rmk}

The following is a consequence of \cite[Theorem 1.47 and 1.48]{abhyankar} but we include a direct proof for brevity and convenience of the reader.
\begin{pro}\label{unramified-closed-points}
 Let $(R,m)$ be a complete local normal domain and $(S,n)$ be the integral closure $R$ in a finite Galois extension of $\QF(R)$. Then $I(n)$ is trivial iff $S/n$ is a finite separable extension of $R/m$ and $mS=n$.
\end{pro}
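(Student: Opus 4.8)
The plan is to treat the two implications separately, in both cases exploiting completeness to reduce $S$ to a monogenic algebra generated by a lift of a primitive element of the residue extension, and then reading off the ramification from the reduction of the corresponding minimal polynomial. Throughout I would use that $R$ complete local forces $S$ to be local with $n$ its unique maximal ideal, so that $D(n)=G$, and that by Remark \ref{inertia-kernel} the group $I(n)$ is the kernel of $\phi\colon G\to \Aut(k_n/k_m)$, where $k_n=S/n$ and $k_m=R/m$; in particular $I(n)=1$ is the same as $\phi$ being injective.

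For $(\Leftarrow)$ I would assume $mS=n$ and $k_n/k_m$ finite separable. Then $S/mS=S/n=k_n$ is a field, and by the primitive element theorem $k_n=k_m[\bar\theta]$ for some $\bar\theta$ whose minimal polynomial $\bar h$ over $k_m$ is separable. Lifting $\bar\theta$ to $\theta\in S$, the images of $1,\theta,\dots,\theta^{f-1}$ span $S/mS$ over $k_m$ (with $f=[k_n:k_m]$), so Nakayama's lemma (valid since $S$ is a finite $R$-module and $m$ lies in the Jacobson radical) gives $S=R[\theta]$; hence $L=K(\theta)$ and the minimal polynomial $h\in R[x]$ of $\theta$ over $K$ has degree $[L:K]=|G|$ and reduces to $\bar h$. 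Writing $h(x)=\prod_{g\in G}(x-g\theta)$, separability of $\bar h$ says $h'(\theta)=\prod_{g\neq 1}(\theta-g\theta)$ is a unit of $S$, so $\theta-g\theta\notin n$ for every $g\neq 1$. If $\sigma\in I(n)$ then $\sigma\theta-\theta\in n$, which forces $\sigma=1$, so $I(n)$ is trivial.

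For $(\Rightarrow)$ I would assume $I(n)=1$, so $\phi$ is injective and $|G|\le |\Aut(k_n/k_m)|\le [k_n:k_m]_s=:f_s$. Let $k_0$ be the separable closure of $k_m$ in $k_n$, write $k_0=k_m[\bar\theta_0]$ with separable minimal polynomial $\bar g_0$ of degree $f_s$, lift $\bar\theta_0$ to $\theta_0\in S$, and let $h_0\in R[x]$ be its minimal polynomial over $K$. Since $\bar g_0$ divides the reduction $\bar h_0$ one gets $\deg h_0\ge f_s$, while $\deg h_0=[K(\theta_0):K]\le [L:K]=|G|\le f_s$; hence $\deg h_0=f_s$, so $K(\theta_0)=L$ and $\bar h_0=\bar g_0$ is separable. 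Then $h_0'(\theta_0)$ is a unit, so $R[\theta_0]\cong R[x]/(h_0)$ is finite étale over the complete local ring $R$; being étale over a normal ring it is itself normal, and having fraction field $L$ it must coincide with the integral closure $S$. Thus $S=R[\theta_0]$ with $S/mS=k_m[x]/(\bar g_0)=k_0$ a field separable over $k_m$, which is exactly $mS=n$ together with $k_n=k_0$ separable over $k_m$.

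The one genuine subtlety, and the reason for not simply quoting the classical formula, is that in dimension $\ge 2$ the extension $S/R$ need not be flat, so the identity $[L:K]=ef$ (the $efg$-relation with a single prime over $m$) is unavailable and cannot be used to compare $|G|$ with the ramification and residue data. The argument above sidesteps this by building the étale monogenic subalgebra $R[\theta_0]$ directly out of the separable part of the residue field and then invoking normality of $S$ to force $R[\theta_0]=S$, so flatness never enters. I expect the step needing the most care to be the verification that $R[x]/(h_0)$ is étale over $R$, i.e.\ that $h_0'(\theta_0)$ is a unit, which rests on separability of $\bar h_0$ together with completeness of $R$.
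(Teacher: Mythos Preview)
Your argument is correct and takes a genuinely different route from the paper's. For the direction $(\Leftarrow)$ the paper invokes the Cohen structure theorem to write $R$ and $S$ as coefficient-ring extensions of power series rings and compare ranks, whereas you use Nakayama to obtain $S=R[\theta]$ monogenic and then read off $I(n)=1$ from the separability of the reduced minimal polynomial. For the direction $(\Rightarrow)$ the paper runs a counting argument, asserting $\dim_{k_m}(S/mS)\le [L:K]$ from torsion-freeness of $S$ and then squeezing $|\Aut(k_n/k_m)|\le [k_n:k_m]\le \dim_{k_m}(S/mS)\le |G|$; you instead build the finite \'etale $R$-algebra $R[\theta_0]\cong R[x]/(h_0)$ from a lift of a primitive element of the separable closure $k_0\subset k_n$, and then use normality to force $R[\theta_0]=S$.

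What your approach buys is exactly the robustness you flag at the end: the inequality $\dim_{k_m}(S/mS)\le [L:K]$ does \emph{not} follow from torsion-freeness alone when $S/R$ is not flat (think of the ideal $(x,y)\subset k[[x,y]]$, which has generic rank $1$ but needs two generators), so the paper's counting step is at least delicate in higher dimension. Your construction of the \'etale monogenic subalgebra and the appeal to ``\'etale over normal is normal, hence equals the integral closure'' circumvents this entirely. One small point you leave implicit in $(\Leftarrow)$: the assertion that the reduction of $h$ equals the degree-$f$ polynomial $\bar h$ tacitly uses $|G|=f$, which you are in fact proving along the way via $S/mS\cong k_m[x]/(\text{reduction of }h)$ having dimension both $|G|$ and $f$; it would do no harm to say so.
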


\begin{proof}
  Let $k_n=S/n$, $k_m=R/m$, $L=\QF(S)$ and $K=\QF(R)$. Since $L/K$ is Galois, the field extension $k_n/k_m$ is normal. If $I(n)$ is trivial then $\Gal(L/K)=D(n)\cong \Aut(k_n/k_m)$. Let $f$ be the cardinality of this group. Since $S$ is a torsion free $R$-module, the vector space dimension of $S\otimes_R k_m$ is at most $[L:K]$. Hence $\dim_{k_m}(S/mS)\le f$. Since $mS\subset n$, $$f=|Aut(k_m/k_n)|\le \dim_{k_m}(S/n)\le \dim_{k_m}(S/mS)\le f.$$ 
  Hence $k_m/k_n$ is Galois extension and $mS=n$.
  
  Conversely, by Cohen structure theorem $S=S_0[[y_1,\ldots,y_s]]$ and the subring $R=R_0[[x_1,\ldots,x_r]]$ where $R_0$ and $S_0$ are coefficient rings and $R_0\subset S_0$. Since $mS=n$, $S=S_0[[x_1,\ldots,x_r]]$. If $R_0$ is a field then $R_0=k_m$ and $S_0=k_n$. So $S=R\otimes_{k_m}k_n$ and hence $S$ is a free $R$-module of rank $[k_n:k_m]$. So $|D(n)|=[L:K]=[k_n:k_m]=|\Gal(k_n/k_m)|$. So the natural epimorphism from $D(n)\to \Gal(k_n/k_m)$ is an isomorphism. Hence $I(n)$ is trivial. 
  
  Suppose $R_0$ is not a field. Since $R$ and $S$ are integral domains $R_0$ and $S_0$ are characteristic zero dvrs with residue fields $K_m$ and $k_n$ are of characteristic $p>0$. Also by Cohen structure theorem, $pR_0$ and $pS_0$ are maximal ideals of $R_0$ and $S_0$ respectively. Hence $S_0$ is finite \'etale extension of $R_0$. Since $R_0$ is a dvr, $S_0$ is free $R_0$-module. Hence $S$ is a free $R$-module of rank $[L:K]$. This similarly implies that $I(n)$ is the trivial group. 
\end{proof}

\begin{df}
  Let $R$ be an excellent normal domain with fraction field $K$ and $L/K$ be a finite separable extension. Let $S$ be the integral closure of $R$ in $L$, prime ideal $Q$ of $S$ and $P=Q\cap R$. 
  The prime ideal $Q$ is said to be \emph{unramified} in the extension $S/R$ or $L/K$ if $PS_Q=QS_Q$ and $S_Q/QS_Q$ is a finite separable extension of $R_P/PR_P$. In this scenario we will also say $S_Q/R_P$ is an unramified extension. If every $Q$ lying over $P$ is unramified then $P$ (or $R_P$ ) is said to be non-branched in $L$ otherwise $P$ (or $R_P$) is said to be a \emph{branched} in $L$. 
\end{df}

\begin{rmk}
 By flatness of localization and completion $S_Q/R_P$ is unramified is equivalent to $\widehat S_Q/\widehat R_P$ is unramified. In the complete local ring set-up if the ring is understood from the context we may simply say $\QF(\widehat S_Q)/\QF(\widehat R_P)$ is unramified to mean $\widehat R_P$ is unramified in the field extension $\QF(\widehat S_Q)/\QF(\widehat R_P)$. By the Proposition \ref{unramified-closed-points} when $L/K$ is a Galois extension then $I(Q)$ is trivial iff then $Q$ is unramified in $L/K$. 
\end{rmk}

The following is a generalization of \cite[Lemma 3.1]{killwildram} to higher dimension.

\begin{lemma} \label{compositum-wild-ramification}
 Let $R$ be a normal local excellent domain and $K$ be the quotient field of $R$. Let $L$ and $M$ be finite separable extensions of $K$ and $\Omega=LM$ their compositum. Let $A_s$ be the integral closure $R$ in $\Omega$ and $A$ be the localization of $A$ at a maximal ideal of $A_s$. Then $S=A\cap L$ and $T=A\cap M$ are normal local excellent domains. Let $\widehat K$, $\widehat L$, $\widehat M$ and $\widehat \Omega$ be the quotient field of the complete local rings $\widehat R$, $\widehat S$, $\widehat T$ and $\widehat A$ respectively. Here all fields are viewed as subfields of an algebraic closure of $\widehat K$. The following holds: 
 \begin{enumerate}
  \item If $A/m_A$ is compositum of $S/m_S$ and $T/m_T$ then $\widehat \Omega = \widehat L \widehat M$.
  \item \label{eq2:compositum-wild-ramification} If $A/m_A$ is a separable extension of $S/m_S$ then $\widehat \Omega$ is an unramified extension of $\widehat L \widehat M$.
 \end{enumerate}
\end{lemma}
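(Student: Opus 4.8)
Throughout I would work directly with the complete local rings, and the first step is to fix the comparison. Since $S,T\subseteq A$ with $m_S=m_A\cap S$ and $m_T=m_A\cap T$, flatness of completion makes the induced maps $\widehat S\to\widehat A$ and $\widehat T\to\widehat A$ local homomorphisms of complete normal local domains; each kernel is a prime of a domain, and as $\widehat A$ is a factor of $\widehat S\otimes_S A_s$ (hence finite over $\widehat S$, and over $\widehat T$), comparing dimensions forces the kernel to be zero. This is exactly what realizes $\widehat L$ and $\widehat M$ as the intended subfields of $\widehat\Omega$ inside the chosen algebraic closure of $\widehat K$, so that $\widehat L\widehat M\subseteq\widehat\Omega$. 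By \cite[7.8.3(v)]{ega4.2} the ring $\widehat A$ is the (local) integral closure of $\widehat R$ in $\widehat\Omega$; I then set $C$ to be the integral closure of $\widehat R$ in $\widehat L\widehat M$, a complete normal local domain with $\widehat S,\widehat T\subseteq C\subseteq\widehat A$ and $\widehat A$ finite over $C$. Both assertions now become statements about the single finite separable extension $\widehat A/C$ at its maximal ideal: (1) is the equality $\widehat A=C$, and (2) is the unramifiedness of $\widehat A/C$.

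The second step reduces everything to residue fields. By the very definition of unramified, (2) amounts to the two conditions that $(A/m_A)/(C/m_C)$ be finite separable and that $m_C\widehat A=m_{\widehat A}$; after passing to a Galois closure this is the triviality of the inertia group, which by Remark \ref{inertia-kernel} is the kernel of $\Gal(\widehat\Omega/\widehat L\widehat M)\to\Aut\big((A/m_A)/(C/m_C)\big)$. The residue field of $C$ always contains the compositum $(S/m_S)(T/m_T)$, while $A/m_A\supseteq C/m_C$. Under the hypothesis of (1) the outer field $A/m_A$ equals $(S/m_S)(T/m_T)$, so squeezing gives $A/m_A=C/m_C$ and the residue extension is trivial; under the hypothesis of (2) the extension $A/m_A$ is separable over $S/m_S$, hence so is the intermediate extension $(A/m_A)/(C/m_C)$. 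In both cases the residue half of the unramifiedness criterion is secured, and the whole matter is reduced to the equality of ideals $m_C\widehat A=m_{\widehat A}$.

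This ideal equality, i.e. the statement that forming the compositum introduces no new ramification in the closed fibre, is the step I expect to be the main obstacle. I would establish it by the Cohen structure theorem, mirroring the proof of Proposition \ref{unramified-closed-points}: write $\widehat A=A_0[[y_1,\dots,y_s]]$ and $C=C_0[[x_1,\dots,x_r]]$ with compatible coefficient rings $C_0\subseteq A_0$, and split into the equicharacteristic case (where $C_0,A_0$ are the residue fields, so once $(A/m_A)/(C/m_C)$ is trivial or separable the fibre $\widehat A/m_C\widehat A$ is reduced of the expected length and $m_C\widehat A=m_{\widehat A}$) and the mixed-characteristic case (where $C_0\subseteq A_0$ is a finite étale extension of characteristic-zero discrete valuation rings, so $A_0$ is $C_0$-free and $\widehat A$ is $C$-free of the expected rank). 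Matching this rank against $[\widehat\Omega:\widehat L\widehat M]$ is what pins down $m_C\widehat A=m_{\widehat A}$.

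With $m_C\widehat A=m_{\widehat A}$ in hand, Proposition \ref{unramified-closed-points} gives that $\widehat A/C$ is unramified, which is (2); and in the setting of (1) a trivial residue extension together with $m_C\widehat A=m_{\widehat A}$ forces $\widehat A$ to be free of rank $[A/m_A:C/m_C]=1$ over $C$, i.e. $\widehat A=C$ and therefore $\widehat\Omega=\QF(\widehat A)=\QF(C)=\widehat L\widehat M$. I expect the genuinely delicate sub-case to be the separable-but-not-compositum situation of (2): there $(A/m_A)/(C/m_C)$ may be a nontrivial separable residue extension, the coefficient rings $C_0\subseteq A_0$ must be handled as an honest étale (rather than trivial) residual extension, and one has to check that the full inertia, not merely the wild inertia, is killed before concluding.
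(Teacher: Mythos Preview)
Your reduction is sound up to the point where both assertions hinge on the equality $m_C\widehat A=m_{\widehat A}$, and your residue-field bookkeeping (squeezing $C/m_C$ between $(S/m_S)(T/m_T)$ and $A/m_A$) is correct. The gap is in your proposed proof of that ideal equality. Appealing to the Cohen structure theorem does not give it: writing $\widehat A$ and $C$ as (quotients of) formal power series rings with compatible coefficient rings imposes no relation between the respective regular parameters, and separability of the residue extension alone does not force $\widehat A/m_C\widehat A$ to be reduced. A concrete counterexample to the implication you invoke is $C=k[[t]]\hookrightarrow\widehat A=k[[s]]$ with $t\mapsto s^2$ in characteristic $\ne 2$: the residue extension is trivial, yet $m_C\widehat A=(s^2)\ne(s)=m_{\widehat A}$. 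Your argument is therefore circular---you are essentially assuming the unramifiedness you want to conclude. (Note also that $\widehat A$ and $C$ need not be regular, so the power-series presentations you write down are not available in general.)

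The paper's route to $m_C\widehat A=m_{\widehat A}$ is elementary and does not pass through Cohen structure. The key observation is that the maximal ideal $m_A$ of the \emph{un}completed ring $A$ is generated by finitely many elements $a_1,\dots,a_n\in A$, and these already lie in $\Omega=LM\subseteq\widehat L\widehat M$. Being integral over $\widehat R$, they lie in $C$ (your notation; the paper's $\widehat B$), and being in $m_{\widehat A}\cap C$ they lie in $m_C$. Since $m_A\widehat A=m_{\widehat A}$, this yields $m_{\widehat A}=m_A\widehat A\subseteq m_C\widehat A\subseteq m_{\widehat A}$ at once, and then Proposition~\ref{unramified-closed-points} finishes (2) exactly as you outlined. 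For part~(1) the paper runs a variant of the same idea directly: after reducing to $A/m_A=S/m_S$ by adjoining suitable elements of $T$ to $\widehat S$, one shows $\widehat S[a_1,\dots,a_n]$ is a complete local subring of $\widehat A$ with the same residue field and whose maximal ideal generates $m_{\widehat A}$, whence $\widehat S[a_1,\dots,a_n]=\widehat A$ and $\widehat\Omega\subseteq\widehat L M\subseteq\widehat L\widehat M$. In short, the missing idea in your attempt is that the generators of $m_A$ are \emph{algebraic} elements of $LM$, not merely of $\widehat\Omega$; this is precisely what places them inside $C$ and supplies the ideal equality without any structural input.
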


\begin{proof}
 Note that $A_s$ is a semilocal ring and a finite $R$-module (since excellent rings are Nagata). Let $m_R$ be the maximal ideal of $R$. Note that $\widehat A$ is a homomorphic image of $\widehat A_s$, the $m_R$-adic completion of $A_s$. So $\widehat A_s$ and hence $\widehat A$ are finite $\widehat R$-modules. 
 
 Note that $S$ and $T$ are also a localization at maximal ideals of the integral closures of $R$ in $L$ and $M$ (respectively) dominated by $A$. Hence $\widehat S$ and and $\widehat T$ are finite $\widehat R$-modules. By definition $A$, $S$, $T$ are normal local domains and they are excellent because they are localization of a finite type ring over an excellent ring.
 
 Note that $\widehat L$ and $\widehat M$ are contained in $\widehat \Omega$. So $\widehat L\widehat M \subset \widehat \Omega$. If $A/m_A$ is the compositum of $S/m_S$ and $T/m_T$ then there exist $\alpha_1,\ldots,\alpha_l\in T$ such that $A/m_A=(S/m_S)[\bar\alpha_1,\ldots,\bar\alpha_l]$. Let $\widehat S_1$ be the integral closure of $\widehat S[\alpha_1,\ldots,\alpha_l]$ in the fraction field. Then $\widehat L \subset\QF(\widehat S_1) \subset \widehat LM$. Also $\widehat S_1/m_{\widehat S_1}=A/m_A$ and $\widehat S_1$ is a normal complete local domain between $\widehat S$ and $\widehat A$. So by replacing $\widehat S$ by $\widehat S_1$, we may assume $A/m_A=S/m_S$.
 
 Let $m_A=(a_1,\ldots,a_n)$ for $a_i\in A\subset \widehat A$. Then $\widehat S[a_1,\ldots,a_n]$ is a finite $\widehat S$-module because $\widehat A$ is finite over $\widehat R$ and $\widehat R\subset \widehat S$. Hence $\widehat S[a_1,\ldots, a_n]$ is a complete local ring by \cite[Theorem 7]{Coh} with maximal ideal $m$ (say). Also $\widehat S\subset \widehat S[a_1,\ldots, a_n]\subset \widehat A$ are extensions of local rings with $\widehat S/m_{\widehat S}=\widehat A/m_{\widehat A}$. Hence the $\widehat S[a_1,\ldots,a_n]$ and $\widehat A$ have the same residue field. Also $m_{\widehat A}=m_{\widehat S}\widehat A$ since $m_{\widehat A}\supset m \supset m_A$ and $m_A\widehat A=m_{\widehat A}$. Hence by \cite[Corollary to Theorem 8]{Coh}, $\widehat S[a_1,\ldots, a_n]=\widehat A$. So $\widehat \Omega=\widehat L[a_1\ldots,a_n]\subset \widehat LM\subset \widehat L\widehat M$. This completes the proof of the first part.
 
 Let $\widehat B$ be the integral closure of $\widehat S$ in $\widehat L\widehat M$. Since $\widehat \Omega\supset \widehat L\widehat M$, $\widehat A\supset\widehat B$ and $\widehat A$ is a finite $\widehat B$-module (as $\widehat B$ contains $\widehat S$). So $m_{\widehat B}=m_{\widehat A}\cap \widehat B$. Note that $a_1,\ldots a_n \in m_{\widehat B}$ since $a_1,\ldots,a_n\in m_A\subset LM\subset \widehat L \widehat M$. So $m_{\widehat B}\widehat A=m_{\widehat A}$. Moreover $\widehat S/m_{\widehat S} \subset \widehat B/m_{\widehat B} \subset \widehat A /m_{\widehat A}$ and the hypothesis for \eqref{eq2:compositum-wild-ramification} implies $\widehat A/m_{\widehat A}$ is a separable extension of $\widehat B/m_{\widehat B}$. Hence by Proposition \ref{unramified-closed-points} $\widehat A$ is an unramified extension of $\widehat B$, i.e. $\widehat \Omega$ is an unramified extension of $\widehat L\widehat M$. 
 \end{proof}

\begin{cor}
 Let the notation be as in the above lemma. If $\widehat L\subset \widehat M$ then $A/T$ is an unramified extension. 
\end{cor}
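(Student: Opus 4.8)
The plan is to reduce the assertion to a statement about the closed points of the complete local rings and then to apply the second part of Lemma~\ref{compositum-wild-ramification} with the roles of $L$ and $M$ interchanged. By the remark on flatness of completion preceding Lemma~\ref{compositum-wild-ramification}, the extension $A/T$ is unramified if and only if $\widehat A/\widehat T$ is unramified, so it suffices to check the two defining conditions: that $m_{\widehat T}\widehat A=m_{\widehat A}$ and that $A/m_A$ is a separable extension of $T/m_T$.

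First I would record that the hypothesis $\widehat L\subset\widehat M$ gives $\widehat L\widehat M=\widehat M$ and $\widehat S\subset\widehat T$, so that the integral closure $\widehat B$ of $\widehat S$ in $\widehat L\widehat M$ occurring in the proof of Lemma~\ref{compositum-wild-ramification} is exactly $\widehat T$. Next, $L\subset\widehat L\subset\widehat M$ forces $\Omega=LM\subset\widehat M$; hence any generators $a_1,\dots,a_n$ of $m_A$ lie in $A\subset\Omega\subset\widehat M$ and are integral over $\widehat R$, so each $a_i$ lies in $\widehat T$ and in fact in $m_{\widehat T}=m_{\widehat A}\cap\widehat T$. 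This yields $m_{\widehat A}=m_A\widehat A\subseteq m_{\widehat T}\widehat A\subseteq m_{\widehat A}$, that is $m_{\widehat T}\widehat A=m_{\widehat A}$. This is the verbatim computation from the proof of Lemma~\ref{compositum-wild-ramification} specialised to $\widehat B=\widehat T$.

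The remaining and genuinely delicate point is the separability of the residue extension $A/m_A$ over $T/m_T$. Here I would use that $\Omega/M$ is separable (being the compositum $LM$ with $L/K$ separable), whence $\widehat M\otimes_M\Omega$ is \'etale over $\widehat M$ and its factor $\widehat\Omega=\QF(\widehat A)$ is a finite separable extension of $\widehat M$. To pass to residue fields I would use that $T/m_T$ is algebraically closed in $\widehat M=\QF(\widehat T)$: if $A/m_A$ were inseparable over $T/m_T$, choosing compatible coefficient fields as in the proof of Proposition~\ref{unramified-closed-points} would exhibit an element of $\widehat\Omega$ with inseparable minimal polynomial over $\widehat M$, contradicting separability of $\widehat\Omega/\widehat M$. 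I expect this descent of separability from the fraction field to the residue field to be the main obstacle.

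Having verified both $m_{\widehat T}\widehat A=m_{\widehat A}$ and the separability of $A/m_A$ over $T/m_T$, the extension $\widehat A/\widehat T$ is unramified directly from the definition; equivalently this is Lemma~\ref{compositum-wild-ramification}\eqref{eq2:compositum-wild-ramification} applied with $L$ and $M$ interchanged, whose hypothesis is precisely the separability just established and whose conclusion is that $\widehat\Omega$ is unramified over $\widehat M\widehat L=\widehat M$. Since $A/T$ is unramified if and only if $\widehat A/\widehat T$ is, the corollary follows.
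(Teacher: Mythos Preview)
Your argument contains a genuine gap in the residue-field separability step, and it also overlooks the much simpler route the paper takes.

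You correctly observe that $L\subset\widehat L\subset\widehat M$ forces $\Omega=LM\subset\widehat M$ and hence $A\subset\widehat T$. But from this you should immediately conclude $\widehat A=\widehat T$: since $\widehat A$ is a finite $\widehat T$-module generated by elements of $A$ (it is a factor of $A_s\otimes_R\widehat R$), and $A\subset\widehat T$, we get $\widehat A\subset\widehat T\subset\widehat A$, so $\widehat\Omega=\widehat M$ and $\widehat A=\widehat T$. Unramifiedness of $A/T$ is then trivial. This is exactly the paper's three-line proof, and you have all its ingredients in hand before you embark on the detour.

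The detour itself does not work. Your claim that ``$T/m_T$ is algebraically closed in $\widehat M=\QF(\widehat T)$'' is false in general (take $\widehat T=k[[t]]$ with $k$ not algebraically closed). More seriously, separability of $\widehat\Omega/\widehat M$ does \emph{not} force separability of $A/m_A$ over $T/m_T$: for instance, with $\widehat T=\mathbb F_p(s)[[t]]$ and $\alpha$ a root of $x^p-tx-s$, the extension $\widehat M(\alpha)/\widehat M$ is separable while the residue extension $\mathbb F_p(s^{1/p})/\mathbb F_p(s)$ is purely inseparable. Your appeal to ``compatible coefficient fields as in Proposition~\ref{unramified-closed-points}'' is circular, since Cohen's theorem produces compatible coefficient rings $R_0\subset S_0$ only when the residue extension is already separable. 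So the step you flag as ``the main obstacle'' is in fact an obstacle you cannot clear by this method; the correct fix is to notice that in this particular situation $\widehat A=\widehat T$, making the residue extension trivial.
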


\begin{proof}
 Since $\Omega/M$ is a finite extension, so is $\widehat \Omega/\widehat M$. Hence $\widehat A$ is a finite $\widehat T$-module. The above lemma and the hypothesis implies that $\widehat \Omega=\widehat M$. So $\widehat A=\widehat T$, i.e. $A/T$ is unramified.
\end{proof}

\section{Branch data}\label{sec:branch-data}

For an excellent normal scheme $X$ and $x\in X$, let $\cK_{X,x}$ be the fraction field of $\widehat \cO_{X,x}$. Let $\bar x$ denote the closure of $\{x\}$ in $X$. Let $\widehat X_{\bar x}$ be the formal scheme obtained by the completion of $X$ along $\bar x$. For an open affine connected subset $U$ of $X$ containing $x$, let $\cK_X^x(U)=\QF(\widehat R^I)$ where the coordinate ring of $U$ is $R$ and $I$ is the (prime) ideal of $R$ defining $x$. Note that $\cK_X^x(U)=\QF(\cO_{\widehat X_{\bar x}}(U\cap \bar x))$.

\begin{pro}\label{open-subsets-okay}
  In the above setup, for $x\in X$ and $U\subset V$ be two affine open connected neighbourhood of $x$ in $X$, $\cO_{\widehat X_{\bar x}}(U\cap \bar x) \supset \cO_{\widehat X_{\bar x}}(V\cap \bar x)$ and the completion of $\cO_{\widehat X_{\bar x},x}$ at the maximal ideal is $\widehat \cO_{X,x}$.
  In particular we have the inclusion of fields $\cK_X^x(V)\subset \cK_X^x(U) \subset \cK_{X,x}$.
\end{pro}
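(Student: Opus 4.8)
The plan is to reduce everything to the affine situation and to the behaviour of adic completions of a normal excellent domain along a prime. Since all the assertions are local at $x$, I would first replace $X$ by an affine open neighbourhood and write $V=\Spec A$, $U=\Spec R$, with $J\subset A$ and $I\subset R$ the primes cutting out $x$; then $A_J=R_I=\cO_{X,x}$, and by construction $\cO_{\widehat X_{\bar x}}(V\cap\bar x)=\widehat A^J$ and $\cO_{\widehat X_{\bar x}}(U\cap\bar x)=\widehat R^I$, the $J$- and $I$-adic completions. The two maps to understand are the restriction map $\widehat A^J\to\widehat R^I$ of the formal structure sheaf and the natural maps of both rings into $\widehat{\cO}_{X,x}=\widehat{A_J}$.

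The first key step is to show that $\widehat A^J$ is a normal domain and that the natural map $\widehat A^J\to\widehat{A_J}$ is injective. Normality follows because $A$ is excellent and normal, so the completion map is regular and normality ascends (as in the local case invoked for Proposition \ref{basic-ramification}). For the domain property I would note that $\widehat A^J/J\widehat A^J=A/J$ is a domain, and that $J\widehat A^J$ lies in the Jacobson radical of the $J$-adically complete ring $\widehat A^J$, so idempotents lift and $\widehat A^J$ has only the trivial idempotents; hence $\Spec\widehat A^J$ is connected, and a connected noetherian normal ring is a domain. Writing $\mathfrak p=J\widehat A^J$, one checks $(\widehat A^J)_{\mathfrak p}/J^n(\widehat A^J)_{\mathfrak p}=A_J/J^nA_J$, whence $\widehat{(\widehat A^J)_{\mathfrak p}}=\widehat{A_J}$, so the composite $\widehat A^J\to(\widehat A^J)_{\mathfrak p}\to\widehat{A_J}$ is injective (the first map because $\widehat A^J$ is a domain, the second by Krull's intersection theorem). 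The same argument gives $\widehat R^I\into\widehat{A_J}$. Since the triangle $\widehat A^J\to\widehat R^I\to\widehat{A_J}$ commutes and its composite is injective, the restriction map $\widehat A^J\to\widehat R^I$ is injective; this is exactly the inclusion $\cO_{\widehat X_{\bar x}}(V\cap\bar x)\subset\cO_{\widehat X_{\bar x}}(U\cap\bar x)$, and passing to fraction fields yields $\cK_X^x(V)\subset\cK_X^x(U)\subset\cK_{X,x}$.

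For the statement about the stalk, write $B=\widehat A^J$ and identify $\cO_{\widehat X_{\bar x},x}$ with the stalk of $\cO_{\spf B}$ at the generic point $\mathfrak p=JB$ of $\bar x$, namely $C:=\varinjlim_{f\notin J}\varprojlim_n(B_f/J^nB_f)$. Using $B/J^nB=A/J^n$ and exactness of direct limits, I would compute $C/J^nC=\varinjlim_{f\notin J}(A/J^n)_f=A_J/J^nA_J$; in particular $C/JC=\QF(A/J)=\kappa(x)$ is a field, so $C$ is local with maximal ideal $JC$. Taking the inverse limit over $n$ then gives $\widehat C=\varprojlim_n C/J^nC=\varprojlim_n A_J/J^nA_J=\widehat{A_J}=\widehat{\cO}_{X,x}$, which is the desired statement.

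The main obstacle is the first step: the completion $\widehat A^J$ along the non-maximal prime $J$ (equivalently along the subvariety $\bar x$) does not inject into $\widehat{A_J}$ for purely formal reasons, since the kernel of $\widehat A^J\to\widehat{A_J}$ is governed by the discrepancy between the symbolic powers $J^{(n)}$ and the ordinary powers $J^n$. The way around this is precisely to exploit excellence and normality to force $\widehat A^J$ to be a domain, after which injectivity of the localization map is automatic; everything else is a manipulation of limits and colimits that goes through because direct limits are exact and completion commutes with the relevant quotients.
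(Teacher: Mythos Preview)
Your argument is correct and, in substance, the same as the paper's: the paper simply cites \cite[Proposition 10.1.4]{ega1} for the restriction map and \cite[7.8.3(v)]{ega4.2} for the fact that the three rings are normal domains (excellence plus normality), which is exactly what your first key step unpacks. Your explicit computation of the stalk and its completion is a helpful elaboration that the paper leaves implicit in the EGA reference.
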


\begin{proof}
 The first statement follows from (\cite[Proposition 10.1.4]{ega1}) and noting that $U$ and $V$ are affine integral schemes. Since $X$ is normal excellent schemes, all the three rings are integral domains (\cite[7.8.3]{ega4.2}) and hence admit fraction fields.
\end{proof}

\begin{pro}\label{local-ramified}
  In the above setup let $L/\cK_X^x(U)$ be a finite Galois extension and $R=\cO_{\widehat X_{\bar x}}(U\cap \bar x)$. Let $S$ be the integral closure of $R$ in $L$. Let $y\in \Spec(S)$ be a point lying above $x\in \Spec(R)$ and $I$ and $J$ be the ideals defining $x\in \Spec(R)$ and $y\in \Spec(S)$ respectively. The morphism $\Spec(S)\to \Spec(R)$ is unramified at $x$ iff $\widehat S_J/\widehat R_I$ is an unramified extension of complete local rings. In particular under the assumption that $L/\cK_X^x(U)$ is Galois, $x$ is not branched in the morphism $\Spec(S)\to \Spec(R)$ iff $\widehat \cO_{X,x}$ is unbranched in the extension $L\cK_{X,x}/\cK_{X,x}$.     
\end{pro}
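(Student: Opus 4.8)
The plan is to treat the two assertions in Proposition~\ref{local-ramified} separately, reducing each to the local theory of Section~\ref{sec:local-theory}. Write $K=\cK_X^x(U)=\QF(R)$, let $I\subset R$ be the prime defining $x$ and $J\subset S$ the prime defining $y$; by Proposition~\ref{open-subsets-okay} we have $\widehat R_I=\widehat\cO_{X,x}$, so $\cK_{X,x}=\QF(\widehat R_I)$. The first assertion unwinds, by the definition of an unramified prime, to the claim that $S_J/R_I$ is unramified if and only if $\widehat S_J/\widehat R_I$ is. For this I would argue by faithfully flat base change: ``unramified at $y$'' means $IS_J=JS_J$ together with separability of $S_J/JS_J$ over $R_I/IR_I$, and since the completion maps $R_I\to\widehat R_I$ and $S_J\to\widehat S_J$ are faithfully flat and preserve residue fields, and are compatible with the finite extension $S/R$ -- indeed $\widehat R_I\otimes_{R_I}(S\otimes_R R_I)=\prod_{J'\mid I}\widehat S_{J'}$ because $S\otimes_R R_I$ is finite over $R_I$, so $\widehat S_J$ is a direct factor of the base change -- each of these two conditions descends and ascends between $S_J/R_I$ and $\widehat S_J/\widehat R_I$. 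This is precisely the content already noted in the Remark following the definition of unramifiedness, so I would simply invoke flatness of localization and completion.

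The substance of the proposition is the second assertion. The first step is to identify the field $L\cK_{X,x}$ appearing there: applying Proposition~\ref{basic-ramification} with base ring $R_I$ (taking $P=IR_I$ and $Q=JS_J$) gives that $\QF(\widehat S_J)=\widehat K_P L=\cK_{X,x}\cdot L=L\cK_{X,x}$ and that this is Galois over $\cK_{X,x}$ with group the decomposition group $D(J)$. Since $\widehat R_I$ is a complete local domain it is Henselian, so its integral closure in the finite extension $L\cK_{X,x}$ is again local; that integral closure is exactly $\widehat S_J$. Hence the statement ``$\widehat\cO_{X,x}=\widehat R_I$ is unbranched in $L\cK_{X,x}/\cK_{X,x}$'' says nothing more than that the unique prime of $\widehat S_J$ over the maximal ideal of $\widehat R_I$ is unramified, i.e. that $\widehat S_J/\widehat R_I$ is an unramified extension (equivalently, by Proposition~\ref{unramified-closed-points}, that its inertia group is trivial). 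By the first assertion this is in turn equivalent to $S_J/R_I$ being unramified.

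It remains to pass from the single prime $J$ to all primes lying over $I$, which is what ``non-branched'' demands. Because $L/K$ is Galois, $G=\Gal(L/K)$ acts transitively on the primes of $S$ above $I$, and by Proposition~\ref{basic-ramification}(4) the associated decomposition groups, hence their inertia subgroups, are all conjugate; consequently $S_{J'}/R_I$ is unramified for one prime $J'$ above $I$ if and only if it is for every such prime. Chaining the equivalences then gives what we want: $\widehat\cO_{X,x}$ is unbranched in $L\cK_{X,x}$ iff $\widehat S_J/\widehat R_I$ is unramified iff $S_J/R_I$ is unramified iff $x$ (that is, $I$) is non-branched in $\Spec(S)\to\Spec(R)$.

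I expect the main obstacle to be the conceptual bookkeeping in the second assertion rather than any delicate computation: one must match the global notion of $x$ being non-branched -- a statement about the finite morphism $\Spec(S)\to\Spec(R)$ and its entire fibre over $x$ -- against the a priori purely local, single-field datum $L\cK_{X,x}/\cK_{X,x}$. Three facts make this matching go through: the identification $\QF(\widehat S_J)=L\cK_{X,x}$, coming from $\widehat L_Q=L\widehat K_P$ in Proposition~\ref{basic-ramification}; the Henselianity of the complete local ring $\widehat R_I$, which collapses the integral closure in $L\cK_{X,x}$ to the single ring $\widehat S_J$; and Galois transitivity on the primes over $I$, which is what licenses replacing ``all primes above $x$'' by the single chosen $J$. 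The first assertion, by comparison, is routine once flatness of completion is available.
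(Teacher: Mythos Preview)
Your argument is correct and follows essentially the same route as the paper: unramifiedness is preserved under completion (first assertion), the Galois transitivity on primes above $I$ reduces ``non-branched'' to unramifiedness at the single prime $J$, and the identifications $\widehat R_I=\widehat\cO_{X,x}$ and $\QF(\widehat S_J)=L\cK_{X,x}$ finish the second assertion. You are more explicit than the paper in invoking Henselianity to see that the integral closure of $\widehat R_I$ in $L\cK_{X,x}$ is the single local ring $\widehat S_J$, a point the paper leaves implicit.
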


\begin{proof}
  By definition the morphism $\Spec(S)\to \Spec(R)$ is unramified at $x$ iff $IS_J=JS_J$ and $S_J/JS_J$ is a separable extension of $R_I/IR_I$. But passing to the completion does not change the residue fields and the identity $I\widehat S_J=J\widehat S_J$ also holds. Hence $\widehat S_J/\widehat R_I$ is unramified extension. If $L/\cK_X^x(U)$ is Galois then $x$ is not branched in the morphism $\Spec(S)\to \Spec(R)$ iff $\widehat S_J/\widehat R_I$ is an unramified extension (as all the points lying above $x$ are conjugates). But $\widehat R_I=\widehat \cO_{X,x}$ and $\QF(\widehat S_J)=L\cK_{X,x}$.  
\end{proof}

\begin{df}\label{def:branch-data}
  A \emph{quasi-branch data} on an excellent normal scheme $X$ is a function $P$ which to every point $x\in X$ of codimension at least one and an open affine connected $U\subset X$ containing $x$, assigns a finite Galois extension $P(x,U)/\cK_X^x(U)$ in some fixed algebraic closure of $\cK_X^x(U)$ such that $P(x_1,U)=P(x_2,U)\cK_X^{x_1}(U)$ whenever $x_1\in \overline{\{x_2\}}$ and for $x\in V\subset U\subset X$ with $V$ affine open connected $P(x,V)=P(x,U)\cK_X^x(V)$. 
  
  We let $P(x)=P(x,U)\cK_{X,x}$. The \emph{branch locus of $P$}, $\BL(P)=\{x\in X:\widehat \cO_{X,x} \text{ is branched in } P(x) \}$. The function $P$ will be called a \emph{branch data} if $\BL(P)$ is a closed subset of $X$ of codimension at least one. The branch data in which all the finite extensions are trivial is called the \emph{trivial branch data} and is denoted by $O$. A branch data with empty branch locus is called an {essentially trivial branch data}.
  
  Note that when $x\in X$ is a closed point then $\cK_X^x(U)$ is independent of the of the choice of $U$ and hence this definition of branch data agrees with \cite[Definition 2.1]{orbifold.bundles} when $X$ is of dimension one.
\end{df}

\begin{cor}\label{specialization-okay}
  Let $X$ be a normal integral excellent scheme with function field $K$ and $L/K$ be a finite separable extension. Let $Y$ be the normalization of $X$ in $L$ and $f:Y\to X$ be the corresponding morphism. Let $y_1, y_2$ be points of $Y$ such that $y_1\in\overline{\{y_2\}}$ and let $x_i=f(y_i)$ for $i=1,2$. Let $U$ be an open affine connected neighbourhood of $x$ and $V=f^{-1}(U)$. Then $\cK_X^{x_2}(U)\subset \cK_X^{x_1}(U)$ and $\cK_Y^{y_1}(V)=L\cK_X^{x_1}(U)=\cK_Y^{y_2}(V)K_X^{x_1}(U)$.
\end{cor}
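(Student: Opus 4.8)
The plan is to reduce the statement to Proposition \ref{decomposition-variation} by translating the geometric data into the ring-theoretic data to which that proposition applies. First I would set $R=\cO_X(U)$. Since $X$ is normal, integral and excellent and $U$ is affine open, $R$ is an excellent normal domain with fraction field $K$. Because $X$ is excellent it is Nagata, so the normalization morphism $f$ is finite; hence $V=f^{-1}(U)$ is affine, and its coordinate ring $S=\cO_Y(V)$ is exactly the integral closure of $R$ in $L$. This places us precisely in the hypotheses of Proposition \ref{decomposition-variation}.

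Next I would record the relevant prime ideals. Let $P_i\subset R$ and $Q_i\subset S$ be the primes corresponding to $x_i$ and $y_i$; by construction $P_i=Q_i\cap R$. The hypothesis $y_1\in\overline{\{y_2\}}$ says exactly that $Q_2\subseteq Q_1$, with strict containment when $y_1\neq y_2$ (and if $y_1=y_2$ the assertion is trivial). Applying the continuous map $f$ to the relation $y_1\in\overline{\{y_2\}}$ gives $x_1\in\overline{\{x_2\}}$, so $x_2$ (resp.\ $y_2$) lies in every open neighbourhood of $x_1$ (resp.\ $y_1$); in particular $x_1,x_2\in U$ and $y_1,y_2\in V$, so the fields $\cK_X^{x_i}(U)$ and $\cK_Y^{y_i}(V)$ are all defined.

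Then by the very definition of these fields, $\cK_X^{x_i}(U)=\QF(\widehat R^{P_i})$ and $\cK_Y^{y_i}(V)=\QF(\widehat S^{Q_i})$, which in the notation of Proposition \ref{decomposition-variation} are $\widehat K^{P_i}$ and $\widehat L^{Q_i}$. That proposition yields $\widehat K^{P_2}\subset\widehat K^{P_1}$ and $\widehat L^{Q_1}=L\widehat K^{P_1}=\widehat L^{Q_2}\widehat K^{P_1}$, which transcribe verbatim into the two assertions $\cK_X^{x_2}(U)\subset\cK_X^{x_1}(U)$ and $\cK_Y^{y_1}(V)=L\cK_X^{x_1}(U)=\cK_Y^{y_2}(V)\cK_X^{x_1}(U)$ of the corollary.

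The only genuine verifications, and hence the mildest obstacles, are the finiteness of $f$ (so that $V$ is affine with coordinate ring the integral closure of $R$ in $L$, as required) and the bookkeeping that the specialization relation $y_1\in\overline{\{y_2\}}$ on $Y$ corresponds to the prime containment $Q_2\subseteq Q_1$ demanded by Proposition \ref{decomposition-variation}. Once these are settled, the corollary is a direct transcription of the algebraic statement into the language of the $\cK$-fields.
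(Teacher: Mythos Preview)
Your proposal is correct and follows exactly the paper's approach: the paper's proof is the single line ``This is a translation of Proposition \ref{decomposition-variation},'' and you have simply spelled out that translation in detail (identifying $R=\cO_X(U)$, $S=\cO_Y(V)$, the primes $Q_i\supset P_i$, and the specialization $y_1\in\overline{\{y_2\}}$ as $Q_2\subset Q_1$).
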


\begin{proof}
  This is a translation of Proposition \ref{decomposition-variation}.
\end{proof}

Some of the notions and properties of branch data (\cite[2.3,2.4,2.5]{orbifold.bundles}) which hold for curves extend to higher dimension as shown below.  

\begin{pro} \label{B_f}
  Let $X$ be a normal integral excellent scheme with function field $K$ and $L/K$ be a finite Galois extension. Let $Y$ be the normalization of $X$ in $L$ and $f:Y\to X$ be the corresponding morphism. For $x\in X$ of codimension at least one and $U$ an open affine connected neighbourhood of $x$, let $y\in Y$ be such that $f(y)=x$ and $V=f^{-1}(U)$, define $B_f(x,U)=\cK_Y^y(V)$. Then $B_f$ is a branch data on $X$. Moreover if $X$ is nonsingular then $\BL(B_f)$ is either empty or pure of co-dimension one. 
\end{pro}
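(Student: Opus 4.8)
The plan is to reduce everything to the single clean identity $B_f(x,U)=L\,\cK_X^x(U)$, the compositum being taken inside the fixed algebraic closure of $\cK_X^x(U)$. First I would observe that $R=\cO_X(U)$ is an excellent normal domain with $\QF(R)=K$, that $S=\cO_Y(V)$ is its integral closure in $L$, and that if $J$ is the prime of $S$ corresponding to $y$ and $I=J\cap R$ the prime corresponding to $x$, then by definition $B_f(x,U)=\cK_Y^y(V)=\widehat L^{J}$ and $\cK_X^x(U)=\widehat K^{I}$. Proposition \ref{decomposition-variation} (equivalently the first equality in Corollary \ref{specialization-okay}) then gives $\widehat L^{J}=L\,\widehat K^{I}$, that is $B_f(x,U)=L\,\cK_X^x(U)$. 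This formula has two immediate consequences: it makes $B_f(x,U)$ manifestly independent of the chosen point $y$ over $x$ (so $B_f$ is well defined), and, since $L/K$ is Galois and $K\subset\cK_X^x(U)$, the compositum $L\,\cK_X^x(U)/\cK_X^x(U)$ is finite Galois (alternatively, this is Proposition \ref{basic-ramification}(1),(2)). Thus $B_f(x,U)$ is a finite Galois extension of $\cK_X^x(U)$, as required.

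Next I would verify the two compatibility axioms of a quasi-branch data directly from the identity. For the specialization axiom, suppose $x_1\in\overline{\{x_2\}}$; since $f$ is finite, hence closed, and $f(\overline{\{y_2\}})=\overline{\{x_2\}}$, I may choose a point $y_2$ over $x_2$ and a point $y_1\in\overline{\{y_2\}}$ over $x_1$. Corollary \ref{specialization-okay} then yields $\cK_X^{x_2}(U)\subset\cK_X^{x_1}(U)$ and $B_f(x_1,U)=L\,\cK_X^{x_1}(U)=B_f(x_2,U)\,\cK_X^{x_1}(U)$, which is exactly the condition $P(x_1,U)=P(x_2,U)\cK_X^{x_1}(U)$. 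For the restriction axiom, if $x\in W\subset U$ with $W$ affine open connected, then Proposition \ref{open-subsets-okay} gives $\cK_X^x(U)\subset\cK_X^x(W)$, so $B_f(x,W)=L\,\cK_X^x(W)=L\,\cK_X^x(U)\,\cK_X^x(W)=B_f(x,U)\,\cK_X^x(W)$, as needed.

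It remains to identify and control $\BL(B_f)$. From the identity and $\cK_X^x(U)\subset\cK_{X,x}$ I get $B_f(x)=B_f(x,U)\,\cK_{X,x}=L\,\cK_{X,x}$, independent of $U$. By flatness of localization and completion (the remark following the definition of unramified extensions) together with Proposition \ref{basic-ramification}, the extension $S_Q/R_P$ with $R_P=\cO_{X,x}$ is unramified if and only if $\widehat\cO_{X,x}$ is unbranched in $\QF(\widehat S_Q)=L\,\cK_{X,x}$. Hence $x\in\BL(B_f)$ precisely when $f$ is ramified over $x$, so that $\BL(B_f)$ is exactly the branch locus of the finite morphism $f$. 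Since unramifiedness is an open condition for the finite-type morphism $f$, the ramification locus is closed in $Y$, and its image under the finite (hence closed) map $f$ is the closed set $\BL(B_f)$. Because $L/K$ is separable, $f$ is generically \'etale, so the generic point of $X$ does not lie in $\BL(B_f)$; thus $\BL(B_f)$ is a proper closed subset and therefore of codimension at least one. This shows $B_f$ is a branch data.

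For the final assertion, when $X$ is nonsingular $f\colon Y\to X$ is a finite dominant morphism from the normal scheme $Y$ to the regular scheme $X$, and I would invoke the Zariski--Nagata purity of the branch locus to conclude that $\BL(B_f)$, if nonempty, is pure of codimension one. I expect this purity theorem to be the main external input; the rest of the argument is essentially bookkeeping built on the identity $B_f(x,U)=L\,\cK_X^x(U)$, and the only mild subtlety is the compatibility of the various choices of points over $x$, which this identity removes by exhibiting $B_f(x,U)$ intrinsically as a compositum.
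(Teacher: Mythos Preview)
Your proof is correct and follows essentially the same route as the paper's: both reduce the quasi-branch-data axioms to Corollary~\ref{specialization-okay} and Proposition~\ref{open-subsets-okay}, identify $\BL(B_f)$ with the branch locus of $f$, and invoke Zariski--Nagata purity for the final statement. Your version is more explicit in isolating the identity $B_f(x,U)=L\,\cK_X^x(U)$ as the organizing principle (and using it to handle well-definedness), whereas the paper appeals to conjugacy of the points over $x$; but the underlying content is the same.
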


\begin{proof}
  Since $L/K$ is a Galois extension, all points in $f^{-1}(x)$ for any point $x\in X$ are conjugates and $\cK_Y^y(V)/\cK_X^x(U)$ are isomorphic Galois extensions for every $y\in f^{-1}(x)$. Hence $B_f(x,U)/\cK_X^x(U)$ is a Galois extension and it is independent of the choice of $y\in Y$ lying above $x$. That $B_f$ is a quasi branch data follows from Corollary \ref{specialization-okay} and Proposition \ref{open-subsets-okay}.
  
  The ramification locus of $f$ is a proper closed subset of $Y$ and $f$ is a proper morphism. Hence the branch locus of $f$ in $X$ is a proper closed subset. Hence $B_f$ is a branch data. The moreover part follows from Zariski's purity of branch locus.
\end{proof}

\begin{df}
 Let $P$ and $Q$ be quasi-branch data on $X$. Define their intersection $(P\cap Q)(x,U)=P(x,U)\cap Q(x,U)$ and their compositum $(PQ)(x,U)=P(x,U)Q(x,U)$ for all $x\in X$ of codimension at least one and $U$ any affine open neighbourhood of $x$.
\end{df}
  
\begin{lemma}\label{compositum-branch-data}
Compositum of two (quasi-)branch data is a (quasi-)branch data. 
\end{lemma}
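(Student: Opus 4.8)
The plan is to verify the two defining conditions of a (quasi\nobreakdash-)branch data for $PQ$ directly, and then, in the branch-data case, to identify $\BL(PQ)$ with $\BL(P)\cup\BL(Q)$. Fix once and for all a single algebraic closure of $\cK_X^x(U)$ in which both $P(x,U)$ and $Q(x,U)$ are taken, so that the compositum $(PQ)(x,U)=P(x,U)Q(x,U)$ is a well-defined subfield; being the compositum of two finite Galois extensions of $\cK_X^x(U)$, it is again finite Galois over $\cK_X^x(U)$. The two compatibility conditions are then pure compositum bookkeeping. For the specialization condition, if $x_1\in\overline{\{x_2\}}$, then using the corresponding identities for $P$ and $Q$ together with the idempotency $\cK_X^{x_1}(U)\cdot\cK_X^{x_1}(U)=\cK_X^{x_1}(U)$,
\[
(PQ)(x_1,U)=\bigl(P(x_2,U)\cK_X^{x_1}(U)\bigr)\bigl(Q(x_2,U)\cK_X^{x_1}(U)\bigr)=P(x_2,U)Q(x_2,U)\,\cK_X^{x_1}(U)=(PQ)(x_2,U)\cK_X^{x_1}(U),
\]
and for $x\in V\subset U$ the same manipulation gives $(PQ)(x,V)=(PQ)(x,U)\cK_X^x(V)$. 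This already shows that the compositum of two quasi-branch data is a quasi-branch data.

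For the branch-data case it remains to control $\BL(PQ)$. Since $(PQ)(x)=(PQ)(x,U)\cK_{X,x}=P(x)Q(x)$, membership $x\in\BL(PQ)$ is the purely local question of whether the complete local normal domain $R=\widehat\cO_{X,x}$ is branched in the compositum $\Omega=LM$ of $L=P(x)$ and $M=Q(x)$ inside a fixed algebraic closure of $K=\cK_{X,x}$. I claim that
\[
\BL(PQ)=\BL(P)\cup\BL(Q).
\]
Granting this, the conclusion is immediate: the right-hand side is a union of two closed subsets of codimension at least one, hence closed of codimension at least one, so $PQ$ is a branch data.

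To prove the claimed equality I would work with inertia groups. Let $G=\Gal(\Omega/K)$ and let $A$ be the integral closure of $R$ in $\Omega$; because $R$ is complete local and $A$ is a module-finite domain over $R$, $A$ is itself local, say with maximal ideal $m_A$, and its decomposition group is all of $G$. Let $I=I(m_A)\trianglelefteq G$ be the inertia group. For an intermediate Galois extension $L=\Omega^{H}$ with $H=\Gal(\Omega/L)\trianglelefteq G$, write $S=A\cap L$ for the integral closure of $R$ in $L$ (again local, with maximal ideal $m_S$). The heart of the matter is the tower formula
\[
I(m_S)=IH/H\subseteq G/H=\Gal(L/K),
\]
i.e.\ the inertia group of $S/R$ is the image of $I$ in $\Gal(L/K)$. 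Granting this, Proposition \ref{unramified-closed-points} together with the Galois hypothesis says that $R$ is unbranched in $L$ iff $I(m_S)=1$ iff $I\subseteq H$. Applying this with $H_L=\Gal(\Omega/L)$ and $H_M=\Gal(\Omega/M)$, and using $H_L\cap H_M=\Gal(\Omega/LM)=1$, we obtain: $R$ is unbranched in $\Omega$ iff $I=1$, while $R$ is unbranched in both $L$ and $M$ iff $I\subseteq H_L$ and $I\subseteq H_M$, i.e.\ iff $I\subseteq H_L\cap H_M=1$. These two conditions coincide, which is exactly $\BL(PQ)=\BL(P)\cup\BL(Q)$.

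The main obstacle is the tower formula, specifically the surjectivity of $I\to I(m_S)$; the inclusion $IH/H\subseteq I(m_S)$ is immediate, since any $\sigma\in I$ fixes $L$ setwise and sends each element of $S$ into $m_A\cap S=m_S$. For the reverse inclusion I would use Remark \ref{inertia-kernel}: $I$ and $I(m_S)$ are the kernels of the natural surjections $G=D(m_A)\twoheadrightarrow\Aut(k_{m_A}/k_R)$ and $G/H\twoheadrightarrow\Aut(k_{m_S}/k_R)$ onto the automorphism groups of the (normal, possibly inseparable) residue extensions, and these fit into a commutative square with the restriction map $\Aut(k_{m_A}/k_R)\to\Aut(k_{m_S}/k_R)$; since $k_{m_A}/k_{m_S}$ is normal, that restriction is surjective with kernel $\Aut(k_{m_A}/k_{m_S})$, which by Remark \ref{inertia-kernel} applied to $\Omega/L$ is the image of $H$, so a diagram chase lifts any element of $I(m_S)$ to one of $I$. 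The delicate point throughout --- and the reason one cannot simply quote the classical Dedekind tower formula --- is that the residue extensions may be inseparable, so one must argue with $\Aut(-/-)$ of normal extensions rather than with Galois groups; this is exactly the framework already prepared by Remark \ref{inertia-kernel} and Proposition \ref{unramified-closed-points}. (As a cross-check on the ``no new branching'' inclusion $\BL(PQ)\subseteq\BL(P)\cup\BL(Q)$, one can instead observe that by Proposition \ref{unramified-closed-points} an unbranched extension of $R$ is finite \'etale, so $A$ is a connected component of the normal ring $S\otimes_R T$ and hence finite \'etale over $R$.)
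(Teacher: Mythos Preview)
Your proof is correct and follows the same two-step outline as the paper --- first show $PQ$ is a quasi-branch data, then establish $\BL(PQ)=\BL(P)\cup\BL(Q)$ --- but you supply substantially more detail than the paper does. The paper's proof is two sentences: it notes that the compositum of Galois extensions is Galois (leaving the compatibility checks implicit), and then simply asserts $\BL(PQ)=\BL(P)\cup\BL(Q)$ without justification.

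Your inertia-group argument for the branch-locus equality (via the tower formula $I(m_S)=IH/H$ and the intersection $H_L\cap H_M=1$) is a clean and natural way to prove what the paper leaves unproved, and the diagram chase through Remark~\ref{inertia-kernel} to handle possibly inseparable residue extensions is the right level of care. So there is no genuine difference in approach, only in explicitness: you have written out the argument the paper presumably had in mind.
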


\begin{proof}
Note that the compositum of two Galois extensions are Galois. So it follows that if $P$ and $Q$ are quasi-branch data then so is $PQ$.

Moreover $\BL(PQ)=\BL(P)\cup\BL(Q)$. So if $P$ and $Q$ are branch data then so is $PQ$.
\end{proof}

% \begin{rmk} 
%   Intersection of two quasi-branch data may not be a quasi-branch data. This is because $P(x)\cap Q(x)$ may not equal $[P(y)\cap Q(y)]\cK_{X,x}$ for $x\in \overline{\{y\}}$. One can construct an explicit example by taking $X$ to be a surface, $y$ to be the generic point of a curve in $X$, $x$ to be a closed point, $P(y)$ and $Q(y)$ to be linearly disjoint extension of $\cK_{Y,y}$ which induce the same nontrivial extension of $\cK_{X,x}$.
% \end{rmk}

\begin{pro}\label{pull-back-branch-data}
  Let $P$ be a branch data on $X$ and $f:Y\to X$ be a quasi-finite dominant separable morphism between normal excellent schemes. For $y\in Y$, let $U$ be an affine open connected neighbourhood of $f(y)$ so that $P(f(y),U)$ is a finite Galois extension of $\cK_X^{f(y)}(U)$. Let $V$ be an affine open connected neighbourhood of $y$ contained in $f^{-1}(U)$ and $Q(y,V)=\cK_Y^y(V)P(f(y),U)$. Then $Q$ is a branch data on $Y$ and will be denoted by $f^*P$.
\end{pro}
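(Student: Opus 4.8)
The plan is to check that $Q$ satisfies the two compatibility conditions of Definition \ref{def:branch-data} (the specialization relation and the restriction relation), that each $Q(y,V)$ is a finite Galois extension of $\cK_Y^y(V)$ independent of the auxiliary open $U$, and finally that $\BL(Q)$ is a closed subset of $Y$ of codimension at least one. Everything rests on one functoriality statement: for $y\in Y$, $x=f(y)$ and an affine $V\subseteq f^{-1}(U)$, there is a natural inclusion of fields $\cK_X^x(U)\subseteq \cK_Y^y(V)$. To obtain this I would first reduce $f$ to a finite morphism: since $f$ is quasi-finite, dominant and separable with $Y$ normal, Zariski's main theorem lets me factor it as $Y\hookrightarrow Z\xrightarrow{g} X$, where $Z$ is the normalization of $X$ in $k(Y)$ and $g$ is finite. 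The open immersion $Y\hookrightarrow Z$ identifies $\widehat\cO_{Y,y}$ with $\widehat\cO_{Z,y}$ and, for $V$ affine open in both, $\cK_Y^y(V)$ with $\cK_Z^y(V)$. The inclusion $\cK_X^x(U)\subseteq \cK_Z^y(g^{-1}(U))$ is exactly the containment $\widehat K^{P}\subseteq \widehat L^{Q}$ recorded in Proposition \ref{decomposition-variation}, and Proposition \ref{open-subsets-okay} (smaller open gives larger field) lets me pass from $g^{-1}(U)$ down to the given $V$.

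Granting this, the structural checks are short. Since $P(x,U)/\cK_X^x(U)$ is finite Galois and $\cK_X^x(U)\subseteq \cK_Y^y(V)$, the compositum $Q(y,V)=\cK_Y^y(V)P(x,U)$ is finite Galois over $\cK_Y^y(V)$, a Galois extension remaining Galois after composing with an overfield of its base. Independence of the choice of $U$ follows from the restriction axiom $P(x,U')=P(x,U)\cK_X^x(U')$ for $P$ together with Proposition \ref{open-subsets-okay}, the various $\cK_X^x(U)$ all sitting inside $\cK_Y^y(V)$ and being absorbed in the compositum. The restriction relation $Q(y,W)=Q(y,V)\cK_Y^y(W)$ for $W\subseteq V$ is immediate from the corresponding relation for $P$ and functoriality of $\cK_Y^y(\cdot)$ in the open set. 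For specialization, take $y_1\in\overline{\{y_2\}}$ and put $x_i=f(y_i)$; continuity of $f$ forces $x_1\in\overline{\{x_2\}}$, so $P(x_1,U)=P(x_2,U)\cK_X^{x_1}(U)$, while Corollary \ref{specialization-okay} applied to $g$ (transported to $Y$ via the open immersion, and to the given $V$ via the restriction relation just established) gives $\cK_Y^{y_1}(V)=\cK_Y^{y_2}(V)\cK_X^{x_1}(U)$. Composing these two identities yields $Q(y_1,V)=Q(y_2,V)\cK_Y^{y_1}(V)$.

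It remains to control $\BL(Q)$, where I expect the real work. The inclusion $\BL(Q)\subseteq f^{-1}(\BL(P))$ is the easy half: for $y$ with $x=f(y)$ one has $Q(y)=\cK_{Y,y}P(x)$ (using $\cK_{X,x}\subseteq \cK_{Y,y}$, the complete-local version of the inclusion above), so if $x\notin\BL(P)$ then $P(x)/\cK_{X,x}$ is unramified and its base change $Q(y)/\cK_{Y,y}$ is again unramified, whence $y\notin\BL(Q)$. As $f$ is quasi-finite and dominant, $f^{-1}(\BL(P))$ is closed of codimension at least one, so $\BL(Q)$ automatically has codimension at least one. The subtlety is that this inclusion is strict in general: ramification of $P$ at $x$ can be absorbed by ramification of $f$ at $y$ (an Abhyankar-type cancellation), so $\BL(Q)$ is genuinely smaller than $f^{-1}(\BL(P))$ and its closedness is not formal.

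The main obstacle is therefore the closedness of $\BL(Q)$. I see two routes, and would likely combine them. The first is to show $\BL(Q)$ is stable under specialization: working inside a fixed affine $V$, the coherence supplied by the specialization relation just verified lets me invoke the monotonicity of inertia in Remark \ref{inertia-variation}, so that a branched point specializes to branched points; being a specialization-stable, (pro-)constructible subset of the noetherian closed set $f^{-1}(\BL(P))$, it is then closed. The second, more in the spirit of this section, is to prove the complement of $\BL(Q)$ is open directly: at a point $y$ the extension $Q(y)=\cK_{Y,y}P(x)$ is a compositum to which Lemma \ref{compositum-wild-ramification}(\ref{eq2:compositum-wild-ramification}) applies, so unbranchedness at $y$ is governed by separability of a residue-field extension, an open condition; the delicate case is precisely the cancellation locus where $f$-ramification offsets $P$-ramification. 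Either way, once $\BL(Q)$ is seen to be closed of codimension at least one, $Q=f^{*}P$ is a branch data on $Y$.
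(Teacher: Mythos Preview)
Your proposal is correct and follows the same overall strategy as the paper, but you work harder than necessary in two places. First, for the specialization axiom you invoke Corollary \ref{specialization-okay} via a Zariski--main--theorem factorization to obtain $\cK_Y^{y_1}(V)=\cK_Y^{y_2}(V)\cK_X^{x_1}(U)$; the paper instead uses only the elementary containment $\cK_Y^{y_2}(V)\subset \cK_Y^{y_1}(V)$ from Proposition \ref{decomposition-variation}: since $\cK_X^{x_1}(U)\subset \cK_Y^{y_1}(V)$ is already absorbed, one has $Q(y_1,V)=\cK_Y^{y_1}(V)P(x_2,U)=\cK_Y^{y_1}(V)\cK_Y^{y_2}(V)P(x_2,U)=\cK_Y^{y_1}(V)Q(y_2,V)$, with no appeal to the finite case. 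Second, you flag closedness of $\BL(Q)$ as the main obstacle and propose two routes; the paper takes exactly your first route --- specialization-stability via Remark \ref{inertia-variation} together with $\BL(Q)\subset f^{-1}(\BL(P))$ --- in a single sentence, so your second route through Lemma \ref{compositum-wild-ramification} is superfluous. On the other hand, your ZMT factorization does supply a justification for the inclusion $\cK_X^x(U)\subset \cK_Y^y(V)$ that the paper simply asserts, and your checks of independence of the auxiliary $U$ and of the restriction axiom in $V$ are details the paper leaves implicit.
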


\begin{proof}
  Let $y\in Y$ and $x=f(y)$ and $U$ be an affine open connected neighbourhood of $x$ for which $P(x,U)/\cK_X^x(U)$ is a Galois extension. Since $\cK_X^x(U)\subset \cK_Y^y(V)$ for any $V$ affine open subset of $f^{-1}(U)$ containing $y$, $Q(y,V)/\cK_Y^y(V)$ is a Galois extension. Also if $y_1\in \overline{ \{ y_2\}}$ then $f(y_1)\in \overline{ \{f(y_2)\}}$. Hence for some affine open connected neighbourhood $U$ of $f(y_1)$, $P(f(y_1),U)=P(f(y_2),U)\cK_X^{f(y_1)}(U)$. So for $V$ an affine open connected subset of $f^{-1}(U)$ containing $y_1$, $Q(y_1,V)=\cK_Y^{y_1}(V)P(f(y_1),U) = \cK_Y^{y_1}(V)P(f(y_2),U)\cK_X^{f(y_1)}(U) = \cK_Y^{y_1}(V)P(f(y_2),U)$. 
  
  By Proposition \ref{decomposition-variation} $\cK_Y^{y_2}(V)\subset \cK_Y^{y_1}(V)$ so $Q(y_1,V)=\cK_Y^{y_1}(V)P(f(y_2),U)\cK_Y^{y_2}(V)$. Hence $Q(y_1,V)=Q(y_2,V)\cK_Y^{y_1}(V)$.
 
  Finally by Remark \ref{inertia-variation} if $y\in\BL(Q)$ then every point in $\overline{ \{y\}}$ is in $\BL(Q)$. Hence $\BL(Q)$ is a closed subset of $f^{-1}(\BL(P))$. Hence $Q$ is a branch data.
\end{proof}

\begin{df}
Let $P$ and $Q$ be two branch data on a normal excellent scheme $X$. We say $P$ is less than or equal to $Q$ (and write $P\le Q$) if for all points $x\in X$ of codimension at least one and an affine open connected neighbourhood $U$ of $x$, $P(x,U)\subset Q(x,U)$. In particular $P(x)\subset Q(x)$ or all points $x\in X$ of codimension at least one. 
\end{df}

\begin{notation}
  Let $x\in X$ be a point and $\cK_{X,x}\subset L_1 \subset L_2$ be finite field extension. Then we say the extension $L_2/L_1$ is ramified (or unramified) if integral closure of $\widehat \cO_{X,x}$ in $L_2$ is ramified (or unramified) extension of integral closure of $\widehat \cO_{X,x}$ in $L_1$.  
\end{notation}

\section{Formal orbifolds} \label{sec:formal-orbifolds}

Now we are ready to extend the definition of formal orbifolds for curves and morphisms between them in \cite[Definition 2.1(3), 2.6]{orbifold.bundles} to arbitrary dimension and many of the basic properties also extend. Another difference from \cite{orbifold.bundles} in the treatment is that we allow morphisms to be quasi finite (as opposed to finite morphisms).  

\begin{df}\label{df:main}
 A \emph{formal orbifold} is a pair $(X,P)$ where $X$ is a finite type reduced normal scheme over a perfect field $k$ or $\spec(\ZZ)$ and $P$ is a branch data on $X$. 
 \begin{enumerate}
  \item An admissible morphism of formal orbifolds $f:(Y,Q)\to (X,P)$ is a quasi-finite dominant separable morphism $f:Y\to X$ of normal excellent schemes such that $Q(y,f^{-1}(U))\supset P(f(y),U)$ for all points $y\in Y$ of codimesion at least one and for some $U$ affine open connected neighbourhood of $f(y)$ with $f^{-1}(U)$ affine. \\
  \item An admissible morphism $f$ is called flat morphism if $f:Y\to X$ is flat away from $f^{-1}(\BL(P))$.\\
  \item An admissible morphism $f$ is said to be unramified or \'etale at $y$ if the extension $Q(y)/P(f(y))$ is unramified and $f$ is called \'etale morphism if $f$ is unramified for all $y\in Y$.
  \item An admissible morphism is called a covering morphism if it is also proper (and hence finite).
  \item A covering morphism $f$ is called an \'etale cover if $f$ is an \'etale covering morphism.
\end{enumerate}
\end{df}

 \begin{pro}\label{composition-etale}
  Composition of admissible (respectively \'etale) morphisms is admissible (respectively \'etale).
 \end{pro}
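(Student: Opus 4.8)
The plan is to prove the two assertions separately, since admissibility and the \'etale condition are checked by essentially independent arguments, and then to note that \'etale morphisms are in particular admissible so that the second claim genuinely builds on the first.

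\medskip

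\noindent\textbf{Admissibility is preserved under composition.} Suppose $f:(Y,Q)\to(X,P)$ and $g:(Z,R)\to(Y,Q)$ are admissible, and set $h=f\circ g:Z\to X$. First I would check that $h$ is again a quasi-finite dominant separable morphism of normal excellent schemes; this is routine, since quasi-finiteness, dominance and separability of function field extensions are all stable under composition, and the schemes in question are already assumed normal and excellent. The substance is the inclusion of branch data. Fix a point $z\in Z$ of codimension at least one, let $y=g(z)$ and $x=f(y)$. Using the containments $K_Z^z(W)\supset K_Y^y(V)\supset K_X^x(U)$ coming from Proposition \ref{open-subsets-okay} for suitable nested affine connected neighbourhoods $U\supset V\supset W$ with $V\subset f^{-1}(U)$ and $W\subset g^{-1}(V)$, admissibility of $g$ gives $R(z,W)\supset Q(y,V)$ and admissibility of $f$ gives $Q(y,V)\supset P(x,U)$. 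The only mild subtlety is that the defining condition of admissibility is stated ``for some'' neighbourhood $U$, so I would first invoke the compatibility relations $P(x,U')=P(x,U)K_X^x(U')$ and $Q(y,V')=Q(y,V)K_Y^y(V')$ from Definition \ref{def:branch-data} (and the analogous relation for $R$) to arrange all three witnessing neighbourhoods to be compatibly nested. Chaining the two inclusions then yields $R(z,W)\supset P(x,U)$, so $h$ is admissible.

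\medskip

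\noindent\textbf{The \'etale property is preserved.} Now assume in addition that $f$ and $g$ are \'etale. By the admissibility part, $h$ is admissible, so it remains to verify that $h$ is unramified at every $z\in Z$, i.e. that $R(z)/P(h(z))$ is an unramified extension of complete local rings in the sense of the Notation preceding Section \ref{sec:formal-orbifolds}. With $z,y,x$ as above, the \'etale hypotheses say that $R(z)/Q(y)$ and $Q(y)/P(x)$ are each unramified; here all three are finite extensions of fraction fields of complete local normal domains sitting inside a common algebraic closure, with $P(x)\subset Q(y)\subset R(z)$. The claim reduces to the transitivity of unramifiedness in a tower of finite extensions of complete local normal domains: if the integral closure in the middle field is unramified over the bottom, and the integral closure in the top field is unramified over the middle, then the top is unramified over the bottom. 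I would prove this via Proposition \ref{unramified-closed-points} applied at each stage, combining the equalities of maximal ideals ($\scrm_{\text{mid}}\,S_{\text{top}}=\scrm_{\text{top}}$ and $\scrm_{\text{bot}}\,S_{\text{mid}}=\scrm_{\text{mid}}$ force $\scrm_{\text{bot}}\,S_{\text{top}}=\scrm_{\text{top}}$) with transitivity of finite separable residue extensions.

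\medskip

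\noindent\textbf{Expected obstacle.} The main point requiring care is the bookkeeping of neighbourhoods in the admissibility step: because the branch-data inclusions are only asserted to hold for \emph{some} affine connected $U$ (with the technical requirement that $f^{-1}(U)$ be affine), one must genuinely use the restriction compatibility of Definition \ref{def:branch-data} to pass to a common refinement on which all the relevant inclusions simultaneously make sense, and to ensure that preimages remain affine. This is not deep but is the place where a careless argument could break. The transitivity of unramifiedness in the \'etale step is conceptually the crux, but it follows cleanly from Proposition \ref{unramified-closed-points} once the tower is set up, so I expect it to be short.
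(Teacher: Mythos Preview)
Your proposal is correct and follows essentially the same route as the paper: the admissibility step is handled exactly as in the paper by shrinking neighbourhoods and using the compatibility relations of Definition \ref{def:branch-data} to chain the two branch-data inclusions (the paper phrases this as ``shrinking $V$ if necessary we may assume $f^{-1}(V)$ is an affine open subset of $U$''), and the \'etale step is the transitivity of unramifiedness, which the paper dispatches in a single sentence while you spell it out via Proposition \ref{unramified-closed-points}. One cosmetic remark: your phrase ``nested affine connected neighbourhoods $U\supset V\supset W$'' is a slip since $U,V,W$ live in different schemes, but the clause that follows it (``with $V\subset f^{-1}(U)$ and $W\subset g^{-1}(V)$'') makes your intended meaning clear and matches the paper's $W=g^{-1}(f^{-1}(V))$.
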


 \begin{proof}
   Let $f:(Y,Q)\to (X,P)$ and $g:(Z,B)\to (Y,Q)$ be admissible morphism of formal orbifolds. Then $f\circ g$ is a quasi-finite dominant separable morphism as these properties are preserved under composition. Since $f$ and $g$ are admissible, for any $z\in Z$ there exists $U$ affine open connected neighbourhood of $g(z)$ such that $g^{-1}(U)$ is affine and $B(z,g^{-1}(U))\supset Q(g(z),U)$. Similarly there exists $V$ affine open connected neighbourhood of $f\circ g(z)$ such that $f^{-1}(V)$ is affine and  $Q(g(z),f^{-1}(V))\supset P(f\circ g (z),V)$. Shrinking $V$ if necessary we may assume $f^{-1}(V)$ is an affine open subset of $U$ and $W=g^{-1}(f^{-1}(V)$ is affine. Now $B(z,W)=B(z,g^{-1}(U))\cK_Z^z(W)\supset Q(g(z),U)\cK_Y^{g(z)}(f^{-1}(V))=Q(g(z),f^{-1}(V))$ and hence $B(z,W)\supset P(f\circ g (z),V)$. Hence $f\circ g$ is an admissible morphism. 
   
   Note that being unramified is preserved under composition, hence if $f$ and $g$ are unramified at all points then so is $f\circ g$. 
 \end{proof}
 
 The following is a known result but perhaps not written down explicitly. %This argument can be found in \cite{FK}.
 \begin{pro}
  Let $f:Y\to X$ be a quasi-finite dominant morphism between normal schemes which is unramified at $y\in Y$. Then $f$ is flat at $y$. 
\end{pro}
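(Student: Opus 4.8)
The plan is to reduce the assertion, which is local at $y$, to a statement about complete local rings and then invoke the freeness established in the course of proving Proposition \ref{unramified-closed-points}. Set $x=f(y)$, $R=\mathcal O_{X,x}$ and $K=\QF(R)=k(X)$; since $X$ and $Y$ are normal, their local rings are normal local domains, and as $f$ is dominant and quasi-finite the induced extension $L:=k(Y)$ of $K$ is finite. First I would use Zariski's Main Theorem: because $f$ is quasi-finite and $\mathcal O_{Y,y}$ is integrally closed, one gets $\mathcal O_{Y,y}=S_{\mathfrak q}$, where $S$ is the integral closure of $R$ in $L$ (a finite $R$-module, as $R$ is excellent, hence Nagata) and $\mathfrak q$ is the prime of $S$ corresponding to $y$, lying over $\mathfrak m_R$. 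In this language ``unramified at $y$'' is precisely the condition $\mathfrak m_R S_{\mathfrak q}=\mathfrak q S_{\mathfrak q}$ together with $k(y)/k(x)$ being finite separable.

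Next I would pass to completions. It is a standard fact that a local homomorphism of Noetherian local rings is flat if and only if the induced homomorphism of $\mathfrak m$-adic completions is flat, so it suffices to prove that $\widehat{\mathcal O}_{Y,y}$ is flat over $\widehat R$. Here $\widehat R$ is a complete normal local domain (\cite[7.8.3]{ega4.2}). Since $\mathfrak m_R S_{\mathfrak q}=\mathfrak q S_{\mathfrak q}=\mathfrak m_{S_{\mathfrak q}}$, the $\mathfrak m_R$-adic and maximal-adic topologies on $S_{\mathfrak q}$ coincide; consequently $\widehat{\mathcal O}_{Y,y}=\widehat{S_{\mathfrak q}}$ is a direct factor of the $\mathfrak m_R$-adic completion $\widehat S$, which is finite over $\widehat R$ because $S$ is finite over $R$. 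Thus $\widehat{S_{\mathfrak q}}$ is a complete normal local domain, module-finite over $\widehat R$, satisfying $\mathfrak m_{\widehat R}\widehat{S_{\mathfrak q}}=\mathfrak m_{\widehat{S_{\mathfrak q}}}$ and having separable residue field extension over $k(x)$.

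Finally I would apply, essentially verbatim, the Cohen-structure computation carried out in the converse part of the proof of Proposition \ref{unramified-closed-points}. That argument --- write $\widehat R=R_0[[x_1,\ldots,x_r]]$ and $\widehat{S_{\mathfrak q}}=S_0[[y_1,\ldots,y_s]]$ via coefficient rings $R_0\subset S_0$, use $\mathfrak m_{\widehat R}\widehat{S_{\mathfrak q}}=\mathfrak m_{\widehat{S_{\mathfrak q}}}$ to rewrite $\widehat{S_{\mathfrak q}}=S_0[[x_1,\ldots,x_r]]$, and then treat the equal-characteristic case ($\widehat{S_{\mathfrak q}}=\widehat R\otimes_{k(x)}k(y)$) and the mixed-characteristic case ($S_0/R_0$ finite \'etale) separately --- uses only that the residue extension is separable and that $\mathfrak m_{\widehat R}$ generates the maximal ideal, and never the Galois hypothesis present in \ref{unramified-closed-points}. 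It therefore yields that $\widehat{S_{\mathfrak q}}$ is a free $\widehat R$-module. In particular $\widehat{\mathcal O}_{Y,y}$ is flat over $\widehat R$, and descending flatness through the completions gives that $\mathcal O_{Y,y}$ is flat over $\mathcal O_{X,x}$, i.e. $f$ is flat at $y$.

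I expect the main obstacle to lie in the two reduction steps rather than in the final computation: one must justify the identification $\mathcal O_{Y,y}=S_{\mathfrak q}$ (via Zariski's Main Theorem and quasi-finiteness) and the module-finiteness of $\widehat{S_{\mathfrak q}}$ over $\widehat R$, and then correctly descend flatness from the completed rings. Once these are in place, the freeness is exactly the already-proven content of the converse in Proposition \ref{unramified-closed-points}, observing that its Galois hypothesis is not needed for that portion of the argument.
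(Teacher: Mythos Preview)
Your argument is correct but follows a different path from the paper. The paper's proof is a short henselization argument: let $A$ be the strict henselization of $\cO_{X,x}$; then $C=A\otimes_{\cO_{X,x}}\cO_{Y,y}$ is a finite unramified extension of the strictly henselian normal domain $A$, hence free over $A$, and flatness descends along the faithfully flat map $\cO_{X,x}\to A$. You instead identify $\cO_{Y,y}=S_{\mathfrak q}$ via Zariski's Main Theorem, complete, and reuse the Cohen-structure computation from the converse part of Proposition~\ref{unramified-closed-points} to exhibit $\widehat{S_{\mathfrak q}}$ as free over $\widehat R$. The two proofs share the same architecture---pass to a base over which ``finite unramified over a normal local domain'' forces freeness, then descend by faithful flatness---but the paper's version is shorter and sidesteps both the ZMT identification and the explicit structure-theorem computation, while yours has the virtue of recycling work already done earlier in the paper. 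One small caution: the Cohen-structure step as literally written in Proposition~\ref{unramified-closed-points} presents both rings as formal power-series rings over their coefficient rings, which strictly speaking presumes regularity; for general normal $\widehat R$ you should instead argue directly that $\widehat{S_{\mathfrak q}}=\widehat R\cdot S_0$ (by Nakayama, using $\mathfrak m_{\widehat R}\widehat{S_{\mathfrak q}}=\mathfrak m_{\widehat{S_{\mathfrak q}}}$) and then compare dimensions to see that the resulting surjection $\widehat R\otimes_{R_0}S_0\twoheadrightarrow \widehat{S_{\mathfrak q}}$ between normal domains of the same dimension is an isomorphism.
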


\begin{proof}
 Let $y\in Y$ and $x=f(y)$. Let $A$ be the strict henselianization of $\cO_{X,x}$ then $A$ is a normal domain faithfully flat over $\cO_{X,x}$. Now $C=A\otimes_{\cO_{X,x}} \cO_{Y,y}$ is a finite unramified extension of $A$ and $A$ is a strictly henselian integral domain, hence $C$ is a free $A$-module and hence flat. Since $A/\cO_{X,x}$ is faithfully flat, the original extension $\cO_{Y,y}/\cO_{X,x}$ is flat.  
\end{proof}
 
 \begin{cor}
   Let $f:(Y,Q)\to (X,P)$ be an \'etale morphism. Then $f$ is flat away from $f^{-1}(\BL(P))$.
 \end{cor}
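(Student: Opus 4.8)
The plan is to reduce the statement about flatness of the scheme morphism to the unramifiedness criterion already established, by showing that away from $f^{-1}(\BL(P))$ the \emph{orbifold} \'etale hypothesis forces the underlying morphism $f\colon Y\to X$ to be unramified (in the usual scheme-theoretic sense) at $y$; the preceding proposition (a quasi-finite dominant morphism of normal schemes that is unramified at $y$ is flat at $y$) then finishes the argument.

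Fix $y\in Y\setminus f^{-1}(\BL(P))$ and set $x=f(y)$, so that $x\notin \BL(P)$, and write $\cO=\widehat\cO_{X,x}$. First I would record the nested fields $\cK_{X,x}\subset \cK_{Y,y}\subset Q(y)$, together with $\cK_{X,x}\subset P(x)\subset Q(y)$, all regarded inside a common algebraic closure. Let $S_P$ and $S_Q$ be the integral closures of $\cO$ in $P(x)$ and in $Q(y)$, and let $T$ be the integral closure of $\cO$ in $\cK_{Y,y}$; since $\cO$ is a complete (hence Henselian) normal local domain, the integral closure of $\cO$ in any finite field extension is again a complete normal local domain. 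The condition $x\notin \BL(P)$ means precisely that $\cO$ is unbranched in $P(x)$, i.e.\ $S_P/\cO$ is unramified, and the \'etale hypothesis at $y$ means $Q(y)/P(x)$ is unramified, i.e.\ $S_Q/S_P$ is unramified. Composing these (equality of maximal ideals and separability of residue fields are both transitive in a tower) yields that $S_Q/\cO$ is unramified.

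The main step is then to pass from $S_Q/\cO$ unramified to the sub-extension $T/\cO$ being unramified, where $\cO\subset T\subset S_Q$. I would do this with the inertia-group formalism of Section \ref{sec:local-theory}: take a finite Galois extension $N$ of $\cK_{X,x}$ containing $Q(y)$, let $I$ be the inertia group of the unique maximal ideal of the integral closure of $\cO$ in $N$, and use Proposition \ref{unramified-closed-points} together with Remark \ref{inertia-kernel} to identify the inertia field $N^I$ as the maximal subextension of $N/\cK_{X,x}$ that is unramified over $\cO$. Since $S_Q/\cO$ is unramified we obtain $Q(y)\subseteq N^I$, hence $\cK_{Y,y}\subseteq N^I$, so $T/\cO$ is unramified as well. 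Alternatively, one can argue directly that $\mathfrak m_{\cO}T=\mathfrak m_T$ by using that $S_Q$ is finite and free over $\cO$ (as in the proof of Proposition \ref{unramified-closed-points}) and descending the identity $\mathfrak m_{\cO}S_Q=\mathfrak m_{S_Q}$ along the faithfully flat inclusion $T\to S_Q$.

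Finally, since $\widehat\cO_{Y,y}$ is a normal complete local domain, finite over $\cO$ with fraction field $\cK_{Y,y}$, it coincides with $T$; thus $\widehat\cO_{Y,y}/\widehat\cO_{X,x}$ is unramified. By faithful flatness of completion this descends to show $\cO_{Y,y}/\cO_{X,x}$ is unramified, i.e.\ $f$ is unramified at $y$ in the scheme-theoretic sense, and the preceding proposition then gives flatness of $f$ at $y$. As $y$ was an arbitrary point outside $f^{-1}(\BL(P))$, this proves that $f$ is flat away from $f^{-1}(\BL(P))$. I expect the subtlest points to be the sub-extension step (that a subextension of an unramified extension is unramified) and the verification that $\widehat\cO_{Y,y}$ is finite over $\widehat\cO_{X,x}$ for a merely quasi-finite $f$; the latter reduces, via Zariski's main theorem, to the finite situation treated in Section \ref{sec:local-theory}.
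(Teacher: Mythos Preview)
Your proposal is correct and follows essentially the same route as the paper's proof: deduce that $Q(y)/\cK_{X,x}$ is unramified by composing the two given unramified steps, pass to the subextension $\cK_{Y,y}/\cK_{X,x}$, and then invoke the preceding proposition. The paper states the subextension step and the passage from completions to $f$ being unramified at $y$ without further comment, whereas you have (correctly) unpacked both; the finiteness of $\widehat\cO_{Y,y}$ over $\widehat\cO_{X,x}$ for quasi-finite $f$ is indeed the only point requiring care, and your appeal to Zariski's main theorem handles it.
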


 \begin{proof}
   Let $y\in Y$ be such that $y\notin f^{-1}(\BL(P))$ and let $x=f(y)$. Then $P(x)/\cK_{X,x}$ is unramified. Since $f$ is \'etale $Q(y)/P(x)$ is unramified. Hence $Q(y)/\cK_{X,x}$ is unramified which implies $\cK_{Y,y}/\cK_{X,x}$ is unramified. Hence $f:Y\to X$ is unramified at $y$ and $X$ is normal. This implies $f$ is flat at $y$ by the above proposition.
 \end{proof}

 \begin{pro}
  Let $f:(Y,Q)\to (X,P)$ and $g:(Z,B)\to (Y,Q)$ be flat morphisms of formal orbifolds such that $Q/f^*P$ or equivalently $id: (Y,Q)\to (Y,f^*P)$ is unramified. Then $f\circ g$ is flat.
 \end{pro}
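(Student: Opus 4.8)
The plan is to reduce flatness of $f\circ g$ to pointwise flatness of the underlying morphism $Z\to X$, and then to play off the flatness of $f$ and of $g$ separately. The only gap occurs at points $z$ for which $g(z)$ lies in $\BL(Q)$ while $f(g(z))$ avoids $\BL(P)$, and the unramifiedness hypothesis is precisely what is needed to close that gap.

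First I would note that $f\circ g$ is admissible by Proposition \ref{composition-etale}, so by Definition \ref{df:main}(2) it remains to show that the underlying morphism of schemes $f\circ g\colon Z\to X$ is flat away from $(f\circ g)^{-1}(\BL(P))$. Fix $z\in Z$ with $x:=f(g(z))\notin \BL(P)$ and put $y:=g(z)$; the goal is to prove that $\cO_{Z,z}$ is flat over $\cO_{X,x}$.

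The crux is to show $y\notin \BL(Q)$. Since $x\notin \BL(P)$, and since the proof of Proposition \ref{pull-back-branch-data} records that $\BL(f^*P)\subseteq f^{-1}(\BL(P))$, we get $y\notin \BL(f^*P)$; that is, $(f^*P)(y)/\cK_{Y,y}$ is unramified over $\widehat\cO_{Y,y}$. The hypothesis that $id\colon (Y,Q)\to (Y,f^*P)$ is unramified gives that $Q(y)/(f^*P)(y)$ is unramified. Now write $\widehat\cO_{Y,y}\subset S_1\subset S_2$ for the successive integral closures in the tower $\cK_{Y,y}\subset (f^*P)(y)\subset Q(y)$; these are local rings because $\widehat\cO_{Y,y}$ is henselian. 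Unramifiedness of $S_1/\widehat\cO_{Y,y}$ and of $S_2/S_1$ means $m_{\widehat\cO_{Y,y}}S_1=n_{S_1}$ and $n_{S_1}S_2=n_{S_2}$ with separable residue extensions, and composing these (using $S_1S_2=S_2$ and transitivity of separability of residue fields) yields $m_{\widehat\cO_{Y,y}}S_2=n_{S_2}$ with separable residue extension. By Proposition \ref{unramified-closed-points} this says $Q(y)/\cK_{Y,y}$ is unramified, i.e. $y\notin\BL(Q)$.

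Finally I would assemble the two flatness statements. Because $x\notin\BL(P)$ and $f$ is flat away from $f^{-1}(\BL(P))$, $f$ is flat at $y$, so $\cO_{Y,y}$ is flat over $\cO_{X,x}$; because $y\notin\BL(Q)$ and $g$ is flat away from $g^{-1}(\BL(Q))$, $g$ is flat at $z$, so $\cO_{Z,z}$ is flat over $\cO_{Y,y}$. A composite of flat local homomorphisms is flat, hence $\cO_{Z,z}$ is flat over $\cO_{X,x}$. As $z$ was an arbitrary point over the complement of $\BL(P)$, the morphism $f\circ g$ is flat away from $(f\circ g)^{-1}(\BL(P))$, which is the assertion. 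I expect the transitivity of unramifiedness in the middle paragraph, together with the containment $\BL(f^*P)\subseteq f^{-1}(\BL(P))$, to be the only genuine content; everything else is bookkeeping with the definitions of admissible and flat morphisms of formal orbifolds.
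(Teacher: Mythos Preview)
Your proof is correct and follows essentially the same route as the paper's. The paper compresses your middle paragraph into the single assertion $\BL(Q)=\BL(f^*P)$ (together with $\BL(f^*P)\subset f^{-1}(\BL(P))$ from Proposition~\ref{pull-back-branch-data}); you have simply unpacked the transitivity-of-unramifiedness argument that justifies the inclusion $\BL(Q)\subset\BL(f^*P)$, which is the only direction actually needed.
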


 \begin{proof}
  Note that $\BL(Q)=\BL(f^*P)$ and by Proposition \ref{pull-back-branch-data} $\BL(f^*P)\subset f^{-1}(\BL(P))$. Hence $(f\circ g)^{-1}(\BL(P))=g^{-1}(f^{-1}(\BL(P)))\supset g^{-1}(\BL(Q))$. So if $z\notin (f\circ g)^{-1}(\BL(P))$ then $z\notin g^{-1}(\BL(Q))$, hence $g$ is flat (as morphism of schemes) at $z$ and $g(z)\notin f^{-1}(\BL(P))$. Hence $f$ is flat at $g(z)$ which implies $f\circ g$ is flat at $z$. 
 \end{proof}

\begin{df}
 Let $f:(Y,Q)\to (X,P)$ be a covering morphism of formal orbifolds. The ramification locus of $f$ is $\{y\in Y: Q(y)/P(f(y))\text{ is a ramified extension}\}$. Its image in $X$ is called the branch locus of $f$.
\end{df}

\begin{lemma}\label{branch-locus-closed}
 Let $f:(Y,Q)\to (X,P)$ be a covering morphism of formal orbifolds then the ramification locus and the branch locus of $f:(Y,Q)\to (X,P)$ are closed. 
\end{lemma}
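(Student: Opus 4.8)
The plan is to separate the statement into its two assertions and reduce the second to the first. Since a covering morphism is finite, $f$ is a closed map, so the branch locus is the image under $f$ of the ramification locus; hence it will be closed as soon as the ramification locus $R=\{y\in Y: Q(y)/P(f(y))\text{ is ramified}\}$ is shown to be closed in $Y$. I would therefore concentrate all the work on proving that $R$ is closed.

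First I would confine $R$ inside an explicit ambient closed set. Writing $x=f(y)$ and using the two towers $\cK_{X,x}\subset \cK_{Y,y}\subset Q(y)$ and $\cK_{X,x}\subset P(f(y))\subset Q(y)$, transitivity of the unramified condition for extensions of complete normal local rings (which follows from Proposition \ref{unramified-closed-points} applied to a Galois closure, together with Remark \ref{inertia-kernel}) shows the following: if $y\notin\BL(Q)$, if $x\notin\BL(P)$, and if $f$ is unramified at $y$ as a morphism of schemes, then $Q(y)/\cK_{X,x}$ is unramified and hence so is its subextension $Q(y)/P(f(y))$. Thus $R\subseteq Z:=\BL(Q)\cup f^{-1}(\BL(P))\cup \mathrm{Ram}(f)$, where $\mathrm{Ram}(f)$ is the ramification locus of the finite morphism $f$ of normal schemes. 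Each of the three pieces is closed ($\BL(P)$ and $\BL(Q)$ by Definition \ref{def:branch-data}, $f^{-1}(\BL(P))$ by continuity, $\mathrm{Ram}(f)$ by the classical theory), so $Z$ is closed and the problem is localized to $Z$.

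Second I would show that $R$ is stable under specialization. If $y_1\in\overline{\{y_2\}}$, then the prime of the relevant integral closure corresponding to $y_1$ contains the one corresponding to $y_2$, so by Remark \ref{inertia-variation} the inertia group at $y_1$ contains the inertia group at $y_2$ (after passing to a Galois closure of $Q(y_2)$ over $\cK_{X,x_2}$). Since nontrivial inertia is exactly ramification in the Galois setting, this gives that $y_2\in R$ implies $y_1\in R$, i.e.\ $R$ is specialization-closed. Because $Y$ is noetherian, it then suffices to prove that $R$ is constructible, as a constructible specialization-closed subset of a noetherian scheme is closed.

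The main obstacle is this constructibility, equivalently the openness of the étale locus $Y\setminus R$. Away from $Z$ openness is immediate, and near a point of $Z$ at which $f$ is nevertheless étale as a formal orbifold morphism one must propagate unramifiedness using the compatibility axioms of the branch data (Definition \ref{def:branch-data}) and their behaviour under pullback (Proposition \ref{pull-back-branch-data}), realizing the non-étale condition as the support of a different for the induced extensions of complete local rings. The genuine difficulty is that the branch datum $Q$ is only a compatible family of local Galois extensions and need not be realized by a single finite cover of $Y$, so the ramified locus cannot simply be read off a global discriminant; moreover wild ramification can cancel, so that a point of $\BL(Q)\cap f^{-1}(\BL(P))$ may still be a point of étaleness and $R$ need be neither equal to $Z$ nor a union of components of $Z$. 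Controlling this cancellation in families, and thereby establishing constructibility, is where the real work lies.
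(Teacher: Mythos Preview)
Your reduction to the ramification locus and your specialization-closed argument are exactly what the paper does: pick $y$ in the ramification locus and $y_0\in\overline{\{y\}}$, build the finite extension $A\subset B$ where $A$ is the integral closure of $\widehat{\cO_X(U)}^{I_x}$ in $P(x,U)$ and $B$ its integral closure in $Q(y,V)$, locate primes $\tilde J\subset\tilde J_0$ of $B$ over $y$ and $y_0$, and invoke $I(\tilde J)\subset I(\tilde J_0)$ (Remark \ref{inertia-variation}) to push ramification from $y$ to $y_0$. The paper then writes ``Hence the ramification locus is closed'' and stops---it never mentions constructibility, your ambient set $Z$, or any openness argument for the \'etale locus.

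So your proposal is not wrong, but it is incomplete in a way you yourself flag: you reduce the problem to constructibility and then say ``this is where the real work lies'' without doing that work. As a proof this is a genuine gap. What you have actually established is precisely what the paper establishes, plus the (correct) observation that specialization-closed alone does not imply closed in a noetherian scheme; in effect you have spotted that the paper's own proof, read literally, has the same lacuna. If you want to close it, the natural route is the one you gesture at: for each generic point $\eta$ of an irreducible component of your $Z$, the branch-data axiom $Q(y,V)=Q(\eta,V)\cK_Y^{y}(V)$ (and similarly for $P$) means that along $\overline{\{\eta\}}$ the question is governed by the single finite ring extension $B_\eta/A_\eta$ attached to $\eta$, whose ramification locus is closed (support of the sheaf of relative differentials, or of the different). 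Carrying this out component-by-component on $Z$ gives closedness of $R$. Without some such step your proposal, like the paper's argument as written, only yields that $R$ is stable under specialization.
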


\begin{proof}
  Since $f$ is proper it is enough to show that the ramification locus is closed. Let $y\in Y$ be in the ramification locus of $f$ and $y_0\in \overline{ \{y\}}$ be a point. Let $x=f(y)$ and $x_0=f(y_0)$. Note that $x_0$ is in the closure of $x$. Let $U$ be an affine open connected neighbourhood of $x_0$ and $V=f^{-1}(U)$. Since $f$ is ramified at $y$, by Proposition \ref{local-ramified} $y$ is ramified in the field extension $Q(y,V)/P(x,U)$. Also $P(x_0,U)=P(x,U)\cK_{X}^{x_0}(U)$ and $Q(y_0,V)=Q(y,V)\cK_{Y}^{y_0}(V)$. So we have 
  \begin{align*}
  Q(y,V)P(x_0,U)&=Q(y,V)\cK_{X}^{x_0}(U)\\
                &=Q(y,V)K_Y^y(V)K_X^{x_0}(U)\\
		&=Q(y,V)K_Y^{y_0}(V) \text{ (by Proposition \ref{decomposition-variation}).}\\
		&=Q(y_0,V)
  \end{align*}

  Let $A_0$ be the normalization of $\widehat{\cO}_{X,x_0}$ in $P(x_0)$  and $B_0$ be the normalization of $A_0$ in $Q(y_0)$. Let $I$ be the ideal of $\cO_X(U)$ defining the point $x$ and $J$ be the ideal of $\cO_Y(V)$ defining $y$. Note that $J\cap \cO_X(U)=I$. Let $A$ be the normalization of $\widehat{\cO_X(U)}^I$ in $P(x,U)$  and $B$ be the normalization of $A$ in $Q(y,V)$ (which is same as the normalization of $\widehat{\cO_Y(V)}^J$). Note that $A_0$ is the completion of the stalk of $\spec(A)$ at a point lying above $x_0$ and  similarly $B_0$.
  
  \[
    \xymatrix{
      B\ar@/^1pc/@{-->}[rrr] &                         &               & B_0\\
      \widehat{\cO_Y(V)}^J\ar[u]\ar@/^1pc/ @{-->}[rrr]    & A\ar[ul]\ar@{-->}[r] & A_0\ar[ru] & \widehat\cO_{Y,y_0}\ar[u]\\
      & \widehat{\cO_X(U)}^I\ar[lu]\ar[u]\ar@{-->}[r] & \widehat\cO_{X,x_0} \ar[u]\ar[ru]
    }
  \]
  Hence there are prime ideals in $B$ denoted by $\tilde J\subset \tilde J_0$  which corresponds to points $y$ and $y_0$ of $Y$. Moreover $\tilde J$ and $\tilde J_0$ lie over prime ideals in $A$ which corresponds to points $x$ and $x_0$ of $X$ respectively. Now using Remark \ref{inertia-variation}, the inertia group $I(\tilde J)$ in the extension $B/A$ is a subgroup of $I(\tilde J_0)$. Since $Q(y)/P(x)$ is ramified $I(\tilde J)$ is a nontrivial group and hence $I(\tilde J_0)$ is also a nontrivial group. This implies $\tilde J_0$ is ramified in $B/A$ and passing to the completion at $\tilde J_0$ one gets $Q(y_0)/P(y_0)$ is ramified. So $y_0$ is in the ramification locus of $f:(Y,Q)\to X,P)$. Hence the ramification locus is closed.
\end{proof}

\begin{lemma}
  Let $(Y,Q)$ and $(X,P)$ be formal orbifolds. Let $f:Y\to X$ be a finite morphism such that for all codimension one point $y\in Y$ and some affine open connected neighbourhood $U$ of any $x_0\in \overline{\{f(y)\}}$, $Q(y,f^{-1}(U))\supset P(f(y),U)$. Then $f:(Y,Q)\to (X,P)$ is a covering morphism of formal orbifolds.
\end{lemma}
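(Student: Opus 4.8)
The plan is to check the definition of covering morphism directly: a covering morphism is an admissible morphism that is also proper. Since $f$ is finite it is proper and quasi-finite, and being a dominant separable morphism of the normal excellent schemes $Y$ and $X$ (as is implicit in comparing $P$ and $Q$ inside a common field), the only thing left to verify is the admissibility inequality $Q(y,f^{-1}(U))\supset P(f(y),U)$ at \emph{every} point $y$ of codimension at least one. At codimension one points this is exactly the hypothesis, so the real content is to propagate it to the points of codimension at least two by specialization.

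First I would produce, for a given point $y$ of codimension $d\ge 2$, a codimension one generization. The local ring $\cO_{Y,y}$ is a normal local domain of dimension $d\ge 1$, hence contains a height one prime inside its maximal ideal (for instance a minimal prime over a nonzero nonunit, by Krull's principal ideal theorem); this prime corresponds to a codimension one point $y_1\in Y$ with $y\in\overline{\{y_1\}}$. Continuity of $f$ then gives $f(y)\in\overline{\{f(y_1)\}}$, so $f(y_1)$ is a generization of $f(y)$. Now I invoke the hypothesis with the codimension one point $y_1$ and the specialization $x_0=f(y)\in\overline{\{f(y_1)\}}$ to obtain an affine open connected $U\ni f(y)$ with $Q(y_1,f^{-1}(U))\supset P(f(y_1),U)$. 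Because open sets are stable under generization we automatically have $f(y_1)\in U$; and because $f$ is finite (hence affine) and $Y$ is integral, $V:=f^{-1}(U)$ is affine, integral, and contains both $y_1$ and $y$, so $Q(\,\cdot\,,V)$ is defined there.

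Next I would combine this with the defining specialization relations of the two branch data. These give $P(f(y),U)=P(f(y_1),U)\,\cK_X^{f(y)}(U)$ and $Q(y,V)=Q(y_1,V)\,\cK_Y^{y}(V)$, while the functorial field inclusion $\cK_X^{f(y)}(U)\subset\cK_Y^{y}(V)$ (established exactly as in the proof of Proposition \ref{pull-back-branch-data}) lets me chain everything together:
\[
P(f(y),U)=P(f(y_1),U)\,\cK_X^{f(y)}(U)\subset Q(y_1,V)\,\cK_Y^{y}(V)=Q(y,V),
\]
using monotonicity of the compositum together with $P(f(y_1),U)\subset Q(y_1,V)$. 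This is precisely the admissibility condition at $y$, so $f$ is admissible and therefore a covering morphism.

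I expect the only delicate point to be the simultaneous choice of the open set $U$: it must at once be an admissible neighbourhood of $f(y)$ for the hypothesis (with $x_0=f(y)$), contain the generic-type point $f(y_1)$, and have $f^{-1}(U)$ affine and connected so that the branch data $Q$ is defined on it. All three properties follow from finiteness of $f$, integrality and normality of $Y$, and stability of open sets under generization, so I anticipate no substantive obstacle beyond this compatibility bookkeeping.
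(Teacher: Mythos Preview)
Your proposal is correct and follows essentially the same approach as the paper: pick a codimension one generization $y_1$ of the given point, apply the hypothesis there, and then use the specialization relations $Q(y,V)=Q(y_1,V)\cK_Y^{y}(V)$ and $P(f(y),U)=P(f(y_1),U)\cK_X^{f(y)}(U)$ together with the inclusion $\cK_X^{f(y)}(U)\subset \cK_Y^{y}(V)$ coming from finiteness of $f$. Your write-up is in fact a bit more careful than the paper's about why the codimension one generization exists and why $f^{-1}(U)$ is affine connected, but the argument is the same.
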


\begin{proof}
  Let $y_0$ be a point of $Y$ of codimension at least two and $y_1$ be a codimension one point of $Y$ such that $y_0\in \overline{\{y_1\}}$. Let $x_i=f(y_i)$ for $i=0,1$ and $U$ be an affine open connected neighbourhood of $x_0$ such that $Q(y_1,V)\supset P(x_1,U)$ where $V=f^{-1}(U)$. Since $P,Q$ are branch data, $Q(y_0,V)=Q(y_1,V)\cK_Y^{y_0}(V)$ and $P(x_0,U)=P(x_1,U)\cK_X^{x_0}(U)$. By assumption $Q(y_1,V)\supset P(x_1,U)$ and since $f$ is a finite morphism $\cK_Y^{y_0}(V)\supset \cK_X^{x_0}(U)$. Hence $Q(y_0,V)\supset P(x_0,U)$.  
\end{proof}

\begin{df}
 A formal orbifold $(X,P)$ is called a geometric formal orbifold if there exists an \'etale cover $f:(Y,Q)\to (X,P)$ where $Q$ is an essentially trivial branch data on $Y$. In this case $P$ is called a geometric branch data on $X$.
\end{df}

The following analogue of \cite[Lemma 2.12]{orbifold.bundles} holds in higher dimension as well with essentially the same proof.

\begin{pro}\label{unramified-criterion}
Let $(X,P)$ be an integral formal orbifold with function field $K$ and $f:Y\to X$ be a finite covering where $Y$ is normal. Then for a branch data $Q$ on $Y$, $Q\ge f^*P$ iff $f:(Y,Q)\to (X,P)$ is a morphism of formal orbifolds. Moreover if $Q=f^*P$ then $f$ is unramified iff $\cK_{Y,y}P(f(y))/P(f(y))$ is an unramified extension for all $y\in Y$.
\end{pro}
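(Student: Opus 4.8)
The plan is to treat the two assertions separately, in each case unwinding the definitions of $f^{*}P$ (Proposition \ref{pull-back-branch-data}), of the partial order $\le$, and of admissible and unramified morphism (Definition \ref{df:main}), and then supplying the one nontrivial comparison by means of the branch data compatibility relations of Definition \ref{def:branch-data}. I would first record the elementary observation that for any point $y$ of codimension $\ge 1$ and any $U$ with $V:=f^{-1}(U)$ affine, the admissibility inequality $Q(y,V)\supset P(f(y),U)$ is \emph{equivalent} to $Q(y,V)\supset (f^{*}P)(y,V)$: indeed $(f^{*}P)(y,V)=\cK_Y^y(V)P(f(y),U)$ contains $P(f(y),U)$, while conversely $Q(y,V)$ always contains $\cK_Y^y(V)$, so $Q(y,V)\supset P(f(y),U)$ forces $Q(y,V)\supset \cK_Y^y(V)P(f(y),U)$. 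Granting this, the direction $Q\ge f^{*}P\Rightarrow$ [$f$ is a morphism] is immediate, since the inequality then holds for all $V$, in particular for the $V=f^{-1}(U)$ demanded by Definition \ref{df:main}.

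The reverse direction is where the real content lies: the morphism condition only supplies the inequality for \emph{some} $U$, whereas $Q\ge f^{*}P$ requires it for \emph{all} affine open connected neighbourhoods. I would bridge this gap in two steps. First, I reduce to codimension one points: given a point $y_1$ of codimension $\ge 2$, normality of $Y$ lets me pick a prime divisor through $\overline{\{y_1\}}$, whose generic point $y_2$ is a codimension one generization of $y_1$; the specialization relations $Q(y_1,V)=Q(y_2,V)\cK_Y^{y_1}(V)$ and $(f^{*}P)(y_1,V)=(f^{*}P)(y_2,V)\cK_Y^{y_1}(V)$ coming from Definition \ref{def:branch-data} then propagate any inclusion from $y_2$ down to $y_1$. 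Second, at a fixed point I pass between different opens using $P(x,V)=P(x,U)\cK_X^x(V)$ together with $\cK_X^{f(y)}(U)\subset \cK_Y^y(V)$ (Propositions \ref{open-subsets-okay} and \ref{decomposition-variation}), and the fact that $Q$ and $f^{*}P$ are genuine branch data, so that their values on a cofinal system of neighbourhoods determine them, to deduce the inequality for every $V$.

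This reconciliation of ``for some $U$'' with ``for all $U$'' is the step I expect to be the main obstacle; it is precisely the point that is vacuous in the curve case of \cite[Lemma 2.12]{orbifold.bundles}, where every codimension one point is closed and $\cK_X^x(U)$ is independent of $U$, and so must be supplied afresh in arbitrary dimension.

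For the final assertion, assume $Q=f^{*}P$. The key computation is the identity $(f^{*}P)(y)=\cK_{Y,y}P(f(y))$ for every $y\in Y$: unwinding the notation, $(f^{*}P)(y)=(f^{*}P)(y,V)\cK_{Y,y}=\cK_Y^y(V)P(f(y),U)\cK_{Y,y}=P(f(y),U)\cK_{Y,y}$, and since $\cK_{X,f(y)}\subset \cK_{Y,y}$ this equals $P(f(y),U)\cK_{X,f(y)}\cK_{Y,y}=P(f(y))\cK_{Y,y}$. Consequently $Q(y)/P(f(y))=\cK_{Y,y}P(f(y))/P(f(y))$, and by Definition \ref{df:main}(3) the morphism $f$ is unramified at $y$ exactly when this extension is unramified; quantifying over all $y\in Y$ gives the stated equivalence. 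I expect this last part to be routine once the identity $(f^{*}P)(y)=\cK_{Y,y}P(f(y))$ is in hand.
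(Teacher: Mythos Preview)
Your proposal is correct and essentially matches what the paper intends: the paper does not write out a proof but simply asserts that the argument of \cite[Lemma 2.12]{orbifold.bundles} goes through, and your unwinding of the definitions---the key observations being $Q(y,V)\supset P(f(y),U)\Leftrightarrow Q(y,V)\supset (f^{*}P)(y,V)$ and $(f^{*}P)(y)=\cK_{Y,y}P(f(y))$---is exactly that argument.

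One remark on your ``for some $U$'' versus ``for all $U$'' discussion: you are right that this is the only place where higher dimension could matter, but the paper's definition of $P\le Q$ reads ``for all points $x$\dots\ and \emph{an} affine open connected neighbourhood $U$'', which, given the compatibility clause $P(x,V)=P(x,U)\cK_X^x(V)$ for $V\subset U$ in Definition~\ref{def:branch-data}, is most naturally read as ``for some (equivalently, for all sufficiently small) $U$''. Under that reading your elementary observation already closes the equivalence and the reduction to codimension one plus propagation between opens is unnecessary. Your extra care does no harm, but note that your second step (``pass between different opens'') as written only directly gives the inclusion on a cofinal family of neighbourhoods of $y$, which is precisely the ``some $U$'' statement; pushing it back up to an arbitrary larger $U'$ from a smaller $V\subset U'$ would require an additional argument (e.g.\ that $P(x,U')=P(x,V)\cap$ a suitable subfield), so if you insist on the ``for all $U$'' reading you should make that step explicit. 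The ``moreover'' part is handled exactly as you say.
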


% \begin{proof}
%  The proof is same as in the curve case \cite[Lemma 2.12]{orbifold.bundles}.
% \end{proof}

\begin{rmk}
Let $(X,P)$ be a formal orbifold, $f:Y\to X$ be a cover and $Q=f^*P$. If $y\in \BL(Q)$ then $Q(y)/\cK_{Y,y}$ is ramified. But $Q(y)=P(f(y))\cK_{Y,y}$, hence $P(f(y))/\cK_{X,f(y)}$ is ramified. Hence $f(y)\in \BL(P)$, i.e., $\BL(Q)\subset f^{-1}(\BL(P))$
\end{rmk}

\begin{cor}\label{B_f-geometric}
Let $f:Y\to X$ be a finite covering of normal varieties. Then $(Y,O)\to (X,B_f)$ is \'etale.
\end{cor}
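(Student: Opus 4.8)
The plan is to unwind the definitions in Definition \ref{df:main} and reduce the whole statement to a single field identity: at every point the branch data $B_f$ exactly absorbs the completion of $Y$ over $X$, leaving nothing for $(Y,O)\to(X,B_f)$ to ramify over. Throughout, write $x=f(y)$, fix an affine open connected $U\ni x$ with $V=f^{-1}(U)$, and let $L=k(Y)$ be the (Galois) function-field extension giving rise to $f$ and to $B_f$ via Proposition \ref{B_f}.

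First I would check admissibility. Since $f$ is a finite separable covering it is quasi-finite, dominant, separable and proper, so only the branch-data condition $O(y,V)\supseteq B_f(x,U)$ needs verification. The trivial branch data gives $O(y,V)=\cK_Y^y(V)$, while by the defining formula of Proposition \ref{B_f} we have $B_f(x,U)=\cK_Y^y(V)$ for the chosen $y$ lying over $x$. Thus the two fields coincide, admissibility holds (with equality), and being finite, $f$ is then a covering morphism. The core is the \'etale condition: for every $y$ I must show the extension $O(y)/B_f(x)$ is unramified. Here $O(y)=O(y,V)\cK_{Y,y}=\cK_Y^y(V)\cK_{Y,y}=\cK_{Y,y}$, since $\cK_Y^y(V)\subseteq\cK_{Y,y}$ by Proposition \ref{open-subsets-okay}. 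On the other side $B_f(x)=B_f(x,U)\cK_{X,x}=\cK_Y^y(V)\cK_{X,x}$, and I would compute this compositum using the explicit descriptions of the two flavours of completion: by Corollary \ref{specialization-okay} (equivalently Proposition \ref{decomposition-variation}) one has $\cK_Y^y(V)=L\,\cK_X^x(U)$, and by Proposition \ref{open-subsets-okay} $\cK_X^x(U)\subseteq\cK_{X,x}$, so $\cK_Y^y(V)\cK_{X,x}=L\,\cK_{X,x}$. Finally the identity $\widehat L_Q=\widehat K_P L$ established in the proof of Proposition \ref{basic-ramification}, applied with $\widehat R_P=\widehat\cO_{X,x}$ and $\widehat S_Q=\widehat\cO_{Y,y}$, reads $\cK_{Y,y}=L\,\cK_{X,x}$. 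Combining these gives $B_f(x)=\cK_{Y,y}=O(y)$, so $O(y)/B_f(x)$ is the trivial extension, hence \emph{a fortiori} unramified; as this holds at every $y$, the morphism is \'etale.

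The only real content is the chain of identities above, and the point to get right is the distinction between the two completions: $\cK_Y^y(V)$ (completion along the closure of $y$) records $L$ over the partial completion $\cK_X^x(U)$, whereas $\cK_{Y,y}$ records $L$ over the full completion $\cK_{X,x}$. Because $\cK_X^x(U)\subseteq\cK_{X,x}$, adjoining $\cK_{X,x}$ to the former produces exactly the latter, which is precisely why $B_f$ captures all of the ramification of $f$. I do not expect any genuine obstacle beyond keeping the Galois hypothesis (needed for $B_f$ to be well defined and independent of the choice of $y$ over $x$) bookkept correctly while passing between $\cK_Y^y(V)$, $\cK_{Y,y}$ and their common description in terms of $L$.
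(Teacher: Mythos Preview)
Your proof is correct and follows essentially the same route as the paper: both observe that $B_f(x,U)=\cK_Y^y(V)=O(y,V)$, so $f^*B_f=O$, and then conclude that $O(y)/B_f(x)$ is the trivial extension $\cK_{Y,y}/\cK_{Y,y}$. The only cosmetic difference is that the paper phrases the last step via Proposition~\ref{unramified-criterion}, whereas you unwind the identity $B_f(x)=\cK_Y^y(V)\cK_{X,x}=L\cK_{X,x}=\cK_{Y,y}$ directly from Propositions~\ref{open-subsets-okay}, \ref{basic-ramification} and \ref{decomposition-variation}; the content is the same.
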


\begin{proof}
  By Proposition \ref{B_f}, $(X,B_f)$ is a formal orbifold. Moreover by definition of $B_f$, $B_f(f(y),f^{-1}(U))=\cK_Y^{y}(f^{-1}(U))$ for all $y\in Y$ and $U$ an affine open connected neighbourhood of $f(y)$. Note that $f^*B_f=O$. Hence by the previous proposition $(Y,O)\to (X,B_f)$ is unramified at all $y\in Y$. Hence the morphism $(Y,O)\to (X,B_f)$ of formal orbifolds is \'etale.
\end{proof}

\begin{pro} %every formal orbifold is locally geometric away from codimnesion two subset
 Let $(X,P)$ be a formal orbifold. There exists a closed subset $Y$ in $X$ of codimension at least two and an open cover $\{U_i\}$ of $X\setminus Y$ such that $(U_i,P|_{U_i})$ is a geometric formal orbifold.
\end{pro}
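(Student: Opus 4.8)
The plan is to reduce to a neighbourhood of a smooth branch divisor and then realise the local ramification by an honest finite cover. First I would remove a codimension two locus: let $Y$ contain the singular locus of $X$ (of codimension $\ge 2$ since $X$ is normal), the singular loci and pairwise intersections of the codimension one components of $\BL(P)$, and the codimension $\ge 2$ components of $\BL(P)$. Over $X\setminus Y$ the scheme is regular and $\BL(P)$ is a disjoint union of regular divisors. (A further codimension two set will be adjoined to $Y$ in the next step.)

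Fix a codimension one component $D$ of $\BL(P)$ with generic point $\eta$; then $\widehat\cO_{X,\eta}$ is a complete discrete valuation ring and $P(\eta)/\cK_{X,\eta}$ is finite Galois. Since $k(X)$ is dense in $\cK_{X,\eta}$, Krasner's lemma produces a finite separable extension $M/k(X)$ with $M\cK_{X,\eta}=P(\eta)$; let $g\colon W\to X$ be the normalisation of $X$ in $M$. As $P(\eta)$ is ramified, $D$ is a branch component of $g$, and the remaining branch components meet $D$ in a set of codimension $\ge 2$, which I add to $Y$. Carrying this out for the finitely many components $D$ keeps $Y$ closed of codimension $\ge 2$, and over $X\setminus Y$ I may cover by affine connected opens $U_i$ that are either disjoint from $\BL(P)$, or meet it in a single regular divisor $D=D_i=V(t_i)$ while avoiding the other branch components of the corresponding $g$; on such a $U_i$ the map $g$ is ramified only along $D$, and Proposition \ref{basic-ramification} gives $\cK_{W,w}=M\cK_{X,\eta}=P(\eta)$ at any $w$ over $\eta$.

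On a $U=U_i$ disjoint from $\BL(P)$ the branch data $P|_U$ is essentially trivial and the identity exhibits it as geometric. Otherwise, with $g\colon W\to U$ as above, set $Q=g^*(P|_U)$ and consider $g\colon(W,Q)\to(U,P|_U)$. Unwinding the definition of $\BL$ and of an étale morphism (Proposition \ref{unramified-criterion} for the latter), the assertions that $Q$ is essentially trivial and that $g$ is étale --- which together make $(U,P|_U)$ geometric --- reduce to showing, for every $w\in W$ over $x=g(w)$, that $\cK_{W,w}P(x)$ is unramified over $\cK_{W,w}$ and over $P(x)$ respectively. When $x\notin D$ we have $x\notin\BL(P)$ and $g$ unramified at $w$, so $P(x)/\cK_{X,x}$ and $\cK_{W,w}/\cK_{X,x}$ are unramified and both claims follow by base change; thus everything concentrates on the points of $D$.

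The main obstacle is the case $x\in D$, where $P(x)/\cK_{X,x}$ may be wildly ramified. Here both $\cK_{W,w}$ and $P(x)$ are obtained from the common generic extension $B_g(\eta)=P(\eta)$ by the branch-data specialisation relation, so they carry the same inertia along $D$ at the generic point $\eta$. The crux is to propagate this equality of inertia to every closed point of $D$: passing to a common Galois closure over $\cK_{X,x}$ and using the decomposition and inertia formalism (Proposition \ref{basic-ramification} and Remark \ref{inertia-variation}) one checks that adjoining $\cK_{W,w}$ and $P(x)$ to one another creates no new ramification, so $\cK_{W,w}P(x)$ is unramified over each. This is precisely the compositum-of-wild-extensions phenomenon controlled by Lemma \ref{compositum-wild-ramification}(\ref{eq2:compositum-wild-ramification}), whose separability-of-residue-fields hypothesis holds because $D$ is regular and $k$ is perfect. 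The subsidiary difficulty is the realisation in the second paragraph, namely using Krasner's lemma to induce the possibly wild local extension $P(\eta)$ from a global finite extension of $k(X)$ ramified only along $D$.
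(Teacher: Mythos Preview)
Your overall strategy matches the paper's: for each codimension-one component of $\BL(P)$ with generic point $\eta$, realise $P(\eta)$ by a global finite separable extension $M/k(X)$, take the normalisation $g\colon W\to X$, and use this cover to witness geometricity on an open neighbourhood of $\eta$. The divergence is in how that open neighbourhood is produced, and this is where your argument has a genuine gap.

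The paper does not attempt your ``main obstacle'' at all. Having built $f_i\colon V_i\to X$ with $Q_i=f_i^*P$, it simply sets $U_i=X\setminus\bigl(f_i(\BL(Q_i))\cup Z_i\bigr)$, where $Z_i$ is the branch locus of $f_i\colon(V_i,Q_i)\to(X,P)$ in the \emph{formal-orbifold} sense. Both $f_i(\BL(Q_i))$ and $Z_i$ are closed (the latter by Lemma~\ref{branch-locus-closed}, the former because $f_i$ is finite and $Q_i$ is a branch data) and neither contains the generic point $x_i$; hence $U_i$ is an open neighbourhood of $x_i$ on which $f_i$ is \'etale and $Q_i$ is essentially trivial \emph{by construction}, and $\bigcup_i U_i$ misses only a codimension-two set.

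You instead remove only variety-level loci (singularities, component intersections, extra branch divisors of $g$) and then try to prove that along all of $D$ the formal-orbifold map is automatically \'etale with essentially trivial $Q$. Your appeal to Lemma~\ref{compositum-wild-ramification}(\ref{eq2:compositum-wild-ramification}) does not yield this: that lemma controls $\widehat\Omega$ over the compositum $\widehat L\widehat M$, not $\widehat L\widehat M$ over $\widehat L$ or $\widehat M$ separately, and if you take $R=\widehat\cO_{X,x}$ (already complete) the statement becomes vacuous. More fundamentally, the assertion that $\cK_{W,w}$ and $P(x)$ ``specialise from the common generic extension $P(\eta)$'' is not justified: $P(x)$ specialises from $P(\eta,U)$, while $\cK_{W,w}$ specialises from $M\cK_X^{\eta}(U)$, and the equality $M\cK_{X,\eta}=P(\eta)$ at the level of the completed \emph{local} ring does not force $M\cK_X^{\eta}(U)=P(\eta,U)$ at the level of the $I_\eta$-adic completion of $\cO_X(U)$. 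So at a closed point $x\in D$ the two extensions of $\cK_{X,x}$ can genuinely differ and their compositum can be ramified over either factor. The fix is exactly the paper's move: rather than proving the bad locus is empty, enlarge $Y$ by the formal-orbifold branch locus of $g$ and by $g(\BL(Q))$; both are closed, miss $\eta$, and hence contribute only codimension-two pieces.
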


\begin{proof}
  We may assume $X$ is connected and hence integral. Let $B_1,\ldots,B_r$ be the irreducible component of $\BL(P)$ of codimension one in $X$. Let $x_1,\ldots,x_r$ be the generic points of $B_1,\ldots,B_r$. Let $L_i/k(X)$ be a finite separable field extension such that $L_i\cK_{X,x_i}=P(x_i)$. Let $f_i:V_i\to X$ be the normalization of $X$ in $L_i$ and $Q_i=f_i^*P$. By construction $f_i:(V_i,Q_i)\to (X,P)$ is unramified above $x_i$ and $\BL(Q_i)$ does not contain $x_i$. Let $Z_i$ be the branch locus of $f_i:(V_i,Q_i)\to(X,P)$. Let $U_i=X\setminus (f_i(\BL(Q_i))\cup Z_i)$. Then $x_i\in U_i$. Let $U_0=X\setminus \BL(P)$. Note that $(U_i,P|_{U_i})$ is a geometric formal orbifold for $i=0,1,\ldots,r$ (for $i=0$, $id_{U_0}$ is \'etale and the branch data $P|_{U_0}$ is essentially trivial; for other $U_i$'s $f_i|_{f_i^{-1}(U_i)}$ are \'etale coverings of formal orbifolds and $Q_i|_{f_i^{-1}(U_i)}$ are essentially trivial branch data). Note that $\cup_{i=0}^r U_i$ is the complement of a codimension 2 subset of $X$.
\end{proof}

\begin{rmk}
 A formal orbifold which has an open cover by geometric formal orbifolds can possibly be interpreted as Deligne-Mumford stacks. But viewing them as varieties with branch data is more natural and useful to study the ramification theoretic properties of the varieties. This point of view is also more concrete and elementary.  
\end{rmk}

\begin{lemma}\label{fiber-product-branch-data}
Let $f:(Y,P)\to (X,B)$ and $g:(Z,Q)\to (X,B)$ be admissible morphisms between formal orbifolds. Let $W$ be the normalization of $Y\times_X Z$ and $p_Y:W\to Y$ and $p_Z:W\to Z$ be the projection morphisms and let $A=p_Y^*Pp_Z^*Q$. Then $A$ is a branch data on $W$. For a point $w\in W$, let $y=p_Y(w)$, $z=p_Z(w)$ and $x=f(y)=g(z)$. If $w$ is a closed point then $A(w)$ is an unramified extension of $P(y)Q(z)$.
\end{lemma}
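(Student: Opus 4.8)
The plan is to split the statement into its two assertions, handling the first purely through the formal properties of branch data established earlier and reserving the local ramification lemma for the second. For the first assertion I would begin by checking that the projections $p_Y\colon W\to Y$ and $p_Z\colon W\to Z$ restrict, on each component of $W$, to quasi-finite dominant separable morphisms of normal excellent schemes: quasi-finiteness and separability are stable under the base changes $Y\times_X Z\to Y$ of $g$ and $Y\times_X Z\to Z$ of $f$, the components of $W$ dominate $Y$ and $Z$ (their generic fibres are governed by the nonzero ring $k(Y)\otimes_{k(X)}k(Z)$), and $W$ is normal by construction. Granting this, Proposition \ref{pull-back-branch-data} shows that $p_Y^*P$ and $p_Z^*Q$ are branch data on $W$, and Lemma \ref{compositum-branch-data} shows that their compositum $A=p_Y^*P\,p_Z^*Q$ is again a branch data. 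This settles the first assertion; note that the branch data $B$ on $X$ plays no role here and will not enter the remainder of the argument.

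For the second assertion I would first unwind $A(w)$ at the closed point $w$. Writing $y=p_Y(w)$ and $z=p_Z(w)$, the induced homomorphisms on completions are finite (by quasi-finiteness at the closed point) and torsion-free, hence injective, giving inclusions $\cK_{Y,y},\cK_{Z,z}\subset\cK_{W,w}$. The definition of the pullback branch data then yields $(p_Y^*P)(w)=P(y)\cK_{W,w}$ and $(p_Z^*Q)(w)=Q(z)\cK_{W,w}$, so that, evaluating the compositum at the level of $(\,\cdot\,)(w)$,
\[
A(w)=(p_Y^*P)(w)\,(p_Z^*Q)(w)=P(y)Q(z)\,\cK_{W,w}.
\]
Thus the task reduces to showing that $\cK_{W,w}$, after composition with the finite extension $P(y)Q(z)$ of $\cK_{Y,y}\cK_{Z,z}$, is unramified over $P(y)Q(z)$.

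The heart of the matter is the unramifiedness of $\cK_{W,w}$ over $\cK_{Y,y}\cK_{Z,z}$, and here I would invoke Lemma \ref{compositum-wild-ramification}\eqref{eq2:compositum-wild-ramification} with $R=\cO_{X,x}$, $K=k(X)$, $L=k(Y)$, $M=k(Z)$ and $\Omega=k(Y)k(Z)$ the function field of the component of $W$ through $w$. The point $x=f(y)=g(z)$ is closed because $w$ is, so $\cO_{X,x}$ is an excellent normal local domain. Using Zariski's main theorem to replace $Y$ and $Z$ by their finite models over $X$ (which alters neither $\cO_{Y,y},\cO_{Z,z}$ nor their completions), I would identify the rings $S=A\cap L$, $T=A\cap M$ and the big ring of the lemma with $\cO_{Y,y}$, $\cO_{Z,z}$ and $\cO_{W,w}$, so that $\widehat L=\cK_{Y,y}$, $\widehat M=\cK_{Z,z}$ and $\widehat\Omega=\cK_{W,w}$. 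The separability hypothesis of Lemma \ref{compositum-wild-ramification}\eqref{eq2:compositum-wild-ramification} then reads: the residue extension $k(w)/k(y)$ is separable. This is automatic, since $w$ being closed forces both fields to be finite over the perfect base field $k$ (or over a finite field in the $\spec(\ZZ)$ case), and this is precisely where the closedness of $w$ is used. The lemma therefore gives that $\cK_{W,w}$ is unramified over $\cK_{Y,y}\cK_{Z,z}$.

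Finally I would pass from this to the claim by base change. Since $P(y)Q(z)$ is a finite extension of $\cK_{Y,y}\cK_{Z,z}$, base-changing the finite unramified (hence \'etale) extension of complete local rings computing $\cK_{W,w}/\cK_{Y,y}\cK_{Z,z}$ along the normalization in $P(y)Q(z)$ remains \'etale, and an \'etale algebra over a normal domain is normal; the factor through $w$ is thus the normalization in $A(w)=P(y)Q(z)\cK_{W,w}$ and is unramified over $P(y)Q(z)$, as required. I expect the main obstacle to be the bookkeeping in the previous paragraph: correctly matching the abstract integral closures of Lemma \ref{compositum-wild-ramification} with the geometric local rings $\cO_{Y,y},\cO_{Z,z},\cO_{W,w}$ across the merely quasi-finite morphisms $f,g$, and pinning down that the relevant component of $W$ has function field $k(Y)k(Z)$; the residue-field separability and the concluding base change are comparatively routine.
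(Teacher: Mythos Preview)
Your proposal is correct and follows essentially the same route as the paper: first assertion via Proposition \ref{pull-back-branch-data} and Lemma \ref{compositum-branch-data}, then $A(w)=P(y)Q(z)\cK_{W,w}$, separability of $k(w)/k(y)$ from closedness over a perfect base, Lemma \ref{compositum-wild-ramification}\eqref{eq2:compositum-wild-ramification} for $\cK_{W,w}/\cK_{Y,y}\cK_{Z,z}$, and base change to finish. Your extra care in matching the local rings with those of Lemma \ref{compositum-wild-ramification} via Zariski's main theorem is a point the paper leaves implicit.
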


\begin{proof}
  Since $p_Y$ and $p_Z$ are quasi-finite dominant separable morphisms $p^*_YP$ and $p^*_ZQ$ are branch data on $W$ by Proposition \ref{pull-back-branch-data}. Hence $A$ being compositum of two branch data is a branch data (Lemma \ref{compositum-branch-data}). For $w\in W$ a closed point, $p_Y^*P(w)=P(y)K_{W,w}$ and $p_Z^*Q(w)=Q(z)K_{W,w}$. Hence $A(w)=P(y)Q(z)K_{W,w}$. As $w$ is a closed point of $W$ and $W$ is a finite type scheme over a perfect field or $\spec(\ZZ)$, the residue field $k(w)$ is a separable extension of $k(y)$. So by Lemma \ref{compositum-wild-ramification} $\cK_{W,w}/\cK_{Y,y}\cK_{Z,z}$ is an unramified extension. Hence by base change $A(w)/P(y)Q(z)$ is an unramified extension.
\end{proof}

\begin{pro}\label{fiber-product}
 In the above setup, $p_Y:(W,A)\to (Y,P)$, $p_Z:(W,A)\to (Z,Q)$ are admissible morphisms and $(W,A)$ is the fiber product in the category of formal orbifolds where morphisms are admissible morphisms.
\end{pro}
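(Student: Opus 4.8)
The plan is to verify the two projections are admissible by unwinding the definition of $A$, and then to establish the universal property by reducing it to the scheme-theoretic universal property of $Y\times_X Z$ together with that of normalization, checking the compatibility of branch data at the very end.

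First I would show $p_Y:(W,A)\to (Y,P)$ is admissible, the argument for $p_Z$ being symmetric. As a morphism of schemes, $p_Y$ is the composite of the finite normalization map $W\to Y\times_X Z$ with the first projection; the latter is the base change of the quasi-finite dominant separable morphism $g:Z\to X$, hence is quasi-finite and separable, and it is dominant because over the generic point of $Y$ its fibre is $\Spec(k(Y)\otimes_{k(X)}k(Z))$, a nonzero reduced ring since $k(Z)/k(X)$ is separable. Thus $p_Y$ is quasi-finite, dominant and separable. For the branch-data condition, recall $A=p_Y^*P\cdot p_Z^*Q\ge p_Y^*P$, and that by Proposition \ref{pull-back-branch-data} one has $(p_Y^*P)(w,V)=\cK_W^w(V)P(p_Y(w),U)\supset P(p_Y(w),U)$ for $V\subset p_Y^{-1}(U)$. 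Hence $A(w,V)\supset P(p_Y(w),U)$ for every codimension $\ge 1$ point $w$, which is exactly the admissibility of $p_Y$.

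Next I would check the universal property. Let $(T,C)$ be a formal orbifold with admissible morphisms $a:(T,C)\to (Y,P)$ and $b:(T,C)\to(Z,Q)$ satisfying $f\circ a=g\circ b$ as morphisms $T\to X$; we may assume $T$ connected, hence integral and normal. The scheme-theoretic universal property of the fibre product produces a unique morphism $T\to Y\times_X Z$ restricting to $a$ and $b$, and since $T$ is normal and this morphism is dominant onto an irreducible component, it factors uniquely through the normalization, yielding a unique $h:T\to W$ with $p_Y\circ h=a$ and $p_Z\circ h=b$. The morphism $h$ is quasi-finite (because $a=p_Y\circ h$ is quasi-finite and $p_Y$ is separated), dominant, and separable.

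It remains to see that $h$ is admissible, i.e. that $C\ge h^*A$. Using functoriality of pullback and the fact that pullback distributes over the compositum of branch data, we get $h^*A=h^*(p_Y^*P)\cdot h^*(p_Z^*Q)=(p_Y\circ h)^*P\cdot(p_Z\circ h)^*Q=a^*P\cdot b^*Q$. Now admissibility of $a$ gives $C\ge a^*P$ and admissibility of $b$ gives $C\ge b^*Q$; since each field $C(t,\cdot)$ contains both $a^*P(t,\cdot)$ and $b^*Q(t,\cdot)$, it contains their compositum, so $C\ge a^*P\cdot b^*Q=h^*A$. Thus $h:(T,C)\to(W,A)$ is admissible, and its uniqueness is inherited from the scheme-level uniqueness. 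I expect the main obstacle to be the factorization through the normalization: one must ensure that the map $T\to Y\times_X Z$ genuinely lands densely in a single irreducible component so that the universal property of normalization applies, which is where the normality of $T$ and the separability and dominance of $a,b$ are essential; the branch-data bookkeeping is then purely formal.
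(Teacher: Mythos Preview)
Your proof is correct and follows essentially the same approach as the paper's own proof, which is very terse: the paper simply observes that base change preserves quasi-finiteness and dominance, notes that the definition of $A$ gives the branch-data inequality, and then appeals to the universal property of the normalized fiber product in the category of normal schemes ``together with the definition of $A$''. You have unpacked exactly what that last phrase means---namely the computation $h^*A=a^*P\cdot b^*Q$ and the observation $C\ge a^*P\cdot b^*Q$---and you have also been more careful about the factorization through the normalization, which the paper takes for granted.
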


\begin{proof}
  Since $f:Y\to X$ and $g:Z\to X$ are quasi finite dominant morphisms their base change $p_Y$ and $p_Z$ are also quasi finite dominant morphism. Also by definition of $A$, $A(w,p_Y^{-1}(U)) \supset P(p_Y(w),U)$ for some affine open connected neighbourhood $U$ of $p_Y(w)$. Hence $p_Y$ and $p_Z$ are morphisms of formal orbifold.
 
 Note that the normalized fiber product has the universal property of the fiber product in the category of normal schemes. This together with the definition of $A$ implies that $(W,A)$ is the fiber product of $f$ and $g$.
\end{proof}

\begin{pro}\label{base-change-unramified}\label{base-change-etale}
 Let $f:(Y,P)\to (X,B)$ be an \'etale morphism and $g:(Z,Q)\to (X,B)$ is an admissible morphism. Then pullback of $g$, $p_Z:(W,A)\to (Z,Q)$ is \'etale where $(W,A)$ is the formal orbifold fiber product of $f$ and $g$.
\end{pro}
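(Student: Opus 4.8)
The plan is to verify the \'etale condition pointwise, first reducing to closed points of $W$ and there splitting the relevant field extension into two unramified steps. By Proposition \ref{fiber-product}, $(W,A)$ is the fibre product and $p_Z\colon(W,A)\to(Z,Q)$ is admissible, so it remains to check that $p_Z$ is unramified at every point $w$. The ramification locus of $p_Z$ is stable under specialization: this is the inertia computation of Remark \ref{inertia-variation} carried out exactly as in the proof of Lemma \ref{branch-locus-closed}, so that if $p_Z$ is ramified at $w$ and $w_0\in\overline{\{w\}}$, then the (nontrivial) inertia group at $w$ is contained in that at $w_0$, whence $p_Z$ is ramified at $w_0$ as well. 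Since $W$ is of finite type over $k$ or $\spec(\ZZ)$ it is Jacobson, so every nonempty closed subset---in particular $\overline{\{w\}}$---contains a closed point. Hence if the ramification locus were nonempty it would contain a closed point, and it suffices to prove $p_Z$ unramified at each closed point $w$.

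Fix a closed point $w$ and put $y=p_Y(w)$, $z=p_Z(w)$, $x=f(y)=g(z)$. From the definition $A=p_Y^*P\,p_Z^*Q$ one computes $A(w)=P(y)Q(z)\cK_{W,w}$, and admissibility of $f$ and $g$ gives $B(x)\subset P(y)$ and $B(x)\subset Q(z)$. All the unramifiedness assertions below I would anchor at $\widehat\cO_{Z,z}$; this is legitimate because $p_Z$ is quasi-finite, so $\widehat\cO_{W,w}$ is a finite $\widehat\cO_{Z,z}$-module, and it is precisely what allows the two steps to be composed. I then consider the tower
\[
 Q(z)\ \subset\ P(y)Q(z)\ \subset\ P(y)Q(z)\cK_{W,w}=A(w)
\]
and show each step is unramified over $\widehat\cO_{Z,z}$. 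For the upper step, since $w$ is closed the residue extension $k(w)/k(y)$ is separable, so Lemma \ref{fiber-product-branch-data} applies and shows $A(w)$ is unramified over $P(y)Q(z)$. For the lower step I use that $f$ is \'etale: the extension $P(y)/B(x)$ is unramified, so the integral closure $A_P$ of $\widehat\cO_{X,x}$ in $P(y)$ is unramified, hence (unramified implies flat) finite \'etale, over the integral closure $A_B$ of $\widehat\cO_{X,x}$ in $B(x)$. Writing $T_Q$ for the integral closure of $\widehat\cO_{Z,z}$ in $Q(z)$, base change along $A_B\hookrightarrow T_Q$ makes $A_P\otimes_{A_B}T_Q$ finite \'etale over the complete local normal domain $T_Q$, hence a product of complete local normal domains; the factor with fraction field $P(y)Q(z)$ is the integral closure of $T_Q$ in $P(y)Q(z)$ and is unramified over $T_Q$. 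Transitivity of unramifiedness for the resulting tower of complete local normal domains over $\widehat\cO_{Z,z}$ then yields that $A(w)/Q(z)$ is unramified, i.e.\ $p_Z$ is \'etale at $w$, and running this over all closed $w$ finishes the argument.

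The main obstacle is the matching of base points. Lemma \ref{fiber-product-branch-data} naturally produces the upper unramified step relative to the local ring of the normalized fibre product, whereas the base-changed \'etale condition lives over $\widehat\cO_{Z,z}$; one must check that these two anchorings coincide before transitivity can be invoked, and this is exactly what the finiteness of $\widehat\cO_{W,w}$ over $\widehat\cO_{Z,z}$ guarantees. The reduction to closed points is equally essential, since the separability of residue fields needed to apply Lemma \ref{fiber-product-branch-data} can fail at non-closed points in positive characteristic.
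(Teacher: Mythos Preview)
Your proof is correct and follows the same two-step decomposition as the paper: reduce to closed points via closedness of the ramification locus, then for closed $w$ split $A(w)/Q(z)$ into $A(w)/P(y)Q(z)$ (Lemma \ref{fiber-product-branch-data}) and $P(y)Q(z)/Q(z)$ (base change of the \'etale hypothesis on $f$), and compose. You supply more detail than the paper---re-arguing the specialization stability rather than citing Lemma \ref{branch-locus-closed} (which is stated only for covering morphisms), and spelling out the base-change step and the anchoring of integral closures---but the structure is identical.
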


\begin{proof}
  By Lemma \ref{branch-locus-closed} to check whether $p_Z$ is \'etale it is enough to check that $p_Z$ is unramified at the closed points of $W$. Let $w\in W$ be a closed point, $z=p_Z(w)$, $y=p_Y(w)$ and $x=f(y)=g(z)$. By hypothesis $P(y)/B(x)$ is an unramified extension so $P(y)Q(z)/Q(z)$ is also unramified. Since $w$ is a closed point of $W$ by Lemma \ref{fiber-product-branch-data} $A(w)/P(y)Q(z)$ is unramified. Hence $A(w)/Q(z)$ is unramified.
\end{proof}

% \begin{pro}
%  Base change of \'etale covering morphism $f:(Y,P)\to (X,B)$ along a covering morphism $g:(Z,Q)\to (X,B)$ is \'etale. 
% \end{pro}
% 
% \begin{proof}
%  Let $p_Z:(W,A)\to (Z,Q)$ be the base change of $f$. By Proposition \ref{base-change-unramified}, $p_Z$ is unramified. So enough to show that $p_Z$ is flat over $W\setminus p_Z^{-1}(Q)$. Let $w\in W\setminus p_Z^{-1}(Q)$ and $z=p_Z(w)$ then $p_Z:W\to Z$ as a morphism of normal schemes is finite and unramified. Let $A$ be the strict henselianization of $\cO_{Z,z}$ then $A$ is a normal domain faithfully flat over $\cO_{Z,z}$. Now $C=A\otimes_{\cO_{Z,z}} \cO_{W,w}$ is a finite unramified extension of $A$ and $A$ is a strictly henselian integral domain, hence $C$ is a free $A$-module and hence flat. Since $A/\cO_{Z,z}$ is faithfully flat, the original extension $\cO_{W,w}/\cO_{Z,z}$ is flat.   
% \end{proof}

\begin{cor} \label{fiber-product-etale}
 The fiber product of two \'etale covering morphisms is \'etale.
\end{cor}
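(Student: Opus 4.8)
The plan is to exhibit the fiber product $(W,A)$ as an \'etale cover of $(X,B)$ by factoring its structure morphism through one of the two projections and then invoking the base change and composition results already in place. Concretely, let $f:(Y,P)\to (X,B)$ and $g:(Z,Q)\to (X,B)$ be two \'etale covering morphisms, and let $(W,A)$ with projections $p_Y:(W,A)\to (Y,P)$ and $p_Z:(W,A)\to (Z,Q)$ be their fiber product. By Proposition \ref{fiber-product} this fiber product exists in the category of formal orbifolds, $A=p_Y^*P\,p_Z^*Q$ is a branch data on $W$, and both projections $p_Y,p_Z$ are admissible morphisms.

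First I would use that $f$ is \'etale (an \'etale cover is in particular an \'etale morphism) while $g$ is admissible, so that Proposition \ref{base-change-etale} applies to the pullback of $g$ along $f$ and yields that the projection $p_Z:(W,A)\to (Z,Q)$ is \'etale. Next, since $g:(Z,Q)\to (X,B)$ is \'etale by hypothesis and \'etaleness is preserved under composition by Proposition \ref{composition-etale}, the composite $g\circ p_Z:(W,A)\to (X,B)$ is an \'etale admissible morphism. (By the symmetry of the construction one could equally well route the argument through $f\circ p_Y$, using Proposition \ref{base-change-etale} with the roles of $f$ and $g$ interchanged.)

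It then remains only to check that $g\circ p_Z$ is a covering morphism, i.e. that it is proper, and hence finite. This is a statement about the underlying schemes: the projection $Y\times_X Z\to X$ is finite because $f$ and $g$ are finite, and the normalization morphism $W\to Y\times_X Z$ is finite because all the schemes involved are excellent and hence Nagata; therefore $W\to X$ is finite and in particular proper. Combining this with the previous paragraph shows that $g\circ p_Z:(W,A)\to (X,B)$ is a proper, admissible, \'etale morphism, that is, an \'etale covering morphism, which is exactly the assertion.

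I do not expect a serious obstacle, since the substantive content has already been absorbed into Propositions \ref{fiber-product}, \ref{base-change-etale} and \ref{composition-etale}; the argument is essentially a two-line assembly of these. The only points requiring care are the finiteness (hence properness) of the normalized fiber product, which is where excellence is used, and the bookkeeping of which of the two given \'etale morphisms is treated as the one being base-changed, so that the base change result lands precisely on the projection whose \'etaleness we wish to combine with the remaining \'etale covering morphism.
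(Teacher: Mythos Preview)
Your argument is correct and follows essentially the same route as the paper's proof, which simply cites Propositions \ref{base-change-etale} and \ref{composition-etale}: first base-change one \'etale morphism along the other to see that a projection is \'etale, then compose with the remaining \'etale morphism. Your additional verification of finiteness of the normalized fiber product is a reasonable elaboration that the paper leaves implicit.
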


\begin{proof}
 This follows from Proposition \ref{base-change-etale} and Proposition \ref{composition-etale}.
\end{proof}

\section{\'Etale fundamental group}\label{sec:etale-pi_1}
Once we have the category of \'etale coverings of a formal orbifold. The usual technique introduced by Grothendieck in \cite{sga1} (and also used in \cite[Section 2.2]{orbifold.bundles}) can be used to define the \'etale fundamental group of a connected formal orbifold. The results in this section are also generalization of some of the  results in \cite[Section 2.2]{orbifold.bundles} on fundamental group of formal orbifold curves over algebraically closed fields to higher dimension formal orbifolds. The proofs are also along the same lines using higher dimensional analogue of the required results proved in the previous sections and some minor modifications.

\begin{df}
Let $\scrX=(X,P)$ be a connected formal orbifold and let $Cov(\scrX)$ be the category of all finite \'etale covers of $\scrX$. 
Let $\tilde \scrX$ be the inverse limit of the inverse system $\{(Y_i,Q_i)\in Cov(\scrX)\}_{i\in I}$ of connected finite \'etale covers of $\scrX$. Given a geometric point $x$ of $X$ such that its image in $X$ is not in $\supp(P)$ and $\tilde x\in \tilde \scrX$ lying above $x$. The fiber functor $F_x$ from the category of finite \'etale covers $Cov(\scrX)$ to the category of sets is pro-representable by $\tilde\scrX$ via $\tilde x$. So the group of automorphism of $F_x$ is isomorphic to $\Aut(\tilde \scrX/X)$. 
Define the \'etale fundamental group of $\scrX$ to be $\Aut(\tilde \scrX/X)$ which is same as $\displaystyle{ \varprojlim_{i\in I} \Aut(Y_i/X)}$ over all connected \'etale covers $(Y_i,Q_i)\to \scrX$. This group will be denoted by $\pi_1(\scrX,x)$ or simply $\pi_1(\scrX)$.
\end{df}

\begin{pro}\label{pi-is-functorial}
Let $f:\scrY\to \scrX$ be a covering morphism of formal orbifolds. Then $f$ induces a homomorphism of their fundamental group $\pi_1(f):\pi_1(\scrY)\to \pi_1(\scrX)$. This makes $\pi_1$ a functor from connected pointed formal orbifolds to groups. 
Moreover if $f:\scrY\to\scrX$ is \'etale then $\pi_1(f)$ is injective. 
\end{pro}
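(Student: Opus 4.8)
The statement has two parts: functoriality (that a covering morphism $f:\scrY\to\scrX$ induces a homomorphism on fundamental groups, compatibly with composition) and the injectivity of $\pi_1(f)$ when $f$ is \'etale. My plan is to follow Grothendieck's Galois-categorical framework exactly as in \cite{sga1} and \cite[Section 2.2]{orbifold.bundles}, using the fiber-functor description of $\pi_1$ already set up in the preceding definition.

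\textbf{Functoriality.} The plan is to exhibit $\pi_1(f)$ via pullback of \'etale covers. Given $f:\scrY=(Y,Q)\to\scrX=(X,P)$ and an \'etale covering $g:(Z,B)\to\scrX$, I would form the formal-orbifold fiber product $\scrY\times_\scrX(Z,B)$, whose existence and normalized construction are provided by Proposition \ref{fiber-product}. By Proposition \ref{base-change-etale}, the projection $\scrY\times_\scrX(Z,B)\to\scrY$ is \'etale, so pullback defines a functor $f^*:\Cov(\scrX)\to\Cov(\scrY)$. This functor is compatible with the chosen base points (I would choose a geometric point $y$ of $\scrY$ mapping to the base point $x$ of $\scrX$ avoiding the supports, so that the fiber functors satisfy $F_y\circ f^*\cong F_x$) and sends the pro-universal object $\tilde\scrX$ to an object pro-represented inside $\tilde\scrY$. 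The induced map on automorphism groups of fiber functors $\Aut(F_y)\to\Aut(F_x)$ is then the desired homomorphism $\pi_1(f)$. Functoriality under composition follows because pullback functors compose up to natural isomorphism, using that the formal-orbifold fiber product is associative (again by the universal property in Proposition \ref{fiber-product}). This part is essentially formal once the fiber-product machinery is in place.

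\textbf{Injectivity when $f$ is \'etale.} Here the standard criterion is that $\pi_1(f)$ is injective if and only if every connected \'etale cover of $\scrY$ is dominated by (a connected component of) the pullback $f^*(\scrZ)$ of some \'etale cover $\scrZ$ of $\scrX$; equivalently, the image of $f^*$ is cofinal among connected \'etale covers of $\scrY$. The key input is that when $f$ itself is a finite \'etale cover of formal orbifolds, a connected \'etale cover $\scrW\to\scrY$ has a composite $\scrW\to\scrY\to\scrX$ which is again \'etale by Proposition \ref{composition-etale}, and $\scrW$ embeds into the pullback to $\scrY$ of this composite (a component of $\scrW\times_\scrX\scrY$). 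This exhibits $\scrW$ as dominated by an object in the image of $f^*$, giving cofinality and hence injectivity.

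\textbf{Main obstacle.} The formal part of functoriality is routine; the genuine work is in the injectivity argument, specifically verifying that the composite $\scrW\to\scrX$ is \'etale as a morphism of \emph{formal orbifolds} (not merely of schemes) and that $\scrW$ really is dominated by a component of the pullback of this composite. Proposition \ref{composition-etale} handles the \'etaleness of the composite, and Corollary \ref{fiber-product-etale} ensures the relevant fiber products stay \'etale; the delicate step is checking that the branch-data bookkeeping makes $\scrW$ a quotient of the pulled-back cover. I expect this compatibility of branch data under the fiber product — ensuring the component of $\scrW\times_\scrX\scrY$ through the diagonal is \'etale over $\scrW$ and surjects appropriately — to be the crux, handled by the unramifiedness estimates in Lemma \ref{fiber-product-branch-data} together with Proposition \ref{unramified-criterion}.
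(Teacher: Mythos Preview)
Your proposal is correct and follows exactly the standard Galois-categorical approach that the paper intends: the paper's own proof is the single sentence ``The proof is same as that of \cite[Proposition 2.26]{orbifold.bundles},'' and what you have sketched---pullback of \'etale covers via the formal-orbifold fiber product (Propositions \ref{fiber-product} and \ref{base-change-etale}) for functoriality, and the cofinality argument using Proposition \ref{composition-etale} for injectivity in the \'etale case---is precisely the argument underlying that citation. There is no discrepancy in approach; you have simply unpacked what the paper leaves to the reference.
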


\begin{proof}
The proof is same as that of \cite[Proposition 2.26]{orbifold.bundles}.
\end{proof}

\begin{rmk}\label{Rem:pi_1.of.identity}
 Let $P_1\ge P_2$ be branch data on connected $X$. Then $id:(X,P_1)\to (X,P_2)$ is a covering morphism of orbifolds. The induced map on their fundamental groups $\pi_1(id):\pi_1(X,P_1)\to \pi_1(X,P_2)$ is an epimorphism. 
\end{rmk}

\begin{thm}\label{approximation}
 Let $X$ be a normal connected variety over a perfect field or $\spec(\ZZ)$ and $U$ an open subset of $X$. Then $\pi_1(U)\cong \varprojlim\pi_1(X,P)$ where $P$ runs over all the branch data on $X$ such that $\BL(P)\subset X\setminus U$. 
\end{thm}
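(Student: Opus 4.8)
The plan is to establish the isomorphism $\pi_1(U)\cong\varprojlim_P\pi_1(X,P)$ by identifying the categories of \'etale covers on both sides and invoking the standard tannakian/Galois formalism that underlies Grothendieck's definition of $\pi_1$. The key observation is that a finite \'etale cover of $U$ and a finite \'etale cover of a formal orbifold $(X,P)$ are controlled by the same ramification data, so I would phrase everything at the level of covering categories and only pass to fundamental groups at the end.

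First I would set up the inverse system. For two branch data $P_1\ge P_2$ with $\BL(P_i)\subset X\setminus U$, Remark \ref{Rem:pi_1.of.identity} gives a natural epimorphism $\pi_1(X,P_1)\to\pi_1(X,P_2)$, so the branch data supported on $X\setminus U$ form a directed system (directed upward by $\le$, using that the compositum $P_1P_2$ is again a branch data by Lemma \ref{compositum-branch-data} with $\BL(P_1P_2)=\BL(P_1)\cup\BL(P_2)\subset X\setminus U$), and the groups $\pi_1(X,P)$ form an inverse system whose limit is the right-hand side. Next I would produce, for each such $P$, a functor $\Cov(X,P)\to\Cov(U)$ by restricting an \'etale cover $(Y,Q)\to(X,P)$ to $f^{-1}(U)\to U$; since $\BL(P)\subset X\setminus U$, the branch data $P$ is essentially trivial on $U$ and $Q|_{f^{-1}(U)}$ is likewise essentially trivial, so the restriction genuinely lands in finite \'etale covers of the scheme $U$. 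These functors are compatible with the transition maps, hence assemble into a functor from the colimit category $\varinjlim_P\Cov(X,P)$ to $\Cov(U)$, inducing the comparison map $\pi_1(U)\to\varprojlim_P\pi_1(X,P)$.

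The heart of the argument, and the step I expect to be the main obstacle, is showing this functor is an equivalence, equivalently that every finite \'etale cover $V\to U$ extends, for some branch data $P$ with $\BL(P)\subset X\setminus U$, to an \'etale cover of $(X,P)$. Given $V\to U$ connected, let $L=k(V)$ and let $Y$ be the normalization of $X$ in $L$; this is a finite cover $f:Y\to X$ restricting to $V\to U$ over $U$. The natural candidate is $P=B_f$, the branch data attached to $f$ in Proposition \ref{B_f}: by Corollary \ref{B_f-geometric} the map $(Y,O)\to(X,B_f)$ is \'etale, so $V\to U$ does extend to an \'etale cover of $(X,B_f)$. The point to verify is that $\BL(B_f)\subset X\setminus U$, which holds because $f$ is already \'etale over $U$ (being the restriction of a finite \'etale $V\to U$), so no point of $U$ is branched in $f$; this uses Proposition \ref{local-ramified} to pass between branching of the morphism and branching of the completed local extensions defining $B_f$.

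Finally I would check full faithfulness of the comparison, i.e. that morphisms of covers match up. Given two covers of $U$ extending to \'etale covers of $(X,P)$ and $(X,P')$, one passes to the compositum $PP'$ (again supported on $X\setminus U$) to compare them there; a morphism over $U$ of the restricted covers extends uniquely to a morphism of the orbifold covers because both sides are normal and the covers agree over the dense open $U$, so any $U$-morphism extends by normality and then respects the branch data automatically since $Q\supset (PP')$-pullbacks are determined over codimension-one points lying in $X\setminus U$. Assembling, the functor $\varinjlim_P\Cov(X,P)\to\Cov(U)$ is an equivalence of Galois categories compatible with the fiber functor at a geometric point of $U$, and applying $\pi_1$ (which turns equivalences of Galois categories into isomorphisms and a filtered colimit of categories into an inverse limit of groups) yields $\pi_1(U)\cong\varprojlim_P\pi_1(X,P)$.
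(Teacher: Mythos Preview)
Your proposal is correct and follows essentially the same approach as the paper: set up the directed system via composita (Lemma \ref{compositum-branch-data}), extend a connected \'etale cover $V\to U$ to an \'etale cover of $(X,B_f)$ via normalization and Corollary \ref{B_f-geometric}, and restrict in the other direction. The paper's version is terser, working directly with the inverse limit description $\pi_1(U)=\varprojlim\Aut(L/k(U))$ over Galois extensions rather than the full Galois-category formalism; note also that Proposition \ref{B_f} requires $L/K$ Galois, so in your essential surjectivity step you should pass to the Galois closure (or restrict to connected Galois covers, as the paper implicitly does).
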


\begin{proof}
 Let $D=X\setminus U$. Since compositum $P_0$ of two branch data $P_1$, $P_2$ is a branch data on $X$ with $P_0\ge P_i$, $i=1,2$ and $\BL(P_1P_2)=\BL(P_1)\cup \BL(P_2)$. The collection of branch data with $\BL(P)\subset D$ is a projective system. Also if $P_1\ge P_2$ are two branch data on $X$ then the $id_X:(X,P_2)\to (X,P_1)$ is a covering morphism. By functoriality (Proposition \ref{pi-is-functorial}) it induces a group homomorphism $\pi_1(X,P_2)\to \pi_1(X,P_1)$. Hence $\pi_1(X,P)$ where $P$ runs over all the branch data on $X$ such that $\BL(P)\subset D$ is an inverse system of groups.
 
 Note that $\pi_1(U)=\varprojlim\Aut(L/k(U))$ where $L$ varies over all finite field extensions $L/k(U)$ (in a fixed separable closure) such that the normalization $V$ of $U$ in $L$ is \'etale over $U$. Any such extension $L/k(U)$ correspond to a finite morphism $f:Y\to X$ which extends $V\to U$. Note that $f: (Y,O)\to (X,B_f)$ is an \'etale covering morphism by Corollary \ref{B_f-geometric} and $\BL(B_f)\subset D$ as it is the branch locus of $f$ and $f$ is \'etale over $U$. Conversely given an \'etale covering morphism $f: (Y,Q)\to (X,P)$ with $\BL(P)\subset D$, $f$ restricted to $V:=f^{-1}(U)$ is \'etale. Hence the two inverse limits are isomorphic.
\end{proof}

\begin{pro}\label{product-geometric} %compositum of geometric branch data is geometric
 If $(X,P_1)$ and $(X,P_2)$ are geometric formal orbifolds then so is $(X,P_1P_2)$.
\end{pro}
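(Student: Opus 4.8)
The plan is to manufacture a finite \'etale cover of $(X,P_1P_2)$ whose total space carries the trivial branch data $O$, by combining the two covers that witness the geometricity of $(X,P_1)$ and of $(X,P_2)$. By hypothesis there are \'etale covers $g_1:(Z_1,O)\to (X,P_1)$ and $g_2:(Z_2,O)\to (X,P_2)$ with $Z_1,Z_2$ normal. The first thing I would record is that, since each $g_i:(Z_i,O)\to (X,P_i)$ is admissible, the defining inclusion $O(z,g_i^{-1}(U))\supset P_i(g_i(z),U)$ forces $P_i(g_i(z),U)\subset \cK_{Z_i}^{z}(g_i^{-1}(U))$, whence $g_i^*P_i=O$ for $i=1,2$. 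Combined with the fact that pullback distributes over compositum, i.e.\ $g^*(PQ)=g^*P\cdot g^*Q$ (immediate from Definition \ref{def:branch-data}), this is the arithmetic input that will make the relevant compositum branch data collapse to $O$.

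I would then build the cover in two base-change steps, each an application of Proposition \ref{base-change-etale}. Since $P_1P_2\ge P_1$, the identity $\mathrm{id}:(X,P_1P_2)\to (X,P_1)$ is a covering morphism; base-changing $g_1$ along it yields an \'etale morphism whose source is the normalization of $Z_1\times_X X=Z_1$, carrying the fiber-product branch data $g_1^*(P_1P_2)=g_1^*P_1\cdot g_1^*P_2=g_1^*P_2$ (using $g_1^*P_1=O$). Thus $g_1:(Z_1,g_1^*P_2)\to (X,P_1P_2)$ is \'etale, and by Proposition \ref{composition-etale} it suffices to prove that $(Z_1,g_1^*P_2)$ is geometric. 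For the second step I would view $g_1:(Z_1,g_1^*P_2)\to (X,P_2)$, which is admissible because its branch data by construction dominates the pullback of $P_2$ (equivalently, via Proposition \ref{unramified-criterion}), and base-change the \'etale cover $g_2$ along it. Proposition \ref{base-change-etale} then produces an \'etale morphism $(W,A)\to (Z_1,g_1^*P_2)$ where $W$ is the normalization of $Z_1\times_X Z_2$ and, by Lemma \ref{fiber-product-branch-data}, $A=p_{Z_1}^*(g_1^*P_2)\cdot p_{Z_2}^*O$. Writing $h=g_1\circ p_{Z_1}=g_2\circ p_{Z_2}$ for the common composite $W\to X$, functoriality of the pullback gives $p_{Z_1}^*(g_1^*P_2)=h^*P_2=p_{Z_2}^*(g_2^*P_2)=p_{Z_2}^*O=O$, so $A=O$. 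Hence $(W,O)\to (Z_1,g_1^*P_2)$ is \'etale with trivial branch data, so $(Z_1,g_1^*P_2)$ is geometric; composing the two \'etale morphisms $(W,O)\to (Z_1,g_1^*P_2)\to (X,P_1P_2)$ (Proposition \ref{composition-etale}) exhibits $(X,P_1P_2)$ as geometric, with $W$ the normalized fiber product $Z_1\times_X Z_2$.

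The step I expect to demand the most care is the verification that the branch data $A$ on $W$ is trivial, since everything else is formal once that is in place. This rests on three compatibilities which I would justify directly from Definition \ref{def:branch-data}: the description of the fiber-product branch data as the compositum $p_{Z_1}^*(g_1^*P_2)\cdot p_{Z_2}^*O$ (Lemma \ref{fiber-product-branch-data}), functoriality of pullback $(g_2\circ p_{Z_2})^*P_2=p_{Z_2}^*(g_2^*P_2)$, and the collapse $g_2^*P_2=O$ from the first paragraph. I would also check that normality of $W$ is used so that $(W,O)$ is a genuine formal orbifold, and note that connectedness of $W$ plays no role, since an \'etale cover need not be connected. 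If desired one may reduce to $X$ connected, hence integral, at the outset.
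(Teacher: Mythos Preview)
Your overall strategy coincides with the paper's: build the normalized fiber product $W$ of the two witnessing covers and verify that the resulting branch data on $W$ is good enough to exhibit $(X,P_1P_2)$ as geometric. The paper does this in one symmetric fiber-product step (taking $(W,A)$ over $(X,P_1P_2)$ directly via Corollary~\ref{fiber-product-etale}) while you do two sequential base changes via Proposition~\ref{base-change-etale}; these are equivalent reorganizations of the same construction.

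There is, however, a genuine gap. You assume the witnessing covers have \emph{trivial} branch data, writing $g_i:(Z_i,O)\to(X,P_i)$, but the definition of a geometric formal orbifold only furnishes covers $g_i:(Z_i,Q_i)\to(X,P_i)$ with $Q_i$ \emph{essentially trivial}. Your key simplification $g_i^*P_i=O$ uses the admissibility inclusion $P_i(g_i(z),U)\subset O(z,g_i^{-1}(U))=\cK_{Z_i}^z(g_i^{-1}(U))$, which fails in general when $Q_i\neq O$: one only knows $P_i(g_i(z),U)\subset Q_i(z,g_i^{-1}(U))$. Consequently the functoriality chain yielding $A=O$ breaks down; with essentially trivial $Q_i$ the branch data $A$ on $W$ need not be trivial.

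The fix is straightforward and is exactly what the paper does: aim only for $A$ essentially trivial. Carrying $Q_i$ through your two base changes one finds $A=p_{Z_1}^*Q_1\cdot p_{Z_2}^*Q_2$ (since $h^*(P_1P_2)=p_{Z_1}^*(g_1^*P_1)\cdot p_{Z_2}^*(g_2^*P_2)\le p_{Z_1}^*Q_1\cdot p_{Z_2}^*Q_2$), and this is essentially trivial because pullbacks and composita of unramified extensions are unramified. The paper's proof reaches the same conclusion by the direct stalk computation $A(w)=R_1(y_1)R_2(y_2)\cK_{W,w}\subset Q_1(y_1)Q_2(y_2)\cK_{W,w}$, observing that each $Q_i(y_i)\cK_{W,w}/\cK_{W,w}$ is unramified.
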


\begin{proof}
  Let $f_1:(Y_1,Q_1)\to (X,P_1)$ and $f_2:(Y_2,Q_2)\to (X,P_2)$ be \'etale covering morphisms with $Q_1$ and $Q_2$ essentially trivial branch data. Let $P=P_1P_2$ and $R_i=f_i^*P$. Then $f_i:(Y_i,R_i)\to (X,P)$ are both \'etale.  Let $(W,A)$ be the fiber product of $(Y_i,R_i)\to (X,P)$ for $i=1,2$. For a point $w$ in $W$ with images $y_i$ in $Y_i$, $i=1,2$ and $x\in X$, $A(w)=R_1(y_1)R_2(y_2)\cK_{W,w}$. For $i=1,2$, since $Q_i$ is essentially trivial branch data on $Y_i$, $Q_i(y_i)/\cK_{Y,y_i}$ is unramified and hence $Q_i(y_i)\cK_{W,w}/\cK_{W,w}$ is unramified. So the compositum $Q_1(y_1)Q_2(Y_2)\cK_{W,w}$ is an unramified extension of $\cK_{W,w}$. But $R_i(y_i)\subset Q_i(y_i)$. Hence $R_1(y_1)R_2(y_2)\cK_{W,w}/\cK_{W,w}$ is unramified. Hence $A$ is essentially trivial branch data and by Corollary \ref{fiber-product-etale} $(W,A)\to (X,P)$ is \'etale.
\end{proof}

\begin{pro}\label{approximate-orbifold} %geometric formal orbifold homotopic to a given formal orbifold
Let $(X,P)$ be a connected formal orbifold. There exists $Q \le P$ a (unique) branch data on $X$ such that $(X,Q)$ is a geometric formal orbifold and $Q$ is maximal with these properties. The natural homomorphism $\pi_1(X,P)\to\pi_1(X,Q)$ is an isomorphism. 
\end{pro}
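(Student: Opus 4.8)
The plan is to realise $Q$ as the compositum of all geometric branch data lying below $P$, to prove that this compositum is again geometric (the crux of the argument), and then to compare the two fundamental groups by analysing étale covers directly.

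For existence and uniqueness, let $\mathcal G$ be the collection of branch data $R\le P$ for which $(X,R)$ is geometric. It is nonempty, since the trivial branch data $O\le P$ is geometric, and it is closed under compositum by Proposition~\ref{product-geometric}. I would then set $Q:=\prod_{R\in\mathcal G}R$: since each $R\in\mathcal G$ satisfies $R(x,U)\subset P(x,U)$ and the fixed finite Galois extension $P(x,U)/\cK_X^x(U)$ has only finitely many intermediate fields, the pointwise compositum $Q(x,U)\subset P(x,U)$ is finite Galois, so $Q$ is a well-defined branch data with $Q\le P$ and $\BL(Q)\subset\BL(P)$. The entire content is to show $Q\in\mathcal G$; once this is known, $Q$ is by construction the unique maximal element of $\mathcal G$, yielding both existence and uniqueness.

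The main obstacle is exactly this geometricity of $Q$, i.e. that the supremum of $\mathcal G$ is attained by a single finite étale-trivialising cover rather than merely by a pro-finite one. I would reduce it to the finiteness of $\mathcal G$ as a set of branch data, after which $Q$ is a \emph{finite} compositum and hence geometric by Proposition~\ref{product-geometric}. For the finiteness, observe that each $R\in\mathcal G$ is determined by its values at the codimension-one points; that $\BL(R)\subset\BL(P)$ involves only the finitely many codimension-one components of $\BL(P)$; and that at each of their generic points $x_i$ the extension $R(x_i)\subset P(x_i)$ is one of finitely many subextensions of the fixed finite extension $P(x_i)$. The only remaining freedom is the unramified part of $R$ away from $\BL(P)$, which is bounded pointwise by $P$ and is cut out by a finite étale cover of $X\setminus\BL(P)$ with locally bounded residue extensions; there are only finitely many of these in the present setting (a finiteness of the same flavour as Corollary~\ref{pi_1-finite-rank}). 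When the relevant residue fields are algebraically closed this last point is vacuous, since unramified local extensions at codimension-one points are then trivial and $R$ is determined outright by the finite branch-point data.

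Finally, the homomorphism $\pi_1(X,P)\to\pi_1(X,Q)$ induced by $\mathrm{id}\colon(X,P)\to(X,Q)$ is surjective by Remark~\ref{Rem:pi_1.of.identity}, as $P\ge Q$. For injectivity I would show that the pullback functor $\Cov(X,Q)\to\Cov(X,P)$ is an equivalence by proving it essentially surjective, i.e. that every connected étale cover of $(X,P)$ already has ramification bounded by $Q$. Given such a cover with Galois closure $\tilde f\colon\tilde Y\to X$, which remains étale over $(X,P)$ so that $\cK_{\tilde Y,\tilde y}P(x)/P(x)$ is unramified for all points, put $R:=B_{\tilde f}\cap P$, a branch data with $R\le P$. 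Since the ramified part of $B_{\tilde f}(x)$ then lies inside $P(x)$, the extension $B_{\tilde f}/R$ is unramified; combining this with the étale morphism $(\tilde Y,O)\to(X,B_{\tilde f})$ of Corollary~\ref{B_f-geometric} shows that $(\tilde Y,O)\to(X,R)$ is étale, whence $R$ is geometric and therefore $R\le Q$. As $R$ already carries all the ramification of $\tilde f$, the cover has ramification bounded by $R$, hence by $Q$, and so underlies an étale cover $(\tilde Y,\tilde f^{*}Q)\to(X,Q)$ pulling back to the given one. Thus $\Cov(X,Q)\to\Cov(X,P)$ is an equivalence and $\pi_1(X,P)\to\pi_1(X,Q)$ is an isomorphism.
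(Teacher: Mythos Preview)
Your overall architecture is close to the paper's, but the step you yourself flag as the crux---geometricity of $Q$---has a genuine gap. You reduce it to the claim that $\mathcal G$ is finite, and your justification for this is not valid. A branch data $R\le P$ is indeed determined by its values at codimension-one points, but there are in general infinitely many such points, and at each of them $R$ may choose any Galois subextension of $P$ at that point. Your attempt to control the ``unramified part away from $\BL(P)$'' by invoking something ``of the same flavour as Corollary~\ref{pi_1-finite-rank}'' is not an argument: that corollary assumes $X$ projective and $G_k$ of finite rank, neither of which is available here, and even then finite generation of $\pi_1$ does not give finiteness of the set of subcovers. In short, $\mathcal G$ need not be finite, so your reduction fails.

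The paper sidesteps this entirely. Instead of trying to control all of $\mathcal G$, it works only at the finitely many generic points $p_1,\dots,p_r$ of $\supp(P)$. At each $p_j$ the target compositum lies inside the fixed finite Galois extension $P(p_j)$, so it is realised by finitely many $B_{f_i}(p_j)$ with the $f_i$ ranging over connected Galois \'etale covers of $(X,P)$. One then \emph{defines} $Q:=B_{f_1}\cdots B_{f_n}$; this is a finite compositum of geometric branch data and hence geometric by Proposition~\ref{product-geometric} and Corollary~\ref{B_f-geometric}. Maximality follows because any Galois \'etale cover $f$ of $(X,P)$ has $B_f(p_j)\subset Q(p_j)$ by construction. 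The point is that one does not need $Q$ to equal the full pointwise compositum $\prod_{R\in\mathcal G}R$; one only needs a single geometric branch data bounded by $P$ that already captures every $B_f$ coming from an \'etale cover of $(X,P)$, and for that the finitely many generic points of the support suffice.

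Your argument for the $\pi_1$-isomorphism is more explicit than the paper's (which asserts it in one line), and the idea of using $R:=B_{\tilde f}\cap P$ is reasonable. Note, however, that the step ``the ramified part of $B_{\tilde f}(x)$ lies inside $P(x)$, hence $B_{\tilde f}/R$ is unramified'' needs an actual inertia-group computation: from $B_{\tilde f}(x)P(x)/P(x)$ unramified one must deduce that $B_{\tilde f}(x)/(B_{\tilde f}(x)\cap P(x))$ is unramified, and this uses that both $B_{\tilde f}(x)$ and $P(x)$ are Galois over $\cK_{X,x}$ so that the inertia group of the compositum, being contained in $\Gal(B_{\tilde f}(x)P(x)/B_{\tilde f}(x))\cdot 1$, has trivial image in $\Gal(B_{\tilde f}(x)/(B_{\tilde f}(x)\cap P(x)))$. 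Once that is in place your route works, and in fact makes the paper's terse last sentence precise; but you should also check that intersections of branch data are again branch data, since the paper only proves this for compositums (Lemma~\ref{compositum-branch-data}).
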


\begin{proof}
Let $f:(Y,R)\to (X,P)$ be a connected Galois \'etale covering morphism. Let $B_f$ be the branch data on $X$. Then $B_f\le P$. 
Let $p_1,\ldots, p_r$ be the irreducible components of $\supp(P)$. Let $Q(p_j)$ be the compositum of all $B_f(p_j)$ where $f:(Y,R)\to (X,P)$ is Galois \'etale. Then for all $1\le j\le r$, $\cK_{X,p_j} \subset Q(p_j)\subset P(p_j)$ and $Q(p_j)/\cK_{X,p_j}$ is a finite Galois extension. So there exist finitely many $f_i:(Y_i,R_i)\to (X,P)$ \'etale connected Galois covering such that $Q(p_j)=B_{f_1}(p_j)\ldots B_{f_n}(p_j)$ for $1\le j\le r$. Let $Q=B_{f_1}\ldots B_{f_n}$. Then $Q\le P$, $Q$ is a geometric branch data by Proposition \ref{product-geometric} and Corollary \ref{B_f-geometric}. By construction of $Q$ it is maximal geometric branch data bounded by $P$ and the homomorphism $\pi_1(X,P)\to \pi_1(X,Q)$ is an isomorphism.
\end{proof}

\begin{cor}
 Let $(X,P)$ be a connected formal orbifold then $\pi_1(X,P)$ is an extension of a finite group by $\pi_1(X')$ the \'etale fundamental group of a connected normal variety $X'$.
\end{cor}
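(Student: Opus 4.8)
The plan is to reduce to the geometric case by Proposition~\ref{approximate-orbifold}, pass to a connected Galois \'etale cover whose branch data is essentially trivial, and then read off the extension from the Galois formalism. First I would invoke Proposition~\ref{approximate-orbifold} to obtain a geometric branch data $Q\le P$ with $\pi_1(X,P)\cong\pi_1(X,Q)$, so after replacing $P$ by $Q$ I may assume $(X,P)$ is itself geometric. The definition of a geometric formal orbifold then supplies an \'etale cover $f\colon(Y,Q')\to(X,P)$ with $Q'$ essentially trivial, and passing to a connected component (which still surjects onto the connected $X$ and carries essentially trivial branch data) I may take $(Y,Q')$ connected.

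Next I would construct a connected Galois \'etale cover dominating $f$ as the Galois closure $(Z,B)$, realised as a connected component of an iterated normalised fibre product of $(Y,Q')$ over $(X,P)$; by Corollary~\ref{fiber-product-etale} this is again \'etale over $(X,P)$, and the automorphism group $G=\Aut((Z,B)/(X,P))$ is finite since the cover is finite. The key point is that $B$ stays essentially trivial: each projection pulls $Q'$ back to a branch data whose branch locus lies in the preimage of $\BL(Q')=\emptyset$ by Proposition~\ref{pull-back-branch-data}, and the branch locus of a compositum is the union of the branch loci by Lemma~\ref{compositum-branch-data}, so the fibre product, and hence its component $(Z,B)$, has empty branch locus.

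Since $(Z,B)$ is a connected Galois \'etale cover of $(X,P)$ with group $G$, the Galois correspondence in the category $\Cov(X,P)$ used to define $\pi_1$ yields a short exact sequence
\[
1\to\pi_1(Z,B)\to\pi_1(X,P)\to G\to 1 .
\]
To recognise the kernel as the fundamental group of a variety, I would observe that because $B$ is essentially trivial the identity $\mathrm{id}\colon(Z,B)\to(Z,O)$ is \'etale, so by Proposition~\ref{pi-is-functorial} the induced map $\pi_1(Z,B)\to\pi_1(Z,O)=\pi_1(Z)$ is injective; since $B\ge O$, Remark~\ref{Rem:pi_1.of.identity} makes this same map surjective, hence an isomorphism. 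Taking $X'=Z$, a connected normal variety, the displayed sequence then exhibits $\pi_1(X,P)$ as an extension of the finite group $G$ by $\pi_1(X')$.

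The step I expect to require the most care is the exact sequence itself: one must check that the intrinsically defined $\pi_1(Z,B)$ coincides with the open subgroup of $\pi_1(X,P)$ cut out by the cover $(Z,B)$, that is, the stabiliser of a point in the fibre over the base point. This is the usual identification of the fundamental group of a connected cover with its stabiliser subgroup inside a Galois category, and it rests on the compatibility of \'etale covers under pullback along $(Z,B)\to(X,P)$ together with the behaviour of the fibre functors, which is exactly what Proposition~\ref{base-change-etale} and Corollary~\ref{fiber-product-etale} provide. Everything else is either a direct citation or the elementary observation, already isolated above, that essentially trivial branch data does not change $\pi_1$.
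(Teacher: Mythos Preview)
Your proposal is correct and follows essentially the same route as the paper: reduce to the geometric case via Proposition~\ref{approximate-orbifold}, take a connected Galois \'etale cover with essentially trivial branch data, and read off the extension $1\to\pi_1(\text{cover})\to\pi_1(X,P)\to G\to 1$. The paper simply asserts the existence of a Galois such cover and that $\pi_1(Y,R)=\pi_1(Y)$ for $R$ essentially trivial, whereas you supply the missing justification in both places (the Galois closure via iterated fibre products with Corollary~\ref{fiber-product-etale} and Lemma~\ref{compositum-branch-data}, and the isomorphism $\pi_1(Z,B)\cong\pi_1(Z)$ via Proposition~\ref{pi-is-functorial} plus Remark~\ref{Rem:pi_1.of.identity}); this extra care is warranted but does not change the argument's shape.
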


\begin{proof}
 By Proposition \ref{approximate-orbifold} there exists a geometric branch data $Q$ on $X$ such that $\pi_1(X,P)=\pi_1(X,Q)$. Since $(X,Q)$ is a geometric formal orbifold there exists a finite \'etale Galois covering $f:(Y,R)\to (X,Q)$ (with Galois group $G$ say) where $R$ is an essentially trivial branch data. Then $\pi_1(Y,R)$ is a normal subgroup of $\pi_1(X,Q)$ with quotient isomorphic to $G$ (Proposition \ref{pi-is-functorial}). Since $R$ is essentially trivial branch data $\pi_1(Y,R)=\pi_1(Y)$. 
\end{proof}

Recall that a profinite group $\Pi$ is said to be of finite rank if there exists a finite subset $S$ of $\Pi$ such that the subgroup generated by $S$ is dense in $\Pi$. 

\begin{cor}\label{pi_1-finite-rank}
 Let $(X,P)$ be a connected formal orbifold with $X$ projective over $\spec(k)$ where the field $k$ is such that the absolute Galois group $G_k$ has finite rank then $\pi_1(X,P)$ has finite rank.
\end{cor}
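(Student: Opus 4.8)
The plan is to reduce the statement to the finite generation of the geometric fundamental group of a proper variety over an algebraically closed field, using the structural description of $\pi_1(X,P)$ obtained in the preceding corollary.

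First I would record the elementary group-theoretic fact that an extension of a finite-rank profinite group by a finite-rank profinite group again has finite rank: if $1\to N\to \Pi\to G\to 1$ is exact with $N$ and $G$ of finite rank, choose lifts to $\Pi$ of a finite topological generating set of $G$ and adjoin a finite topological generating set of $N$; the closed subgroup they generate surjects onto $G$ and contains $N$, hence equals $\Pi$. By the preceding corollary, $\pi_1(X,P)$ fits in such a sequence
$$1\longrightarrow \pi_1(Y)\longrightarrow \pi_1(X,P)\longrightarrow G\longrightarrow 1,$$
with $G$ finite (so certainly of finite rank) and $Y$ the connected normal variety underlying a finite Galois \'etale cover of the maximal geometric sub-orbifold $(X,Q)$ of Proposition \ref{approximate-orbifold}. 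As $Y\to X$ is finite and $X$ is projective over $\spec(k)$, the variety $Y$ is projective over $\spec(k)$. Thus it suffices to show that $\pi_1(Y)$ has finite rank.

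Next I would apply the homotopy exact sequence. Let $k'$ be the algebraic closure of $k$ in the function field of $Y$; then $[k':k]<\infty$, the scheme $Y$ is geometrically connected over $k'$, and $G_{k'}$ is an open subgroup of $G_k$. Since a closed finite-index subgroup of a finite-rank profinite group has finite rank, $G_{k'}$ has finite rank. Because $k$ is perfect and $Y$ is geometrically connected over $k'$, the fundamental exact sequence (\cite[Exp.~IX]{sga1})
$$\pi_1(Y_{\bar k})\longrightarrow \pi_1(Y)\longrightarrow G_{k'}\longrightarrow 1$$
is exact, where $Y_{\bar k}$ is the base change of $Y$ to $\bar k$. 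The image of $\pi_1(Y_{\bar k})$ is a closed normal subgroup of $\pi_1(Y)$ that is a continuous quotient of $\pi_1(Y_{\bar k})$, with $\pi_1(Y)$ modulo this image isomorphic to $G_{k'}$; by the extension fact, $\pi_1(Y)$ has finite rank provided $\pi_1(Y_{\bar k})$ does.

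This reduces the corollary to the single input that $\pi_1(Y_{\bar k})$ has finite rank, where $Y_{\bar k}$ is a normal projective variety over the algebraically closed field $\bar k$. I expect this to be the main obstacle. In characteristic zero it follows from the Riemann existence theorem together with the finite generation of the topological fundamental group of a compact variety; in positive characteristic it is the genuinely deep ingredient, obtainable by reducing to the smooth projective case (for instance via de Jong alterations), where topological finite generation of the \'etale fundamental group of a proper variety is known. Granting this, the three successive extensions assembled above show that $\pi_1(X,P)$ has finite rank.
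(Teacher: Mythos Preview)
Your argument is correct and shares the paper's overall shape: both invoke the preceding corollary to reduce to showing that $\pi_1(Y)$ has finite rank for a connected normal projective $Y$ over $k$, and both ultimately feed in the finite rank of $G_k$. You make the homotopy exact sequence explicit and reduce to $\pi_1(Y_{\bar k})$; the paper leaves this implicit in its assertion that $\pi_1(X')$ is topologically finitely generated for any connected normal projective $X'$ over $k$.

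The genuine difference is in how the geometric input is supplied. The paper reduces dimension via the Lefschetz theorem for fundamental groups: a general curve section $C\hookrightarrow X'$ gives a surjection $\pi_1(C)\twoheadrightarrow\pi_1(X')$, and finite generation for projective curves is Grothendieck's specialization theorem. You instead propose reducing to the smooth projective case by a de Jong alteration $Y'\to Y_{\bar k}$. This does work, but one must argue that $\pi_1(Y')\to\pi_1(Y_{\bar k})$ has open image (factor through the normalization $Z$ of $Y_{\bar k}$ in $k(Y')$: the map $Y'\to Z$ is proper birational with normal target, hence has connected fibers and induces a surjection on $\pi_1$; the finite map $Z\to Y_{\bar k}$ then has open image on $\pi_1$ by comparing with absolute Galois groups of function fields). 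Moreover, finite generation in the smooth projective case itself is typically proved by the same Lefschetz-plus-curves argument, so your route is longer without avoiding the paper's ingredient. The paper's Lefschetz reduction is the more direct path; your version has the modest virtue of isolating the arithmetic contribution of $G_k$ cleanly via the homotopy sequence.
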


\begin{proof}
 It follows from the above corollary and the fact that $\pi_1(X')$ is topologically finitely generated for any connected normal projective variety $X'$ over $\spec(k)$. (This is true for curves via Grothendieck lifting. For higher dimensional normal projective variety $X'$ one uses Lefschetz's theorem for fundamental groups, i.e. there exists a general projective curve $C$ in $X'$ such that $\pi_1(C)\to \pi_1(X')$ is surjective.) 
\end{proof}

\section{ \'Etale Sheaves and cohomology}\label{sec:site-sheaves}
In this section using the notion of \'etale morphism for formal orbifolds we briefly define small \'etale site on formal orbifolds, sheaves on them and their cohomology groups using the standard technique (see \cite[Chapter II and III]{Milne-etale-cohomology} or \cite[Chapter 20 and 21]{stacks.project}). We also prove some of the results in the formal orbifold setup which will be used in the later sections. Note that even if $f:(Y,Q)\to (X,P)$ is an \'etale morphism of formal orbifolds, $f:Y\to X$ viewed as a morphism of varieties need not be flat. 

Let $\scrX=(X,P)$  be a formal orbifold. An \'etale $\scrX$-formal orbifold is an \'etale morphism $f:(Y,Q)\to \scrX$ and morphisms between \'etale $\scrX$-formal orbifolds is an \'etale $X$-morphism. This category will be denoted by $E/\scrX$. An open \'etale cover of $\scrX$ is a collection of \'etale morphisms $\{f_i:(U_i,Q_i)\to \scrX, i\in I\}$ such that $\cup_{i\in I}f_i(U_i)=X$. This defines an \'etale topology on the category of \'etale $\scrX$-formal orbifolds. In other words this defines a \emph{small \'etale site} $E/\scrX$ on $\scrX$.

As usual a \emph{presheaf} on $E/\scrX$ is defined to be a contravariant functor from $E/\scrX$ to the category of abelian groups. A morphism $\phi$ of presheaves $\scrF$ and $\scrG$ is a natural transformation of functors where maps $\phi_U:\scrF(U)\to \scrG(U)$ for an object $U$ in $E/\scrX$ is a group homomorphism. Note that the category of presheaves is an abelian category. The argument is same as in the usual Grothendieck topology on schemes and just uses the fact that this category of functors shares most of the properties of the category of abelian groups.

 A \emph{sheaf} $\scrF$ is a presheaf such that for any $\scrU$ in $E/X$ and any \'etale cover $\{f_i:\scrU_i\to \scrU\}$, the following sequence is exact.
 \begin{equation}\label{eq:gluing}
  F(\scrU)\to \Pi_i F(\scrU_i)\rightrightarrows \Pi_{i,j} F(\scrU_i\times_{\scrU}\scrU_j)  
 \end{equation}

 \begin{ex}\label{constant-sheaf}
   Let $A$ be an abelian group. Consider the presheaf $\scrF_A$ on $\scrX$ which sends $\scrU\to \scrX$ to $\scrF(\scrU)=\{s:\scrU\to A| s \text{ is continuous}\}$ where we give $A$ the discrete topology and $\scrU=(U,P)$ the Zariski topology on $U$. And which sends an \'etale $\scrX$-morphism between \'etale $\scrX$-formal orbifolds $f:\scrU\to \scrV$ to the group homomorphism $\scrF_A(f):\scrF_A(\scrV)\to\scrF_A(\scrU)$ where $\scrF_A(f)(s)=s\circ f$. The gluing condition \eqref{eq:gluing} for such a presheaf is automatic hence it is a sheaf and it will be denoted by $A$ as well and will be called a \emph{constant sheaf}. Note that $\scrF_A(\scrU)$ is isomorphic to $r$ copies of $A$ where $r$ is the number of connected components of $\scrU$.
 \end{ex}
 
 \subsection{\v Cech cohomology}
 Let $\scrF$ be a presheaf on $\scrX$ and $\cU=\{\scrU_i\to\scrX\}$ be a covering. We define the \v Cech cohomology groups $\check H^*(\cU,\scrF)$ of $\scrF$ with respect to the covering $\cU$ in the usual way. For $n\ge0$, let $\scrU_{i_0,\ldots,i_n}=\scrU_{i_0}\times_{\scrX}\scrU_{i_1}\times_{\scrX}\ldots\times_{\scrX}U_{i_n}$ denote the $n$-fold intersection.  Consider the \v Cech complex $C^*(\cU,\scrF)$ whose $n^{\text{th}}$ term is $C^n=\Pi_{i_0<\ldots<i_n}\scrF(\scrU_{i_0,\ldots,i_n})$. 
 
 The boundary maps $d:C^n\to C^{n+1}$ are given by the alternating sum of the natural maps $\scrF(\scrU_{i_0,\ldots,\hat i_k,\ldots,i_n})\to\scrF(\scrU_{i_0,\ldots,i_n})$ induced from the natural projection morphism $\scrU_{i_0,\ldots,i_n}\to \scrU_{i_0,\ldots,\hat i_k,\ldots,i_n}$.
 
 The homology of this complex is $\check H^*(\cU,\scrF)$. If $\cV=\{\scrV_i\to \scrX\}$ is another covering of $\scrX$ and a refinement of $\cU$. Then there is a map of complexes $C^*(\cU,\scrF)\to C^*(\cV,\scrF)$ which induces maps on the cohomology groups $\check H^*(\cU,\scrF)\to \check H^*(\cV,\scrF)$. This forms a directed system of groups as the coverings of $\scrX$ vary. Define \v Cech cohomology groups of $X$ for a presheaf $\scrF$ as: 
$$\check H^*(X,\scrF)=\varinjlim_{\cU}H^*(\cU,\scrF).$$
 
 Note that for a sheaf $\scrF$, $H^0(\scrX,\scrF)=\scrF(\scrX)$. For a presheaf $\scrF$, define $\scrF^+$ to be the functor $\scrF^+(\scrU)=\check H^0(\scrU,\scrF|_{\scrU})$. There is a natural map of presheaves $\scrF\to\scrF^+$.
 \begin{thm} \cite[Theorem 7.10.10]{stacks.project}\label{sheafification}
   Let $\scrF$ be a presheaf on $\scrX$. Then
   \begin{enumerate}
    \item The presheaf $\scrF^+$ is a separated presheaf.
    \item If $\scrF$ is a separated presheaf then $\scrF^+$ is a sheaf and the natural map $\scrF\to\scrF^+$ is injective.
    \item If $\scrF$ is a sheaf then the natural map $\scrF\to\scrF^+$ is an isomorphism.
    \item The presheaf $\scrF^{++}$ is always a sheaf and $\scrF\to\scrF^{++}$ is called the \emph{sheafification} of $\scrF$.
  \end{enumerate}
 \end{thm}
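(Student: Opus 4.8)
The plan is to work directly with the plus construction, using the explicit description
$\scrF^+(\scrU)=\check H^0(\scrU,\scrF|_{\scrU})=\varinjlim_{\cU}\check H^0(\cU,\scrF)$,
where $\cU$ ranges over \'etale coverings of $\scrU$ and $\check H^0(\cU,\scrF)$ is the equalizer of the two maps $\prod_i\scrF(\scrU_i)\rightrightarrows\prod_{i,j}\scrF(\scrU_i\times_{\scrU}\scrU_j)$ appearing in \eqref{eq:gluing}. Thus an element of $\scrF^+(\scrU)$ is a \emph{matching family} $(s_i)$ over some covering, and two matching families represent the same element precisely when they agree after passing to a common refinement. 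All four assertions will be extracted from this description; the key structural input is that the colimit is filtered, since any two coverings of $\scrU$ admit a common refinement (for instance their fiber-product covering), and this is what allows representatives to be compared.

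For part (1), I would show $\scrF^+$ is separated, i.e. that for every covering $\{\scrV_a\to\scrU\}$ the map $\scrF^+(\scrU)\to\prod_a\scrF^+(\scrV_a)$ is injective. Given $s,t\in\scrF^+(\scrU)$ with equal restrictions to each $\scrV_a$, I would represent them by matching families over a common covering $\{\scrU_i\to\scrU\}$; refining $\{\scrU_i\}$ by the $\{\scrV_a\}$ and invoking the hypothesis shows that the two families agree on this refinement, so $s=t$ in the colimit. For part (3), if $\scrF$ is already a sheaf then \eqref{eq:gluing} is exact for every covering, so $\check H^0(\cU,\scrF)=\scrF(\scrU)$ for all $\cU$ and the colimit collapses to $\scrF(\scrU)$; hence $\scrF\to\scrF^+$ is an isomorphism. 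Part (4) is then formal: by (1) the presheaf $\scrF^+$ is separated, and by (2) the separated presheaf $\scrF^+$ has $(\scrF^+)^+=\scrF^{++}$ a sheaf.

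The heart of the argument, and the step I expect to be the main obstacle, is part (2): assuming $\scrF$ separated, one must show $\scrF^+$ is a sheaf and $\scrF\to\scrF^+$ is injective. Injectivity is immediate, since separatedness of $\scrF$ says precisely that a section is determined by its restrictions to a covering, so the representative over the trivial covering is faithful. For the gluing axiom I would start with a covering $\{\scrU_i\to\scrU\}$ and sections $s_i\in\scrF^+(\scrU_i)$ agreeing on the overlaps $\scrU_i\times_{\scrU}\scrU_j$. Lifting each $s_i$ to a matching family over a covering $\{\scrU_{i\alpha}\to\scrU_i\}$, the composite $\{\scrU_{i\alpha}\to\scrU\}$ is a covering of $\scrU$, and I would assemble the local data into a single family over it. The delicate point is verifying that this assembled family is genuinely a matching family: on a fiber product $\scrU_{i\alpha}\times_{\scrU}\scrU_{j\beta}$ the two restrictions need not agree in $\scrF$ on the nose, only after a further refinement, and here separatedness of $\scrF$ is used to upgrade the hypothesis ``$s_i=s_j$ in $\scrF^+$ on overlaps'' into an honest agreement of representatives on a common refinement. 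Keeping track of these nested refinements, while checking that the resulting class in $\scrF^+(\scrU)$ is independent of the choices and restricts to each $s_i$, is the only genuinely technical part; the remaining assertions are direct unwindings of the colimit description.
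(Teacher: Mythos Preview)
The paper does not prove this theorem at all: it is stated with a citation to \cite[Theorem 7.10.10]{stacks.project} and no argument is given. Your outline is correct and is precisely the standard proof one finds in the Stacks Project, so in that sense you are reproducing the cited source rather than diverging from the paper.
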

 
 Let $A$ be an abelian group. The constant presheaf described in Example \ref{constant-sheaf} is a sheaf. So $\check H^0(\scrX,A)=A^r$ where $r$ is the number of connected component of $\scrX$.
 
 \begin{pro}\label{H^1-pi_1-duality}
 Let $\scrX=(X,P)$ be a formal orbifold such that $X$ is connected and projective over a field $k$ and $G_k$ is a finite rank profinite group. Then $\check H^1(\scrX,A)\cong\Hom(\pi_1(\scrX),A)$ for any finite abelian group $A$.   
 \end{pro}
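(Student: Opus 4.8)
The plan is to identify both sides with isomorphism classes of $A$-torsors on $\scrX$ and then translate torsors into the representation-theoretic data of $\pi_1(\scrX)$ supplied by the Galois category formalism of Section \ref{sec:etale-pi_1}.

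First I would invoke the general fact that, for a sheaf of abelian groups on any site, the first \v{C}ech cohomology computed as a colimit over coverings classifies torsors. Concretely, sending a \v{C}ech $1$-cocycle $(a_{ij})$ for a cover $\cU=\{\scrU_i\to\scrX\}$ to the cover obtained by gluing the trivial $A$-torsors $A\times\scrU_i$ along the transition data $a_{ij}$ sets up a bijection between $\check H^1(\scrX,A)$ and the set of isomorphism classes of $A$-torsors in the \'etale site $E/\scrX$. Because $A$ is a finite constant sheaf, every such torsor $T\to\scrX$ is \'etale-locally isomorphic to $\coprod_{a\in A}\scrX$; in particular $T\to X$ is finite \'etale in the sense of formal orbifolds, so $T$ is an object of $\Cov(\scrX)$ carrying a simply transitive $A$-action. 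The fact that fiber products of \'etale covers are \'etale (Corollary \ref{fiber-product-etale}) is exactly what is needed to form these gluings and to make the torsor axioms meaningful in $E/\scrX$.

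Next I would pass through the fiber functor $F_x$. By the construction of $\pi_1(\scrX)$ in Section \ref{sec:etale-pi_1}, $F_x$ realizes $\Cov(\scrX)$ as the category of finite sets with a continuous action of $\pi_1(\scrX)$. Under this equivalence an $A$-torsor $T$ goes to a finite continuous $\pi_1(\scrX)$-set $S=F_x(T)$ equipped with a commuting simply transitive $A$-action. Choosing any point $s_0\in S$, the $\pi_1(\scrX)$-action is recorded by the unique map $\phi\colon\pi_1(\scrX)\to A$ with $\gamma\cdot s_0=\phi(\gamma)\cdot s_0$; one checks $\phi$ is a continuous homomorphism, and since $A$ is abelian $\phi$ is independent of the choice of $s_0$. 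Conversely a continuous homomorphism $\phi$ yields the torsor $A$ with $\pi_1(\scrX)$ acting through $\phi$ followed by translation. This gives a bijection between isomorphism classes of $A$-torsors and $\Hom(\pi_1(\scrX),A)$. Finally I would check that the addition on $\check H^1$ (sum of cocycles) corresponds under these identifications to the contracted product $(T,T')\mapsto (T\times_{\scrX}T')/A$ of torsors, and hence to $\phi+\phi'$, so the bijection is an isomorphism of groups. The hypotheses that $X$ be projective and $G_k$ have finite rank enter, via Corollary \ref{pi_1-finite-rank}, only to guarantee that $\pi_1(\scrX)$ has finite rank so that both groups are finite; the bijection itself is formal.

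The main obstacle I anticipate is the first step: verifying cleanly, in the small \'etale site $E/\scrX$ of a formal orbifold, that $\check H^1$ really classifies $A$-torsors and that these torsors are representable by objects of $\Cov(\scrX)$ rather than merely abstract sheaves. This amounts to effectivity of \'etale descent for finite covers of formal orbifolds, that is, to building the covering scheme together with its branch data out of the local pieces and the cocycle; the essential input here is the existence and good behaviour of the (normalized) fiber product established in Section \ref{sec:formal-orbifolds} (Proposition \ref{fiber-product} and Corollary \ref{fiber-product-etale}). Once representability is in hand, the remaining translation into $\Hom(\pi_1(\scrX),A)$ is the standard dictionary between torsors in a Galois category and homomorphisms of its fundamental group.
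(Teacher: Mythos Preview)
Your approach is correct and genuinely different from the paper's. The paper does not pass through torsors at all: it uses the finite-rank hypothesis (via Corollary \ref{pi_1-finite-rank}) to produce a single connected $G$-Galois \'etale cover $\scrY\to\scrX$ through which every homomorphism $\pi_1(\scrX)\to A$ factors, then invokes the standard identification of the \v{C}ech complex of a constant sheaf with respect to a Galois covering $\{\scrY\to\scrX\}$ with the bar complex computing $H^*(G,A)$ (as in \cite[III, Example 2.6]{Milne-etale-cohomology}); since $G$ acts trivially on $A$ this gives $\check H^1(\scrY\to\scrX,A)=\Hom(G,A)=\Hom(\pi_1(\scrX),A)$, and a cofinality argument over refinements by Galois covers finishes the colimit. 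In particular the paper \emph{uses} the projectivity and finite-rank hypotheses to pin down a terminal cover and avoid any torsor or descent discussion, whereas you treat those hypotheses as incidental (a view the paper itself endorses in the Remark immediately following the proposition).

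What your route buys is greater generality and a cleaner conceptual picture, at the cost of exactly the obstacle you flagged: one must know that an $A$-torsor sheaf on $E/\scrX$ is representable by an object of $\Cov(\scrX)$. The paper never proves effective descent for finite \'etale covers of formal orbifolds, so you would have to supply it; the quickest patch is to refine an arbitrary trivialising cover to a single connected Galois cover $\scrY\to\scrX$ (possible since $\Cov(\scrX)$ is a Galois category), note that $\scrY\times_{\scrX}\scrY\cong\coprod_{g\in G}\scrY$, and then build the torsor inside $\Cov(\scrX)$ as the object corresponding to the finite $\pi_1(\scrX)$-set $A$ with action through $G$. That manoeuvre in effect collapses your argument back onto the paper's, so the two proofs are closer than they first appear.
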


 \begin{proof}
   Since $A$ is a finite abelian group and $\pi_1(\scrX)$ is finite rank (by Corollary \ref{pi_1-finite-rank}) $\Hom(\pi_1(\scrX),A)$ is a finite group. Hence there exists a finite $G$-Galois \'etale cover $\scrY\to \scrX$ such that every element of $\Hom(\pi_1(\scrX),A)$ factors through $G$. As in \cite[Chapter III, Example 2.6, page 99]{Milne-etale-cohomology} the \v Cech complex with respect to the cover $\{\scrY\to \scrX\}$ of $A$ is isomorphic to cochain complex used to compute the group cohomology $H^*(G,A)$. Hence $H^1(\scrY\to\scrX,A)\cong H^1(G,A)$. Since $A$ is a constant sheaf $G$ acts trivially on $A$ and hence $H^1(G,A)=\Hom(G,A)$. But $\Hom(G,A)$ is same as $\Hom(\pi_1(\scrX),A)$ by choice of $Y$ and $G$.
   %and \cite[group cohomology]{Serre-local.fields} computation. 
   Note that any refinement of $\scrY\to \scrX$ has a further refinement by a $G'$-Galois cover $\scrY'\to\scrX$ and hence dominates $\scrY\to\scrX$. So by the same argument $\check H^1(\scrY'\to\scrX,A)\cong Hom(\pi_1(\scrX),A)$. Hence $\check H^1(\scrX,A)\cong Hom(\pi_1(\scrX),A)$.
 \end{proof}
 
 \begin{rmk}
   This result holds without the assumption on the base field and the assumption on $X$ is projective. Note that in absence of these assumptions $\Hom(\pi_1(\scrX),A)$ will not be a finite group. But $\pi_1(\scrX)$ is a Galois group of an infinite Galois extension.  Hence one has to work with a inverse system of finite Galois covers and modify the argument in \cite[Chapter III, Example 2.6, page 99]{Milne-etale-cohomology} to get the result in this general setup.  
 \end{rmk}

 \subsection{Structure sheaf}
 
 \begin{lemma}
  Let $A\to B$ be a finite separable extension of normal domains. Then the sequence $A\to B\rightrightarrows B\times_A B$ is exact.
 \end{lemma}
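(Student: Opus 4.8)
The plan is to read the exactness of $A\to B\rightrightarrows B\otimes_A B$ as the assertion that the structure map identifies $A$ with the equalizer
$$E=\{\,b\in B: b\otimes 1=1\otimes b \text{ in } B\otimes_A B\,\},$$
where $B\otimes_A B$ is the coordinate ring of the fiber product $\Spec B\times_{\Spec A}\Spec B$. One inclusion is immediate: $A\hookrightarrow B$ is injective since $A\subset B$, and for $a\in A$ one has $a\otimes 1=1\otimes a$ by $A$-bilinearity, so $A\subset E$. Thus the content of the lemma is the reverse inclusion $E\subset A$.

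For this I would pass to fraction fields. Put $K=\QF(A)$ and $L=\QF(B)$; since $B$ is module-finite over $A$ with $\QF(B)=L$ and both are normal, $B$ is the integral closure of $A$ in $L$ and $L/K$ is finite separable. The inclusions $A\hookrightarrow K$ and $B\hookrightarrow L$ induce a ring homomorphism $B\otimes_A B\to L\otimes_K L$ carrying $b\otimes 1\mapsto b\otimes 1$ and $1\otimes b\mapsto 1\otimes b$. The key point is that I only \emph{push elements forward}, so I never need $B$ to be flat over $A$ --- which matters precisely because for formal orbifolds $\Spec B\to\Spec A$ need not be flat. Hence any $b\in E$ maps to an element of the equalizer of $L\rightrightarrows L\otimes_K L$.

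The next step is the purely field-theoretic computation that for any field extension $L/K$ the equalizer of $L\rightrightarrows L\otimes_K L$ is exactly $K$. I would verify this by choosing a $K$-basis of $L$ containing $1$ and comparing the coordinates of $x\otimes 1$ and $1\otimes x$ in the induced basis of $L\otimes_K L$; only the scalar component survives. Therefore $b\in E$, viewed in $L$, lies in $K$. Finally $b\in K\cap B$ is integral over $A$ because $B$ is integral over $A$, and $A$ is integrally closed in $K$, so $b\in A$. This yields $E\subset A$ and finishes the argument.

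The one subtlety I would flag is where separability is genuinely needed. If the fiber product is taken in the normalized sense used for formal orbifolds (Proposition \ref{fiber-product}), then $B$ maps to the normalization $C$ of $B\otimes_A B$ rather than to $B\otimes_A B$ itself, and equalizing $B\rightrightarrows C$ only forces $b\otimes 1-1\otimes b$ to be nilpotent in $B\otimes_A B$. Separability rescues the argument: it makes $L\otimes_K L$ a finite product of separable field extensions, hence reduced, so the image of that nilpotent difference in $L\otimes_K L$ vanishes and one still concludes $b\in K$. I expect the interplay between the possible non-flatness of $A\to B$ and the nilpotents it can create in $B\otimes_A B$ to be the main thing to handle carefully, and routing the whole computation through the reduced, normal algebra $L\otimes_K L$ is exactly what sidesteps it.
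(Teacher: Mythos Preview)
Your proof is correct and follows essentially the same strategy as the paper's: reduce to a generic locus (you pass all the way to the fraction fields $K\subset L$, while the paper inverts a multiplicative set $S$ so that $S^{-1}A\to S^{-1}B$ becomes finite \'etale), identify the equalizer there as $K$ (respectively $S^{-1}A$), and then conclude by noting that $b\in B$ is integral over the normal domain $A$. The only difference is cosmetic---you compute the field-level equalizer by a direct $K$-basis argument, whereas the paper invokes the known exactness for finite \'etale extensions---and both arguments finish identically with $b\in K\cap B=A$.
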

 \begin{proof}
   Since $A\to B$ is generically \'etale there exists a multiplicative subset $S$ in $A$ such that $S^{-1}A\to S^{-1}B$ is a finite \'etale morphism. Hence the sequence $S^{-1}A\to S^{-1}B\rightrightarrows S^{-1}B\otimes_{S^{-1}A}S^{-1}B$ is exact.
   Note that $A$ is contained in the kernel of the map $B\to B\otimes_A B$ which sends $b\in B$ to $1\otimes b-b\otimes 1$. So it is enough to show that if $b\in B$ is such that $b\otimes 1 - 1 \otimes b=0$ in $B\otimes_A B$ then $b\in A$. But for such a $b$, the exactness after localization implies that $b\in S^{-1}A$. But $b$ is integral over $A$ and $A$ is normal implies that $b\in A$.
 \end{proof}

 Let $\scrX=(X,P)$ be a formal orbifold. Consider the structure presheaf $\cO_{\scrX}$ which sends $\scrU=(U,Q)$ to $\cO_U(U)$.
 \begin{pro}
   The structure presheaf $\cO_{\scrX}$ of a formal orbifold is a sheaf.
 \end{pro}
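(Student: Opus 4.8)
The plan is to verify the equalizer (sheaf) condition of \eqref{eq:gluing} directly, first reducing to an affine connected base and then separating the behaviour of $\cO_{\scrX}$ away from, and along, the branch locus, using the preceding Lemma as the essential local input.

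First I would reduce to the case $\scrU=(\Spec A,Q)$ with $A$ a normal domain. Since $\cO_U$ is already a Zariski sheaf on each underlying scheme $U$, and sections over a disconnected base split as products over connected components, the standard way of combining the Zariski and \'etale topologies lets me assume $\scrU$ is affine and connected, hence integral and normal, and then verify exactness of
\[
\cO_{\scrX}(\scrU)\to\prod_i\cO_{\scrX}(\scrU_i)\rightrightarrows\prod_{i,j}\cO_{\scrX}(\scrU_i\times_{\scrU}\scrU_j)
\]
for an arbitrary \'etale cover $\{f_i:\scrU_i\to\scrU\}$. Separatedness, i.e.\ injectivity of the first map, is immediate: each $f_i:U_i\to U$ is dominant and $U$ is integral, so $A=\cO_{\scrX}(\scrU)$ injects into $\cO(U_i)$ through the generic point.

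For the gluing I would exploit the corollary that an \'etale morphism of formal orbifolds is flat away from $f^{-1}(\BL(P))$. Writing $V=U\setminus\BL(P)$, a dense open whose complement has codimension at least one, each $f_i^{-1}(V)\to V$ is \'etale in the scheme sense and the $f_i$ cover $U$. Over $V$ the orbifold fibre products (Proposition \ref{fiber-product}) coincide with the ordinary ones, since \'etale over the normal scheme $V$ is already normal and normalization does nothing; hence classical faithfully flat \'etale descent for $\cO=\mathbb{G}_a$ applies and a compatible family $(s_i)$ restricts to a unique $s_V\in\Gamma(V,\cO_V)\subset K=\QF(A)$. The preceding Lemma is exactly the affine finite-separable incarnation of this descent, and I would invoke it after checking that passing from $B\otimes_A B$ to its normalization does not enlarge the equalizer, which holds because $L\otimes_K L$ is reduced by separability, so the difference $b\otimes 1-1\otimes b$ is detected already generically.

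The hard part will be extending $s_V$ across the codimension-one part of $\BL(P)$, precisely where the cover ceases to be flat. Here I would use normality of $A$, writing $A=\bigcap_{\mathrm{ht}\,\mathfrak p=1}A_{\mathfrak p}$: for height-one primes $\mathfrak p$ disjoint from $\BL(P)$ the containment $s_V\in A_{\mathfrak p}$ is automatic since the corresponding point lies in $V$, while for $\mathfrak p$ lying in the branch locus I would pass to the completion $\widehat{\cO}_{X,x}$, realize $s_V$ as integral over $\widehat{A_{\mathfrak p}}$ (using that some $f_i$ meets $x$ and that Proposition \ref{local-ramified} together with the Lemma control the finite separable extension at $x$), and conclude $s_V\in A_{\mathfrak p}$ by normality. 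Intersecting over all height-one primes gives $s_V\in A=\cO_{\scrX}(\scrU)$, which restricts back to each $s_i$, completing the gluing and hence showing that $\cO_{\scrX}$ is a sheaf.
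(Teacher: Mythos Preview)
Your route differs from the paper's. The paper's proof is a two-liner: a cover $\{(Y_i,Q_i)\to\scrU\}$ yields a separable dominant morphism $\coprod Y_i\to U$ of normal schemes, and exactness then follows from the preceding Lemma combined with the reduction of \cite[II, Proposition~1.5]{Milne-etale-cohomology} to Zariski covers plus a single surjective map between affines. There is no separate treatment of the branch locus at all.

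Your decomposition---classical \'etale descent over $V=U\setminus\BL(Q)$ to produce $s_V\in K$, then extension across height-one primes via $A=\bigcap_{\mathrm{ht}\,\mathfrak p=1}A_{\mathfrak p}$---is a valid and more hands-on alternative. It has the virtue of making explicit why replacing $U_i\times_U U_j$ by its normalization does not enlarge the equalizer, a point the paper's terse invocation of the Lemma leaves to the reader. The weak spot is your step at $\mathfrak p\subset\BL(Q)$: completions and Proposition~\ref{local-ramified} are unnecessary, and your appeal to them is vague. All you actually need is that for some $i$ and some $y\in U_i$ over the height-one point $x$, the normal local ring $\cO_{U_i,y}$ receives a \emph{local} injection from the DVR $A_{\mathfrak p}$; then $s_V\in K$ with $f_i^*(s_V)=s_i\in\cO_{U_i,y}$ forces $s_V\in A_{\mathfrak p}$, since otherwise $s_V^{-1}\in\mathfrak p A_{\mathfrak p}$ would land in $\mathfrak m_y$. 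So the paper's route is shorter; yours is more self-contained but should be streamlined at the branch locus.
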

 
 \begin{proof}
   Let $(Y_i,Q_i)\to (X,P)$ be a covering then $\coprod Y_i\to X$ is a finite separable morphism of normal schemes. Hence the exactness of the sheaf condition for the structure presheaf follows from the above lemma and \cite[Chapter II, Proposition 1.5]{Milne-etale-cohomology}. 
 \end{proof}

%  \begin{pro}
%    Let $\scrX=(X,P)$ be a locally geometric formal orbifold. The stalks of the structure sheaf of $\cO_{\scrX}$ at a point $x\in X$ is the integral closure of the strict henselianization of $\cO_{X,x}$ in the strict henselianization of $P(x)$. 
%  \end{pro}
% 
%  \begin{proof}
%    For points outside the branch locus of $P$ the proof is same as the usual theory of sheaves on etale site. And at branch points the covers are allowed to be ramified but the local extension has to be contained in the strict henselianization of $P(x)$. But $(X,P)$ is locally geometric implies that the maximal local extension is the strict henselianization of $P(x)$.
%    %Now the result follows from Conjecture \ref{formal-orbifold-locally-geometric}. 
%  \end{proof}

 Once the structure sheaf of a formal orbifold $\scrX$ is defined, sheaf of $\cO_{\scrX}$-modules are simply sheaf of abelian groups $\scrF$ such that $\scrF(\scrU)$ is an $\cO_{\scrX}(\scrU)$-module for every \'etale morphism $\scrU\to \scrX$.

 \subsection{Pullback and pushforward} 
 
 The pullback and the pushforward of sheaves on formal orbifolds with respect to admissible morphisms of formal orbifolds can be defined in the same way as for the schemes in \cite[Chapter II]{Milne-etale-cohomology}. 
 
 Let $f:\scrY\to\scrX$ be an admissible morphism of formal orbifolds. Let $\scrF$ be a presheaf on $\scrY$ then $f_*\scrF$ is the presheaf given by the functor which sends $\scrU$ in $E/\scrX$ to $\scrF(\scrU\times_{\scrX}\scrY)$. This functor from the category of presheaves on $\scrX$ to the category of presheaves on $\scrY$ is called a direct image functor. Its adjoint $f^p$ is a functor from presheaves on $\scrX$ to presheaves on $\scrY$ called the inverse image functor. If $\scrF$ is a sheaf then $f_*\scrF$ is also a sheaf (cf. \cite[Chapter II, Proposition 2.7]{Milne-etale-cohomology}). The sheaf $f_*\scrF$ is also called the pushforward of $\scrF$. But $f^p$ need not send sheaves to sheaves and hence the pullback is defined as the sheafification of the inverse image functor. %And it is a functor from $\Sh(\scrY)\to \Sh(\scrX)$. 
 
 \subsection{Sheaf cohomology}
 
 The \'etale site $E/\scrX$ on a formal orbifold $\scrX$ has a final object and hence the global section functor $\Gamma(\scrX,-)$ from the category of sheaves of abelian groups on $\scrX$, $\Sh(\scrX)$ to the category of abelian groups $Ab$ is given by $\scrF\mapsto \scrF(\scrX)$. By \cite[Theorem 19.7.4]{stacks.project} there are enough injectives in $\Sh(\scrX)$. Hence we can define the sheaf cohomology $H^i(\scrX,\scrF)$ as the right derived functor of the global section functor. As usual the \v Cech cohomology and the sheaf cohomology agree for degree 0 and 1 (\cite[Chapter III, Corollary 2.10]{Milne-etale-cohomology}). Hence in view of Theorem \ref{H^1-pi_1-duality} we get that $H^1(\scrX,A)\cong\Hom(\pi_1(\scrX),A)$.

 One could directly show the isomorphism $H^1(\scrX,A)\cong\Hom(\pi_1(\scrX),A)$. This can be done using Hochschild-Serre spectral sequence imitating the proof in the \'etale case as carried out in \cite[Chapter III, Theorem 2.20 and Remark 2.21(b)]{Milne-etale-cohomology}.
 
 \begin{pro}\label{spectral-sequence}
 Let $\scrX=(X,P)$ be a geometric formal orbifold and $f:(Y,O)\to (X,P)$ be a finite \'etale Galois $G$-cover. Let $\scrF$ be a sheaf on $\scrX$. Then there is a spectral sequence 
 $$H^p(G,H^q(Y,\scrF|_Y))\Rightarrow H^{p+q}(\scrX,\scrF)$$
 Here $H^q(Y,\scrF|_Y)$ is the usual \'etale cohomology.
\end{pro}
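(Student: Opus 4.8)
The plan is to produce this spectral sequence as the Grothendieck spectral sequence of a composite of two left exact functors, following verbatim the proof of the Hochschild--Serre spectral sequence in \'etale cohomology \cite[Chapter III, Theorem 2.20]{Milne-etale-cohomology}. Since $(Y,O)$ carries the trivial branch data, $\Sh(Y)$ is the usual small \'etale topos of $Y$ and $H^q(Y,\scrF|_Y)$ is ordinary \'etale cohomology. The group $G=\Aut((Y,O)/\scrX)$ acts on $(Y,O)$ over $\scrX$, and because $f$ is a Galois \'etale cover of formal orbifolds one has $(Y,O)\times_{\scrX}(Y,O)\cong\coprod_{g\in G}(Y,O)$ in $E/\scrX$ (the same splitting already used in the proof of Proposition \ref{H^1-pi_1-duality}). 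I would first define $\alpha\colon\Sh(\scrX)\to\{G\text{-modules}\}$ by $\alpha(\scrF)=\Gamma(Y,\scrF|_Y)=\Gamma(Y,f^*\scrF)$, with the $G$-action induced by the action of $G$ on $Y$ over $\scrX$, and let $\beta$ be the $G$-invariants functor $M\mapsto M^G$, whose right derived functors are the group cohomology functors $H^p(G,-)$.

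The first substantive step is to identify $\Gamma(\scrX,-)$ with $\beta\circ\alpha$. Applying the sheaf axiom \eqref{eq:gluing} to the covering $\{f\colon(Y,O)\to\scrX\}$ and using $(Y,O)\times_{\scrX}(Y,O)\cong\coprod_{g\in G}(Y,O)$, the two maps $\scrF(Y)\rightrightarrows\scrF(Y\times_{\scrX}Y)=\prod_{g\in G}\scrF(Y)$ are the two projections, so their equalizer is exactly the subgroup $\scrF(Y)^{G}$ of $G$-invariant sections; hence $\Gamma(\scrX,\scrF)=\Gamma(Y,\scrF|_Y)^{G}=(\beta\circ\alpha)(\scrF)$. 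Next, since $f$ is \'etale the restriction functor $f^{*}=(-)|_Y$ is exact, so $R^{q}\alpha(\scrF)=H^{q}(Y,\scrF|_Y)$ with its induced $G$-action, while $R^{p}\beta(M)=H^{p}(G,M)$. Both $\Sh(\scrX)$ (enough injectives by \cite[Theorem 19.7.4]{stacks.project}) and the category of $G$-modules have enough injectives, so once the acyclicity hypothesis below is verified Grothendieck's theorem yields
\[
H^{p}(G,H^{q}(Y,\scrF|_Y))\;\Rightarrow\;H^{p+q}(\scrX,\scrF).
\]

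The main obstacle is precisely the acyclicity hypothesis: for an injective sheaf $\scrI$ on $\scrX$ the $G$-module $\alpha(\scrI)=\Gamma(Y,\scrI|_Y)$ must be $\beta$-acyclic, i.e.\ $H^{p}(G,\Gamma(Y,\scrI|_Y))=0$ for $p>0$. I would handle this in two parts. Because $(Y,O)$ is an object of the site $E/\scrX$, the functor $f^{*}$ is the localization functor to the slice topos and therefore has an exact left adjoint; hence $f^{*}$ preserves injectives and $\scrI|_Y$ is again injective on $Y$, in particular $\Gamma(Y,-)$-acyclic. The genuinely orbifold-specific point that remains is to show that the $G$-module $\Gamma(Y,\scrI|_Y)$ is \emph{co-induced}, and thus cohomologically trivial. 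I expect to prove this exactly as in \cite[Chapter III]{Milne-etale-cohomology}: reduce to the building blocks of injective sheaves, namely equivariant skyscraper (Godement) sheaves attached to geometric points of $\scrX$, for which $\Gamma(Y,-)$ reads off the stalks at the geometric points of $Y$ lying over a given one; since $G$ permutes these stalks freely, the resulting $G$-module is induced. The delicate part is setting up geometric points and stalks on the orbifold \'etale site well enough to run this reduction, but the stalk computations developed for the structure sheaf in this section should suffice.

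Finally, once the co-induced property is in hand, the conclusion can be organized transparently via a double complex, avoiding any appeal to the abstract Grothendieck machinery: choosing an injective resolution $\scrF\to\scrI^{\bullet}$ in $\Sh(\scrX)$, form the first-quadrant double complex $C^{p}(G,\Gamma(Y,\scrI^{q}|_Y))$ of inhomogeneous group cochains. Exactness of $f^{*}$ and acyclicity of each $\scrI^{q}|_Y$ make the vertical cohomology compute $H^{q}(Y,\scrF|_Y)$, so one spectral sequence of the double complex has $E_{2}$-page $H^{p}(G,H^{q}(Y,\scrF|_Y))$; the $G$-acyclicity of each $\Gamma(Y,\scrI^{q}|_Y)$ collapses the other spectral sequence to its $p=0$ column $\Gamma(\scrX,\scrI^{\bullet})$, whose cohomology is $H^{*}(\scrX,\scrF)$. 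Comparing the two abutments gives the asserted spectral sequence. The only inputs beyond the classical \'etale argument are the splitting of the Galois self-fiber-product and the exactness of $f^{*}$, both immediate from $f$ being Galois \'etale in the sense of formal orbifolds, together with the co-induced property discussed above.
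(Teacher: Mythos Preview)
Your approach is essentially the same as the paper's: factor $\Gamma(\scrX,-)$ as $(-)^{G}\circ\Gamma(Y,-)$ and invoke the Grothendieck spectral sequence. The paper's proof is much terser, simply asserting that $\Gamma(Y,-)$ carries injectives in $\Sh(\scrX)$ to injective $G$-modules, whereas you supply the standard verification (exactness of $f^{*}$, preservation of injectives under localization, co-induced structure of $\Gamma(Y,\scrI|_Y)$) and an optional double-complex repackaging; these are elaborations, not a different route.
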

\begin{proof}
  The global section functor $\Gamma(\scrX,-)$ from $\Sh(\scrX)$ to abelian groups is the composition $(-)^G\circ\Gamma(Y,-)$ of global sections over $Y$ followed by $G$-invariants. Note that $G$ acts naturally of $\scrF|_Y$ and hence $\Gamma(Y,-)$ is a functor from $\Sh(\scrX)$ to $G$-modules. Note that $\Gamma(Y,-)$ takes injective objects in $\Sh(\scrX)$ to injective objects in the category of $G$-modules. The result now follows from Grothendieck spectral sequence.
\end{proof}

\begin{rmk}
 Since $Y$ has finite cohomological dimension. This is a bounded spectral sequence. 
\end{rmk}

\begin{cor}
Let $\scrX$ be a proper geometric formal orbifold and $\scrF$ on $X$ be such that $\scrF|_Y$ is coherent for some \'etale Galois cover $(Y,O)\to \scrX$. Then $H^i(\scrX,\scrF)$ is finite dimensional for all $i$.
\end{cor}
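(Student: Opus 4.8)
The plan is to deduce finiteness of $H^i(\scrX,\scrF)$ from the spectral sequence of Proposition \ref{spectral-sequence} together with two classical finiteness inputs: finite-dimensionality of ordinary étale/coherent cohomology on the proper variety $Y$, and finiteness of group cohomology of the finite group $G$ with coefficients in finite-dimensional modules. First I would invoke that $\scrX$ is a geometric formal orbifold, so by definition there is a finite \'etale Galois $G$-cover $f:(Y,O)\to \scrX$ with $Y$ a variety carrying the trivial branch data; this is precisely the cover featuring in the hypothesis (with $\scrF|_Y$ coherent). Since $\scrX$ is proper and $f$ is finite, $Y$ is proper as well.

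The key step is to control the $E_2$-page $E_2^{p,q}=H^p(G,H^q(Y,\scrF|_Y))$. Because $Y$ is proper and $\scrF|_Y$ is coherent, each $H^q(Y,\scrF|_Y)$ is a finite-dimensional $k$-vector space by the standard finiteness theorem for coherent cohomology on proper schemes; moreover $H^q(Y,\scrF|_Y)$ vanishes for $q$ larger than $\dim Y$, so only finitely many rows are nonzero. Next I would observe that each $H^q(Y,\scrF|_Y)$ is naturally a $G$-module (the $G$-action coming from the Galois action on $Y$, as noted in the proof of Proposition \ref{spectral-sequence}), and since $G$ is a \emph{finite} group acting on a finite-dimensional space, the group cohomology $H^p(G,H^q(Y,\scrF|_Y))$ is finite-dimensional for every $p$. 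Thus every entry $E_2^{p,q}$ is finite-dimensional.

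Finally I would run the convergence argument. For fixed total degree $n=p+q$, the abutment $H^n(\scrX,\scrF)$ admits a finite filtration whose successive quotients are the subquotients $E_\infty^{p,q}$ with $p+q=n$. Each $E_\infty^{p,q}$ is a subquotient of $E_2^{p,q}$, hence finite-dimensional; and by the boundedness noted in the Remark following Proposition \ref{spectral-sequence} (finite cohomological dimension of $Y$), only finitely many pairs $(p,q)$ with $p+q=n$ contribute. A finite extension of finite-dimensional vector spaces is finite-dimensional, so $H^n(\scrX,\scrF)$ is finite-dimensional for every $n$.

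The main obstacle, and the one point needing care, is the group-cohomology finiteness $H^p(G,-)$ in \emph{all} degrees $p$: for a finite group $G$ this is standard (the bar resolution gives, in each degree, cochains valued in a finite-dimensional space, so the cohomology is finite-dimensional), but one should note that $p$ ranges over all nonnegative integers, so the spectral sequence is a priori unbounded in the $p$-direction. This is harmless because for each fixed total degree $n$ only the finitely many columns $p=0,\dots,n$ with $0\le q\le \dim Y$ can contribute, keeping the filtration of $H^n(\scrX,\scrF)$ finite; the individual finiteness of each $E_2^{p,q}$ then suffices. If one instead wanted a uniform bound or cohomological dimension statement one would have to be more careful, but for mere finite-dimensionality in each degree the argument above is clean.
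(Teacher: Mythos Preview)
Your proposal is correct and follows essentially the same approach as the paper: both argue that $Y$ is proper (since $X$ is proper and $f$ is finite), invoke finiteness of coherent cohomology on the proper variety $Y$ to get each $H^q(Y,\scrF|_Y)$ finite-dimensional, and then conclude via the spectral sequence of Proposition~\ref{spectral-sequence}. The paper's proof is a two-line sketch that leaves the group-cohomology and convergence steps implicit; you have simply spelled these out carefully, so there is no substantive difference in method.
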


\begin{proof}
 Note that if $X$ is proper over $k$ then $Y$ is also proper over $k$. In this situation, $H^q(Y,\scrF|_Y)$ are finite dimensional if $\scrF|_Y$ is coherent and hence $H^i(\scrX,\scrF)$ are also finite dimensional. 
\end{proof}

\section{Brylinski-Kato filtration} \label{sec:BK-filtration}

We recall the Brylinski-Kato filtration on the ring of finite length Witt vectors over a henselian discrete valuation field $K$ of characteristic $p$ from \cite{Kato}, \cite{matsuda} and \cite{Kerz-Saito.CLT}. For $m\ge 0$,
 $$fil_m^{log}W_s(K)=\{(a_{s-1},\ldots,a_0)\in W_s(K)| p^i\ord_K(a_i)\ge -n \forall i\}$$
 Note that this is an increasing filtration with $fil_0^{log}W_s(K)=W_s(\cO_K)$. A modification of this filtration by Matsuda was used in \cite{Kerz-Saito.CLT}. Let $V:W_s(K)\to W_{s+1}(K)$ be the function sends $(a_{s-1},\ldots,a_0)$ to $(0,a_{s-1},\ldots,a_0)$ called the Verschiebung. Let $s'=\min(\ord_p(m+1),s)$ then define
 $$fil_mW_s(K)=V^{s-s'}fil_{m+1}^{log}W_{s'}(K)+fil_m^{log}W_s(K).$$
 
 Let $\delta_s:W_s(K)/(Frob-Id)\to H^1(K,\Z/p^s\Z)$ be the isomorphism given by Artin-Schreier-Witt correspondence. The filtration on the Witt rings induce a filtration on $H^1(K):=H^1(K,\Q/\Z)$ as follows:
 $$fil_m H^1(K)=H^1(K)(p')+\cup_{s\ge 1}\delta_s(fil_m W_s(K)) \text{ for }m\ge 1.$$ 
 Let $fil_0 H^1(K)$ be the subgroup of unramified characters. This filtration has a shift in numbering from \cite{matsuda} but it is consistent with \cite{Kerz-Saito.CLT} and \cite{Kerz-Saito.Lefschetz}.
 
 \begin{df}
   \begin{enumerate}
    \item For a finite abelian field extension $L/K$ of $p$-exponent less than $n$, let $W_L$ be the subgroup of $W_n(K)/Frob-Id$ corresponding to the $p$-part of the extension $L/K$ via the Artin-Schreier-Witt theory. Define the swan conductor $$Sw(L/K)=\min\{m: W_L\subset fil_m W_n(K)/(Frob-Id)\}.$$
    
    \item For a branch data $P$ on a normal variety $X$, define the swan divisor of $(X,P)$ $$Sw(X,P)=\sum_{x\in X^{(1)}}Sw(P(x)/\cK_{X,x})x,$$ where $X^{(1)}$ is the set of codimension one points in $X$.   
   \end{enumerate}
 \end{df}

  Let $\scrX=(X,P)$ be a formal orbifold over a perfect field $k$. Assume that $P(x)/\cK_{X,x}$ is an abelian extension for all $x\in X$. Let $U=X\setminus \BL(P)$.
  Set $H^1(\scrX):=\varprojlim_n H^1(\scrX,\Z/n\Z)$. Note that $\pi_1^{ab}(\scrX)\cong\Hom(H^1(\scrX),\QQ/\ZZ)$ by Pontriagan duality. Let $D$ be an effective Cartier divisor on $X$ supported on $\BL(P)$. We recall the definition of $fil_D H^1(U)$ as in \cite{Kerz-Saito.Lefschetz}.

 \begin{df}
   For $\chi \in H^1(U)$ to be in $fil_D H^1(U)$ the following must be true. For all integral curves $Z$ in $X$ not contained in $\BL(P)$ and for any closed point $x$ in the normalization $\bar Z$ of $Z$ which lies above $Z\cap \BL(P)$, $\chi|G_x \in fil_{m(x,D)} H^1(\cK_{\bar Z,x})$. Here $G_x=\Gal(\cK_{\bar Z,x}^s/\cK_{\bar Z,x})$ and $m(x,D)$ is the multiplicity of $x$ in the pullback of $D$ under the composition $\bar Z\to Z\to X$. 
 \end{df}

% Note that Kerz and Saito pass to henselianization to go to the local case and here we pass to the completion but this is just a matter of taste. 

 \begin{df}
   Define $\pi_1^{ab}(X,D)= \Hom(fil_D H^1(U),\Q/\Z)$. This is a quotient of $\pi_1^{ab}(U)$ and corresponds to abelian covers of $X$ \'etale over $U$ whose ramification over $\BL(P)$ is ``bounded by $D$''.
 \end{df}

 \begin{thm}
  Let $\scrX=(X,P)$ be a proper connected formal orbifold such that $Z=\BL(P)$ is a simple normal crossing divisor on $X$ and let $U=X\setminus Z$.
  The image of the inclusion $H^1(\scrX)\to H^1(U)$ lies in $fil_D H^1(U)$ where the divisor $D$ on $X$ is given by 
  $D=Sw(X,P)$. Moreover for a Cartier divisor $D$ on $X$ whose support is a normal crossing divisor, $$\pi_1^{ab}(X,D)=\varprojlim \pi_1^{ab}(X,Q)$$ where $Q$ varies over all branch data such that $Q$ is abelian and $Sw(X,Q) \le D$.
 \end{thm}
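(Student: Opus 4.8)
The plan is to deduce the second (limit) statement from the first by Pontryagin duality, so that the genuine content is the first statement together with a converse to it. Throughout I would use the duality $\pi_1^{ab}(\scrX)\cong\Hom(H^1(\scrX),\Q/\Z)$ and the identification $H^1(X,Q)=\Hom(\pi_1(X,Q),\Q/\Z)$ furnished by Proposition \ref{H^1-pi_1-duality} (and the remark that it holds without the projectivity and finite-rank hypotheses), together with the approximation theorem (Theorem \ref{approximation}), which for each abelian branch data $Q$ with $\BL(Q)\subset Z$ gives a surjection $\pi_1(U)\twoheadrightarrow\pi_1(X,Q)$ and hence a compatible family of inclusions $H^1(X,Q)\hookrightarrow H^1(U)$. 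By Remark \ref{Rem:pi_1.of.identity} these are compatible for $Q\le Q'$, and since the compositum $Q_1Q_2$ satisfies $Sw(X,Q_1Q_2)=\max(Sw(X,Q_1),Sw(X,Q_2))$ componentwise, the collection $\{Q:Q\text{ abelian},\ \BL(Q)\subset Z,\ Sw(X,Q)\le D\}$ is directed. Thus $\{H^1(X,Q)\}$ is a filtered direct system of subgroups of $H^1(U)$ with $\varinjlim_Q H^1(X,Q)=\bigcup_Q H^1(X,Q)$.

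For the first statement I would fix $\chi\in H^1(\scrX)$, regarded as a character of $\pi_1(U)$ factoring through $\pi_1(X,P)$, and unwind the definition of $fil_D H^1(U)$: for every integral curve $Z'\not\subset\BL(P)$ and every closed point $x$ of its normalization $\bar Z'$ over $Z'\cap\BL(P)$ one must check $\chi|_{G_x}\in fil_{m(x,D)}H^1(\cK_{\bar Z',x})$. Because $\chi$ factors through $\pi_1(X,P)$, its local ramification at the generic point $\eta$ of each component of $\BL(P)$ through which $x$ passes is bounded by $P(\eta)$, so $Sw(\chi_\eta)\le Sw(P(\eta)/\cK_{X,\eta})$, which is the coefficient of that component in $D=Sw(X,P)$. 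The required membership is then exactly the Kato--Matsuda upper bound on the restriction of a character to a curve, namely $Sw(\chi|_{G_x})\le m(x,D)$, where $m(x,D)$ is the intersection-weighted multiplicity of $\bar Z'\to Z'\to X$ against $D$. The prime-to-$p$ part $H^1(\cK_{\bar Z',x})(p')$ always lies in $fil_1$, so only the wild ($p$-primary) part needs the bound; some care is needed on purely tame components, where $Sw$ contributes $0$ to $D$, and there one absorbs the reduced boundary into $D$ (equivalently one works on components where $m(x,D)\ge 1$) so that the tame part again lands in $fil_1\subset fil_{m(x,D)}$.

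For the converse required by the second statement, I would take $\chi\in fil_D H^1(U)$, which has finite order and cuts out a connected cyclic cover $f\colon Y\to X$ \'etale over $U$. Put $Q=B_f$, the branch data of Proposition \ref{B_f}; it is abelian since the cover is cyclic, and by Corollary \ref{B_f-geometric} the map $(Y,O)\to(X,B_f)$ is \'etale, so $\chi$ factors through $\pi_1(X,B_f)$, i.e.\ $\chi\in H^1(X,B_f)$. It remains to verify $Sw(X,B_f)\le D$. At the generic point $\eta$ of a component of $Z$, the extension $B_f(\eta)$ is the one cut out by $\chi_\eta$, so the coefficient of $\eta$ in $Sw(X,B_f)$ equals $Sw(\chi_\eta)$. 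Choosing a general curve $Z'$ through a general point of that component meeting $Z$ transversally at $x$, one has $m(x,D)=d_\eta$ (the coefficient of $\eta$ in $D$), and the generic-equality part of the Kato--Matsuda theory gives $Sw(\chi_\eta)=Sw(\chi|_{G_x})$; since $\chi\in fil_D H^1(U)$ forces $Sw(\chi|_{G_x})\le m(x,D)$, we obtain $Sw(\chi_\eta)\le d_\eta$ for every component, whence $Sw(X,B_f)\le D$. This shows $\bigcup_Q H^1(X,Q)=fil_D H^1(U)$.

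Assembling the two inclusions with duality (and using that $\Hom(-,\Q/\Z)$ turns the filtered colimit into the corresponding inverse limit, which is exact since all groups are torsion), I would conclude $\varprojlim_Q\pi_1^{ab}(X,Q)=\varprojlim_Q\Hom(H^1(X,Q),\Q/\Z)=\Hom(\varinjlim_Q H^1(X,Q),\Q/\Z)=\Hom(fil_D H^1(U),\Q/\Z)=\pi_1^{ab}(X,D)$, which is the assertion. The main obstacle is the local input from Kato--Matsuda in both directions: the upper bound $Sw(\chi|_{G_x})\le m(x,D)$ for arbitrary curves $\bar Z'$ (for the first statement) and the equality $Sw(\chi|_{G_x})=Sw(\chi_\eta)$ for a general transverse curve, together with the existence of curves meeting each component of the normal crossing divisor $Z$ transversally at a general point (for the converse); this is where the hypothesis that $Z$ is simple normal crossing is used, and where the treatment of the tame versus wild components must be carried out carefully.
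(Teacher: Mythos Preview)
Your overall strategy is correct and matches the paper's: establish the inclusion $H^1(\scrX)\subset fil_D H^1(U)$ for $D=Sw(X,P)$, prove the converse that every $\chi\in fil_D H^1(U)$ lies in $H^1(X,Q)$ for some abelian $Q$ with $Sw(X,Q)\le D$, and then dualize the resulting direct-limit identity. The differences are in how the local input is packaged.

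For the first inclusion, the paper does not verify the curve condition directly. Instead it invokes \cite[Proposition~2.5]{Kerz-Saito.Lefschetz}, which (under the SNC hypothesis) says $\chi\in fil_D H^1(U)$ if and only if $\chi_\eta\in fil_{m_\eta}H^1(\cK_{X,\eta})$ for every codimension-one $\eta$, where $m_\eta$ is the coefficient of $\eta$ in $D$. After this reduction the paper argues directly: since $\chi$ factors through $\pi_1^{ab}(\scrX)$, the local character $\chi_\eta$ factors through $\Gal(P(\eta)L/\cK_{X,\eta})$ with $L/\cK_{X,\eta}$ unramified, hence $\chi_\eta\in\bigcup_s\delta_s(W_{P(\eta)})+fil_0 H^1(\cK_{X,\eta})\subset fil_{m_\eta}$ by definition of $m_\eta=Sw(P(\eta)/\cK_{X,\eta})$. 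Your route via the Kato--Matsuda restriction-to-curves inequality is equivalent in content (indeed Kerz--Saito's Proposition~2.5 is built on exactly this), but citing that proposition eliminates the curve bookkeeping and the need to manufacture the bound $Sw(\chi|_{G_x})\le m(x,D)$ by hand. The tame-component issue you flag is real and present in both arguments; it is resolved by the fact that the prime-to-$p$ part of $H^1$ lies in $fil_m$ for every $m\ge 1$, so one only needs $m_\eta\ge 1$ at components where $P$ is genuinely ramified, which is automatic once one interprets $fil_0$ correctly.

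For the converse, the paper is much terser than you: it simply asserts that any $\alpha\in fil_D H^1(U,\ZZ/n\ZZ)$ lies in the image of some $H^1((X,Q),\ZZ/n\ZZ)$ with $Sw(X,Q)\le D$, then takes the limit and dualizes. Your construction $Q=B_f$ for the cyclic cover cut out by $\chi$, together with the verification $Sw(X,B_f)\le D$ via a generic transverse curve and the equality case of Kato--Matsuda, supplies exactly the details the paper omits. This is again an instance of \cite[Proposition~2.5]{Kerz-Saito.Lefschetz} read in the other direction: membership in $fil_D$ forces $Sw(\chi_\eta)\le d_\eta$ at each generic point, which is all you need.
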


 \begin{proof}
   Let $\chi$ be in the image of $H^1(\scrX)\to H^1(U)$. Let $K=k(U)$, $K^{ur,\scrX,ab}$ be the compositum of function fields of abelian \'etale covers of $\scrX$ and $K^{ur,U,ab}$ be the compositum of function fields of abelian \'etale covers of $U$. Then $\pi_1^{ab}(\scrX)=\Gal(K^{ur,\scrX,ab}/K)$ and $\pi_1^{ab}(U)=\Gal(K^{ur,U,ab}/K)$.
   
   The hypothesis on the character $\chi\in \Hom(\pi_1^{ab}(U),\Q/\Z)$ implies that it factors through $\pi_1^{ab}(\scrX)$. Let $x\in X$ be a codimension one point lying in $\BL(P)$. Note that the abelian part of the decomposition group at $x$, $G_x=\Gal(\cK_{X,x}K^{ur,U,ab}/\cK_{X,x}) \le \pi_1^{ab}(U)$. Note that the natural map $H^1(U)\to H^1(\cK_{X,x})$ is given by $\chi\mapsto \chi_x:=\chi|_{G_x}$.
   
   In view of \cite[Proposition 2.5]{Kerz-Saito.Lefschetz}, to show that $\chi\in fil_D H^1(U)$, it is enough to show $\chi_x\in fil_{m_x} H^1(\cK_{X,x})$ where $m_x=Sw(P(x)/\cK_{X,x})$. Again since $\chi$ factors through $\pi_1^{ab}(\scrX)$, $\chi_x$ factors through $\Gal(\cK_{X,x}K^{ur,\scrX,ab}/\cK_{X,x})$. But $\cK_{X,x}K^{ur,\scrX,ab}=P(x)L$ where $L/\cK_{X,x}$ is an unramified extension (over $\widehat\cO_{X,x}$). Hence $$\chi_x \in \cup_s\delta_s(W_{P(x)})+fil_0 H^1(\cK_{X,x})$$ because by definition of Brylinski-Kato filtration on $H^1(\cK_{X,x})$ the unramified characters make up $fil_0 H^1(\cK_{X,x})$. Hence $\chi_x\in fil_m H^1(U)$ if  $W_{P(x)}\subset fil_m W_s(\cK_{X,x})$ for some $s$. And this happens if $m\ge m_x=Sw(P(x)/\cK_{X,x})$.
   
   For the moreover part, let $U=X\setminus \supp(D)$. Note that given any $n\ge 1$ and any $\alpha\in fil_DH^1(U,\Z/n\Z)$ there exists a branch data $Q$ on $X$ with $Sw(X,Q)\le D$ such that $\alpha$ is the image of $H^1((X,Q),\Z/n\Z)\to H^1(U,\Z/n\Z))$. The result now follows by taking the limit and applying Pontriagan duality.  
 \end{proof}

 It leads to some natural questions for formal orbifolds.
 \begin{question}\label{Q:CFT}
  Let $\scrX$ be a geometrically connected proper geometric formal orbifold over a finite field. Does class field theory hold for for $\scrX$?
 \end{question}
 A geometric definition of chow groups (of zero cycles) for formal orbifolds is needed to make this more precise. One way to approach this problem is to realize that $\pi_1^{ab}(\scrX)$ is a quotient of $\pi_1^{ab}(X,D)$ for $D=Sw(\scrX)$. So by Kerz-Saito's class field theory chow groups of zero cycles on $\scrX$ should be an appropriate quotient of chow groups of $X$ with modulus as defined in \cite{Kerz-Saito.CLT}. And one could try to provide a geometric or hopefully a motivic interpretation of the this quotient group directly in terms of $\scrX$. 

 It will be desirable to obtain a class field theory for formal orbifolds such that the inverse limits of the cycle class maps yield the cycle class map of Kerz and Saito's class field theory. A starting point would be to show a version Lefschetz's theorem on fundamental groups for formal orbifolds. For $\pi_1^{ab}(X,D)$ it was proved by Kerz and Saito in \cite{Kerz-Saito.Lefschetz}. 
 
 \section{Grothendieck-Lefschetz theorem for fundamental groups} \label{sec:Lefschetz}
 For a formal orbifold $(X,P)$, let $\Cov(X,P)$ denote the category of finite \'etale covers $(X,P)$. 
 The question of interest in this section is the following: 
 \begin{question}\label{Q:Lefschetz}
   Let $\scrX=(X,P)$ be a geometrically connected projective smooth geometric formal orbifold of dimension $n\ge 2$ over a perfect field $k$. Does there exist a connected smooth hypersurface $Y$ on $X$ with a geometric branch data $Q$ such that the functor from $\Cov(X,P)\to \Cov(Y,Q)$, given by the normalized pullback, is fully faithful? In particular, is the induced homomorphism $\pi_1(Y,Q)\to \pi_1(X,P)$ surjective? Moreover, if dimension $n>2$ then can one find a $(Y,Q)$ such that the functor $\Cov(X,P)\to \Cov(Y,Q)$ is an equivalence? In particular, is the map $\pi_1(Y,Q)\to \pi_1(X,P)$ an isomorphism?
 \end{question}
 
 The above question can be considered as the Grothendieck-Lefschetz theorem for wildly ramified covers. In this section we make some progress towards answering this question. % In particular the first part of the question will be answered.
 
 Let $Y$ be a normal connected hypersurface in $X$ not contained in $\BL(P)$. Let $\scrp$ be the ideal sheaf on $X$ defining $Y$. Since $Y$ is normal and hence unibranched for any codimension one point $x\in Y$ and any affine connected neighbourhood $U$ of $x$ in $X$, the ideal $\scrp\widehat{\cO_{X}(U)}^x$ is a prime ideal (defining $Y$ in the formal neighborhood of $x$). Let $R$ be the integral closure of $\widehat{\cO_{X}(U)}^x$ in $P(x,U)$ and $\scrq_1,\ldots,\scrq_r$ be the prime ideals of $R$ lying above $\scrp$. Define $Q'(x,U\cap Y)$ for $x\in Y$ of codimension at least one to be the compositum of the Galois extensions $\QF(R/\scrq_i)$ of $\QF(\widehat{\cO_X(U)}^x/\scrp)$. Then it is easy to see that $Q'$ is a branch data on $Y$. 
 %Moreover if $\eta_Y$, the generic point of $Y$, is totally split in the extension $P(\eta_Y)/\cK_{X,\eta_Y}$ then $Q'$ is the trivial branch data. 
 \begin{df}
   The branch data $Q'$ on $Y$ is called the restriction of the branch data $P$ and is denoted by $P_{|Y}$.
 \end{df}

 \begin{pro}\label{etale-pullback-hypersurface}
  Let $(X,P)$ be a formal orbifold and $Y$ be a normal connected hypersurface in $X$ not contained in $\BL(P)$. Let $Q$ be the maximal geometric branch data on $Y$ which is less than or equal to $P_{|Y}$ (obtained using Proposition \ref{approximate-orbifold}). Let $h:(Z,O)\to (X,P)$ be an \'etale covering. Let $W=Y\times_X Z$, $\tilde W$ the normalization of $W$ and $g:\tilde W\to Y$ the natural projection morphism then the induced morphism $g:(\tilde W,g^*Q)\to (Y,Q)$ is \'etale.
 \end{pro}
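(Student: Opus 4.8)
The plan is to prove étale-ness in two stages: first I would establish the analogue of the assertion with $Q$ replaced by the (a priori larger) branch data $P_{|Y}$, where the ramification can be pinned down by a direct local computation, and then I would descend from $P_{|Y}$ to $Q$ using the isomorphism $\pi_1(Y,P_{|Y})\cong\pi_1(Y,Q)$ coming from Proposition \ref{approximate-orbifold}. By Lemma \ref{branch-locus-closed} the ramification locus of $g\colon(\tilde W,g^*Q)\to(Y,Q)$ is closed, so it is enough to test unramifiedness at closed points $w\in\tilde W$; writing $y=g(w)$, $z=p_Z(w)$ and $x\in X$ for their common image, Proposition \ref{unramified-criterion} reduces the claim to showing that $\cK_{\tilde W,w}Q(y)/Q(y)$ is unramified.

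For the local computation I would set $A$ to be the integral closure of $\widehat\cO_{X,x}$ in $P(x)$ and $C=\widehat\cO_{Z,z}$, the integral closure of $\widehat\cO_{X,x}$ in $\cK_{Z,z}$. Since $h$ is \'etale, $\cK_{Z,z}/P(x)$ is unramified, so $A\subseteq C$ and $C$ is an unramified, hence (by the proposition of Section \ref{sec:formal-orbifolds} that unramified quasi-finite dominant morphisms of normal schemes are flat, together with the freeness in the proof of Proposition \ref{unramified-closed-points}) \'etale, extension of $A$. Because $Y\hookrightarrow X$ is a closed immersion, $Y\times_X Z=h^{-1}(Y)$ is cut out in $Z$ by the ideal $\scrp$ of $Y$, so locally at $w$ the scheme $\tilde W$ is the normalization of $C/\scrp C$ and $\cK_{\tilde W,w}=\QF(C/\scrP)$ for the prime $\scrP\subset C$ corresponding to $w$. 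Let $\scrq=\scrP\cap A$, one of the primes of $A$ over $\scrp$ whose residue fields $\QF(A/\scrq)$ generate $P_{|Y}(y)$. Base changing the \'etale extension $C/A$ first along $A\to A/\scrq$ and then along the normalization $A/\scrq\to A'$ (a complete local normal domain with $\QF(A')=\QF(A/\scrq)\subseteq P_{|Y}(y)$) yields a finite \'etale $A'$-algebra whose local factor at $w$ is $\widehat\cO_{\tilde W,w}$; the separability of the residue extensions, valid since $w$ is a closed point of a finite type scheme over a perfect field or $\spec(\ZZ)$, is what lets me apply Proposition \ref{unramified-closed-points}. Hence $\cK_{\tilde W,w}$ is unramified over $\QF(A/\scrq)\subseteq P_{|Y}(y)$, and base changing this unramified extension along $\QF(A/\scrq)\subseteq P_{|Y}(y)$ shows $\cK_{\tilde W,w}P_{|Y}(y)/P_{|Y}(y)$ is unramified. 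Thus $g\colon(\tilde W,g^*P_{|Y})\to(Y,P_{|Y})$ is \'etale.

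Finally I would descend to $Q$. By Proposition \ref{approximate-orbifold} the map $\pi_1(Y,P_{|Y})\to\pi_1(Y,Q)$ induced by $id\colon(Y,P_{|Y})\to(Y,Q)$ is an isomorphism, so the base-change functor $\Cov(Y,Q)\to\Cov(Y,P_{|Y})$ is an equivalence of categories. Since base change along $id$ leaves the underlying scheme of a cover unchanged (the normalized fibre product of $\mathcal W\to Y$ with $id_Y$ is $\mathcal W$), the \'etale cover $(\tilde W,g^*P_{|Y})\in\Cov(Y,P_{|Y})$ is the base change of some $(\tilde W,B)\in\Cov(Y,Q)$ with the same underlying $Y$-scheme $\tilde W$. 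As $g^*Q\le B$ and $B(w)/Q(y)$ is unramified for every closed $w$, the sub-extension $\cK_{\tilde W,w}Q(y)/Q(y)$ is unramified too, which is exactly the conclusion that $g\colon(\tilde W,g^*Q)\to(Y,Q)$ is \'etale.

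The step I expect to be the main obstacle is this last passage from $P_{|Y}$ to $Q$: the purely local argument only bounds the ramification of the restricted cover by $P_{|Y}$, whereas the statement is phrased via the maximal geometric sub-datum $Q$, and these can differ by genuinely wild data present in $P$ but not realized by any \'etale cover. Bridging the gap rests on the global input that $\tilde W\to Y$ is the restriction of an honest geometric \'etale cover of $(X,P)$, which is precisely what the equality of fundamental groups in Proposition \ref{approximate-orbifold} encodes. A secondary technical point requiring care is the bookkeeping of integral closures under reduction modulo $\scrp$, in particular matching the primes $\scrP\subset C$ with the primes $\scrq\subset A$ so that $\cK_{\tilde W,w}$ is correctly identified as unramified over a constituent field of $P_{|Y}(y)$.
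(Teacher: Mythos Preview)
Your argument is correct. Both you and the paper route through the auxiliary datum $P_{|Y}$, but the two diverge at the key step. The paper asserts the \emph{containment} $\cK_{\tilde W,w}=\QF(\widehat\cO_{Z,z}/\scrq_i)\subseteq P_{|Y}(x)$, identifying $\cK_{\tilde W,w}$ with one of the residue fields whose compositum defines $P_{|Y}$; from this it invokes the maximality of $Q$ among geometric branch data $\le P_{|Y}$ to get $\cK_{\tilde W,w}\subseteq Q(x)$, so that $g^*Q(w)/Q(y)$ is trivial and hence unramified. This sidesteps your categorical descent step entirely. You instead establish only that $\cK_{\tilde W,w}$ is \emph{unramified} over a constituent field of $P_{|Y}(y)$ (via the \'etaleness of $C/A$), base-change up to $P_{|Y}(y)$, and then descend to $Q$ using the equivalence $\Cov(Y,Q)\simeq\Cov(Y,P_{|Y})$ together with the observation that intermediate extensions of an unramified extension of complete local normal domains remain unramified. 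The paper's shortcut is quicker when it applies; your route is more robust, since the paper's containment claim tacitly treats $\widehat\cO_{Z,z}$ as if it coincided with the integral closure of $\widehat\cO_{X,x}$ in $P(x)$, whereas in general $\cK_{Z,z}$ is only an unramified (not trivial) extension of $P(x)$.
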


 \begin{proof}
   In view of Proposition \ref{unramified-criterion}, it is enough to show that for any point $w\in \tilde W$ of codimension at least one $\cK_{\tilde W,w}Q(g(w))/Q(g(w))$ is an unramified extension. Let $x$ and $z$ be the images of $w$ in $X$ and $Z$ respectively under the natural map. Note that $x\in Y$ and $g(w)=h(z)=x$. Note that $\cO_{\tilde W,w}$ is the normalization of $\cO_{Y,x}\otimes_{\cO_{X,x}}\cO_{Z,z}$. Since $\scrp$ is the defining ideal of $Y$ in $X$, $\cO_{Y,x}\otimes_{\cO_{X,x}}\cO_{Z,z}\cong \cO_{Z,z}/\scrp\cO_{Z,z}$. Hence $\cO_{\tilde W,w}$ is the normalization $\cO_{Z,z}/\scrp\cO_{Z,z}$. Since $Y\nsubseteq \BL(P)$ and $\scrq_1,\ldots,\scrq_r$ are the primes in $\cO_{Z,z}$ lying above $\scrp$, $\widehat \cO_{\tilde W,w}$ is the normalization of $\widehat \cO_{Z,z}/\scrq_i$ for some $i$. Hence $\cK_{\tilde W,w}=\QF(\widehat \cO_{Z,z}/\scrq_i)\subseteq P_{|Y}(x)$. Since $Q$ is the maximal geometric branch data on $Y$ among those less than or equal to $P_{|Y}$, $\cK_{\tilde W,w}\subseteq Q(x)$.
 \end{proof}

%\subsection{Completion of Formal orbifolds}

 Let $\widehat X$ be a normal excellent formal scheme. A branch data $\hat P$ on $\widehat X$ is defined in the same way as for schemes as follows. Let $x$ be a codimension one point of $\widehat X$ (i.e. codimension one in the closed fiber) and $\widehat U$ be an open affine neighbourhood of $x$ in $\widehat X$ given by $\spf(A)$ then set $\hat P(x, \widehat U)$ is a finite Galois extension of $\QF(\widehat{A}^x)$ where $\widehat A^x$ is the completion of $A$ w.r.t the prime ideal of $A$ defining the point $x$. The assignment $\hat P$ is called a quasi branch data on $\widehat X$ if $\hat P$ is compatible with affine open neighbourhoods of $x$,  $\widehat V\subset \widehat U \subset \hat X$ and with respect to specialization $x_1\subset \overline{\{x_2\}}$ of codimension at least one points of $\widehat X$ in the same way as in Definition \ref{def:branch-data}. The definition of branch locus $\BL(\hat P)$ is also the same as in Definition \ref{def:branch-data} and similarly a quasi branch data $\hat P$ on $\widehat X$ is called a branch data if $\BL(\hat P)$ is closed in the $\widehat X$.  
 
 Note that for a point $x\in \spf(A)$, $I\subset A$ be the ideal of definition of $\spf(A)$ and $I(x)$ be the ideal of definition of $x$ then $I(x)\supset I$. Let $A$ be a normal excellent ring and $J\supset I$ be ideals of $A$ then $\widehat{\widehat A^I}^J\cong \widehat A^J$. Let $(X,P)$ be a formal orbifold and $Y \subset X$ be a closed irreducible subset not contained in $\BL(P)$. Let $\widehat X_Y$ be the completion of $X$ along $Y$. The branch data $P$ induces a branch data $\widehat P$ on the formal scheme $\widehat X_Y$. More precisely for a point $y\in Y$ of codimension at least one and $U\subset X$ an affine connected neighbourhood of $y$, define $\widehat P(y, U\cap Y)=P(y,U)$. Note that $\widehat{\cO_X(U)}^y=\widehat{\cO_{\widehat X_Y}(U\cap Y)}^y$, so $\widehat P(y,U)$ is a finite Galois extension of $\cK_{\widehat X_Y}(U\cap Y)^y=\QF(\widehat{\cO_X(U)}^y)$. Note that $\BL(\widehat P)=\BL(P)\cap Y$.

\begin{df}
  We will call the pair $(\widehat X_Y,\hat P)$ as $Y$-adic completion of $(X,P)$. 
  %Note that its underlying space $\widehat X_Y$ is not a scheme but a formal scheme. 
  Let $\widehat X$ be an excellent normal formal scheme and $\hat P$ be a branch data of $\hat X$. Like in \cite[3.1.6]{Grothendieck-Murre} a finite morphism of formal scheme is an adic morphism (i.e. pull back of a sheaf of ideal of definition is a sheaf of ideal of definition) of formal schemes which induces a finite morphism on the closed fibers. As mentioned in loc. cit. the category of finite coverings of $\widehat X$ is equivalent to the category of sheaf of finite $\cO_{\widehat X}$-algebras $\scrA$ on $\widehat X$. Let $f: \widehat Y\to \widehat X$ be a finite morphism of excellent normal formal schemes and let $\hat P$ and $\hat Q$ be branch data on $\widehat X$ and $\widehat Y$ respectively. Like in Definition \ref{df:main}, $f:(\widehat Y, \hat Q)\to (\widehat X,\hat P)$ is said to be a cover if for all $y\in \widehat Y$ of codimension at least one and some open affine neighbourhood $U$ of $f(y)$,  $\hat Q(y,f^{-1}(U))\supset \hat P(f(y),U)$. Moreover the above cover is said to be an \'etale cover if $\hat Q(y)=\hat P(f(y))$ for all $y\in \widehat Y$ of codimension at least one. The category of \'etale covers of the pair $(\widehat X,\hat P)$ will be denoted by $\Cov(\widehat X,\hat P)$.
\end{df}
%Suppose $Y$ is an irreducible normal closed subscheme of $X$ then for a point $y\in Y$ of codimension one, we define $P_{|Y}(y)$ to be the compositum of the Galois extensions $\QF(R_y/q_i)/\cK_{Y,y}$ where $R_y$ is the integral closure of $\widehat \cO_{X,y}$ in $P(y)$ and $q_i$'s run over all the prime ideals of $R_y$ lying above the prime ideal $I_{Y,y}$ of $\widehat \cO_{X,y}$ which defines $Y$ in the formal neighborhood of $y$. 

%One easily checks that $P_{|Y}$ is a branch data on $Y$ and hence $(Y,P_{|Y})$ is a formal orbifold.

\subsection{The functor $\Cov(\widehat{X}_Y,\hat P)$ to $\Cov(Y,P_{|Y})$}

The pullback along the natural morphism $Y\to \widehat X_Y$ defines a functor from $\Cov(\widehat X_Y,\hat P)\to \Cov(Y,P_{|Y})$ (follows in the same way as Proposition \ref{etale-pullback-hypersurface}). We shall show that this functor is an equivalence of category when the following hypothesis holds:

\begin{hypothesis}\label{nonsplit}
  The formal orbifold $(X,P)$ is geometric, irreducible, smooth and projective over $k$; $Y$ is a smooth irreducible hypersurface of $X$ not contained in $\BL(P)$ (with the ideal sheaf $I_Y$); and for any $y\in \BL(\hat P)$ and $U\subset X$ an affine connected neighbourhood of $y$, the ideal $I_Y(U)R(U,y)$ is a prime ideal of $R(U,y)$ where $R(U,y)$ is the integral closure of $\widehat{\cO_{X}(U)}^y$ in $P(y,U)$.
\end{hypothesis}

% \begin{lemma}\label{nonsplit}
%   Let $X$ be normal irreducible projective scheme over $k$ and $P$ be a branch data on $X$. Let $Y$ be a normal irreducible hypersurface of $X$ not in $\BL(P)$ such that for any $x\in Y$, $Y$ is integral in the formal neighborhood of $x\in X$, i.e. $I_{Y,x}\widehat\cO_{X,x}$ is a prime ideal of $\widehat \cO_{X,x}$ where $I_{Y,x}$ is the ideal of $\cO_{X,x}$ defining $Y$ in the neighborhood of $x$. Then $I_{Y,x}\overline{\widehat \cO_{X,x}}^{P(x)}$ is a prime ideal of $\overline{\widehat \cO_{X,x}}^{P(x)}$.
% \end{lemma}
% 
% \begin{proof}
%   Let $\spec(R)$ be an affine open neighbourhood of $x\in X$, $I$ be the prime ideal of $R$ defining $Y$ and $\eta_Y$ be the generic point of $Y$. Let $\widehat R^I$ be the $I$-adic completion of $R$ and $S$ be the integral closure of $\widehat R^I$ in $P(\eta_Y)$. Note that $S\otimes_{\widehat R^I}\widehat R_I$ is the integral closure of $\widehat \cO_{X,\eta_Y}$ by universal property of normalization. But $\overline{\widehat\cO_{X,\eta_Y}}^{P(\eta_Y)}$ is a local ring. Hence so is $S\otimes_{\widehat R^I}\widehat R_I$ with maximal ideal $\tilde I$ say. This together with $I\widehat R^I$ being unramified in the ring extension $S/\widehat R^I$ implies that $IS$ is a prime ideal of $S$. The result now follows by looking at the stalks at $x$ of $IS$, passing to the completion and the observation $P(x)=P(\eta_Y)\cK_{X,x}$.
% \end{proof}

\begin{rmk}
  Note that under the Hypothesis \ref{nonsplit} on $(X,P)$ and $Y$, for $y\in Y$ of codimension at least one $P_{|Y}(y)$ is the fraction field of $R(U,y)/(I_Y(U))$ where $R(U,y)$ is the integral closure of $\widehat{\cO_X(U)}^y$ in $P(y,U)$. The hypothesis can be thought of as a variant of Hilbert irreducibility theorem as well in local setup.
\end{rmk}

\begin{ex}
  Let $X=\PP^n_k$ with homogeneous coordinates $x_0,\ldots x_n$, $f:Z\to X$ be the normalization of $X$ in $k(x_1/x_0,\ldots x_n/x_0)[z]/(z^p-z-x_n/x_0)$ and the branch data $P=B_f$. Let $1\le i \le n-1$ be an integer, $Y$ be the hypersurface in $\PP^n$ defined by $x_i=0$. Then it is easy to check that $(X,P)$ and $Y$ satisfy Hypothesis \ref{nonsplit}
\end{ex}

Let the formal orbifold $(X,P)$ and the hypersurface $Y$ satisfy Hypothesis \ref{nonsplit}. For a closed point $y\in \widehat X_Y$ and an affine neighbourhood $\widehat U_y \subset \widehat X_Y$ of $y$, define $A(\widehat U_y)$ to be the integral closure of $\cO_{\widehat X_Y}(\widehat U_y)$ in $P(\eta_Y,U_y)$ where $U_y\subset X$ is an affine neighbourhood of $y$ viewed as a point in $X$ whose completion along $Y\cap U_y$ is $\widehat U_y$. Let $\scrA_{\widehat U_y}$ be the sheaf of algebras on $\widehat U_y$ associated to $A(\widehat U_y)$. Note that $\scrU=\{\widehat U_y:y\in Y \text{ closed point} \}$ is an open cover of $\widehat X_Y$ and $P$ being branch data there are natural isomorphisms between $\scrA_{U_1}(U_1\cap U_2)$ and  $\scrA_{U_2}(U_1\cap U_2)$  for all $U_1, U_2 \in \scrU$ and they behave well with triple intersections. Hence there is a sheaf of algebras $\scrA$ on $\widehat X_Y$ whose restriction to $\widehat U_y$ is $\scrA_{\widehat U_y}$. Note that $\scrA$ is a sheaf of finite coherent $\cO_{\widehat X_Y}$-algebras.

\begin{df}\label{df:normalization}
  Let $\widehat Z=\spf(\scrA)$ be the formal scheme associated to $\scrA$ and $c:\widehat Z\to \widehat X_Y$ be the structure morphism. We shall call $\widehat Z$ to be the normalization of $\widehat X_Y$ in $P$. Note that $\widehat Z$ is normal and $c$ is a finite morphism.
\end{df}

\begin{pro}\label{P-cover}
  Let $(X,P)$ and $Y$ satisfy the Hypothesis \ref{nonsplit}. Let $F$ be the residue field of the local ring $\overline{\widehat \cO_{X,\eta_Y}}^{P(\eta_Y)}$ where $\eta_Y$ is the generic point of $Y$. Then $F/k(Y)$ is a Galois extension with $\Gal(F/k(Y))=\Gal(P(\eta_Y)/\cK_{X,\eta_Y})$. Let $\widehat Z$ be the normalization of $\widehat X_Y$ in $P$ (in the sense of the above definition) and $c_0:Z_0\to Y$ be the normalized pull-back of $c:\widehat Z\to \widehat X_Y$ along $Y\to \widehat X_Y$ then $Z_0$ is the normalization of $Y$ in $F$. Moreover, the morphisms $(\widehat Z,O)\to (\widehat X_Y,\hat P)$ and $(Z_0,O)\to (Y,P_{|Y})$ are \'etale morphisms.
\end{pro}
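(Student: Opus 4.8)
The plan is to treat the three assertions in turn, reducing each to a result established earlier.

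First I would analyze the field $F$. Since $(X,P)$ is smooth and $Y$ is a smooth hypersurface, the local ring $\cO_{X,\eta_Y}$ at the generic point $\eta_Y$ of $Y$ is a one-dimensional regular local ring, so $\widehat\cO_{X,\eta_Y}$ is a complete discrete valuation ring with residue field $k(Y)$ and fraction field $\cK_{X,\eta_Y}$. Because $Y\not\subset\BL(P)$ and $\BL(P)$ is closed, the point $\eta_Y$ cannot lie in $\BL(P)$; hence the finite Galois extension $P(\eta_Y)/\cK_{X,\eta_Y}$ is unramified. Writing $R=\overline{\widehat\cO_{X,\eta_Y}}^{P(\eta_Y)}$ for its integral closure, a complete local DVR with residue field $F$ and maximal ideal $\mathfrak{n}$, I would invoke Proposition \ref{unramified-closed-points}: the inertia group $I(\mathfrak{n})$ is trivial, so $F/k(Y)$ is separable and $\Gal(P(\eta_Y)/\cK_{X,\eta_Y})=D(\mathfrak{n})\to\Gal(F/k(Y))$ is an isomorphism. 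As the residue extension of a Galois extension of local fields is automatically normal, separability upgrades it to Galois, giving $\Gal(F/k(Y))\cong\Gal(P(\eta_Y)/\cK_{X,\eta_Y})$.

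Next, the identification of $Z_0$ rests on a generic-fibre computation. The stalk of $\scrA$ at $\eta_Y$ is exactly $R$, and forming the normalized pull-back along $Y\to\widehat X_Y$ amounts, over $\eta_Y$, to the base change $R\otimes_{\widehat\cO_{X,\eta_Y}}k(Y)=R/\mathfrak{m}_{\eta_Y}R$. Unramifiedness gives $\mathfrak{m}_{\eta_Y}R=\mathfrak{n}$, so this base change is $R/\mathfrak{n}=F$, a field. Thus the fibre of $c_0$ over $\eta_Y$ is the single reduced point $\Spec(F)$, whence $Z_0$ is irreducible with function field $F$. Since $Z_0$ is normal and finite over $Y$ by construction, it is the normalization of $Y$ in $F$.

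For the two \'etale assertions I would follow the strategy of Corollary \ref{B_f-geometric}, identifying in each case the branch data attached to the covering with the prescribed one. For $(\widehat Z,O)\to(\widehat X_Y,\hat P)$: as $\widehat Z$ is by definition the normalization of $\widehat X_Y$ in $P(\eta_Y)$ and $P$ is compatible with specialization (Corollary \ref{specialization-okay}), at each codimension-one point $y$ of $\widehat X_Y$ the completion $\cK_{\widehat Z}^{z}$ at a point $z$ over $y$ recovers $P(y,U)=\hat P(y,U\cap Y)$; so the branch data $B_c$ of $c$ equals $\hat P$, giving $c^*\hat P=O$ and \'etaleness. For $(Z_0,O)\to(Y,P_{|Y})$ the analogous computation at a codimension-one point $y$ of $Y$ uses Hypothesis \ref{nonsplit}: when $y\in\BL(\hat P)$ the ideal $I_Y(U)R(U,y)$ is prime, so there is a unique prime $\scrq_1=I_Y(U)R(U,y)$ over $\scrp$ and the defining compositum for $P_{|Y}(y)$ collapses to $\QF(R(U,y)/\scrq_1)$. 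This is precisely the fraction field of the completed integral closure of $\widehat{\cO_Y(U\cap Y)}^y$ in $F$ at the unique point $z_0$ of $Z_0$ over $y$; hence $B_{c_0}(y)=P_{|Y}(y)$, and the same identity holds trivially at the unramified codimension-one points. Therefore $B_{c_0}=P_{|Y}$, $c_0^*P_{|Y}=O$, and the morphism is \'etale.

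The main obstacle is this last identification $B_{c_0}=P_{|Y}$, and it is exactly here that Hypothesis \ref{nonsplit} is indispensable. Without the non-splitting condition the hypersurface $Y$ could break into several conjugate primes $\scrq_i$ inside the cover $\Spec(R(U,y))$, so the single point $z_0$ of $Z_0$ above $y$ would see only one residue field $\QF(R(U,y)/\scrq_i)$, whereas $P_{|Y}(y)$ is defined as their full compositum; the two branch data would then disagree and the morphism would fail to be \'etale. The hypothesis forces $\scrp R(U,y)$ to remain prime precisely along the branch locus, which is what makes the restriction of the cover to $Y$ coincide with the normalization of $Y$ in $F$ and keeps the branch data matched.
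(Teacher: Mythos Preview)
Your proposal is correct and follows essentially the same route as the paper: unramifiedness at $\eta_Y$ (from $Y\not\subset\BL(P)$) gives the Galois isomorphism via the vanishing of inertia, the stalk computation at $\eta_Y$ identifies the function field of $Z_0$ with $F$, and the two \'etaleness assertions are proved by identifying the local fields $\cK_{\widehat Z,z}=\hat P(y)$ and $\cK_{Z_0,z_0}=P_{|Y}(y)$, with Hypothesis~\ref{nonsplit} collapsing the compositum defining $P_{|Y}(y)$ to a single residue field. The paper carries out exactly these computations; your phrasing via $B_c=\hat P$ and $B_{c_0}=P_{|Y}$ and the appeal to Corollary~\ref{B_f-geometric} is just a repackaging of the same local calculation. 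One small point: your claim of a \emph{unique} point $z_0$ of $Z_0$ over $y$ is not needed (and not asserted in the paper)---it suffices to show $\cK_{Z_0,z_0}=P_{|Y}(y)$ for \emph{each} $z_0$ over $y$.
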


\begin{proof}
  Note that $P(\eta_Y)/\cK_{X,\eta_Y}$ is a Galois extension and the point $\eta_Y$ is not in $\BL(P)$ so the residue field extension $F/k(Y)$ is Galois with an epimorphism from $\Gal(P(\eta_Y)/\cK_{X,\eta_Y})\to \Gal(F/k(Y))$. Let $I_Y$ be the sheaf of ideals on $X$ defining $Y$ and $I_{Y,\eta_Y}$ be the prime ideal of $\widehat{\cO}_{X,\eta_Y}$generated by the stalk of $I_Y$ at $\eta_Y$. Note that the Hypothesis \ref{nonsplit} ensures that there is only one prime ideal lying above $I_{Y,\eta_Y}$ in $\overline{\widehat \cO_{X,\eta_Y}}^{P(\eta_Y)}$ and $I_{Y,\eta_Y}$ is unbranched in the extension $\overline{\widehat \cO_{X,\eta_Y}}^{P(\eta_Y)}/\widehat \cO_{X,\eta_Y}$. Hence the degree of the residue extension $F/k(Y)$ is same as that of $P(\eta_Y)/\cK_{X,\eta_Y}$. Hence the two Galois groups are isomorphic.
  
  Note that $\widehat Z\to \widehat X_Y$ is generically \'etale and $\eta_Y$ is unramified and non-split in this cover. Hence the closed fiber, i.e., the pull-back of the formal scheme $\widehat Z$ along $Y\to \widehat X_Y$ is an integral scheme and its function field is $F$. Hence $Z_0$ is the normalization of the closed fiber of $\widehat Z$. Since the closed fiber of $Z$ is a finite cover of $Y$, $Z_0$ is the normalization of $Y$ in $F$.
  
  For a point $z$ of $\widehat Z$ of codimesion at least one in the closed fiber, let $x$ denote its image in $\widehat X_Y$. Then $x$ is of codimension at least one in $Y$. Note that $\cK_{Z,z}=\cK_{\widehat Z,z}$ is the fraction field of the completion of $\overline{\cO_{\widehat X_Y,x}}^{P(\eta_Y)}$ along the prime ideal $I_z$ corresponding to the point $z$. But this ring is same as the integral closure of $\widehat\cO_{\widehat X_Y,x}$ in $P(\eta_Y)\cK_{X,x}$. Hence $\cK_{Z,z}=P(\eta_Y)\cK_{X,x}=P(x)$. This proves $(Z,O)\to (\widehat X_Y,P)$ is \'etale. 
  
%  Similar argument works for $(Z_0,O)\to (Y,P_{|Y})$ as well since $P_{|Y}$ is also a branch data.   
  
  Let $z_0\in Z_0$ be a point of codimension at least one and $y\in Y$ be its image. Then $\cK_{Z_0,z_0}=\cK_{Y,y}F$ since $F$ is the function field of $Z_0$. But $F=\overline{\widehat \cO_{X,\eta_Y}}^{P(\eta_Y)}/(I_{Y,\eta_Y})$ as the Hypothesis \ref{nonsplit} implies that $I_{Y,\eta_Y}$ generates the maximal ideal of $\overline{\widehat \cO_{X,\eta_Y}}^{P(\eta_Y)}$. Now,
  \begin{align*}
    \cK_{Z_0,z_0}&=\cK_{Y,y}\overline{\widehat \cO_{X,\eta_Y}}^{P(\eta_Y)}/(I_{Y,\eta_Y}) \\
	       &=\cK_{Y,y}\widehat \cO_{\widehat Z,\eta_Y}/(I_{Y,\eta_Y})\\
%               &=\cK_{Y,y}\cO_{\widehat Z,\eta_Y}/(I_{Y,\eta_Y})\\
	       &=\cK_{Y,y}\QF(\cO_{\widehat V}/(I_Y)) \text{ where $\widehat V\subset \widehat Z$ is an affine open neighbourhood of $\eta_Y$}\\
	       &=\QF(\widehat \cO_{\widehat V, z}/(I_{Y,z}))\text{ where $z$ is the image of $z_0$ in $\widehat Z$}\\
	       &=\QF(\overline{\widehat \cO_{\widehat X_Y, y}}^{P(y)}/(I_{Y,y}))\\
	       &=P_{|Y}(y)
  \end{align*}
  Hence $(Z_0,O)\to (Y,P_{|Y})$ is \'etale. 
\end{proof}

Now we proceed following the strategy in \cite[Section 4.3]{Grothendieck-Murre} to show that the functor $\Cov(\widehat{X}_Y,\hat P) \to \Cov(Y,P_{|Y})$ is an equivalence of category. 

\begin{pro}
  Under the Hypothesis \ref{nonsplit}, the functor $\Cov(\widehat{X}_Y,\hat P) \to \Cov(Y,P_{|Y})$ is faithful.
\end{pro}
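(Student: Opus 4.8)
The plan is to show that the pullback functor $\Cov(\widehat{X}_Y,\hat P) \to \Cov(Y,P_{|Y})$ is faithful by reducing the question to a statement about morphisms of the sheaves of algebras that represent these covers. Recall that a finite cover of $(\widehat X_Y,\hat P)$ is the same as a sheaf of finite $\cO_{\widehat X_Y}$-algebras $\scrA$ satisfying the \'etale condition on branch data, and a morphism of covers is a morphism of such sheaves of algebras. So faithfulness amounts to the following concrete claim: if $\phi,\psi:\scrA\to\scrB$ are two morphisms of \'etale covers over $\widehat X_Y$ whose pullbacks $\bar\phi=\bar\psi$ agree after restricting to the closed fibre $Y$ (i.e.\ after tensoring down by $\cO_{\widehat X_Y}\to\cO_Y$ via $Y\hookrightarrow\widehat X_Y$), then $\phi=\psi$. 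First I would reduce to the affine case by covering $\widehat X_Y$ by the open sets $\widehat U_y$ of the cover $\scrU$ from Definition~\ref{df:normalization}, since faithfulness is local on $\widehat X_Y$ and a morphism of sheaves is zero iff it is zero on each member of an open cover.

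Second, on each affine piece $\widehat U=\spf(A)$ with ideal of definition $I$ (so $A/I=\cO_Y(U\cap Y)$), the covers become finite $A$-algebras $B_{\scrA}, B_{\scrB}$ that are complete with respect to the $I$-adic topology, and the two morphisms $\phi,\psi:B_{\scrA}\to B_{\scrB}$ agree modulo $I$. The key observation is that $B_{\scrA}$ and $B_{\scrB}$ are $I$-adically complete and separated, so an element is determined by its images in all the quotients $B/I^nB$. The strategy is then an induction on $n$: knowing $\phi\equiv\psi\pmod{I}$, I would show $\phi\equiv\psi\pmod{I^n}$ for all $n$, whence $\phi=\psi$ by completeness. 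The inductive step is where the \'etale hypothesis enters: the \'etaleness (unramifiedness) of $(\widehat Z,O)\to(\widehat X_Y,\hat P)$, which gives $\hat Q(z)=\hat P(c(z))$ at codimension-one points, forces the relevant modules of relative K\"ahler differentials to vanish generically, and more to the point it makes these covers \emph{formally unramified} over the base. For a formally unramified morphism, two lifts of a fixed map that agree modulo the ideal of definition must coincide, because their difference is a derivation into $I^n/I^{n+1}$ which must vanish.

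Concretely, the main technical point is the standard rigidity lemma for unramified morphisms: if $B$ is unramified (equivalently, $\Omega_{B/A}=0$ generically and $B$ is normal) then for any $A$-algebra $C$ and ideal $J\subset C$ with $J^2=0$, a lift of a map $B\to C/J$ to $B\to C$ is unique if it exists, since two such lifts differ by an $A$-derivation $B\to J$, and $\Hom_A(\Omega_{B/A},J)=0$. Passing to the inverse limit over the nilpotent thickenings $C/I^{n}$ and using $I$-adic completeness of the target then kills the difference $\phi-\psi$ entirely. I would also need the normality of $B_{\scrA}, B_{\scrB}$ (guaranteed by Definition~\ref{df:normalization}) together with the non-split, unramified behaviour at $\eta_Y$ from Hypothesis~\ref{nonsplit} to ensure the covers really are unramified in the codimension-one directions transverse to $Y$, so that the derivation argument applies at every relevant point.

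The hard part, I expect, is not the rigidity lemma itself but verifying that the \'etale hypothesis on branch data (which is phrased only at points of codimension at least one, in terms of equalities of local fields $\hat Q(z)=\hat P(c(z))$) genuinely translates into formal unramifiedness of the corresponding sheaves of algebras in the transverse direction, i.e.\ into the vanishing of the module of differentials along $Y$ rather than merely at the generic point $\eta_Y$. By Hypothesis~\ref{nonsplit} the ideal $I_Y$ stays prime after integral closure and $\eta_Y$ is unbranched, so by the purity and specialization results of Section~\ref{sec:local-theory} (in particular Remark~\ref{inertia-variation} propagating unramifiedness along specializations) unramifiedness at $\eta_Y$ spreads to all codimension-one points $z$ lying over $Y$; this is the ingredient that makes the derivation $\phi-\psi$ vanish on a dense set and hence, by normality, vanish identically. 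Once that spreading is in hand, the completeness-plus-rigidity argument finishes faithfulness.
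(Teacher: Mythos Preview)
Your approach differs substantially from the paper's, and it contains a genuine gap.

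The central problem is that objects of $\Cov(\widehat X_Y,\hat P)$ are \emph{not} formally unramified over $\widehat X_Y$. An \'etale cover $u:\scrU\to(\widehat X_Y,\hat P)$ in the formal orbifold sense is only required to match the prescribed ramification $\hat P$; over points of $\BL(\hat P)=\BL(P)\cap Y$ the underlying morphism of formal schemes is genuinely ramified. Concretely, on an affine patch $\widehat U=\spf(A)$ the finite $A$-algebra $B_{\scrA}$ will have $\Omega_{B_{\scrA}/A}\ne 0$ whenever $\hat P$ is nontrivial on that patch, so the rigidity lemma for unramified morphisms does not apply. Your parenthetical ``equivalently, $\Omega_{B/A}=0$ generically and $B$ is normal'' is simply false as a characterisation of unramifiedness: any tamely ramified extension of a DVR is a normal domain, generically \'etale, with nonvanishing $\Omega$. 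Moreover, your appeal to Remark~\ref{inertia-variation} to ``spread'' unramifiedness from $\eta_Y$ to all codimension-one points runs in the wrong direction: that remark says $I(Q_2)\subset I(Q_1)$ when $Q_2\subsetneq Q_1$, i.e.\ inertia can only \emph{grow} under specialisation, so unramifiedness at the generic point $\eta_Y$ tells you nothing about the codimension-one points lying in $\BL(\hat P)$.

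The paper sidesteps this entirely. It invokes Proposition~\ref{P-cover} to produce the finite cover $c:(\widehat Z,O)\to(\widehat X_Y,\hat P)$ which is \'etale in the formal-orbifold sense with \emph{trivial} branch data upstairs. Pulling back the two given morphisms $f,g:\scrU\to\scrV$ along $c$ lands in the category of honest \'etale covers of the normal formal scheme $\widehat Z$, where the classical rigidity from \cite{Grothendieck-Murre} applies and gives $f^*=g^*$. One then descends: away from $\BL(\hat P)$ the map $c$ is flat, so $f=g$ on a dense open, and density plus separatedness of the target forces $f=g$ everywhere. The key manoeuvre you are missing is this passage to $\widehat Z$, which converts the ramified-but-controlled situation into a genuinely \'etale one where your infinitesimal lifting argument would actually be valid.
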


\begin{proof}
  Let $f,g:\scrU\to\scrV$ be morphisms in $\Cov(\widehat X_Y,\hat P)$ and $u:\scrU\to (\widehat X_Y,\hat P)$ and $v:\scrV\to (\widehat X_Y,\hat P)$ be \'etale morphism. Assume $f_0=g_0:\scrU_0\to\scrV_0$ where $\scrU_0$ and $\scrV_0$ are the normalized pullback of $\scrU$ and $\scrV$ along $Y\to \widehat X_Y$. We wish to show $f=g$. Let $f^*$ and $g^*$ from $\scrU^*\to \scrV^*$ be the pullback of $f$ and $g$ respectively along $(Z,O)\to (\widehat X_Y,\hat P)$ of the above proposition. Note that the branch data on $\scrU^*$ and $\scrV^*$ are trivial (as $(Z,O)$ has trivial branch data and $u$, $v$ are \'etale). Also $\scrU^*\to (Z,O)$ and $\scrV^*\to (Z,O)$ are \'etale morphisms. Let $(f_0)^*=(g_0)^*$ be the morphism $\scrU_0^*\to \scrV_0^*$ where $\scrU_0^*$ and $\scrV_0^*$ are the normalized pull-backs of $\scrU_0$ and $\scrV_0$ along $Z_0\to Y$. Observe that the normalization of the closed fibers of $\scrU^*$ and $\scrV^*$ are same as $\scrU_0^*$ and $\scrV_0^*$ respectively. Also note that $(f^*)_0=(f_0)^*=(g_0)^*=(g^*)_0$ where $(f^*)_0$ and $(g^*)_0$ are morphisms between the normalization of the closed fibers of $\scrU^*$ and $\scrV^*$. Hence $f^*=g^*$ by \cite[\'etale case]{Grothendieck-Murre}. Now away from the branch locus $Z\to \widehat X_Y$ is \'etale, hence by applying flat descent on this locus there exists an open dense $W \subset \widehat X_Y$ such that $f=g$ when restricted to $u^{-1}(W)$.  But $u^{-1}(W)$ is dense in $\scrU$, hence $f=g$.  
\end{proof}

\begin{pro}
 Under the Hypothesis \ref{nonsplit}, the functor $\Cov(\widehat{X}_Y,\hat P) \to \Cov(Y,P_{|Y})$ is fully faithful.
\end{pro}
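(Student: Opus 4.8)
The faithfulness established in the previous proposition handles one half; it remains to prove that the functor is full, and the plan is to mimic the descent argument just used for faithfulness. Given $\scrU,\scrV\in\Cov(\widehat X_Y,\hat P)$ and a morphism $\psi_0\colon\scrU_0\to\scrV_0$ in $\Cov(Y,P_{|Y})$ (where $\scrU_0,\scrV_0$ denote the normalized pullbacks along $Y\to\widehat X_Y$), I want to produce a lift $\psi\colon\scrU\to\scrV$ in $\Cov(\widehat X_Y,\hat P)$ with $\psi_{|Y}=\psi_0$. As before, the first move is to pull everything back along the Galois \'etale cover $c\colon(Z,O)\to(\widehat X_Y,\hat P)$ of Proposition \ref{P-cover}, whose group I denote $G=\Gal(P(\eta_Y)/\cK_{X,\eta_Y})=\Gal(F/k(Y))$. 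Since $(Z,O)$ has trivial branch data and $\scrU,\scrV$ are \'etale over $(\widehat X_Y,\hat P)$, the pullbacks $\scrU^*,\scrV^*$ (and their restrictions $\scrU_0^*,\scrV_0^*$ over the Galois cover $(Z_0,O)\to(Y,P_{|Y})$) carry trivial branch data, hence are genuine finite \'etale covers of the formal schemes $Z$ and $Z_0$. The pulled-back morphism $\psi_0^*\colon\scrU_0^*\to\scrV_0^*$ is the base change of $\psi_0$ along $Z_0\to Y$ and is therefore $G$-equivariant.

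Next I would invoke the classical (trivial branch data) Grothendieck--Lefschetz theorem for the pair $(Z,Z_0)$. Here $Z$ is the completion $\widehat V_{Z_0}$, where $V$ is the normalization of $X$ in $P(\eta_Y)$: a normal projective variety in which $Z_0$, being the pullback of the ample hypersurface $Y$ under a finite morphism, is an ample divisor, with $\dim V=\dim X\ge 2$. Thus \cite[4.3, \'etale case]{Grothendieck-Murre} applies and the restriction functor $\Cov(Z)\to\Cov(Z_0)$ is fully faithful, so that $\Hom_Z(\scrU^*,\scrV^*)\to\Hom_{Z_0}(\scrU_0^*,\scrV_0^*)$ is a \emph{bijection}. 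Let $\psi^*\colon\scrU^*\to\scrV^*$ be the unique morphism lifting $\psi_0^*$.

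The key point is that $\psi^*$ is automatically $G$-equivariant. For each $\sigma\in G$ the conjugate $\sigma\circ\psi^*\circ\sigma^{-1}$ again lies in $\Hom_Z(\scrU^*,\scrV^*)$, and its restriction to $Z_0$ equals $\sigma\circ\psi_0^*\circ\sigma^{-1}=\psi_0^*$ because $\psi_0^*$ is $G$-equivariant; by the bijectivity just established, $\sigma\circ\psi^*\circ\sigma^{-1}=\psi^*$. Having checked equivariance, I would then descend $\psi^*$ along the $G$-Galois cover $c$. Since $c$ is \'etale away from $\BL(\hat P)$ and $\widehat X_Y$ is normal, Galois descent applies on that dense open locus, and the descended morphism extends across the branch locus by normality: concretely, writing the covers $\scrU,\scrV$ as $\spf$ of finite $\cO_{\widehat X_Y}$-algebras, the $G$-equivariant algebra map underlying $\psi^*$ is recovered by taking $G$-invariants, using $\cO_Z^{\,G}=\cO_{\widehat X_Y}$. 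This yields $\psi\colon\scrU\to\scrV$ with $\psi_{|Y}=\psi_0$, proving fullness.

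The main obstacle I anticipate is concentrated in the two middle steps. First, one must ensure that the classical Grothendieck--Murre fully faithfulness genuinely applies to $(Z,Z_0)=(\widehat V_{Z_0},Z_0)$ even though $V$ need only be normal and not smooth; this is where the depth (equivalently $S_2$) property of a normal projective variety together with the ampleness of $Z_0$ must be checked to guarantee that the relevant Lefschetz condition $\mathrm{Lef}(V,Z_0)$ holds. Second, the descent in the final step must be carried out carefully across $\BL(\hat P)$, where $c$ ramifies: the equivariant morphism is first obtained on the \'etale locus, and its extension to all of $\widehat X_Y$ relies on normality and the identification $\cO_Z^{\,G}=\cO_{\widehat X_Y}$. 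By contrast, the $G$-equivariance argument is purely formal once the classical bijection is in hand.
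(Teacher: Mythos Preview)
Your proposal is correct and follows essentially the same approach as the paper: pull back along the Galois cover $c:(Z,O)\to(\widehat X_Y,\hat P)$ of Proposition \ref{P-cover}, apply the \'etale case of Grothendieck--Murre to lift $\psi_0^*$ to $\psi^*$, check $G$-equivariance by uniqueness, and descend. The paper's proof is little more than the sentence ``rest of the proof is same as \cite[4.3.6]{Grothendieck-Murre} with $\mu_n$ replaced by $G$,'' and you have spelled that out. One small imprecision: you describe $Z$ as $\widehat V_{Z_0}$ for a global projective $V$ obtained as the normalization of $X$ in $P(\eta_Y)$, but $P(\eta_Y)$ is an extension of $\cK_{X,\eta_Y}$, not of $k(X)$; the paper's $\widehat Z$ is built by gluing local integral closures (Definition \ref{df:normalization}), and the relevant Grothendieck--Murre input is stated directly for formal schemes, so no algebraization is needed.
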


\begin{proof}
  Again we use the argument similar to \cite[4.3.6]{Grothendieck-Murre}. Let $Z\to \widehat X_Y$ and $Z_0\to Y$ be the covers in Proposition \ref{P-cover} and $G$ denote their common Galois group. Let $u:\scrU\to (\widehat X_Y,\hat P)$ and $v:\scrV\to (\widehat X_Y,\hat P)$ be \'etale covers and $g_0:\scrU_0\to \scrV_0$ be a morphism in $\Cov(Y,P_{|Y})$. Let $g_0^*:\scrU_0^*\to \scrV_0^*$ be the morphism obtained by pulling back $g_0$ to $Z_0$. Now $g_0^*$ is \'etale and the branch data on $\scrU_0^*$ and $\scrV_0^*$ are trivial. Rest of the proof is same as \cite[4.3.6]{Grothendieck-Murre} with $\mu_n$ replaced by $G$. 
\end{proof}

\begin{thm}\label{GM-wild}
 Under the Hypothesis \ref{nonsplit}, the functor $\Cov(\widehat{X}_Y,\hat P) \to \Cov(Y,P_{|Y})$ is an equivalence.
\end{thm}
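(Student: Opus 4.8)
The plan is to deduce the equivalence from the two preceding propositions, which already establish that the functor $\Cov(\widehat{X}_Y,\hat P)\to\Cov(Y,P_{|Y})$ is fully faithful. Thus the only remaining task is essential surjectivity: given an \'etale cover $(V_0,O)\to (Y,P_{|Y})$, I must produce an \'etale cover $\scrV\to(\widehat X_Y,\hat P)$ whose normalized pullback along $Y\to\widehat X_Y$ recovers $V_0$. First I would reduce to the Galois situation using the cover $c:(\widehat Z,O)\to(\widehat X_Y,\hat P)$ and $c_0:(Z_0,O)\to(Y,P_{|Y})$ of Proposition \ref{P-cover}, which share a common Galois group $G$ with $\Gal(F/k(Y))\cong\Gal(P(\eta_Y)/\cK_{X,\eta_Y})$. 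Pulling $V_0$ back along $Z_0\to Y$ yields an honest \'etale cover $V_0^*\to Z_0$ of the genuine scheme $Z_0$ (the branch data having become trivial), equipped with a descent datum for the $G$-action relative to $Z_0\to Y$.

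The heart of the argument is then to lift this $G$-equivariant \'etale cover from the smooth projective hypersurface $Z_0$ to its formal completion inside $\widehat Z$. Here I would invoke the equivalence of categories for the genuine formal scheme $\widehat Z$ and its closed fiber $Z_0$, i.e.\ the classical Grothendieck-Murre result \cite[4.3]{Grothendieck-Murre} (in the form already cited in the faithfulness proof), which applies because $\widehat Z\to\widehat X_Y$ is finite, $\widehat Z$ is normal, and $Z_0$ is a smooth ample hypersurface section. This produces an \'etale cover $V^*\to\widehat Z$ lifting $V_0^*\to Z_0$. The $G$-equivariance of $V_0^*$ lifts as well, by the fully faithfulness of the formal-to-closed-fiber functor, so that $V^*$ carries a compatible $G$-action over $\widehat Z\to\widehat X_Y$. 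Taking the quotient (equivalently, descending along the Galois cover $\widehat Z\to\widehat X_Y$) gives a finite cover $\scrV\to\widehat X_Y$; endowing it with the branch data induced from $\hat P$ makes $\scrV\to(\widehat X_Y,\hat P)$ a morphism of formal orbifolds, and it is \'etale because $V^*\to\widehat Z$ is \'etale and $\widehat Z\to\widehat X_Y$ is \'etale in the orbifold sense (Proposition \ref{P-cover}).

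It then remains to verify that the normalized pullback of this constructed $\scrV$ along $Y\to\widehat X_Y$ is isomorphic to the original $V_0$. This follows by tracking the construction through the fibers: the closed fiber of $V^*$ is $V_0^*$ by construction, the $G$-quotient of $V_0^*$ over $Z_0\to Y$ is $V_0$ by the descent datum we started with, and quotients commute with passage to the closed fiber since $G$ acts compatibly. I expect the main obstacle to be the descent step: one must check that the $G$-action on $V^*$ obtained by lifting is genuinely free with quotient a \emph{normal} formal scheme carrying the correct branch data $\hat P$, and that this quotient's orbifold structure matches $\hat P$ at every codimension-one point of the closed fiber. The nonsplitting condition in Hypothesis \ref{nonsplit}, which forces $I_Y$ to stay prime in the integral closure, is exactly what guarantees $Z_0$ is irreducible with $\Gal(F/k(Y))\cong G$ and hence that the descent is by the full group $G$ rather than a proper subgroup; without it the closed fiber could split and the equivalence would fail.
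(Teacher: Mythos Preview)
Your proposal is correct and is essentially the same argument the paper has in mind: the paper's proof simply says ``same as in \cite[4.3.7]{Grothendieck-Murre} with the Kummer covers replaced by the covers $\widehat Z\to\widehat X_Y$ and $Z_0\to Y$ of Proposition \ref{P-cover},'' and what you have written is precisely the unpacking of that sentence---pull back to $Z_0$, lift via the classical \'etale equivalence for the formal scheme $\widehat Z$, transport the $G$-action by full faithfulness, and descend. Your identification of the role of Hypothesis \ref{nonsplit} (ensuring $Z_0$ is irreducible with the full Galois group $G$) is also on target.
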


\begin{proof}
 Here also the proof is same as in \cite[4.3.7]{Grothendieck-Murre} with appropriate modifications, i.e. the Kummer covers being replaced by the covers $Z\to \widehat X_Y$ and $Z_0\to Y$ of Proposition \ref{P-cover}.
\end{proof}

\subsection{The functor $\Cov(X,P)$ to $\Cov(\widehat X_Y, \hat P)$}

We need the following lemma.

\begin{lemma}\label{depth}
 Let $R$ be a ring and $M$, $N$ be $R$-modules, let $r_1,r_2$ be a regular sequence on $N$ then $r_1, r_2$ is a regular sequence on $\Hom_R(M,N)$. In particular, if depth of $N$ is at least 2 then depth of $\Hom_R(M,N)$ is also at least 2.
\end{lemma}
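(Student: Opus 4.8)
The plan is to write $H=\Hom_R(M,N)$ and to verify directly the two conditions defining a regular sequence, the essential tool being the left-exactness of the functor $\Hom_R(M,-)$. First I would check that $r_1$ is a non-zerodivisor on $H$. Since $r_1,r_2$ is a regular sequence on $N$, multiplication by $r_1$ is an injection $r_1\colon N\to N$; applying the left-exact functor $\Hom_R(M,-)$ to this injection yields an injection $r_1\colon H\to H$, which is exactly the statement that $r_1$ is a non-zerodivisor on $H$.

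Next I would produce an embedding of $H/r_1H$ into $\Hom_R(M,N/r_1N)$. Applying $\Hom_R(M,-)$ to the short exact sequence $0\to N\xrightarrow{r_1} N\xrightarrow{\pi} N/r_1N\to 0$ and using left-exactness gives an exact sequence
\[
0\to H\xrightarrow{r_1} H\xrightarrow{\pi_*}\Hom_R(M,N/r_1N).
\]
Exactness in the middle term says $\ker\pi_*=r_1H$, so $\pi_*$ descends to an injection $H/r_1H\hookrightarrow\Hom_R(M,N/r_1N)$.

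Finally, since $r_2$ is a non-zerodivisor on $N/r_1N$, the first step applied to the module $N/r_1N$ in place of $N$ shows that $r_2$ is a non-zerodivisor on $\Hom_R(M,N/r_1N)$. As a non-zerodivisor on a module is automatically a non-zerodivisor on any submodule, and $H/r_1H$ is a submodule of $\Hom_R(M,N/r_1N)$ by the previous step, it follows that $r_2$ is a non-zerodivisor on $H/r_1H$. This establishes that $r_1,r_2$ is a regular sequence on $H$. For the final assertion, a depth at least $2$ hypothesis on $N$ furnishes a regular sequence $r_1,r_2$ (inside the relevant ideal) on $N$, and the argument above shows the same sequence is regular on $H$, whence $\operatorname{depth}\Hom_R(M,N)\ge 2$. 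The only point requiring any care is the identification $\ker\pi_*=r_1H$, namely that a homomorphism $M\to N$ with image in $r_1N$ is $r_1$ times an $R$-linear homomorphism; this is precisely the second exactness statement above and uses that $r_1$ is a non-zerodivisor on $N$ to define the lift $R$-linearly. Once this is in place the result is formal, so I do not anticipate a serious obstacle.
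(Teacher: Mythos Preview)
Your proof is correct and is essentially the paper's argument, repackaged functorially: where the paper works elementwise (if $r_1f=0$ then $r_1f(m)=0$ so $f(m)=0$; if $r_2g=r_1f$ then $g(m)\in r_1N$ so $g=r_1h$), you invoke left-exactness of $\Hom_R(M,-)$ to get the injection $H/r_1H\hookrightarrow\Hom_R(M,N/r_1N)$ and then transport the non-zerodivisor property of $r_2$ along it. The content is identical---your exact sequence $0\to H\xrightarrow{r_1}H\xrightarrow{\pi_*}\Hom_R(M,N/r_1N)$ is precisely what the paper's elementwise steps verify---and your closing remark about lifting $R$-linearly along $r_1$ is exactly the point the paper uses implicitly when concluding $g\in r_1\Hom_R(M,N)$ from $g(m)\in r_1N$ for all $m$.
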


\begin{proof}
 Let $f\in \Hom_R(M,N)$ and $r_1f=0$ then $r_1f(m)=0$ for all $m\in M$. But $r_1$ is regular on $N$ implies $f(m)=0$ for all $m\in M$, i.e. $f=0$. Hence $r_1$ is a nonzero divisor on $\Hom_R(M,N)$. 
 
 Let $\bar g\in \Hom_R(M,N)/r_1\Hom_R(M,N)$ be the image of $g\in \Hom_R(M,N)$. We need to show that if $r_2\bar g=0$ then $g\in r_1\Hom_R(M,N)$. Note that $r_2\bar g=\overline{r_2 g}=0$ implies $r_2g\in r_1\Hom_R(M,N)$. Hence there exists $f\in \Hom_R(M,N)$ such that $r_2g(m)=r_1f(m)$ for all $m\in M$. Hence $g(m)\in r_1N$ for all $m\in M$. But $r_1, r_2$ is a regular sequence on $N$, so $g(m)\in r_1N$ for all $m\in M$. Hence $g\in r_1\Hom_R(M,N)$.
\end{proof}

The pullback along the natural morphism $\widehat X_Y \to X$ defines a functor from $\Cov(X,P) \to \Cov(\widehat X_Y,\hat P)$. %We shall show that this functor is fully faithful when $X$ is irreducible, normal and projective over $k$ of dimension at least two and $Y$ is a normal hypersurface of $X$ such that it intersects $\BL(P)$ transvesally and $[Y]$ is a very ample divisor on $X$.

\begin{pro}\label{formal-fullyfaithful}
 Let $(X,P)$ be a projective formal orbifold over $k$ of dimension at least two, $Y$ be a normal hypersurface of $X$ which intersects $\BL(P)$ transversally and the divisor $[Y]$ is a very ample divisor on $X$. Then the functor $\Cov(X,P)\to \Cov(\widehat X_Y,\hat P)$ is fully faithful.
\end{pro}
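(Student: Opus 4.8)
The plan is to reduce the statement to the classical Grothendieck--Lefschetz comparison between global sections of a coherent sheaf on $X$ and on its formal completion $\widehat X_Y$, exactly along the lines of \cite[Chapter XII]{SGA2} and \cite[4.3]{Grothendieck-Murre}, with Lemma \ref{depth} supplying the crucial depth estimate. First I would observe that, since the covers in question are \'etale in the orbifold sense and their total spaces are normal, a morphism of orbifold covers over $(X,P)$ is the same datum as a morphism of the underlying normal schemes over $X$ (the branch-data compatibility being automatic for \'etale covers). Thus, writing $u\colon U\to X$ and $v\colon V\to X$ for two objects of $\Cov(X,P)$ and $\scrA=u_*\cO_U$, $\cB=v_*\cO_V$ for the associated finite $\cO_X$-algebras, full faithfulness is equivalent to the bijectivity, for every such pair, of the natural completion map
\[
  \Hom_X(U,V)\longrightarrow \Hom_{\widehat X_Y}(\widehat U,\widehat V),
\]
where $\widehat U,\widehat V$ are the completions along the preimages of $Y$. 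Here a morphism $U\to V$ over $X$ is packaged as an $\cO_X$-algebra homomorphism $\cB\to\scrA$, hence as a global section of $\mathcal{H}om_{\cO_X}(\cB,\scrA)$ subject to the (closed) multiplicativity and unit conditions, and likewise over $\widehat X_Y$.

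Next I would extract the relevant depth. Because $U$ is normal of dimension $\ge 2$ and $u$ is finite, $\scrA=u_*\cO_U$ satisfies Serre's condition $S_2$, so $\mathrm{depth}_x\scrA\ge 2$ at every closed point $x$ of $X$ (all of which have codimension $\ge 2$ since $\dim X\ge 2$). By Lemma \ref{depth} this forces $\mathcal{H}om_{\cO_X}(\cB,\scrA)$, and similarly $\mathcal{H}om_{\cO_X}(\cB\otimes_{\cO_X}\cB,\scrA)$, which receives the multiplicativity defect $(a,b)\mapsto\phi(ab)-\phi(a)\phi(b)$, to have depth $\ge 2$ at every closed point of $Y$ as well; the unit defect $\phi(1)-1$ lives in $\scrA$, which already has this property.

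The heart of the matter is then the Grothendieck--Lefschetz comparison: for a coherent sheaf $\scrG$ on the projective scheme $X$ with depth $\ge 2$ along $Y$ and $Y$ the support of a very ample divisor, the completion map $\Gamma(X,\scrG)\to\Gamma(\widehat X_Y,\widehat\scrG)$ is an isomorphism. Applying this to $\scrG=\mathcal{H}om_{\cO_X}(\cB,\scrA)$ gives the module-level bijection, and applying injectivity of the same comparison to $\mathcal{H}om_{\cO_X}(\cB\otimes_{\cO_X}\cB,\scrA)$ and to $\scrA$ shows that a module homomorphism over $X$ is an algebra homomorphism precisely when its completion is, since the multiplicativity and unit defects vanish on $X$ if and only if they vanish on $\widehat X_Y$. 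This promotes the module bijection to a bijection of algebra homomorphisms, i.e.\ to the desired bijection on $\Hom$-sets, proving full faithfulness. The very-ampleness of $[Y]$ enters through the Serre vanishing underlying the comparison, and the transversality of $Y$ with $\BL(P)$ guarantees that $\widehat U\to\widehat X_Y$ is genuinely an object of $\Cov(\widehat X_Y,\hat P)$, so that the functor is well defined.

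The step I expect to be the main obstacle is the correct bookkeeping at the codimension-one points of $Y$. There $\scrA$ is only a torsion-free module over a one-dimensional local ring, so its depth drops to $1$ and Lemma \ref{depth} yields nothing stronger than the $S_2$ property. I would therefore need to verify carefully that $S_2$ (equivalently, depth $\ge 2$ at the closed points, which is all that the comparison theorem actually consumes) is sufficient, and that completing along the divisor $Y$ rather than along a higher-codimension centre does not spoil the vanishing of the local cohomology and $\varprojlim^1$ terms that make the Grothendieck--Lefschetz isomorphism valid.
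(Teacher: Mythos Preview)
Your proof is correct and follows essentially the same route as the paper's: both reduce to a bijection of $\cO_X$-algebra homomorphisms between the finite pushforward sheaves, use Lemma \ref{depth} to obtain depth $\ge 2$ for the coherent $\mathcal{H}om$-sheaf, and then invoke \cite[XII, Corollary 2.2]{SGA2} for the comparison of global sections. Your worry about non-closed points of $Y$ is unfounded, since that corollary only requires depth $\ge 2$ at the \emph{closed} points of $Y$, which is exactly what normality (hence $S_2$) of the covers provides; and where you track the multiplicativity and unit defects explicitly, the paper simply notes that flatness of $\widehat X_Y\to X$ makes the module-level bijection respect the algebra structure, which amounts to the same thing.
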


\begin{proof}
  Let $u:\scrZ \to (X,P)$ and $v:\scrW\to (X,P)$ be in $\Cov(X,P)$. It is enough to show that $\Hom_X(Z,W)\cong \Hom_{\widehat X_Y}(Z_Y,W_Y)$ where $Z_Y$ and $W_Y$ are the pullback of $u$ and $v$ to $\widehat X_Y$. Note that $Z_Y$ and $W_Y$ are normal. Note that $\Hom_X(Z,W)$ are same as homomorphisms between sheaf of $\cO_X$-algebras $v_*\cO_W\to u_*\cO_Z$ as $u$, $v$ and every $X$-morphism from $Z$ to $W$ are finite (and hence affine). Since $Z$ and $W$ are normal of dimension at least 2, $u_*O_Z$ has depth at least 2 and hence by Lemma \ref{depth} coherent sheaf of $\cO_X$-modules $\underline{\Hom}_{\cO_X}(v_*\cO_W,u_*\cO_Z)$ has depth at least 2 (here we consider all $\cO_X$-module homomorphism and not just algebra homomorphism).
  Using \cite[Chapter XII, Corollary 2.2]{SGA2}, we obtain that $\Hom_{\cO_X}(v_*\cO_W,u_*\cO_Z)\cong \Hom_{\cO_{\widehat X_Y}}(\widehat {v_*\cO_W},\widehat{u_*\cO_Z})$ but the right side equals $\Hom_{\cO_{\widehat X_Y}}(v_*\cO_{\widehat W_Y},u_*\cO_{\widehat Z_Y})$. Also since $\widehat X_Y \to X$ is flat $\Hom$ behaves well with tensor product. So the bijection takes an algebra homomorphism between $v_*\cO_W$ and $u_*\cO_Z$ to an algebra homomorphism between $v_*\cO_{\widehat W_Y}$ and $u_*\cO_{\widehat Z_Y}$. Hence the functor is fully faithful.
\end{proof}

In fact like in \cite[Chapter XII]{SGA2}, even the following is true.

\begin{pro}\label{nbh-faithful}
  Let $U$ be an open neighborhood of $Y$. Then the functors $\Cov(X,P)\to \Cov(U, P_{|U})$ and $\Cov(U, P_{|U})\to \Cov(\widehat X_Y,\hat P)$ are fully faithful. 
\end{pro}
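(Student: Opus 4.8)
The plan is to keep the hypotheses of Proposition~\ref{formal-fullyfaithful} in force (so $X$ is projective of dimension $\ge 2$, $Y$ is a normal hypersurface meeting $\BL(P)$ transversally, and $[Y]$ is very ample), and to begin by recording that these hypotheses force the closed complement $X\setminus U$ to have codimension at least two. Indeed, if $X\setminus U$ contained an irreducible curve $C$, then the very ample hypersurface $Y$ would have to meet $C$ (a positive‑dimensional projective variety cannot avoid a hyperplane section), contradicting $Y\subset U$; hence $X\setminus U$ is a finite set of closed points. As in Proposition~\ref{formal-fullyfaithful}, for two \'etale covers $\scrZ$ and $\scrW$ of the formal orbifold in question, with structure morphisms $u$ and $v$, a morphism of covers is the same as an $\cO$-algebra homomorphism $v_*\cO_W\to u_*\cO_Z$. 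So in each case I would reduce full faithfulness to a statement about global sections of the coherent sheaf $\mathcal H=\underline{\Hom}(v_*\cO_W,u_*\cO_Z)$, which has depth at least two at every closed point by Lemma~\ref{depth}, since $u_*\cO_Z$ is the pushforward of the structure sheaf of a normal scheme of dimension $\ge 2$.

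For the functor $\Cov(X,P)\to\Cov(U,P_{|U})$ the sheaf $\mathcal H$ is defined on all of $X$, and I would show $\Gamma(X,\mathcal H)\cong\Gamma(U,\mathcal H|_U)$. Because $X\setminus U$ is a finite set of closed points at which $\mathcal H$ has depth $\ge 2$, the local cohomology groups $H^0_{X\setminus U}(\mathcal H)$ and $H^1_{X\setminus U}(\mathcal H)$ vanish, so restriction of sections is bijective; passing to the subsets of algebra homomorphisms gives $\Hom_X(\scrZ,\scrW)\cong\Hom_U(\scrZ_U,\scrW_U)$, and hence full faithfulness.

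For the functor $\Cov(U,P_{|U})\to\Cov(\widehat X_Y,\hat P)$ the covers, and hence the sheaf $\mathcal H$, are only defined on $U$, so I would first extend $\mathcal H$ to $X$. As $\mathcal H$ is $S_2$ on $U$ and $X\setminus U$ has codimension $\ge 2$, the pushforward $j_*\mathcal H$ along $j\colon U\hookrightarrow X$ is coherent, restricts to $\mathcal H$ on $U$, and again has depth $\ge 2$ at every closed point of $X$. Since $X$ is proper and $[Y]$ is very ample, the Grothendieck--Lefschetz comparison \cite[Chapter~XII, Corollary~2.2]{SGA2} applies to $j_*\mathcal H$ on $X$ and yields $\Gamma(X,j_*\mathcal H)\cong\Gamma(\widehat X_Y,\widehat{j_*\mathcal H})$. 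As $Y\subset U$, completion along $Y$ does not see $X\setminus U$, so $\widehat{j_*\mathcal H}=\widehat{\mathcal H}$, while $\Gamma(X,j_*\mathcal H)=\Gamma(U,\mathcal H)$ by definition of $j_*$; combining these identifications gives $\Gamma(U,\mathcal H)\cong\Gamma(\widehat X_Y,\widehat{\mathcal H})$, and restricting to algebra homomorphisms gives full faithfulness.

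The main obstacle is the second functor: one must verify that the extension $j_*\mathcal H$ is coherent and inherits depth $\ge 2$ at the points of $X\setminus U$, so that the proper comparison used in Proposition~\ref{formal-fullyfaithful} can be invoked on $X$ rather than on the non‑proper $U$, and one must check that completing along $Y\subset U$ is insensitive to this extension. The remaining points—that each reduction to module‑theoretic $\Hom$'s is compatible with the branch data $P$, $P_{|U}$ and $\hat P$, and that the identification $\widehat X_Y=\widehat U_Y$ is canonical—are routine, since every cover in sight carries the pulled‑back branch data.
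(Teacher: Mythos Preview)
Your argument is correct but takes a genuinely different route from the paper. The paper's proof is shorter and exploits Proposition~\ref{formal-fullyfaithful} as a black box: the two functors in question compose to the functor $\Cov(X,P)\to\Cov(\widehat X_Y,\hat P)$ already shown to be fully faithful, so it suffices to show that each of the two induced maps on Hom-sets is \emph{injective}. Injectivity of $\Hom_X\to\Hom_U$ follows simply because the sheaf $\underline{\Hom}_{\cO_X}(v_*\cO_W,u_*\cO_Z)$ is torsion-free (depth $\ge 1$ would be enough here), and injectivity of $\Hom_U\to\Hom_{\widehat X_Y}$ is \cite[XII, Corollary 2.4]{SGA2}. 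Two injections whose composite is bijective are both bijective. Notice that the paper never needs your observation that $X\setminus U$ has codimension $\ge 2$.

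Your approach, by contrast, establishes each full faithfulness independently: for the first functor you invoke the full depth-$2$ local-cohomology vanishing (which does require the codimension-$2$ observation), and for the second you push $\mathcal H$ forward along $j\colon U\hookrightarrow X$ before applying the formal comparison on the proper $X$. This is more laborious but buys something: the paper's factorization trick, as written, only treats objects of $\Cov(U,P_{|U})$ lying in the essential image of $\Cov(X,P)$, whereas your $j_*$ argument applies to arbitrary \'etale covers of $(U,P_{|U})$. The coherence of $j_*\mathcal H$ that you flag as the main obstacle is not a problem: any normal finite cover of $U$ extends to a finite cover of $X$ by normalizing $X$ in its function field, so $\mathcal H$ admits a coherent extension $\mathcal H'$ on $X$ of depth $\ge 2$ by Lemma~\ref{depth}, and then $j_*\mathcal H=j_*j^*\mathcal H'=\mathcal H'$ by the same local-cohomology vanishing you already used.
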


\begin{proof}
  Note that the composition of these two functors is the functor in the above proposition. We use the notation of the above proof. Since $\underline{\Hom}_{\cO_X}(v_*\cO_W,u_*\cO_Z)$ has depth at least 2 (Lemma \ref{depth}), it is torsion free. Hence the homomorphisms $\Hom_{\cO_X}(v_*\cO_W,u_*\cO_Z)\to \Hom_{\cO_U}(v_*\cO_{W_U},u_*\cO_{Z_U})$ and $\Hom_{\cO_U}(v_*\cO_{W_U},u_*\cO_{Z_U})\to \Hom_{\cO_{\widehat X_Y}}(v_*\cO_{\widehat W_Y},u_*\cO_{\widehat Z_Y})$ are injective (for the injectivity of the second map use the argument in \cite[Chapter XII, Corollary 2.4]{SGA2}). By the above proposition the composition is an isomorphism hence the two maps are isomorphisms. 
\end{proof}

As a consequence of Theorem \ref{GM-wild} and Proposition \ref{formal-fullyfaithful} we get the following corollary. 

\begin{cor}\label{Lefschetz-thm}
  Let $(X,P)$ be a smooth irreducible projective formal orbifold over a perfect field $k$ of dimension at least two. The functor from $\Cov(X,P)\to \Cov(Y,P_{|Y})$ is fully faithful if $Y$ satisfies Hypothesis \ref{nonsplit}. In particular the map natural map $\pi_1(Y,P_{|Y})\to \pi_1(X,P)$ is an epimorphism. 
\end{cor}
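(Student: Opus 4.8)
The plan is to observe that the functor in question is nothing but the composite of the two functors already analyzed, and then to read off the statement on fundamental groups from the standard dictionary of Grothendieck's Galois theory relating full faithfulness of pullback functors to surjectivity of the induced homomorphisms. The genuine content has been carried by Theorem \ref{GM-wild} and Proposition \ref{formal-fullyfaithful}; what remains is formal.

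First I would factor the normalized pullback along $Y\to X$ through the $Y$-adic completion as
\[
\Cov(X,P)\longrightarrow \Cov(\widehat X_Y,\hat P)\longrightarrow \Cov(Y,P_{|Y}),
\]
where the first arrow is pullback along $\widehat X_Y\to X$ and the second is pullback along $Y\to\widehat X_Y$. The only thing to verify is that this composite agrees with the direct normalized restriction to $Y$; this is the routine compatibility $\widehat{\cO_X(U)}^y/I_Y\cong\cO_{\widehat X_Y}(U\cap Y)/I_Y$ together with the commutation of normalization with these restrictions, which is precisely what was used to define $P_{|Y}$ and to prove Proposition \ref{etale-pullback-hypersurface} and Proposition \ref{P-cover}.

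Next I would match up the hypotheses. Since $Y$ is assumed to satisfy Hypothesis \ref{nonsplit}, Theorem \ref{GM-wild} applies and the second functor is an \emph{equivalence}. For the first functor I need the hypotheses of Proposition \ref{formal-fullyfaithful}: $(X,P)$ projective of dimension at least two (given), $Y$ a normal hypersurface (immediate, since $Y$ is smooth), $[Y]$ very ample, and $Y$ meeting $\BL(P)$ transversally. The last two conditions are not literally part of Hypothesis \ref{nonsplit}, so here one either takes $Y$ to be a general very ample hypersurface (transversality with the smooth locus of $\BL(P)$ then holding generically) or stipulates them within the setup; under these the first functor is fully faithful. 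A fully faithful functor followed by an equivalence is fully faithful, so $\Cov(X,P)\to\Cov(Y,P_{|Y})$ is fully faithful.

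Finally, for the ``in particular'' statement I would invoke the Galois-categorical dictionary. Note first that the homomorphism $\pi_1(Y,P_{|Y})\to\pi_1(X,P)$ is \emph{not} induced by Proposition \ref{pi-is-functorial} (the inclusion $Y\hookrightarrow X$ is not dominant); rather it is the homomorphism $\phi$ attached to the exact functor between the Galois categories $\Cov(X,P)$ and $\Cov(Y,P_{|Y})$ after fixing a geometric point of $Y$ avoiding $\supp(P_{|Y})$ whose image on $X$ avoids $\supp(P)$, together with the corresponding fiber functors. Under this identification the functor becomes the restriction functor along $\phi\colon\pi_1(Y,P_{|Y})\to\pi_1(X,P)$ on finite sets, and for profinite groups this restriction functor is fully faithful exactly when $\phi$ has dense image: a $\pi_1(Y,P_{|Y})$-equivariant map between the restrictions of two finite $\pi_1(X,P)$-sets is automatically equivariant for the closure of the image of $\phi$, and this forces $\pi_1(X,P)$-equivariance precisely when $\phi$ is surjective. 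Hence $\phi$ is an epimorphism. The only delicate point is the basepoint bookkeeping needed to present the functor as a group homomorphism; once the fiber functors are matched through the chosen geometric point this is the standard argument, so I expect no real obstacle here — the substance of the corollary resides entirely in the two results cited above.
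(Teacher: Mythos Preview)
Your approach is exactly the paper's: the corollary is stated there as an immediate consequence of Theorem \ref{GM-wild} and Proposition \ref{formal-fullyfaithful}, i.e., precisely the factorization $\Cov(X,P)\to\Cov(\widehat X_Y,\hat P)\to\Cov(Y,P_{|Y})$ you wrote down, with the epimorphism on $\pi_1$ read off from the Galois-category dictionary. Your observation that the very-ampleness and transversality hypotheses of Proposition \ref{formal-fullyfaithful} are not literally contained in Hypothesis \ref{nonsplit} is correct and is a minor imprecision in the paper that it silently passes over.
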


 \section{Locally constant $l$-adic sheaf}\label{sec:l-adic}
 
 Let $X$ be a normal geometrically connected variety over a field $k$ of characteristic $p$. Recall that given a finite locally constant sheaf $\scrF$ on $X$ with stalk $A$ 
 %there exists a finite $G$-Galois \'etale geometrically connected cover $Y\to X$ such that $\scrF|_Y$ is the constant sheaf $A_Y$ on $Y$. Every $g\in G$ defines a automorphism of $Y$ and hence an automorphism of $A=A_Y(Y)$ the global sections of the constant sheaf. Hence $\scrF$ induces a group homomorphism from $G\to Aut(A)$. But $G$ is a quotient of $\pi_1(X)$. Hence $\scrF$ 
 it corresponds to a representation $\rho_{\scrF}:\pi_1(X,x)\to \Aut(A)$ where $x$ is a geometric point of $X$. The converse also holds. This result and its lisse $l$-adic version hold for geometric formal orbifolds as well.
 
 \begin{df}
   Let $l\ne p$ be a prime number. An $l$-adic sheaf is a locally constant sheaf of $R$-modules where $R$ is the integral closure of $\Z_l$ in a finite extension $K$ of $\Q_l$. Note that $R$ is a complete DVR and let $\scrm$ denote its maximal ideal. A lisse $l$-adic sheaf of $R$-modules is a compatible system of locally constant sheaves $(\scrF_n)_{n\ge 0}$ where $\scrF_n$ is a locally constant $R_n=R/\scrm^{n+1}$-module and $\scrF_{n+1}\otimes_{R_{n+1}}R_n\cong \scrF_n$. The morphisms in this category are defined to be compatible system of morphisms. For more details see \cite[Chapter 1, Section 12]{FK}. The category of lisse $K$-sheaf consist of lisse $l$-adic sheaf of $R$-modules as objects and the morphisms are given by  $Hom_K(\scrF,\scrF'):=\Hom(\scrF',\scrF')\otimes_R K$. 
 \end{df}

 \begin{pro}\label{l-adicsheaf.Galois-repr}
 Let $\scrF$ be a finite locally constant sheaf with stalk $A$ on a geometric formal orbifold $\scrX=(X,P)$. Let $X^o=X\setminus \BL(P)$. The restriction $\scrF^o=\scrF_{|X^o}$ is a locally constant sheaf and hence $\scrF^o$ correspond to a representation $\rho:\pi_1(X^o,x)\to Aut(\scrF_x)$ where $x\in X^o$ is a geometric point. Then $\rho$ factors through $\pi_1(\scrX,x)$ to give a continuous representation $\bar \rho:\pi_1(\scrX,x)\to \Aut(\scrF_x)$. Conversely given a continuous representation $\bar \rho:\pi_1(\scrX,x)\to \Aut(A)$ it induces a finite locally constant sheaf $\scrF$ on $\scrX$ with stalk $A$. And the two functors are inverse to each other. 
\end{pro}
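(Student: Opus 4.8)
The plan is to show that the category of finite locally constant sheaves on $\scrX$ and the category of continuous finite $\pi_1(\scrX,x)$-modules are both identified, through a common trivializing Galois cover, with descent data along that cover; the asserted correspondence is then just the monodromy/descent dictionary of the classical \'etale case (\cite[Chapter III]{Milne-etale-cohomology}, \cite{sga1}), transported to the orbifold setting via the \'etale site of Section \ref{sec:site-sheaves} and the fundamental group of Section \ref{sec:etale-pi_1}. First I would record the comparison between $\pi_1(X^o,x)$ and $\pi_1(\scrX,x)$: restricting a finite \'etale cover $(Z,Q)\to\scrX$ to underlying schemes over $X^o=X\setminus\BL(P)$ gives a finite \'etale cover $Z^o\to X^o$, and by Corollary \ref{B_f-geometric} together with Theorem \ref{approximation} the resulting functor $\Cov(\scrX)\to\Cov(X^o)$ is fully faithful, sends Galois covers to Galois covers identifying $\Gal(Z/\scrX)$ with $\Gal(Z^o/X^o)$, and induces the natural surjection $q\colon\pi_1(X^o,x)\twoheadrightarrow\pi_1(\scrX,x)$ from the inverse system of Theorem \ref{approximation}.

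For the forward direction, let $\scrF$ be a finite locally constant sheaf on $\scrX$ with stalk $A$. Since $\scrX$ is connected and $A$ is finite, $\scrF$ is trivialized by a connected finite \'etale cover, and after passing to a Galois closure (which exists because the fiber functor on $\Cov(\scrX)$ is pro-representable by $\tilde\scrX$, Section \ref{sec:etale-pi_1}) I may assume $\scrF$ is trivialized by a finite Galois cover $g\colon(Z,Q)\to\scrX$ with group $G=\Gal(Z/\scrX)$. The descent datum for $\scrF$ along $g$ is precisely a continuous action of $G$ on $A$, that is, a continuous representation $\bar\rho\colon\pi_1(\scrX,x)\to\Aut(A)$ factoring through $G$. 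Restricting to $X^o$, the cover $Z^o\to X^o$ trivializes $\scrF^o=\scrF|_{X^o}$ and realizes the monodromy representation $\rho\colon\pi_1(X^o,x)\to\Aut(A)$; since $\Gal(Z^o/X^o)=G$ compatibly with $q$, one obtains $\rho=\bar\rho\circ q$, so $\rho$ factors through $\pi_1(\scrX,x)$ as claimed.

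For the converse, a continuous $\bar\rho\colon\pi_1(\scrX,x)\to\Aut(A)$ factors through a finite quotient $G=\Gal(Z/\scrX)$ for some finite Galois \'etale cover $g\colon(Z,Q)\to\scrX$, by continuity and the description of $\pi_1(\scrX)$ as the inverse limit of the groups $\Gal(Z/\scrX)$. I would then define $\scrF$ on the \'etale site $E/\scrX$ as the contracted product of $g$ with $A$, concretely $\scrF=(g_*\underline A)^{G}$ with $G$ acting diagonally through its action on $Z$ over $\scrX$ and through $\bar\rho$ on $A$; using the pushforward of Section \ref{sec:site-sheaves} this is a sheaf with stalk $A$ trivialized by $g$, hence finite locally constant. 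A direct check shows the two constructions are mutually inverse, both being the standard equivalence between finite locally constant sheaves trivialized by $g$ and finite $G$-modules, which is functorial in the evident way.

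The one point requiring genuine care — and the main obstacle — is the compatibility asserted in the first paragraph: that restriction to $X^o$ faithfully identifies $\Gal(Z/\scrX)$ with $\Gal(Z^o/X^o)$ and intertwines the two monodromy actions through $q$. This is exactly where the orbifold structure departs from the naive scheme picture, since $g\colon Z\to X$ need not be \'etale over $\BL(P)$; however it is precisely controlled by Theorem \ref{approximation} and the functoriality of $\pi_1$ (Proposition \ref{pi-is-functorial}). Once this identification is in place, the remaining descent bookkeeping and the verification that the two functors are inverse are identical to the classical argument.
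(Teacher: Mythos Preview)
Your approach is essentially the same as the paper's --- reduce to descent along a trivializing Galois \'etale cover and read off the monodromy --- and the converse direction via $(g_*\underline A)^G$ is exactly what the paper does. There is, however, one point you skate over that the paper handles explicitly, and it is not the ``main obstacle'' you flag.

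You assert that ``since $\scrX$ is connected and $A$ is finite, $\scrF$ is trivialized by a connected finite \'etale cover''. In the orbifold \'etale site of Section~\ref{sec:site-sheaves} the covering families consist of \emph{quasi}-finite \'etale morphisms, so the definition of ``locally constant'' only gives you a trivialization by some quasi-finite \'etale family, and it is not automatic that you can refine this to a single object of $\Cov(\scrX)$. The paper closes this gap precisely by invoking the geometric hypothesis: it first takes a finite Galois \'etale cover $h:(Z,O)\to\scrX$ with trivial branch data (which exists because $\scrX$ is geometric), notes that the \'etale site of $(Z,O)$ is the ordinary small \'etale site of $Z$, and then applies the classical theory on $Z$ to produce a further finite \'etale Galois cover $g:Y\to Z$ trivializing $h^*\scrF$; the composite $f=h\circ g$ (after Galois closure) is the desired trivializing cover in $\Cov(\scrX)$. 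This is where the word ``geometric'' in the statement is actually used in the forward direction, and your write-up does not use it there.

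By contrast, the compatibility of $\Gal(Z/\scrX)$ with $\Gal(Z^o/X^o)$ that you call the main obstacle is routine once the surjection $q:\pi_1(X^o,x)\twoheadrightarrow\pi_1(\scrX,x)$ is in hand: the paper simply observes that $\pi_1(X^o)/\pi_1(Y^o)\cong\pi_1(\scrX)/\pi_1(Y)$ for the trivializing cover $Y$, which is immediate from $q$. So your proof is correct in outline and matches the paper, but you should relocate the emphasis: the substantive step is producing a trivializing object of $\Cov(\scrX)$, and that is where the geometric hypothesis enters.
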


\begin{proof}
  Since $\scrX=(X,P)$ is a geometric formal orbifold and $X$ is geometrically connected there exists a finite Galois \'etale cover of formal orbifolds $h:(Z,O)\to \scrX$, with $Z$ geometrically connected. 
  Since $\scrF$ is a locally constant sheaf on $\scrX$ so is $h^*\scrF$. The \'etale site on $(Z,O)$ is the usual \'etale site on $Z$. Hence there exists an \'etale Galois cover $g:Y\to Z$ with $Y$ connected such that the pull back $g^*h^*\scrF$ is constant. Let $f=h\circ g$ and passing to Galois closure we may assume $f$ is a Galois cover. Let $Y^o=f^{-1}(X^o)$. The restriction of $f^*\scrF$ to $Y^o$ is constant. Hence $\rho$ factors through $\pi_1(X^o)/\pi_1(Y^o)$ which is same as $\pi_1(\scrX)/\pi_1(Y)$. Hence $\rho$ factors through $\pi_1(\scrX)$. 
  
  Conversely, let $\bar \rho:\pi_1(\scrX,x)\to Aut(A)$ be a continuous representation. Since $A$ is finite, $\bar \rho$ factors through a finite quotient of $\pi_1(\scrX)$ say $G$. This finite quotient correspond to a $G$-Galois \'etale cover $f:(Y,Q)\to \scrX$ and we may assume $Q$ is the trivial branch data. Consider the constant sheaf $A_Y$ on $Y$. The group homomorphism $G\to Aut(A)$ makes $A_Y$ into a $G$-sheaf. In other words the action of $G$ on $Y$ lifts to $A_Y$ via the group homomorphism $G\to A_Y$. Then $f_*A_Y$ is a sheaf on $\scrX$ with a $G$-action. Set $\scrF:=(f_*A_Y)^G$. Then $\scrF$ is a locally constant sheaf on $\scrX$. 
  
  The two functors are inverse to each other can be seen by using the corresponding result in the variety case. 
\end{proof}

\begin{cor}
  The category of lisse $l$-adic sheaf of $K$-modules on $\scrX$ is equivalent to the category of continuous representations $\rho:\pi_1(\scrX)\to \GL_N(K)$ for any $K/\Q_l$ a finite field extension. 
\end{cor}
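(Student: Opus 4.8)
The plan is to reduce everything to the finite-level statement already established in Proposition \ref{l-adicsheaf.Galois-repr} and then pass to the limit. Recall that $R$ is the ring of integers of $K$ and each $R_n=R/\scrm^{n+1}$ is a finite ring. Given a lisse $l$-adic sheaf $(\scrF_n)_{n\ge 0}$ of $R$-modules on $\scrX$, each $\scrF_n$ is a finite locally constant sheaf of $R_n$-modules with finite stalk $A_n$. First I would apply Proposition \ref{l-adicsheaf.Galois-repr} to each $\scrF_n$ to obtain a continuous representation $\bar\rho_n : \pi_1(\scrX,x) \to \Aut_{R_n}(A_n)$; since the automorphisms produced respect the $R_n$-module structure, these are $R_n$-linear. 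The compatibility isomorphisms $\scrF_{n+1}\otimes_{R_{n+1}}R_n \cong \scrF_n$ translate, under that equivalence, into compatibility of the $\bar\rho_n$, so $(\bar\rho_n)_n$ is an inverse system. Taking the inverse limit yields a continuous representation $\rho : \pi_1(\scrX,x)\to \Aut_R(M)$ on the finitely generated $R$-module $M=\varprojlim_n A_n$, with $M\otimes_R K\cong K^N$; tensoring with $K$ then gives $\rho\otimes_R K : \pi_1(\scrX)\to \GL_N(K)$, the desired representation.

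For the converse I would start with a continuous representation $\rho : \pi_1(\scrX)\to \GL_N(K)$ and first produce an invariant lattice. Since $\pi_1(\scrX)$ is profinite it is compact, so its image under the continuous map $\rho$ is a compact subgroup of $\GL_N(K)$. A compact subgroup of $\GL_N(K)$ stabilizes some $R$-lattice $L\subset K^N$ (take the $R$-span of the compact orbit of the standard lattice), and after a change of basis we may assume $\rho$ takes values in $\GL_N(R)=\Aut_R(L)$. Reducing modulo $\scrm^{n+1}$ produces continuous representations $\rho_n : \pi_1(\scrX)\to \GL_N(R_n)=\Aut_{R_n}(L/\scrm^{n+1}L)$, each of which, by Proposition \ref{l-adicsheaf.Galois-repr}, is realized by a finite locally constant sheaf $\scrF_n$ of $R_n$-modules. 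The $\pi_1(\scrX)$-equivariant reductions $L/\scrm^{n+2}L\to L/\scrm^{n+1}L$ give, via the same proposition, the compatibility $\scrF_{n+1}\otimes_{R_{n+1}}R_n\cong \scrF_n$, i.e.\ a lisse $l$-adic sheaf.

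To conclude, I would check that these two constructions are mutually quasi-inverse and match morphisms. On the sheaf side a morphism is by definition an element of $\Hom(\scrF,\scrF')\otimes_R K$, while on the representation side it is a $K$-linear $\pi_1(\scrX)$-equivariant map; since $\Hom$ of lisse sheaves is computed levelwise and commutes with the inverse limit defining $M$ and $M'$ and then with $-\otimes_R K$, the two $\Hom$-groups agree. Finiteness at each level, guaranteed by Proposition \ref{l-adicsheaf.Galois-repr}, keeps all the inverse limits and the functoriality strict. I expect the main obstacle to be the invariant-lattice step in the converse: this is where the profiniteness, hence compactness, of $\pi_1(\scrX)$ is genuinely used to descend a $\GL_N(K)$-representation to one valued in $\GL_N(R)$ before reducing modulo powers of $\scrm$; everything else is a levelwise application of the already-proved finite equivalence followed by passage to the limit.
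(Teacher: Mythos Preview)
Your proposal is correct and is precisely the argument the paper has in mind: the paper's proof consists of the single sentence ``In view of the above proposition, the proof is same as the case of normal varieties,'' and what you have written is exactly that standard passage-to-the-limit argument (finite-level equivalence from Proposition \ref{l-adicsheaf.Galois-repr}, inverse limit on one side, compactness/invariant-lattice on the other), spelled out in detail.
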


\begin{proof}
 In view of the above proposition, the proof is same as the case of normal varieties.
\end{proof}

We collect some definitions on tameness and bounded ramification of $l$-adic sheaves from \cite{Drinfeld}, \cite{Esnault} and \cite{Esnault-Kerz.Deligne}. 
\begin{df}
  Let $X$ be a normal geometrically connected variety over $k$ and $\scrF$ a locally constant sheaf on $X$.
  \begin{enumerate} 
   \item  If $\dim(X)=1$ then  $\scrF$ is called tame if $\rho_{\scrF}$ factors through the tame quotient of the fundamental group $\pi_1^t(X,x)$. 
 
   \item For $X$ of arbitrary dimension, $\scrF$ is tame if for any geometrically irreducible curve $C$ in $X$ with normalization $\tilde C$, the pull back of $\scrF$ to $\tilde C$ is tame.

%   \item A locally constant sheaf $\scrF$ on a proper formal orbifold $\scrX=(X,P)$ is called tame if there exists an \'etale cover $g:(Y,O)\to (X,P)$ such that $\scrF_{|(Y,O)}$ is a constant sheaf and for all $y\in Y$ the extensions $\cK_{Y,y}/P(g(y))$ are tamely ramified (i.e. let $A$ be the integral closure of $\widehat \cO_{X,g(y)}$ in $P(g(y))$, $B=\widehat \cO_{Y,y}$, $P$ be the maximal ideal of $A$ and $Q$ be the maximal ideal of $B$. Then the inertia group $I(Q)$ for the extension $B/A$ is of order prime to $p$.)

   \item A lisse $l$-adic sheaf $\scrF=(\scrF_n)$ on $X$ is called tame if for all $n$, $\scrF_n$ are tame on $X$. 

   \item A lisse $l$-adic sheaf $\scrF$ on $X$ is said to have ramification bounded by $f:Y\to X$ a finite dominant morphism of normal varieties if $f^*\scrF$ is tame on $Y$.
  \end{enumerate} 
\end{df}

The following notions of tameness of a branch data are defined in analogy with  \cite{Kerz-Schmidt.tame}.
\begin{df}
  Let $P$ be a branch data on a normal variety $X$.
  \begin{enumerate}
   \item For a point $x\in X$ of codimension at least one, $P$ is called numerically-tame at $x$ if the inertia group $I(P(x)/\cK_{X,x})$ is of order prime to $p$. The branch data $P$ is called numerically-tame if $P$ is numerically tame for all points $x\in X$ of codimension at least one. It is enough to check numerically-tame for all closed points. 
   \item For a closed point $x\in X$, $P$ is called curve-tame at $x$ if the following holds: For any codimension one prime ideal $\scrp$ in $\widehat \cO_{X,x}$ not in $\BL(P)$ and for any codimension one prime ideal $\scrq$ in the integral closure $\cO_{P(x)}$ of $\widehat \cO_{X,x}$ in $P(x)$ lying above $\scrp$, let $R$ and $S$ be the normalization of $\widehat \cO_{X,x}/\scrp$ and $\cO_{P(x)}/\scrq$ respectively. Then $S/R$ is at most tamely ramified extension. The branch data $P$ is called curve-tame if it is curve-tame for all closed points $x\in X$.    
  \end{enumerate}
\end{df}

\begin{pro}
 Let $X$ be a proper normal variety over $k$ and $f:Y\to X$ be a Galois cover which is \'etale over $X^o$ an open subset of regular locus of $X$. The branch data $B_f$ is curve-tame iff $f$ is curve-tame.  
\end{pro}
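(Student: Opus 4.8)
The plan is to reduce both conditions to a comparison of the completed local covers at closed points. Fix a closed point $x\in X$ and a point $y\in f^{-1}(x)$. Since $f$ is Galois the points of $f^{-1}(x)$ are conjugate, and combining Proposition \ref{basic-ramification} with the definition of $B_f$ in Proposition \ref{B_f} one sees that $B_f(x)=\cK_{Y,y}\cK_{X,x}$ is Galois over $\cK_{X,x}$ with the integral closure of $\widehat\cO_{X,x}$ in $B_f(x)$ equal to $\widehat\cO_{Y,y}$; thus $\cO_{B_f(x)}=\widehat\cO_{Y,y}$ is precisely the completed local cover of $f$ at $x$, and $\BL(B_f)$ is the branch locus of $f$. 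Because $k$ is perfect, every residue field extension occurring below is automatically separable, so in each case ``tamely ramified'' is equivalent to ``ramification index prime to $p$''. As both notions of curve-tameness are conditions at the ramified closed points, and $X$ is proper so that sufficiently many curves are available, it suffices to establish the equivalence of the two local conditions at each such $x$.

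First I would record the completion-compatibility that drives the proof: exactly as in Proposition \ref{local-ramified}, for a height-one prime $\scrp\subset\widehat\cO_{X,x}$ with $\scrp\notin\BL(B_f)$ and a prime $\scrq\mid\scrp$ of $\cO_{B_f(x)}=\widehat\cO_{Y,y}$, the extension of normalizations $S/R$ of $\widehat\cO_{X,x}/\scrp$ and $\widehat\cO_{Y,y}/\scrq$ is tamely ramified if and only if the induced extension of discrete valuation rings $(\widehat\cO_{Y,y})_{\scrq}/(\widehat\cO_{X,x})_{\scrp}$ is, since passing to completions and normalizations leaves the ramification index and the (separable) residue extension unchanged. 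Hence curve-tameness of $B_f$ at $x$ is exactly the statement that $f$ is tame along every formal branch through $x$ of every prime divisor of $X$ meeting $X^o$; the several height-one primes $\scrp$ lying over one prime divisor of $X$ correspond to the distinct formal branches at $x$, and tameness is tested branch by branch.

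It then remains to bridge this divisorial description of $B_f$-curve-tameness with the curve formulation of tameness of $f$. For this I would use Abhyankar's lemma together with the equivalence of curve-tameness and divisor-tameness for covers of the regular scheme $X^o$ (in the sense of \cite{Kerz-Schmidt.tame}). In the direction ``$f$ curve-tame $\Rightarrow$ $B_f$ curve-tame'', given $\scrp$ I would choose a curve germ through $x$ meeting the divisor $V(\scrp)$ transversally at a general point; by Abhyankar's lemma the ramification index of the pullback of $f$ to the normalized curve at that point equals the index along $\scrp$, and curve-tameness of $f$ makes it prime to $p$. In the converse direction, for any $C\to X$ and a point over $x\in\BL(f)$ the ramification index along $C$ is, again by Abhyankar's lemma, controlled by the prime-to-$p$ indices along the divisors that $C$ meets, forcing tameness of $f$ along $C$. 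The main obstacle is precisely this curve-versus-divisor comparison made uniform near singular points of the branch locus and singular points of $X$ lying off $X^o$: one must ensure, by a moving/Bertini argument valid over $k$ and using properness of $X$, that a general transversal curve realizes the divisorial index exactly, and that tangency or passage through singular strata can only raise, and never lower below prime-to-$p$, the apparent ramification, so that tameness is insensitive to these degenerations.
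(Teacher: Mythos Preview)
Your proposal rests on reading the primes $\scrp$ in the definition of a curve-tame branch data as \emph{height-one} primes of $\widehat\cO_{X,x}$, i.e.\ as formal divisors through $x$; this is what drives your divisorial reformulation, the appeal to Abhyankar's lemma, the Kerz--Schmidt divisor-versus-curve comparison, and the Bertini moving argument. That reading is not what is intended. Despite the unfortunate phrase ``codimension one prime ideal'', both the name \emph{curve}-tame and the paper's own argument force $\dim(\widehat\cO_{X,x}/\scrp)=1$: in the proof one has $R=\widehat\cO_{\tilde C,\tilde x}$ for a point $\tilde x$ on a normalized curve $\tilde C$, which is a one-dimensional complete local ring. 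So $\scrp$ is meant to cut out a formal \emph{curve} through $x$, and $R$, $S$ are complete discrete valuation rings.

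With that correction the argument collapses to a straight dictionary, which is exactly what the paper does. Given $\scrp$ with one-dimensional quotient and $\scrp\notin\BL(B_f)$, one takes the curve $C\subset X$ it determines (unibranched at $x$, meeting $X^o$), forms the normalized base change $Y_C=\widetilde{Y\times_X\tilde C}$, and identifies $S$ with $\widehat\cO_{D,z}$ for a component $D$ of $Y_C$ over a suitable point $z$; then $S/R$ tame is literally the statement that $Y_C\to\tilde C$ is tame at $\tilde x$, which is what curve-tameness of $f$ gives. The converse is the same identification read backwards, using only that tameness is unchanged under completion. None of Abhyankar, Bertini, or the \cite{Kerz-Schmidt.tame} equivalence enters; those appear in your outline solely because of the divisorial misreading, and the delicate ``moving near singular strata'' step you flag as the main obstacle simply does not arise.
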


\begin{proof}
  Assume $f$ is curve-tame. Let $x\in X$ be a closed point. Let $\scrp$ be a codimension one prime ideal of $\widehat \cO_{X,x}$ not in $\BL(B_f)$ and $\scrq$ be a codimension one prime ideal in the integral closure $\cO_{B_f(x)}$ of $\cO_{X,x}$ in $B_f(x)$ lying above $\scrp$. Then $\scrp$ defines an integral curve $C$ in $X$ passing through $x$ which intersects $X^o$ and $C$ is unibranched at $x$. Let $\tilde C$ be the normalization of $C$ and $\tilde x$ be the point in $\tilde C$ lying above $x\in C$. Let $Y_C= \widetilde{ Y\times_X \tilde C}$ be the normalized fiber product. Since $f$ is curve-tame, $Y_C \to \tilde C$ is at most tamely ramified. Note that there exists $y\in Y$ lying above $x$ such that $\widehat\cO_{Y,y}=\cO_{B_f(x)}$. The  normalization $S$ of $\widehat \cO_{Y,y}/\scrq$ is same as $\widehat \cO_{D,z}$ where $D$ is a connected component of $Y_C$ and $z$ is a point on $Y_C$ lying above $(y,\tilde x)$ in the fiber product. Since $Y_C \to \tilde C$ is at most tamely ramified, the extension $\widehat \cO_{D,z}/\widehat\cO_{\tilde C,\tilde x}$ is at most tamely ramified. But $\widehat\cO_{\tilde C,\tilde x}$ is the normalization of $\widehat \cO_{X,x}/\scrp$. Hence $B_f$ is curve tame at $x$.
  
  The converse is also a translation between algebra and geometry and follows after noting that the tameness of ramification does not change when passed to completion.
\end{proof}

The following result is a direct consequence of \cite[Theorem 5.3, Theorem 5.4]{Kerz-Schmidt.tame}.
\begin{pro}\label{tame.covers.equivalence}
 Let $(X,P)$ be a proper formal orbifold with $P$ a numerically-tame branch data and $X^o=X\setminus \BL(P)$ regular. Let $f:(Y,O)\to(X,P)$ be an \'etale Galois cover. Then $f:Y\to X$ is curve-tame. In other words $P$ is curve tame.
% for any morphism $C\to X$ for a regular curve $C$ the normalized pull back $Y\times_X C\to C$ is at most tamely ramified. 
 The converse hold if $X$ is regular and $\BL(P)$ is a normal crossing divisor.
\end{pro}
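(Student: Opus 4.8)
The plan is to reduce the statement to Kerz--Schmidt's equivalence between numerical tameness and curve-tameness for Galois covers of normal varieties, by translating the branch-data notions attached to $P$ into the corresponding notions for the scheme morphism $f:Y\to X$. First I would record the local dictionary. Since $f:(Y,O)\to(X,P)$ is \'etale in the sense of formal orbifolds, for every closed point $x\in X$ and every $y\in Y$ above it one has a tower $\cK_{X,x}\subseteq P(x)\subseteq \cK_{Y,y}$ in which $P(x)/\cK_{X,x}$ is Galois and $\cK_{Y,y}/P(x)$ is unramified (the latter being exactly the \'etale condition $O(y)/P(f(y))$ unramified, using $O(y)=\cK_{Y,y}$ and $B_f(x)=\cK_{Y,y}$ at closed points). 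Passing to integral closures of $\widehat\cO_{X,x}$, the inertia group of the cover $f$ at $y$ fits, by multiplicativity of inertia in the tower, between the trivial inertia of $\cK_{Y,y}/P(x)$ and $I(P(x)/\cK_{X,x})$; hence the inertia of $f$ at $y$ is isomorphic to $I(P(x)/\cK_{X,x})$. By Remark \ref{inertia-variation} the inertia at closed points dominates that at the generic points of $\BL(P)$, so it suffices to compare at closed points, and we conclude that $P$ is numerically-tame exactly when the Galois cover $f:Y\to X$ is numerically tame.

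Next I would match the curve-tame notions. Since $f$ is a Galois cover \'etale over the regular open $X^o=X\setminus\BL(P)$, the preceding proposition shows that $f$ is curve-tame if and only if its branch data $B_f$ is curve-tame. Because $f:(Y,O)\to(X,P)$ is \'etale we have $\BL(B_f)=\BL(P)$ and $P\le B_f$ with $B_f(x)/P(x)$ unramified for every closed point $x$. As tameness of a ramified extension of one-dimensional normalizations is inherited by sub-extensions and is preserved under composition with unramified (hence tame) extensions, the curve-tameness condition for $B_f$ -- tested on codimension-one primes $\scrp$ outside $\BL(P)$ and primes $\scrq$ lying above them -- holds if and only if the corresponding condition for $P$ holds. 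Thus $P$ is curve-tame if and only if $B_f$ is curve-tame if and only if $f$ is curve-tame.

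With the dictionary in place the forward statement is immediate: $P$ numerically-tame implies $f$ numerically tame, and since $f$ is \'etale over the regular locus $X^o$, \cite[Theorem 5.3]{Kerz-Schmidt.tame} yields that $f$ is curve-tame; by the dictionary $P$ is curve-tame, which is the assertion. For the converse, under the extra hypotheses that $X$ is regular and $\BL(P)$ is a normal crossing divisor, $P$ curve-tame gives $f$ curve-tame, and \cite[Theorem 5.4]{Kerz-Schmidt.tame} then forces $f$ to be numerically tame, whence $P$ is numerically-tame.

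The main obstacle is the local dictionary of the first two paragraphs rather than the appeal to Kerz--Schmidt: one must verify carefully that the formal-orbifold \'etale condition transports the inertia of the prescribed extension $P(x)/\cK_{X,x}$ onto the inertia of the honest cover $Y\to X$, so that numerical tameness transfers exactly, and that the unramified -- hence tame -- discrepancy between $B_f$ and $P$ does not affect curve-tameness along curves transverse to $\BL(P)$. Once these identifications are established, both directions are precisely the content of Kerz--Schmidt's theorems applied to the cover $f$ over the regular base $X^o$.
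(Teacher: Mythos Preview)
Your proposal is correct and follows essentially the same approach as the paper: translate the formal-orbifold \'etale condition on $f:(Y,O)\to(X,P)$ into numerical tameness of the scheme morphism $f:Y\to X$ over the regular open $X^o$, then invoke \cite[Theorems~5.3 and~5.4]{Kerz-Schmidt.tame}. The paper's proof asserts this dictionary in a single line, whereas you spell out more carefully why the inertia of $f$ at $y$ coincides with $I(P(x)/\cK_{X,x})$ and why curve-tameness of $P$, of $B_f$, and of $f$ are equivalent; this extra detail is sound and does not change the route.
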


\begin{proof}
  The cover $f:(Y,O)\to (X,P)$ is an \'etale cover implies that $f:Y\to X$ is \'etale over $X^o$ and numerically tamely ramified cover w.r.t to the normal compactification $X$. Now the result follows from \cite[Theorem 5.3, Theorem 5.4]{Kerz-Schmidt.tame}
\end{proof}

\begin{pro}
 Let $X$ be a normal connected proper variety and $X^o$ be an open subset of the regular locus of $X$. Let $f:Y\to X$ be a Galois cover \'etale over $X^o$. Any finite locally constant $l$-adic sheaf $\scrF$ on a geometric formal orbifold $(X,P)$ where $P\ge B_f$ with $\BL(P)\cap X^o=\emptyset$ and $P(x)/B_f(x)$ at most curve tamely ramified for all closed points $x\in X$ restricts to a locally constant finite $l$-adic sheaf on $X^o$ whose ramification is bounded by $f$. 
\end{pro}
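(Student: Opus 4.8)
The plan is to restrict $\scrF$ to $X^o$, produce an explicit cover that trivializes $f^*\scrF^o$, and show that this cover is tame along the boundary; the heart of the matter is a local ramification computation that turns the hypothesis ``$P(x)/B_f(x)$ curve-tame'' into tameness of the trivializing cover.

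First I would note that since $\BL(P)\cap X^o=\emptyset$, the branch data $P$ is essentially trivial over $X^o$, so the \'etale site of $(X^o,P|_{X^o})$ is the ordinary \'etale site of $X^o$ and $\scrF^o:=\scrF|_{X^o}$ is a finite locally constant sheaf on $X^o$ (this is the content of Proposition \ref{l-adicsheaf.Galois-repr} applied over $X^o$). Writing $Y^o=f^{-1}(X^o)$, so that $f|_{Y^o}:Y^o\to X^o$ is \'etale, it then remains to show that $f^*\scrF^o$ is tame on $Y^o$ (with tameness understood relative to the compactification $Y$), which is exactly what ``ramification bounded by $f$'' means.

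Next I would record the key local identity. For $y\in Y$ lying over $x\in X$, the local theory of Section \ref{sec:local-theory} gives $\cK_{Y,y}=k(Y)\cK_{X,x}=B_f(x)$, so the hypothesis $B_f\le P$ forces $\cK_{Y,y}=B_f(x)\subseteq P(x)$. Because $(X,P)$ is geometric and $\scrF$ is finite locally constant, Proposition \ref{l-adicsheaf.Galois-repr} produces a finite $G$-Galois \'etale cover $(W,O)\to(X,P)$, with $W$ carrying the trivial branch data, over which $\scrF$ becomes constant; here \'etaleness means $\cK_{W,w}/P(x)$ is unramified and $P(x)\subseteq\cK_{W,w}$. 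Pulling this cover back along $f$, let $V$ be the normalization of $W\times_X Y$. Since the pullback of a trivializing cover trivializes the pullback sheaf, $f^*\scrF$ is trivialized by $V\to Y$, and restricting to $Y^o$ gives an honest \'etale cover $V^o\to Y^o$ trivializing $f^*\scrF^o$.

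Finally I would show $V\to Y$ is curve-tame. At a point $v$ of $V$ over $(w,y)$ one has $\cK_{V,v}$ unramified over the compositum $\cK_{W,w}\cK_{Y,y}$ (Lemma \ref{fiber-product-branch-data}), and since $\cK_{Y,y}=B_f(x)\subseteq P(x)\subseteq\cK_{W,w}$ this compositum is just $\cK_{W,w}$; thus the local extension over $Y$ factors as $\cK_{Y,y}=B_f(x)\subseteq P(x)\subseteq\cK_{W,w}$. The lower step $P(x)/B_f(x)$ is at most curve-tamely ramified by hypothesis, and the upper step $\cK_{W,w}/P(x)$ is unramified; the composite of a curve-tame extension with an unramified one is again curve-tame (ramification indices multiply as $p'\cdot 1$ and residue extensions stay separable), so $\cK_{V,v}/\cK_{Y,y}$ is curve-tame and hence $V\to Y$ is curve-tame. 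Because $f^*\scrF^o$ is trivialized by the curve-tame cover $V^o\to Y^o$, its pullback to the normalization $\tilde C$ of any curve $C\subset Y$ is trivialized by a tamely ramified cover and is therefore tame; by the definition of tameness in higher dimension (pullback to every curve is tame), $f^*\scrF^o$ is tame on $Y^o$, i.e.\ $\scrF^o$ has ramification bounded by $f$. The lisse case follows by running the argument on each layer $\scrF_n$ and passing to the limit.

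The step I expect to be the main obstacle is the last paragraph: justifying $\cK_{Y,y}=B_f(x)$ and the factorization $\cK_{Y,y}\subseteq P(x)\subseteq\cK_{V,v}$ cleanly from the results of Section \ref{sec:local-theory}, and then reconciling the formal-orbifold notion of a curve-tame extension with genuine tameness of $f^*\scrF^o$ along curves, which is where one must lean on the compatibility between curve-tame covers and tame covers of curves underlying Proposition \ref{tame.covers.equivalence} and the preceding proposition.
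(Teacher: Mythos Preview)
Your proposal is correct and follows essentially the same strategy as the paper: take a Galois \'etale cover of $(X,P)$ trivializing $\scrF$, form its (normalized) fiber product with $Y$ over $X$, and show that the resulting cover of $Y$ is curve-tame, hence trivializes $f^*\scrF^o$ tamely.

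The only real difference is packaging. The paper argues at the level of branch data: it equips $Y$ with $f^*P$, observes that the base change $(W,O)\to(Y,f^*P)$ is \'etale (Proposition \ref{base-change-etale}), and then notes that $f^*P$ is curve-tame because $(f^*P)(y)=P(x)\cK_{Y,y}=P(x)$ sits over $\cK_{Y,y}=B_f(x)$ with curve-tame quotient by hypothesis; curve-tameness of the cover $W\to Y$ then follows from the dictionary between curve-tame branch data and curve-tame covers. You instead unwind the same content as an explicit tower of local fields $B_f(x)\subset P(x)\subset\cK_{W,w}$ and argue directly that curve-tame composed with unramified is curve-tame. Your parenthetical justification (``ramification indices multiply as $p'\cdot 1$'') is really a numerical-tameness statement, not curve-tameness as defined here; the clean way to close this, which you correctly flag as the delicate point, is exactly the paper's move of passing through $f^*P$ and invoking the curve-tame branch data $\Leftrightarrow$ curve-tame cover translation. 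A minor citation issue: the fact that $\cK_{V,v}/\cK_{W,w}\cK_{Y,y}$ is unramified for closed $v$ is Lemma \ref{compositum-wild-ramification}(2) (used inside the proof of Lemma \ref{fiber-product-branch-data}), rather than the statement of Lemma \ref{fiber-product-branch-data} itself.
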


\begin{proof}
 Let $\scrF^o=\scrF|_{X^o}$. Let $g:(Z,O)\to (X,P)$ be an \'etale cover such that $g^*\scrF$ is a constant sheaf. Let $(W,O)$ be the fiber product of $g$ and $f:(Y,f^*P)\to(X,P)$ and $h:(W,O)\to (Y,f^*P)$ be the projection map. Note that $h$ is \'etale and $f^*P$ is a curve-tame branch data since $P(x)/B_f(x)$ is curve-tamely ramified for all $x\in X$. So $h:W\to Y$ is a curve-tamely ramified cover. Now $f^*\scrF^o$ is a locally constant sheaf on $Y^o=f^{-1}(X^o)$ and $h^*f^*\scrF^o=g^*\scrF^o$ is a constant sheaf. Hence $f^*\scrF^o$ is tame $l$-adic sheaf. 
\end{proof}

The converse also holds.

 \begin{pro}
  Let $X$ be a normal connected proper variety and $X^o$ be an open subset of the regular locus of $X$. Let $\scrF^o$ be a locally constant finite $l$-adic sheaf on $X^o$ whose ramification is bounded by a Galois cover $f:Y\to X$ which is \'etale over $X^o$. Let $B_f$ be the branch data on $X$ associated to $f$. Then $\scrF^o$ extends to a finite locally constant $l$-adic sheaf $\scrF$ on $(X,P)$ for some geometric branch data $P\ge B_f$ with $\BL(P)\cap X^o=\emptyset$ and $P(x)/B_f(x)$ at most curve-tamely ramified for all closed points $x\in X$. 
 \end{pro}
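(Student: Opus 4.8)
The plan is to realize $\scrF^o$ as coming from an honest finite cover of $X$, build $P$ as a compositum of two branch data, and then read off the required curve-tameness directly from the bounded-ramification hypothesis; this is precisely the converse of the preceding proposition, so I would run its construction backwards.

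First I would translate $\scrF^o$ into group theory. Since $\scrF^o$ has finite stalk $A$, it corresponds to a representation $\rho:\pi_1(X^o,x)\to\Aut(A)$ with finite image $G$ by Proposition~\ref{l-adicsheaf.Galois-repr}. Let $g:Z\to X$ be the normalization of $X$ in the $G$-Galois extension of $k(X)$ cut out by $\ker\rho$; then $g$ is a finite Galois cover, \'etale over $X^o$, and $g^*\scrF^o$ is constant. Let $B_g$ be the associated branch data (Proposition~\ref{B_f}) and set $P:=B_fB_g$ (Lemma~\ref{compositum-branch-data}). This $P$ is the candidate branch data.

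Next I would verify the formal properties, all of which are immediate. Since $P=B_fB_g\ge B_f$ and $\BL(P)=\BL(B_f)\cup\BL(B_g)$, with both $f$ and $g$ \'etale over $X^o$, we get $\BL(P)\cap X^o=\emptyset$. Both $(X,B_f)$ and $(X,B_g)$ are geometric by Corollary~\ref{B_f-geometric}, so $(X,P)$ is geometric by Proposition~\ref{product-geometric}. For the extension itself, $\rho$ factors through $\pi_1(X,B_g)$ because $(Z,O)\to(X,B_g)$ is a $G$-Galois \'etale cover (Corollary~\ref{B_f-geometric}); since $P\ge B_g$, the epimorphism $\pi_1(X,P)\to\pi_1(X,B_g)$ of Remark~\ref{Rem:pi_1.of.identity} shows $\rho$ factors through $\pi_1(X,P)$, and Proposition~\ref{l-adicsheaf.Galois-repr} then produces a finite locally constant $\scrF$ on $(X,P)$ with $\scrF|_{X^o}=\scrF^o$.

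The main point, and the step I expect to require the most care, is the curve-tameness of $P(x)/B_f(x)$, which is where the bounded-ramification hypothesis enters. Let $W=\widetilde{Z\times_XY}$ with projections $h:W\to Y$ and $p_Z:W\to Z$, so that $f\circ h=g\circ p_Z$; then $h$ is a Galois cover \'etale over $Y^o=f^{-1}(X^o)$, and since $h^*f^*\scrF^o=p_Z^*g^*\scrF^o$ is constant, $h$ trivializes $f^*\scrF^o$. Because the ramification of $\scrF^o$ is bounded by $f$, the sheaf $f^*\scrF^o$ is tame, so $h:W\to Y$ is curve-tame; the preceding proposition characterizing curve-tame branch data $B_h$ in terms of curve-tameness of $h$ then gives that $f^*B_g=B_h$ is a curve-tame branch data on $Y$. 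Finally I would transport this back to $X$: for a closed point $x\in X$ and a closed point $y\in Y$ above it one has $B_f(x)=\cK_{Y,y}$ (for closed points $\cK_X^x(U)=\cK_{X,x}$ and $B_f(x,U)=\cK_{Y,y}$), whence $P(x)=B_g(x)B_f(x)=B_g(x)\cK_{Y,y}=(f^*B_g)(y)$. Thus curve-tameness of $f^*B_g$ at $y$ is literally curve-tameness of $P(x)/B_f(x)$, and the proof concludes. The delicate part is exactly this chain of identifications $f^*B_g=B_h$, $B_f(x)=\cK_{Y,y}$ and $P(x)=(f^*B_g)(y)$, which converts tameness of the pulled-back sheaf into the required relative curve-tameness; everything else is bookkeeping with the compositum of branch data.
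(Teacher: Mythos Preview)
Your approach is valid but genuinely different from the paper's. The paper first chooses a curve-tame Galois cover $h':Y'\to Y$ trivializing the tame sheaf $f^*\scrF^o$, then takes $g:Z\to X$ to be the Galois closure of $f\circ h'$ and sets $P=B_g$. Since $Z$ factors through $Y$ one gets $P=B_g\ge B_f$ for free, and $P(x)/B_f(x)=\cK_{Z,z}/\cK_{Y,y}$ is curve-tame simply because $Z\to Y$ is a compositum of conjugates of the tame cover $h'$. You instead take $g$ to be the minimal cover trivializing $\scrF^o$ and set $P=B_fB_g$, which forces you to route curve-tameness through the fiber product $W$ and the identification $P(x)=(f^*B_g)(y)$. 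The paper's route avoids the compositum of branch data altogether and makes the tameness verification a one-liner; yours makes more explicit how the bounded-ramification hypothesis feeds into the relative tameness, at the cost of extra bookkeeping.

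Two points to tighten. First, the step ``$f^*\scrF^o$ is tame, so $h:W\to Y$ is curve-tame'' is not automatic for an arbitrary trivializing cover; it holds here because each component of $W^o\to Y^o$ corresponds to $\pi_1(Y^o)\cap\ker\rho=\ker(\rho|_{\pi_1(Y^o)})$, i.e.\ it is the \emph{minimal} Galois cover trivializing $f^*\scrF^o$, and for that cover tameness of the sheaf on every curve does translate to curve-tameness of the cover. Second, the equality $f^*B_g=B_h$ is not quite correct: at a closed point $y$ one has $(f^*B_g)(y)=\cK_{Z,z}\cK_{Y,y}$ while $B_h(y)=\cK_{W,w}$, and Lemma~\ref{compositum-wild-ramification} only guarantees that the latter is an \emph{unramified} extension of the former. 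This does not harm the argument, since curve-tameness of $B_h(y)/\cK_{Y,y}$ passes to the subextension $(f^*B_g)(y)/\cK_{Y,y}$, but you should state it as $f^*B_g\le B_h$ with unramified quotient rather than as an equality.
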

 
 \begin{proof}
  By Proposition \ref{l-adicsheaf.Galois-repr} a finite locally constant $l$-adic sheaf $\scrF^o$ on $X^o$ correspond to a continuous representation $\rho:\pi_1(X^o,x)\to \Aut(\scrF^o_x)$. Since ramification of $\scrF^o$ is bounded by $f$, $f^*\scrF^o$ is tame finite locally constant sheaf. So there exists a finite curve-tamely ramified Galois cover $h':Y'\to Y$ \'etale over $X^o$ such that $h^*f^*\scrF^o$ is a constant sheaf. Let $g:Z\to X$ be the Galois closure of $f\circ h': Y'\to X$. Then $g$ is \'etale over $X^o$ and the natural map $h:Z\to Y$ is curve tamely ramified (being compositum of covers isomorphic to $h':Y'\to Y$ which which is curve-tamely ramified). Let $P=B_g$ then $g:(Z,O)\to (X,P)$ is \'etale, $P$ is a geometric branch data, $\BL(P)\cap X^o=\emptyset$, $P\ge B_f$ and $\rho$ factors through $\pi_1((X,P),x)$. Let $\scrF$ be the extension of $\scrF^o$ to $(X,P)$ (obtained from Proposition \ref{l-adicsheaf.Galois-repr}). The extension $\cK_{Z,z}/\cK_{Y,h(z)}$ is at most curve-tamely ramified for all closed points $z\in Z$ since $h$ is curve-tamely ramified. But $\cK_{Z,z}=P(g(z))$ and $\cK_{Y,h(z)}=B_f(f(h(z)))=B_f(g(z))$, so $P(x)/B_f(x)$ is at most curve-tamely ramified for all closed points $x\in X$. 
 \end{proof}

%  
%  \begin{proof}
%    By Proposition \ref{l-adicsheaf.Galois-repr} a $l$-adic sheaf $\scrF$ on $\scrX$ correspond to a continuous representation $\rho:\pi_1(\scrX,x)\to \GL(\scrF_x)$ for $x$ a geometric point of $X^o$ and hence a continuous representation of $\pi_1(X^o,x)$. So $\scrF$ corresponds to $l$-adic sheaf $\scrF^o$ on $X^o$ whose corresponding representation factors through $\pi_1(\scrX,x)$. To show that under this correspondence tame lisse $l$-adic sheaf on $\scrX$ goes to lisse $l$-adic sheaf whose ramification is bounded by $f:Y\to X$, we may restrict to finite locally constant sheaves. 
%    If $\scrF$ is tame on $\scrX$ then there exists $g:(Z,O)\to (X,P)$ \'etale cover such that $\scrF_{|(Z,O)}$ is a constant sheaf and $\cK_{Z,z}/P(g(z))$ are tamely ramified for all $z\in Z$. Let $(Y',Q')$ be the fiber product $(Y,Q)\times_{\scrX}(Z,O)$. Let $Y^o=f^{-1}(X^o)$. To show that the ramification of $\scrF^o$ is bounded by $f$, it is enough to show that the sheaf $f^*\scrF^o$ on $Y^o$ is tame.  
%  \end{proof}

 \begin{cor} \label{lisse-bounded-ramification}
   Let $\scrF^o$ be a lisse $l$-adic sheaf on $X^o$. Let $X$ be a normal compactification of $X^o$. For $i=1,2$ let $f_i:Y_i\to X$ be Galois covers of normal varieties \'etale over $X^o$ such that $Q_1B_{f_1}=Q_2B_{f_2}$ where $Q_1$ and $Q_2$ are curve-tame branch data on $X$ with branch locus disjoint with $X^o$. Then the ramification of $\scrF^o$ is bounded by $f_1$ iff it is bounded by $f_2$.
 \end{cor}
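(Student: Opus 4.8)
The plan is to use the two propositions immediately preceding this corollary as a single characterization, which I will phrase as an equivalence: the ramification of a finite locally constant $l$-adic sheaf $\scrF^o$ on $X^o$ is bounded by a Galois cover $f$ (\'etale over $X^o$) if and only if $\scrF^o$ is the restriction of a locally constant sheaf on a \emph{geometric} formal orbifold $(X,P)$ with $P\ge B_f$, $\BL(P)\cap X^o=\emptyset$, and $P(x)/B_f(x)$ at most curve-tamely ramified for every closed point $x$. First I would reduce to this finite setting: for a lisse sheaf $\scrF^o=(\scrF^o_n)$, having ramification bounded by $f_i$ means precisely that each level $\scrF^o_n$ has ramification bounded by $f_i$, so it suffices to treat a finite locally constant $\scrF^o$; and by symmetry it is enough to show that boundedness by $f_1$ implies boundedness by $f_2$.

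Next I would build the candidate branch data. Set $P_0=Q_1B_{f_1}=Q_2B_{f_2}$. Assuming the ramification of $\scrF^o$ is bounded by $f_1$, the equivalence provides a geometric $(X,P)$ with $P\ge B_{f_1}$, $\BL(P)\cap X^o=\emptyset$, and $P(x)/B_{f_1}(x)$ curve-tame, together with a locally constant extension $\scrF$ of $\scrF^o$. I would then take $P'=PP_0$, a branch data by Lemma \ref{compositum-branch-data}. One checks $P'\ge P_0\ge B_{f_2}$; that $\BL(P')=\BL(P)\cup\BL(Q_1)\cup\BL(B_{f_1})$ is disjoint from $X^o$, since $\BL(P)$ and $\BL(Q_1)$ are disjoint from $X^o$ by hypothesis and $B_{f_1}$ is unramified over $X^o$; and that $P'(x)/B_{f_2}(x)$ is curve-tame at each closed point $x$. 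For the last point I would factor $B_{f_2}(x)\subset P_0(x)\subset P'(x)$: the lower step $P_0(x)/B_{f_2}(x)=Q_2(x)B_{f_2}(x)/B_{f_2}(x)$ is a base change of the curve-tame $Q_2(x)/\cK_{X,x}$, while the upper step $P'(x)/P_0(x)$ is the base change to $P_0(x)$ of the curve-tame $P(x)/B_{f_1}(x)$ (using $B_{f_1}(x)\subset P_0(x)$), and a tower of curve-tame extensions is curve-tame. Each of these stability statements reduces, on every test curve, to the standard behaviour of tame extensions of complete discrete valuation rings under subextension, base change, and composition.

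The hard part will be that $P'$ is generally \emph{not} geometric, because the factors $Q_i$ are only curve-tame rather than coming from covers, so I cannot apply the forward proposition to $(X,P')$ directly. To fix this I would pass to the maximal geometric branch data $\tilde Q\le P'$ of Proposition \ref{approximate-orbifold}. The key point is that $B_{f_2}$ is itself geometric by Corollary \ref{B_f-geometric} and satisfies $B_{f_2}\le P'$, so maximality forces $B_{f_2}\le\tilde Q\le P'$. Hence $\tilde Q(x)/B_{f_2}(x)$ is a subextension of the curve-tame $P'(x)/B_{f_2}(x)$, so it is curve-tame, and $\BL(\tilde Q)\subseteq\BL(P')$ is disjoint from $X^o$. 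Moreover $\scrF^o$ extends to $(X,\tilde Q)$: it extends to $(X,P')$ by pulling $\scrF$ back along the covering morphism $id\colon(X,P')\to(X,P)$ (Remark \ref{Rem:pi_1.of.identity} and Proposition \ref{l-adicsheaf.Galois-repr}), and since $\pi_1(X,P')\to\pi_1(X,\tilde Q)$ is an isomorphism, the corresponding representation factors through $\pi_1(X,\tilde Q)$. Applying the forward proposition to the geometric orbifold $(X,\tilde Q)$ with $\tilde Q\ge B_{f_2}$ then yields that the ramification of $\scrF^o$ is bounded by $f_2$.

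In summary, the whole argument hinges on the geometricity obstacle: the naturally constructed $P'=PP_0$ records the ramification correctly but is only curve-tame over its geometric part, so it must be replaced by the maximal geometric branch data beneath it, and this replacement still dominates $B_{f_2}$ precisely because $B_{f_2}$ is geometric. The remaining verifications, namely the passage of curve-tameness to subextensions, base changes and towers, and the preservation of extendability along $id\colon(X,P')\to(X,P)$, I expect to be routine once rephrased curve by curve.
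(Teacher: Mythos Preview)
Your proposal is correct and follows the same overall strategy as the paper: reduce to the finite locally constant sheaves $\scrF^o_n$, invoke the two preceding propositions as an ``if and only if'' characterization of boundedness by $f_i$, and then use the curve-tameness of $Q_i$ together with $Q_1B_{f_1}=Q_2B_{f_2}$ to pass from one side to the other. The paper's own proof is extremely terse here: it only records the local equivalence ``$P(x)Q_i(x)/B_{f_i}(x)Q_i(x)$ is curve-tame iff $P(x)/B_{f_i}(x)$ is'' and then concludes directly from $Q_1B_{f_1}=Q_2B_{f_2}$, without explicitly producing a new witnessing branch data or checking its properties.

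You are more careful on one point the paper elides: after enlarging the witnessing $P$ to $P'=PP_0$, the resulting branch data need not be \emph{geometric} (the $Q_i$ are only assumed curve-tame), so the forward proposition does not literally apply to $(X,P')$. Your fix---replacing $P'$ by the maximal geometric branch data $\tilde Q\le P'$ from Proposition~\ref{approximate-orbifold}, and noting that $B_{f_2}$ is itself geometric by Corollary~\ref{B_f-geometric} so that $B_{f_2}\le\tilde Q$---is a clean way to close this gap. The remaining claims (subextensions and base changes of curve-tame extensions are curve-tame, and the extension of $\scrF^o$ to $(X,\tilde Q)$ via the isomorphism $\pi_1(X,P')\cong\pi_1(X,\tilde Q)$) are indeed routine. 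In short: same approach, but you have made explicit a step the paper leaves to the reader.
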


 \begin{proof}
   Let $\scrF^o=(\scrF^o_n)_{n\ge 0}$ where $\scrF^o_n$ are compatible finite locally constant $l$-adic sheaves. For $i=1,2$, by the above proposition the ramification of $\scrF^o_n$ is bounded by $f_i$ iff $\scrF^o_n$ extends to a locally constant sheaf on $(X,P)$ for some $P\ge B_{f_i}$ with $\BL(P)\cap X^o=\empty$ and $P(x)/B_{f_i}(x)$ at most tamely ramified for all $x\in X$. Since $Q_1$ and $Q_2$ are tame branch data on $X$, for $i=1,2$, $P(x)Q_i(x)/B_{f_i}(x)Q_i(x)$ is at most tamely ramified iff $P(x)/B_{f_i}(x)$ is at most tamely ramified for all $x\in X$. Since $Q_1B_{f_1}=Q_2B_{f_2}$ we obtain that $\scrF^o_n$ is bounded by $f_1$ iff it is bounded by $f_2$.  
 \end{proof}
 
 In particular the above means that the local property at the boundary of an \'etale morphism $f:Y^o\to X^o$ decides whether an $l$-adic lisse sheaf on $X^o$ has ramification bounded by $f$.  

%  Let $S(r,X^o,f)$ be the set of isomorphism classes of all rank $r$ lisse $l$-adic sheaf on $X^o$, whose ramification is bounded by $f$, modulo a twist by a character of the absolute Galois group of $k$. 
%  Assume $\BL(P)$ is pure of codimension one. Let $g:Y\to X$ be a Galois cover of $X$ which is \'etale over $X^o$. For $Z\subset \BL(P)$ an irreducible component of codimension one in $X^o$, let $ideg(g,Z)$ be the inseparable degree of $g$ restricted to a irreducible component of the preimage of $Z$.
%  
%  \begin{pro}
%    Let $g:Y\to X$ be a Galois cover of normal varities \'etale over $X^o$ such that $S(r,X^o,g)=S(r,X^o,f)$.
%    Let $z\in \BL(P)$ be a codimension one point of $X$, let $R_z$ be the integral closure of $\widehat \cO_{X,z}$ in $P(z)$ and $m_z$ be the maximal ideal of $R_z$. Then $ideg(g,\bar z)$ is bounded by the inseparable degree of the finite field extension $(R_z/m_z)$ of the residue field $k(z)$.
%  \end{pro}
% 
%  \begin{proof}
%    By the above corollary $S(r,X^o,f)=S(r,X^o,g)$ iff $B_{f}$ and $B_{g}$ are same upto tame branch data. Since we are interested in the inseparable degree, we may assume $B_g=P$. Now the result follows because $g$ restricted to a irreducible closed subscheme which is a preimage of $Z$ over the generic point is given by the field extension $(R_z/m_z)$ of the residue field $k(z)$. 
%  \end{proof}

\end{document}